\author{R\'emy Sigrist}
\date{\today}
\title{Nonperiodic tilings
       related to Stern's diatomic series
       and based on tiles decorated with elements of $\mathbb{F}_p$}
\newtheorem{theo}{Theorem}[section]
\newtheorem{lemma}[theo]{Lemma}
\newtheorem{corrollary}[theo]{Corollary}
\newtheorem{definition}[theo]{Definition}
\newtheorem{remark}[theo]{Remark}
\newcommand{\quotes}[1]{``#1''}
\newcommand{\twodots}{\mathinner {\ldotp \ldotp}}
\newcommand{\seqnum}[1]{\href{https://oeis.org/#1}{#1}}
\newcommand{\hexgrid}[3]{
    \draw[dotted,gray] (#1-#3,#2+#3) -- (#1+#3,#2+#3);

    \draw[dotted,gray] (#1,#2) -- (#1,#2+2*#3);

    \draw[dotted,gray] (#1+#3,#2) -- (#1-#3,#2+2*#3);

    \foreach \i in {1,...,#3} {
        \draw[dotted,gray] (#1-#3,#2+#3+\i) -- (#1+#3-\i,#2+#3+\i);
        \draw[dotted,gray] (#1-#3+\i,#2+#3-\i) -- (#1+#3,#2+#3-\i);

        \draw[dotted,gray] (#1+\i,#2) -- (#1+\i,#2+2*#3-\i);
        \draw[dotted,gray] (#1-\i,#2+\i) -- (#1-\i,#2+2*#3);

        \draw[dotted,gray] (#1+#3-\i,#2) -- (#1-#3,#2+2*#3-\i);
        \draw[dotted,gray] (#1+#3,#2+\i) -- (#1-#3+\i,#2+2*#3);
    }
}
\newcommand{\hexOfZeros}[3]{
    \foreach \i in {1,...,#3} {
        \node at (#1+\i,#2) {$0$};
        \node at (#1+#3,#2+\i) {$0$};
        \node at (#1+#3-\i,#2+#3+\i) {$0$};
        \node at (#1-\i,#2+2*#3) {$0$};
        \node at (#1-#3,#2+2*#3-\i) {$0$};
        \node at (#1-#3+\i,#2+#3-\i) {$0$};
    }
}
\begin{document}

\maketitle

\begin{abstract}

This paper presents the construction and various properties
of substitution tilings
related to Stern's diatomic series
and based on tiles
decorated with elements of $\mathbb{F}_p$ for some odd prime number $p$.
These substitution tilings are additive in a sense that will be clarified later
and lead to new nonperiodic tilings
in one and two dimensions.

\end{abstract}

\begin{figure}[b]
    \begin{center}
    \vstretch{1.732}{
        \includegraphics[height=3cm, keepaspectratio]{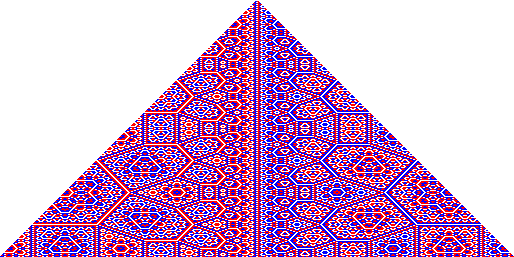}
    }
    \end{center}
    \label{fig:front}
\end{figure}

\clearpage

\newpage

\setcounter{tocdepth}{2}
\tableofcontents

\newpage

\section{Introduction}

Substitution tilings form a fascinating area of mathematics
at the confluence of several lines of research.
For an introduction to the theory of substitution tilings,
see Priebe~\cite{priebe}. Also, the Tilings Encyclopedia~\cite{tilings}
provides a wealth of examples of nonperiodic substitution tilings.

\vspace{\baselineskip}

In this paper, we will present
a two-dimensional substitution $\sigma$
and a one-dimensional substitution $\tau$
with construction rules similar to that of Stern's diatomic series,
based on tiles decorated with elements of $\mathbb{F}_p$
for some odd prime number $p$.
We will describe
the construction, various properties, including additivity and primitivity,
and tilings for both substitutions.
These tilings will be shown to be automatic, self-similar and nonperiodic.

\vspace{\baselineskip}

The usage of finite fields
in the framework of aperiodic tilings
was previously explored by Prunescu~\cite{prunescu}.

\begin{definition}
Throughout this document, and unless defined explicitly,
$p$ will denote an arbitrary odd prime number,
$\mathbb{F}_p$ will correspond to the finite field of order $p$,
and $\mathbb{F}_p^* = \mathbb{F}_p \setminus \{0\}$.
\end{definition}

\section{Stern's diatomic series}
\label{sec:stern}

\begin{definition}
Stern's diatomic series (see Lehmer~\cite{lehmer})
can be computed by the following procedure
(see figure~\ref{fig:stern}):
\begin{itemize}
\item start with an array of two values: $0$ and $1$,
\item repeatedly: insert between each pair of adjacent values,
      say $x$ and $y$, the value $x+y$,
\item the limiting array is Stern's diatomic series.
\end{itemize}
\end{definition}

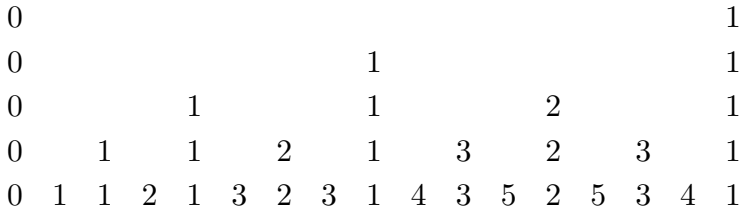
\begin{figure}[H]
    \centering
    \resizebox{10cm}{!}
    {
    \begin{tikzpicture}
                \node[scale=4,below] at (0,8) {0};
        \node[scale=4,below] at (32,8) {1};
        \node[scale=4,below] at (0,6) {0};
        \node[scale=4,below] at (16,6) {1};
        \node[scale=4,below] at (32,6) {1};
        \node[scale=4,below] at (0,4) {0};
        \node[scale=4,below] at (8,4) {1};
        \node[scale=4,below] at (16,4) {1};
        \node[scale=4,below] at (24,4) {2};
        \node[scale=4,below] at (32,4) {1};
        \node[scale=4,below] at (0,2) {0};
        \node[scale=4,below] at (4,2) {1};
        \node[scale=4,below] at (8,2) {1};
        \node[scale=4,below] at (12,2) {2};
        \node[scale=4,below] at (16,2) {1};
        \node[scale=4,below] at (20,2) {3};
        \node[scale=4,below] at (24,2) {2};
        \node[scale=4,below] at (28,2) {3};
        \node[scale=4,below] at (32,2) {1};
        \node[scale=4,below] at (0,0) {0};
        \node[scale=4,below] at (2,0) {1};
        \node[scale=4,below] at (4,0) {1};
        \node[scale=4,below] at (6,0) {2};
        \node[scale=4,below] at (8,0) {1};
        \node[scale=4,below] at (10,0) {3};
        \node[scale=4,below] at (12,0) {2};
        \node[scale=4,below] at (14,0) {3};
        \node[scale=4,below] at (16,0) {1};
        \node[scale=4,below] at (18,0) {4};
        \node[scale=4,below] at (20,0) {3};
        \node[scale=4,below] at (22,0) {5};
        \node[scale=4,below] at (24,0) {2};
        \node[scale=4,below] at (26,0) {5};
        \node[scale=4,below] at (28,0) {3};
        \node[scale=4,below] at (30,0) {4};
        \node[scale=4,below] at (32,0) {1};
    \end{tikzpicture}
    }
    \caption{The construction of Stern's diatomic series}
    \label{fig:stern}
\end{figure}

\begin{remark}
The substitutions $\sigma$ and $\tau$ will use a similar construction rule:
\begin{itemize}
\item start with \quotes{sites} decorated with \quotes{values}
      (in our context: taken from $\mathbb{F}_p$),
\item repeatedly: insert between each pair of \quotes{adjacent} sites,
      say decorated with values $x$ and $y$,
      a new site decorated with the value $x+y$.
\end{itemize}
\end{remark}

\begin{definition}
Stern's diatomic series is also known as Dijkstra's \quotes{fusc} function (see OEIS~\cite{oeis} sequence \seqnum{A002487}).
The \quotes{fusc} function satisfies:
\begin{equation}
    fusc(0) = 0
\end{equation}
\begin{equation}
    fusc(1) = 1
\end{equation}
\begin{equation}
    fusc(2n) = fusc(n)
\end{equation}
\begin{equation}
    fusc(2n+1) = fusc(n) + fusc(n+1)
\end{equation}
\end{definition}

\section{\texorpdfstring{Substitution $\sigma$}{Substitution sigma}}
\label{sec:sigma-cons}

This section presents the construction,
associated notations
and various properties of the substitution $\sigma$.

These notations and properties will allow us to build tilings of the plane
in sections~\ref{sec:sigma-tiling-s} and \ref{sec:sigma-tiling-h},
and provide tools to prove that these tilings are automatic,
self-similar and nonperiodic.

The substitution $\sigma$ is related to the OEIS~\cite{oeis} sequence \seqnum{A355855}.

\subsection{Tiles}
\label{sec:sigma-cons-tiles}

\begin{definition}
The tiles on which the substitution $\sigma$ operates
correspond to equilateral triangles
with unit side length
whose corners are decorated with elements of $\mathbb{F}_p$
and oriented either upwards or downwards
as depicted in figure~\ref{fig:tile}:
\begin{itemize}
\item the upward tile whose bottom left, bottom right and top corners
are respectively decorated with the values $x$, $y$ and $z$
will be denoted by $\bigtriangleup(x,y,z)$ (or $\bigtriangleup xyz$ when there is no ambiguity),
\item the downward tile whose top right, top left and bottom corners
are respectively decorated with the values $x$, $y$ and $z$
will be denoted by $\bigtriangledown(x,y,z)$ (or $\bigtriangledown xyz$ when there is no ambiguity).
\end{itemize}
\end{definition}

\begin{figure}[H]
    \centering
    \resizebox{4cm}{!}
    {
    \begin{tikzpicture}
    	\begin{scope}[yscale=.87,xslant=.5]
        \draw (0,0) -- (5,0) -- (0,5) -- cycle;
        \node[scale=1.5] at (1,1) {\Huge $x$};
        \node[scale=1.5] at (3,1) {\Huge $y$};
        \node[scale=1.5] at (1,3) {\Huge $z$};
        \draw (10,5) -- (5,5) -- (10,0) -- cycle;
        \node[scale=1.5] at (9,4) {\Huge $x$};
        \node[scale=1.5] at (7,4) {\Huge $y$};
        \node[scale=1.5] at (9,2) {\Huge $z$};
    	\end{scope}
    \end{tikzpicture}
    }
    \caption{The upward tile $\bigtriangleup xyz$
             and the downward tile $\bigtriangledown xyz$}
    \label{fig:tile}
\end{figure}

\begin{definition}
The sets of upward, downward and all tiles will be denoted, respectively,
by $U$, $D$ and $T$:
\begin{equation}
    U = \{ \bigtriangleup xyz \mid x, y, z \in \mathbb{F}_p \}
\end{equation}
\begin{equation}
    D = \{ \bigtriangledown xyz \mid x, y, z \in \mathbb{F}_p \}
\end{equation}
\begin{equation}
    T = U \cup D
\end{equation}
\end{definition}

\begin{definition}
We will also be interested in sets of tiles having at least one nonzero corner:
\begin{equation}
    U^* = U \setminus \{ \bigtriangleup 000 \}
\end{equation}
\begin{equation}
    D^* = D \setminus \{ \bigtriangledown 000 \}
\end{equation}
\begin{equation}
    T^* = U^* \cup D^*
\end{equation}
\end{definition}

\begin{definition}
\label{def:tile-matrix}
We will associate to any tile $t$
the row matrix $[t]$ with the values at its three corners
as follows:
$\forall x,y,z \in \mathbb{F}_p$:
\begin{equation}
    [\bigtriangleup xyz] = [\bigtriangledown xyz] = [x\ y\ z]
\end{equation}
\end{definition}

\subsection{Substitution rule}
\label{sec:sigma-cons-rule}

\begin{definition}
The substitution $\sigma$ transforms a tile into four tiles
as depicted in figures \ref{fig:subst-up} and \ref{fig:subst-down}.
\end{definition}

\begin{figure}[H]
    \centering
    \resizebox{8cm}{!}
    {
    \begin{tikzpicture}
    	\begin{scope}[yscale=.87,xslant=.5]
        \node at (5,2.5) {\Huge $\rightarrow$};

        \draw (0,0) -- (5,0) -- (0,5) -- cycle;
        \node at (1,1) {\Huge $x$};
        \node at (3,1) {\Huge $y$};
        \node at (1,3) {\Huge $z$};

        \draw (7,0) -- (12,0) -- (7,5) -- cycle;
        \node at (8,1) {\huge $x$};
        \node at (10,1) {\huge $x + y$};
        \node at (8,3) {\huge $x + z$};
        \draw (7,5) -- (12,5) -- (7,10) -- cycle;
        \node at (8,6) {\huge $x + z$};
        \node at (10,6) {\huge $y + z$};
        \node at (8,8) {\huge $z$};
        \draw (12,0) -- (17,0) -- (12,5) -- cycle;
        \node at (13,1) {\huge $x + y$};
        \node at (15,1) {\huge $y$};
        \node at (13,3) {\huge $y + z$};
        \draw (12,5) -- (7,5) -- (12,0) -- cycle;
        \node at (11,4) {\huge $y + z$};
        \node at (9,4) {\huge $x + z$};
        \node at (11,2) {\huge $x + y$};
    	\end{scope}
    \end{tikzpicture}
    }
    \caption{The substitution rule for the upward tile $\bigtriangleup xyz$}
    \label{fig:subst-up}
\end{figure}

\begin{figure}[H]
    \centering
    \resizebox{8cm}{!}
    {
    \begin{tikzpicture}
    	\begin{scope}[yscale=.87,xslant=.5]
        \node at (2.25,-2.5) {\Huge $\rightarrow$};

        \draw (0,0) -- (-5,0) -- (0,-5) -- cycle;
        \node at (-1,-1) {\Huge $x$};
        \node at (-3,-1) {\Huge $y$};
        \node at (-1,-3) {\Huge $z$};

        \draw (12,0) -- (7,0) -- (12,-5) -- cycle;
        \node at (11,-1) {\huge $x$};
        \node at (9,-1) {\huge $x + y$};
        \node at (11,-3) {\huge $x + z$};
        \draw (12,-5) -- (7,-5) -- (12,-10) -- cycle;
        \node at (11,-6) {\huge $x + z$};
        \node at (9,-6) {\huge $y + z$};
        \node at (11,-8) {\huge $z$};
        \draw (7,0) -- (2,0) -- (7,-5) -- cycle;
        \node at (6,-1) {\huge $x + y$};
        \node at (4,-1) {\huge $y$};
        \node at (6,-3) {\huge $y + z$};
        \draw (7,-5) -- (12,-5) -- (7,0) -- cycle;
        \node at (8,-4) {\huge $y + z$};
        \node at (10,-4) {\huge $x + z$};
        \node at (8,-2) {\huge $x + y$};
    	\end{scope}
    \end{tikzpicture}
    }
    \caption{The substitution rule for the downward tile $\bigtriangledown xyz$}
    \label{fig:subst-down}
\end{figure}

\begin{definition}
$\forall k \in \mathbb{N}$, $\forall t \in T$,
$\sigma^k(t)$ is called a \quotes{$k$-supertile} or simply a \quotes{supertile}.
A subset of tiles in a supertile is called a \quotes{patch}.
\end{definition}

\begin{definition}
As depicted in figure~\ref{fig:abcd}:
\begin{itemize}
\item $\forall u \in U$:
      the central, bottom left, bottom right and top tile in $\sigma(u)$
      will be denoted respectively by $\alpha(u)$, $\beta(u)$, $\gamma(u)$ and $\delta(u)$,
\item $\forall d \in D$:
      the central, top right, top left and bottom tile in $\sigma(d)$
      will be denoted respectively by $\alpha(d)$, $\beta(d)$, $\gamma(d)$ and $\delta(d)$.
\end{itemize}
\end{definition}

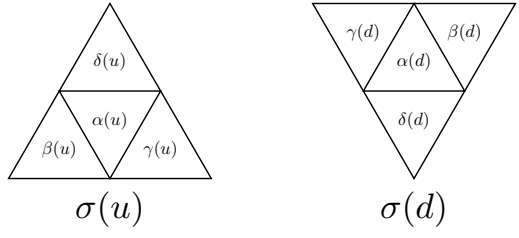
\begin{figure}[H]
    \centering
    \resizebox{7cm}{!}
    {
    \begin{tikzpicture}
    	\begin{scope}[yscale=.87,xslant=.5]
        \draw (0,0) -- (2,0) -- (0,2) -- cycle;
        \draw (1,0) -- (1,1) -- (0,1) -- cycle;
        \node[scale=.5] at (2/3,2/3) {$\alpha(u)$};
        \node[scale=.5] at (1/3,1/3) {$\beta(u)$};
        \node[scale=.5] at (1+1/3,1/3) {$\gamma(u)$};
        \node[scale=.5] at (1/3,1+1/3) {$\delta(u)$};
        \node[below] at (1,0) {$\sigma(u)$};

        \draw (4,0) -- (4,2) -- (2,2) -- cycle;
        \draw (4,1) -- (3,2) -- (3,1) -- cycle;
        \node[scale=.5] at (3+1/3,1+1/3) {$\alpha(d)$};
        \node[scale=.5] at (4-1/3,1+2/3) {$\beta(d)$};
        \node[scale=.5] at (3-1/3,1+2/3) {$\gamma(d)$};
        \node[scale=.5] at (4-1/3,0+2/3) {$\delta(d)$};
        \node[below] at (4,0) {$\sigma(d)$};
    	\end{scope}
    \end{tikzpicture}
    }
    \caption{Tiles in $1$-supertiles}
    \label{fig:abcd}
\end{figure}

\begin{lemma}
\label{lemma:sigma-matrix}
$\forall t \in T$,
using the notation introduced in definition~\ref{def:tile-matrix},
we can conveniently relate $\alpha(t)$, $\beta(t)$, $\gamma(t)$ and $\delta(t)$
to $t$
by means of multiplication by certain matrices $A$, $B$, $C$ and $D$
as follows:
\begin{equation}
    [\alpha(t)]
        = [t] \cdot \underbrace{
                \begin{bmatrix}
                    0 & 1 & 1    \\
                    1 & 0 & 1    \\
                    1 & 1 & 0
                \end{bmatrix}
              }_A
\end{equation}
\begin{equation}
    [\beta(t)]
        = [t] \cdot \underbrace{
                \begin{bmatrix}
                    1 & 1 & 1    \\
                    0 & 1 & 0    \\
                    0 & 0 & 1
                \end{bmatrix}
              }_B
\end{equation}
\begin{equation}
    [\gamma(t)]
        = [t] \cdot \underbrace{
                \begin{bmatrix}
                    1 & 0 & 0    \\
                    1 & 1 & 1    \\
                    0 & 0 & 1
                \end{bmatrix}
              }_C
\end{equation}
\begin{equation}
    [\delta(t)]
        = [t] \cdot \underbrace{
                \begin{bmatrix}
                    1 & 0 & 0    \\
                    0 & 1 & 0    \\
                    1 & 1 & 1
                \end{bmatrix}
              }_D
\end{equation}
\end{lemma}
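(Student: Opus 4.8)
The plan is to prove all four identities by a direct verification: one reads the corner decorations of the sub-tiles $\alpha(t),\beta(t),\gamma(t),\delta(t)$ straight off the substitution rule (figures~\ref{fig:subst-up} and~\ref{fig:subst-down}) and then checks that they coincide, entry by entry, with $[t]\cdot A$, $[t]\cdot B$, $[t]\cdot C$ and $[t]\cdot D$. There is no hidden structure to exploit; the lemma is merely a re-encoding of the substitution rule in matrix form, so the content is entirely bookkeeping, and it suffices to carry out the computation for a single generic tile.

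First I would fix $t$ with $[t]=[x\ y\ z]$ and treat $t=\bigtriangleup xyz$ and $t=\bigtriangledown xyz$ in parallel. For the upward tile, the substitution introduces on the bottom, left and right edges the values $x+y$, $x+z$ and $y+z$; reading the corners of each sub-tile in the prescribed order (bottom left, bottom right, top for an upward tile; top right, top left, bottom for a downward tile), figure~\ref{fig:subst-up} gives $[\beta(t)]=[x\ \ x+y\ \ x+z]$, $[\gamma(t)]=[x+y\ \ y\ \ y+z]$, $[\delta(t)]=[x+z\ \ y+z\ \ z]$, and for the central downward tile $[\alpha(t)]=[y+z\ \ x+z\ \ x+y]$. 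For the downward tile, figure~\ref{fig:subst-down} produces exactly the same three row matrices for $\beta$, $\gamma$, $\delta$, and once the central upward tile is read in its own corner order one again obtains $[\alpha(t)]=[y+z\ \ x+z\ \ x+y]$.

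It then remains to compute the four $(1\times 3)\cdot(3\times 3)$ products. A direct computation gives $[t]\cdot A=[y+z\ \ x+z\ \ x+y]$, $[t]\cdot B=[x\ \ x+y\ \ x+z]$, $[t]\cdot C=[x+y\ \ y\ \ y+z]$ and $[t]\cdot D=[x+z\ \ y+z\ \ z]$, which match $[\alpha(t)]$, $[\beta(t)]$, $[\gamma(t)]$ and $[\delta(t)]$ respectively. Indeed the three columns of each of $A,B,C,D$ are precisely the $\{0,1\}$-indicator vectors recording which corners of $t$ are summed to produce the corresponding corner of the sub-tile, so the match is immediate. Since $x,y,z$ were arbitrary, this establishes the identities for every $t\in T$.

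The only real obstacle is notational rather than mathematical: one must be scrupulous about the cyclic order in which a tile's three corners are listed and about how that order is affected by the up/down orientation, so that the entries of $[\alpha(t)],\dots,[\delta(t)]$ are aligned with the correct columns of the matrices. In particular the central sub-tile has the opposite orientation to $t$, which is the one spot where it is easy to transpose two entries; once the convention is fixed, however, the upward and downward computations are literally identical, which is exactly why a single quadruple of matrices serves for all of $T$.
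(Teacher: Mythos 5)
Your proposal is correct and follows the same route as the paper: read the corner decorations of $\alpha(t),\beta(t),\gamma(t),\delta(t)$ directly off the substitution rule and verify, entry by entry, that they equal $[t]\cdot A$, $[t]\cdot B$, $[t]\cdot C$, $[t]\cdot D$. The paper writes out only the $\alpha$ and $\beta$ cases for an upward tile and declares the rest similar, whereas you carry out all four products and flag the corner-ordering convention for the orientation-reversing central tile; this is the same argument, just more fully spelled out.
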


\begin{proof}
$\forall x,y,z \in \mathbb{F}_p$:
\begin{equation}
    [\alpha(\bigtriangleup xyz)]
    = [\bigtriangledown(y+z,x+z,x+y)]
    = [y+z\ x+z\ x+y]
    = [x\ y\ z] \cdot A
    = [\bigtriangleup xyz] \cdot A
\end{equation}
\begin{equation}
    [\beta(\bigtriangleup xyz)]
    = [\bigtriangleup(x, x+y, x+z)]
    = [x\ x+y\ x+z]
    = [x\ y\ z] \cdot B
    = [\bigtriangleup xyz] \cdot B
\end{equation}
The other cases are similar.
\end{proof}

\begin{lemma}
\label{lemma:abcd-invertible}
The matrices $A$, $B$, $C$ and $D$ are invertible in $\mathbb{F}_p^{3 \times 3}$.
\end{lemma}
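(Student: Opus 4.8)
The plan is to reduce invertibility to the computation of four determinants over $\mathbb{F}_p$, and to observe that each determinant is a unit. Recall that a square matrix over a field is invertible if and only if its determinant is nonzero, so it suffices to show $\det A$, $\det B$, $\det C$ and $\det D$ are all nonzero in $\mathbb{F}_p$.

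First I would handle $B$, $C$ and $D$. The matrix $B$ is upper triangular with $1$'s on the diagonal, and $D$ is lower triangular with $1$'s on the diagonal, so $\det B = \det D = 1$. For $C$, expanding along the first column (its only nonzero off-diagonal entry in that column is below the pivot, which is already $1$) gives $\det C = 1 \cdot \det\!\begin{bmatrix} 1 & 1 \\ 0 & 1 \end{bmatrix} = 1$. Hence $\det B = \det C = \det D = 1 \neq 0$ in $\mathbb{F}_p$ regardless of $p$.

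Next I would compute $\det A$ by cofactor expansion along the first row:
\begin{equation}
    \det A = 0\cdot(0-1) - 1\cdot(0-1) + 1\cdot(1-0) = 1 + 1 = 2 .
\end{equation}
Since $p$ is an odd prime, $2 \not\equiv 0 \pmod p$, so $\det A = 2$ is a nonzero element (indeed a unit) of $\mathbb{F}_p$. Therefore $A$ is invertible as well, which completes the argument.

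There is no real obstacle here: the only subtlety is that the invertibility of $A$ is exactly the point at which the hypothesis that $p$ is odd (equivalently $p \neq 2$) is needed, whereas $B$, $C$, $D$ would be invertible over any commutative ring. If a reader wanted explicit inverses (useful later for the additivity and automaticity arguments), one could record them, but they are not required for the statement as given.
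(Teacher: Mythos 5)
Your proof is correct and takes the same approach as the paper: computing $\det(A)=2$ and $\det(B)=\det(C)=\det(D)=1$, and using that $2\neq 0$ in $\mathbb{F}_p$ for odd $p$. Your version just spells out the determinant computations and the role of the oddness hypothesis more explicitly.
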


\begin{proof}
\begin{equation}
    \det(A) = 2
\end{equation}
\begin{equation}
    \det(B) = \det(C) = \det(D) = 1
\end{equation}
\end{proof}

\begin{remark}
The matrix $A$ is not invertible in $\mathbb{F}_2^{3 \times 3}$.
\end{remark}

\begin{lemma}
The functions $\alpha$, $\beta$, $\gamma$ and $\delta$
are permutations of $T$.
\end{lemma}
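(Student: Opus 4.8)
The plan is to combine two ingredients: the effect of each of $\alpha,\beta,\gamma,\delta$ on the \emph{orientation} of a tile, read directly off the substitution rules, and its effect on the \emph{corner triple} $[\,\cdot\,]$, which by Lemma~\ref{lemma:sigma-matrix} is right-multiplication by one of the matrices $A,B,C,D$ that are invertible over $\mathbb{F}_p$ by Lemma~\ref{lemma:abcd-invertible}. So the statement should follow essentially by putting those two lemmas together with a short orientation count, and I expect no genuinely hard step.

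First I would record that a tile is uniquely determined by the pair formed by its orientation and its row matrix: the assignment $\bigtriangleup xyz \mapsto (\text{up},[x\ y\ z])$ and $\bigtriangledown xyz \mapsto (\text{down},[x\ y\ z])$ is a bijection from $T$ onto $\{\text{up},\text{down}\}\times\mathbb{F}_p^{1\times 3}$. Then, from figures~\ref{fig:subst-up}, \ref{fig:subst-down}, \ref{fig:abcd} and the definition of $\alpha,\beta,\gamma,\delta$, the central subtile of $\sigma(u)$ points downward and that of $\sigma(d)$ upward, whereas the three non-central subtiles inherit the orientation of their parent; hence $\alpha$ maps $U$ into $D$ and $D$ into $U$, while each of $\beta,\gamma,\delta$ maps $U$ into $U$ and $D$ into $D$.

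Now fix $\phi\in\{\alpha,\beta,\gamma,\delta\}$ with associated matrix $M\in\{A,B,C,D\}$. If $\phi(t)=\phi(t')$ then the orientation bookkeeping forces $t$ and $t'$ to have the same orientation, and $[t]M=[\phi(t)]=[\phi(t')]=[t']M$ gives $[t]=[t']$ by invertibility of $M$, so $t=t'$; thus $\phi$ is injective, and since $T$ is finite it is a permutation of $T$. The only point requiring a little care --- not really an obstacle --- is that $\alpha$ is orientation-reversing, so it should be viewed as the two bijections $U\to D$ and $D\to U$ glued along $T=U\cup D$, while for $\beta,\gamma,\delta$ the restrictions to $U$ and to $D$ are each permutations and glue together trivially; if one prefers, the inverse of $\phi$ is the explicit map sending a tile $t$ to the tile with row matrix $[t]M^{-1}$ and orientation equal to (for $\beta,\gamma,\delta$) or opposite to (for $\alpha$) that of $t$.
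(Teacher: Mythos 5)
Your argument is correct and is essentially the paper's own proof: the paper also deduces the lemma from Lemma~\ref{lemma:sigma-matrix} and Lemma~\ref{lemma:abcd-invertible} together with the fact that $\alpha$ reverses orientation while $\beta$, $\gamma$, $\delta$ preserve it. You have simply spelled out the injectivity-plus-finiteness bookkeeping that the paper leaves implicit.
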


\begin{proof}
This is a consequence of lemmas \ref{lemma:sigma-matrix} and \ref{lemma:abcd-invertible}
and of the fact that $\alpha$ inverts tile orientations,
and $\beta$, $\gamma$ and $\delta$ preserve tile orientations.
\end{proof}

\begin{lemma}
\label{lemma:extreme}
$\forall k \in \mathbb{N}$, $\forall t \in T$,
the values at the three extreme corners of $\sigma^k(t)$
are exactly the values at the corners of $t$.
\end{lemma}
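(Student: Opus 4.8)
The plan is to argue by induction on $k$, using the self-similar structure $\sigma^{k+1}(t)=\sigma(\sigma^{k}(t))$ together with the following elementary observation about a single application of $\sigma$: for every tile $t'$, each of the three extreme corners of the triangle occupied by $\sigma(t')$ is a corner of one of the tiles $\beta(t')$, $\gamma(t')$, $\delta(t')$, and its decoration coincides with the corner of $t'$ in the corresponding position. For an upward tile this is read directly off figure~\ref{fig:subst-up}: $\beta(\bigtriangleup xyz)=\bigtriangleup(x,x+y,x+z)$ keeps $x$ at the bottom left, $\gamma(\bigtriangleup xyz)=\bigtriangleup(x+y,y,y+z)$ keeps $y$ at the bottom right, and $\delta(\bigtriangleup xyz)=\bigtriangleup(x+z,y+z,z)$ keeps $z$ at the top; the downward case is the mirror image obtained from figure~\ref{fig:subst-down}. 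In particular this already settles the case $k=1$, and the case $k=0$ is trivial since $\sigma^{0}(t)=t$.

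For the inductive step, assume the claim for some $k$ and consider $\sigma^{k+1}(t)=\sigma(\sigma^{k}(t))$. At each extreme corner of the big triangle carrying $\sigma^{k}(t)$ there sits a unique tile $t'$ of the supertile, one of whose own corners is that extreme corner; by the induction hypothesis its decoration is the matching corner value of $t$. Applying $\sigma$ subdivides $t'$ in place, and by the single-step observation above the tile of $\sigma(t')$ occupying that same extreme position carries the same value. Since the extreme corners of $\sigma^{k+1}(t)$ are exactly the extreme corners of the three corner tiles of $\sigma^{k}(t)$ after subdivision, each of them equals the corresponding corner of $t$, which closes the induction.

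I expect the only real obstacle to be the geometric bookkeeping rather than any algebra: one has to be slightly careful in arguing that exactly one tile of a supertile meets each extreme corner and that subdivision by $\sigma$ does not displace that corner, so that the phrase \quotes{the value at an extreme corner} is well defined and is transported correctly from level $k$ to level $k+1$. It may be convenient to strengthen the induction hypothesis to record, for $\sigma^{k}(\bigtriangleup xyz)$, not merely the three extreme values but also which tile and which of its corners realises each extreme corner; then the inductive step becomes a direct substitution into the single-step rule, and no case analysis beyond the upward/downward symmetry of figures~\ref{fig:subst-up} and~\ref{fig:subst-down} is required.
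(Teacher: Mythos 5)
Your proposal is correct and follows essentially the same route as the paper: induction on $k$, with the key single-step observation that $\beta$, $\gamma$, $\delta$ fix the bottom-left, bottom-right and top corner values respectively (read off figures~\ref{fig:subst-up} and~\ref{fig:subst-down}), so that applying $\sigma$ to the corner tiles of a $k$-supertile leaves the three extreme values unchanged. The bookkeeping concern you raise is handled the same implicit way in the paper's proof.
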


\begin{proof}
The property is trivially satisfied for all $0$-supertiles (\textit{i.e.} for all tiles).

Suppose that, for some $k \in \mathbb{N}$,
the property is satisfied for all $k$-supertiles.

Applying $\sigma$ again to some upward $k$-supertile $s$:
\begin{itemize}
\item let $t$ be the tile at the bottom left corner of $s$,
\item $\beta(t)$ is the tile at the bottom left corner of $\sigma(s)$,
\item $\forall x,y,z \in \mathbb{F}_p$,
      $\beta(\bigtriangleup xyz) = \bigtriangleup(x,x+y,x+z)$,
      so $\beta$ preserves values at bottom left corners,
\item hence the bottom left corners of $s$ and $\sigma(s)$ hold the same value,
\item the reasoning for the bottom right and top corners is similar.
\end{itemize}

The reasoning for downward $k$-supertiles is identical.
\end{proof}

\begin{definition}
\label{def:row}
Following lemma~\ref{lemma:extreme},
we will associate to any supertile $s$
the row matrix $[s]$ with the values at its three extreme corners
as follows:
$\forall k \in \mathbb{N}$,
$\forall t \in T$:
\begin{equation}
    [\sigma^k(t)] = [t]
\end{equation}
\end{definition}

\begin{lemma}
For any supertile, the corners that meet at the same point hold the same value.
\end{lemma}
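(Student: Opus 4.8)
The plan is to prove the statement by induction on $k$, a supertile being $\sigma^k(t)$ for some $t \in T$. The base case $k = 0$ is vacuous, since the three corners of a single tile lie at three distinct points. The case $k = 1$ follows by direct inspection of figures~\ref{fig:subst-up} and~\ref{fig:subst-down}: in $\sigma(\bigtriangleup xyz)$ the midpoint of the bottom edge carries the value $x+y$ simultaneously as the bottom-right corner of $\beta(\bigtriangleup xyz)$, as the bottom-left corner of $\gamma(\bigtriangleup xyz)$ and as the bottom corner of $\alpha(\bigtriangleup xyz)$, and similarly for the midpoints of the two other edges; the downward case is symmetric.

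Before the inductive step I would isolate the following observation, itself proved by a short induction on $k$: the sequence of values read along any edge of a supertile $\sigma^k(u)$, listed from one endpoint to the other, depends only on $k$ and on the ordered pair of values at the two ends of the corresponding edge of $u$ — not on $u$, not on which of the three edges is considered, and not on the orientation of $u$. Indeed, $\sigma$ subdivides each edge of a tile at its midpoint and decorates that midpoint with the sum of the two endpoint values (read off from the $\alpha,\beta,\gamma,\delta$ formulas); decomposing $\sigma^{k+1}(u)$ as the union of the $k$-supertiles $\sigma^k(v)$ with $v \in \sigma(u)$, and restricting to a fixed edge $e$ of $u$ with endpoint values $a$ and $b$, the edge $e$ is covered by two edges of subtiles of $\sigma(u)$, with endpoint values $(a, a+b)$ and $(a+b, b)$; applying the induction hypothesis to those two $k$-supertiles exhibits the decoration along $e$ as a concatenation depending only on $a$, $b$ and $k$. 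As a by-product, the new corners along an edge of $\sigma^k(u)$ occur at the dyadic positions $i/2^k$, $i = 1,\dots,2^k-1$, uniformly.

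For the inductive step, write $\sigma^{k+1}(t)$ as the union of the $k$-supertiles $\sigma^k(v)$, $v \in \sigma(t)$, and classify a point $P$ at which several corners of $\sigma^{k+1}(t)$ meet. If $P$ lies in the interior of a single piece $\sigma^k(v)$, the induction hypothesis applies directly. If $P$ lies in the relative interior of the common edge of two adjacent pieces $\sigma^k(v)$ and $\sigma^k(v')$ — necessarily one of the three edges of $\sigma^k(\alpha(t))$, so exactly two pieces are involved — then the corresponding edges of $v$ and $v'$ carry the same ordered pair of endpoint values by the $k=1$ inspection, hence by the observation above the two pieces carry the same decoration, with subdivision points in the same positions, along that edge, so the corners at $P$ agree. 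Finally, if $P$ is one of the three midpoints of the edges of $t$, then exactly three pieces meet at $P$, namely $\sigma^k(\alpha(t))$ together with two of $\sigma^k(\beta(t)), \sigma^k(\gamma(t)), \sigma^k(\delta(t))$, and $P$ is an extreme corner of each of them; by lemma~\ref{lemma:extreme} the value carried at $P$ by each piece equals the value of the relevant corner of $\alpha(t)$, $\beta(t)$, $\gamma(t)$ or $\delta(t)$, and these coincide by the $k=1$ inspection. The only remaining points are the extreme corners of $\sigma^{k+1}(t)$, where a single corner sits; this exhausts all cases.

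I expect the main obstacle to be organizational rather than computational: one must enumerate correctly the points at which corners can meet — in particular noticing that three (not two) $k$-supertiles abut at each midpoint of an edge of $t$ — and make sure lemma~\ref{lemma:extreme} is invoked precisely at those triple points, the edge observation handling the two-sided ones and the induction hypothesis the interior ones.
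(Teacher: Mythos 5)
Your argument is correct, but it runs the induction along a different decomposition than the paper does. The paper writes $\sigma^{k+1}(t)=\sigma(\sigma^k(t))$, i.e.\ it applies $\sigma$ one more time at the finest scale to the $k$-supertile $s=\sigma^k(t)$: pre-existing meeting points keep consistent values because each tile's substitution preserves the values sitting at its corners, and the only new meeting points are midpoints of edges of tiles of $s$, where either a single tile is involved (all new corners get the sum of the edge's endpoints, by inspection of figures~\ref{fig:subst-up} and~\ref{fig:subst-down}) or two tiles share the edge, in which case the inductive hypothesis forces them to agree on the endpoint values $v,w$ and hence both produce $v+w$ (figure~\ref{fig:subst-meet-same}). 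You instead write $\sigma^{k+1}(t)=\bigcup_{v\in\sigma(t)}\sigma^k(v)$, the coarsest-scale decomposition into four child supertiles, which obliges you to prove an auxiliary fact — that the decoration along an edge of $\sigma^k(u)$ is determined by $k$ and the ordered pair of endpoint values — and to invoke lemma~\ref{lemma:extreme} at the three triple points. The paper's route is leaner and needs no auxiliary lemma; yours costs more bookkeeping but the edge-decoration observation you prove is a genuinely useful by-product (it is essentially the compatibility between $\sigma$ and the one-dimensional substitution $\tau$ that the paper only records later, in the remark accompanying figure~\ref{fig:sigma-tau}).

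One small slip in your case enumeration: a subdivision vertex lying on an outer edge of $\sigma^{k+1}(t)$, strictly between an extreme corner and a midpoint of an edge of $t$, is in none of your four cases as literally stated — it is not interior to a piece, not on a common edge of two pieces, not a midpoint of an edge of $t$, and not an extreme corner. It is of course handled by the inductive hypothesis applied to the unique piece containing it, so the fix is just to rephrase your first case as ``$P$ belongs to exactly one piece'' rather than ``$P$ lies in the interior of a single piece.''
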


\begin{proof}
The property is trivially satisfied for all $0$-supertiles (\textit{i.e.} for all tiles)
as only one corner can meet at a given point.

Suppose that, for some $k \in \mathbb{N}$,
the property is satisfied for all $k$-supertiles.

Applying $\sigma$ again to some $k$-supertile $s$,
\begin{itemize}
\item the corners added and meeting in the middle of an edge
      in contact with a single tile
      will hold the same value
      (see figures~\ref{fig:subst-up} and \ref{fig:subst-down}),
\item the corners added and meeting in the middle of a common edge
      to two tiles of $s$ only depend on the values
      at the ends of this edge, say $v$ and $w$,
      and they will hold the same value $v+w$
      (see figure~\ref{fig:subst-meet-same}).
\end{itemize}
\end{proof}

\begin{figure}[!ht]
    \centering
    \resizebox{8cm}{!}
    {
    \begin{tikzpicture}
    	\begin{scope}[yscale=.87,xslant=.5]
        \draw (0,5) -- (0,0) -- (5,0) -- (5,5) -- (0,5) -- (5,0);
        \node at (3,1) {\Huge $v$};
        \node at (1,3) {\Huge $w$};
        \node at (4,2) {\Huge $v$};
        \node at (2,4) {\Huge $w$};

        \node at (7,2.5) {\Huge $\rightarrow$};

        \draw (19,0) -- (19,10) -- (9,10) -- (9,0) -- (19,0) -- (9,10);
        \draw (9,5) -- (14,0) -- (14,10) -- (19,5) -- cycle;
        \node at (17,1) {\Huge $v$};
        \node at (18,2) {\Huge $v$};
        \node at (10,8) {\Huge $w$};
        \node at (11,9) {\Huge $w$};
        \node at (15,3) {\huge $v+w$};
        \node at (13,4) {\huge $v+w$};
        \node at (16,4) {\huge $v+w$};

        \node at (13,7) {\huge $v+w$};
        \node at (12,6) {\huge $v+w$};
        \node at (15,6) {\huge $v+w$};
    	\end{scope}
    \end{tikzpicture}
    }
    \caption{The corners that meet at the same point hold the same value.}
    \label{fig:subst-meet-same}
\end{figure}
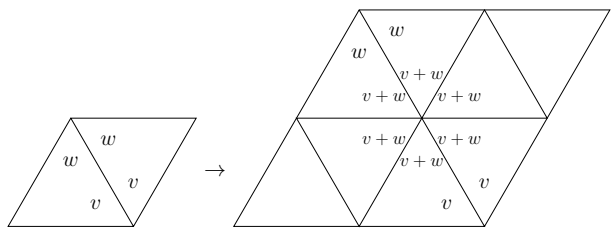

We will therefore, from now on, simplify the figures by indicating
just the values where the corners meet
(see figure~\ref{fig:subst-up-simp}).

\begin{figure}[!ht]
    \centering
    \resizebox{8cm}{!}
    {
    \begin{tikzpicture}
    	\begin{scope}[yscale=.87,xslant=.5]
        \node at (1+1/3,1/3) {$\rightarrow$};
        \draw[dotted,gray] (0,0) -- (1,0);
        \draw[dotted,gray] (0,0) -- (0,1);
        \draw[dotted,gray] (1,0) -- (0,1);
        \draw[dotted,gray] (2,0) -- (4,0);
        \draw[dotted,gray] (2,0) -- (2,2);
        \draw[dotted,gray] (4,0) -- (2,2);
        \draw[dotted,gray] (2,1) -- (3,1);
        \draw[dotted,gray] (3,0) -- (3,1);
        \draw[dotted,gray] (3,0) -- (2,1);
        \node at (0,0) {$x$};
        \node at (0,1) {$z$};
        \node at (1,0) {$y$};
        \node at (2,0) {$x$};
        \node[scale=.8] at (2,1) {$x+z$};
        \node at (2,2) {$z$};
        \node[scale=.8] at (3,0) {$x+y$};
        \node[scale=.8] at (3,1) {$y+z$};
        \node at (4,0) {$y$};
    	\end{scope}
    \end{tikzpicture}
    }
    \caption{The \quotes{simplified} substitution rule for the upward tile $\bigtriangleup xyz$}
    \label{fig:subst-up-simp}
\end{figure}

\subsection{Examples}
\label{sec:sigma-cons-examples}

Figure~\ref{fig:samplenum} depicts $\sigma^k(\bigtriangleup 122)$ for $k = 0\twodots2$ (for $p = 3$); values are given numerically.

\begin{figure}[H]
    \centering
    \resizebox{10cm}{!}
    {
    \begin{tikzpicture}
    	\begin{scope}[yscale=.87,xslant=.5]
                \draw[dotted,gray] (0,0) -- (1,0);
        \draw[dotted,gray] (0,0) -- (0,1);
        \draw[dotted,gray] (1,0) -- (0,1);
        \node[scale=1.5] at (0,0) {1};
        \node[scale=1.5] at (1,0) {2};
        \node[scale=1.5] at (0,1) {2};
        \draw[dotted,gray] (2,0) -- (4,0);
        \draw[dotted,gray] (2,0) -- (2,2);
        \draw[dotted,gray] (4,0) -- (2,2);
        \draw[dotted,gray] (2,1) -- (3,1);
        \draw[dotted,gray] (3,0) -- (3,1);
        \draw[dotted,gray] (3,0) -- (2,1);
        \node[scale=1.5] at (2,0) {1};
        \node[scale=1.5] at (3,0) {0};
        \node[scale=1.5] at (2,1) {0};
        \node[scale=1.5] at (3,1) {1};
        \node[scale=1.5] at (2,2) {2};
        \node[scale=1.5] at (4,0) {2};
        \draw[dotted,gray] (5,0) -- (9,0);
        \draw[dotted,gray] (5,0) -- (5,4);
        \draw[dotted,gray] (9,0) -- (5,4);
        \draw[dotted,gray] (5,1) -- (8,1);
        \draw[dotted,gray] (6,0) -- (6,3);
        \draw[dotted,gray] (8,0) -- (5,3);
        \draw[dotted,gray] (5,2) -- (7,2);
        \draw[dotted,gray] (7,0) -- (7,2);
        \draw[dotted,gray] (7,0) -- (5,2);
        \draw[dotted,gray] (5,3) -- (6,3);
        \draw[dotted,gray] (8,0) -- (8,1);
        \draw[dotted,gray] (6,0) -- (5,1);
        \node[scale=1.5] at (5,0) {1};
        \node[scale=1.5] at (6,0) {1};
        \node[scale=1.5] at (5,1) {1};
        \node[scale=1.5] at (6,1) {0};
        \node[scale=1.5] at (5,2) {0};
        \node[scale=1.5] at (7,0) {0};
        \node[scale=1.5] at (6,2) {1};
        \node[scale=1.5] at (5,3) {2};
        \node[scale=1.5] at (6,3) {0};
        \node[scale=1.5] at (5,4) {2};
        \node[scale=1.5] at (7,2) {1};
        \node[scale=1.5] at (8,0) {2};
        \node[scale=1.5] at (7,1) {1};
        \node[scale=1.5] at (8,1) {0};
        \node[scale=1.5] at (9,0) {2};
    	\end{scope}
    \end{tikzpicture}
    }
    \caption{$\sigma^k(\bigtriangleup 122)$ for $k = 0\twodots2$ (for $p = 3$)}
    \label{fig:samplenum}
\end{figure}

Replacing numerical values by colors
gives greater insight into the variety of patterns;
the next examples will use the following color schemes:
\begin{itemize}
\item for $p = 3$: white = $0$, blue = $1$ and red = $2$;
      see figures \ref{fig:sample-col-num}, \ref{fig:sample} and \ref{fig:sample3},
\item for $p = 5$: white = $0$, cyan = $1$ magenta = $2$, yellow = $3$ and black = $4$;
      see figures \ref{fig:sample2} and \ref{fig:sample4}.
\end{itemize}

\begin{figure}[H]
    \centering
    \resizebox{10cm}{!}
    {
    \begin{tikzpicture}
    	\begin{scope}[yscale=.87,xslant=.5]
        \fill[blue!50] (0,0) -- (0,1/2) -- (1/3,1/3) -- (1/2,0) -- cycle;
\fill[red!50] (1,0) -- (1/2,0) -- (1/3,1/3) -- (1/2,1/2) -- cycle;
\fill[red!50] (0,1) -- (0,1/2) -- (1/3,1/3) -- (1/2,1/2) -- cycle;
\fill[blue!50] (2,0) -- (2,1/2) -- (7/3,1/3) -- (5/2,0) -- cycle;
\fill[red!50] (4,0) -- (7/2,0) -- (10/3,1/3) -- (7/2,1/2) -- cycle;
\fill[blue!50] (3,1) -- (3,1/2) -- (10/3,1/3) -- (7/2,1/2) -- cycle;
\fill[blue!50] (3,1) -- (5/2,1) -- (7/3,4/3) -- (5/2,3/2) -- cycle;
\fill[red!50] (2,2) -- (2,3/2) -- (7/3,4/3) -- (5/2,3/2) -- cycle;
\fill[blue!50] (3,1) -- (3,1/2) -- (8/3,2/3) -- (5/2,1) -- cycle;
\fill[blue!50] (5,0) -- (5,1/2) -- (16/3,1/3) -- (11/2,0) -- cycle;
\fill[blue!50] (6,0) -- (11/2,0) -- (16/3,1/3) -- (11/2,1/2) -- cycle;
\fill[blue!50] (5,1) -- (5,1/2) -- (16/3,1/3) -- (11/2,1/2) -- cycle;
\fill[blue!50] (6,0) -- (6,1/2) -- (19/3,1/3) -- (13/2,0) -- cycle;
\fill[blue!50] (5,1) -- (5,3/2) -- (16/3,4/3) -- (11/2,1) -- cycle;
\fill[blue!50] (6,0) -- (6,1/2) -- (17/3,2/3) -- (11/2,1/2) -- cycle;
\fill[blue!50] (5,1) -- (11/2,1/2) -- (17/3,2/3) -- (11/2,1) -- cycle;
\fill[red!50] (8,0) -- (15/2,0) -- (22/3,1/3) -- (15/2,1/2) -- cycle;
\fill[blue!50] (7,1) -- (7,1/2) -- (22/3,1/3) -- (15/2,1/2) -- cycle;
\fill[red!50] (8,0) -- (8,1/2) -- (25/3,1/3) -- (17/2,0) -- cycle;
\fill[red!50] (9,0) -- (17/2,0) -- (25/3,1/3) -- (17/2,1/2) -- cycle;
\fill[blue!50] (7,1) -- (7,3/2) -- (22/3,4/3) -- (15/2,1) -- cycle;
\fill[blue!50] (7,2) -- (7,3/2) -- (22/3,4/3) -- (15/2,3/2) -- cycle;
\fill[red!50] (8,0) -- (8,1/2) -- (23/3,2/3) -- (15/2,1/2) -- cycle;
\fill[blue!50] (7,1) -- (15/2,1/2) -- (23/3,2/3) -- (15/2,1) -- cycle;
\fill[blue!50] (6,2) -- (11/2,2) -- (16/3,7/3) -- (11/2,5/2) -- cycle;
\fill[red!50] (5,3) -- (5,5/2) -- (16/3,7/3) -- (11/2,5/2) -- cycle;
\fill[blue!50] (6,2) -- (6,5/2) -- (19/3,7/3) -- (13/2,2) -- cycle;
\fill[blue!50] (7,2) -- (13/2,2) -- (19/3,7/3) -- (13/2,5/2) -- cycle;
\fill[red!50] (5,3) -- (5,7/2) -- (16/3,10/3) -- (11/2,3) -- cycle;
\fill[red!50] (5,4) -- (5,7/2) -- (16/3,10/3) -- (11/2,7/2) -- cycle;
\fill[blue!50] (6,2) -- (6,5/2) -- (17/3,8/3) -- (11/2,5/2) -- cycle;
\fill[red!50] (5,3) -- (11/2,5/2) -- (17/3,8/3) -- (11/2,3) -- cycle;
\fill[blue!50] (7,1) -- (7,1/2) -- (20/3,2/3) -- (13/2,1) -- cycle;
\fill[blue!50] (6,2) -- (6,3/2) -- (17/3,5/3) -- (11/2,2) -- cycle;
\fill[blue!50] (7,1) -- (7,3/2) -- (20/3,5/3) -- (13/2,3/2) -- cycle;
\fill[blue!50] (6,2) -- (13/2,3/2) -- (20/3,5/3) -- (13/2,2) -- cycle;
\fill[blue!50] (7,2) -- (7,3/2) -- (20/3,5/3) -- (13/2,2) -- cycle;
\fill[blue!50] (7,1) -- (13/2,1) -- (19/3,4/3) -- (13/2,3/2) -- cycle;
\fill[blue!50] (6,2) -- (6,3/2) -- (19/3,4/3) -- (13/2,3/2) -- cycle;
\node[scale=2] at (0,0) {$1$};
\node[scale=2] at (1,0) {$2$};
\node[scale=2] at (0,1) {$2$};
\draw[dotted] (0,0) -- (1,0) -- (0,1) -- cycle;
\node[scale=2] at (2,0) {$1$};
\node[scale=2] at (3,0) {$0$};
\node[scale=2] at (2,1) {$0$};
\draw[dotted] (2,0) -- (3,0) -- (2,1) -- cycle;
\node[scale=2] at (3,0) {$0$};
\node[scale=2] at (4,0) {$2$};
\node[scale=2] at (3,1) {$1$};
\draw[dotted] (3,0) -- (4,0) -- (3,1) -- cycle;
\node[scale=2] at (2,1) {$0$};
\node[scale=2] at (3,1) {$1$};
\node[scale=2] at (2,2) {$2$};
\draw[dotted] (2,1) -- (3,1) -- (2,2) -- cycle;
\node[scale=2] at (3,0) {$0$};
\node[scale=2] at (2,1) {$0$};
\node[scale=2] at (3,1) {$1$};
\draw[dotted] (3,0) -- (2,1) -- (3,1) -- cycle;
\node[scale=2] at (5,0) {$1$};
\node[scale=2] at (6,0) {$1$};
\node[scale=2] at (5,1) {$1$};
\draw[dotted] (5,0) -- (6,0) -- (5,1) -- cycle;
\node[scale=2] at (6,0) {$1$};
\node[scale=2] at (7,0) {$0$};
\node[scale=2] at (6,1) {$0$};
\draw[dotted] (6,0) -- (7,0) -- (6,1) -- cycle;
\node[scale=2] at (5,1) {$1$};
\node[scale=2] at (6,1) {$0$};
\node[scale=2] at (5,2) {$0$};
\draw[dotted] (5,1) -- (6,1) -- (5,2) -- cycle;
\node[scale=2] at (6,0) {$1$};
\node[scale=2] at (5,1) {$1$};
\node[scale=2] at (6,1) {$0$};
\draw[dotted] (6,0) -- (5,1) -- (6,1) -- cycle;
\node[scale=2] at (7,0) {$0$};
\node[scale=2] at (8,0) {$2$};
\node[scale=2] at (7,1) {$1$};
\draw[dotted] (7,0) -- (8,0) -- (7,1) -- cycle;
\node[scale=2] at (8,0) {$2$};
\node[scale=2] at (9,0) {$2$};
\node[scale=2] at (8,1) {$0$};
\draw[dotted] (8,0) -- (9,0) -- (8,1) -- cycle;
\node[scale=2] at (7,1) {$1$};
\node[scale=2] at (8,1) {$0$};
\node[scale=2] at (7,2) {$1$};
\draw[dotted] (7,1) -- (8,1) -- (7,2) -- cycle;
\node[scale=2] at (8,0) {$2$};
\node[scale=2] at (7,1) {$1$};
\node[scale=2] at (8,1) {$0$};
\draw[dotted] (8,0) -- (7,1) -- (8,1) -- cycle;
\node[scale=2] at (5,2) {$0$};
\node[scale=2] at (6,2) {$1$};
\node[scale=2] at (5,3) {$2$};
\draw[dotted] (5,2) -- (6,2) -- (5,3) -- cycle;
\node[scale=2] at (6,2) {$1$};
\node[scale=2] at (7,2) {$1$};
\node[scale=2] at (6,3) {$0$};
\draw[dotted] (6,2) -- (7,2) -- (6,3) -- cycle;
\node[scale=2] at (5,3) {$2$};
\node[scale=2] at (6,3) {$0$};
\node[scale=2] at (5,4) {$2$};
\draw[dotted] (5,3) -- (6,3) -- (5,4) -- cycle;
\node[scale=2] at (6,2) {$1$};
\node[scale=2] at (5,3) {$2$};
\node[scale=2] at (6,3) {$0$};
\draw[dotted] (6,2) -- (5,3) -- (6,3) -- cycle;
\node[scale=2] at (7,0) {$0$};
\node[scale=2] at (6,1) {$0$};
\node[scale=2] at (7,1) {$1$};
\draw[dotted] (7,0) -- (6,1) -- (7,1) -- cycle;
\node[scale=2] at (6,1) {$0$};
\node[scale=2] at (5,2) {$0$};
\node[scale=2] at (6,2) {$1$};
\draw[dotted] (6,1) -- (5,2) -- (6,2) -- cycle;
\node[scale=2] at (7,1) {$1$};
\node[scale=2] at (6,2) {$1$};
\node[scale=2] at (7,2) {$1$};
\draw[dotted] (7,1) -- (6,2) -- (7,2) -- cycle;
\node[scale=2] at (6,1) {$0$};
\node[scale=2] at (7,1) {$1$};
\node[scale=2] at (6,2) {$1$};
\draw[dotted] (6,1) -- (7,1) -- (6,2) -- cycle;
    	\end{scope}
    \end{tikzpicture}
    }
    \caption{Colored representation of $\sigma^k(\bigtriangleup 122)$ for $k = 0 \twodots 2$ (for $p = 3$)}
    \label{fig:sample-col-num}
\end{figure}

\begin{figure}[H]
    \centering
    \resizebox{10cm}{!}
    {
    \begin{tikzpicture}
    	\begin{scope}[yscale=.87,xslant=.5]
        \input{figure-sample.tikz}
    	\end{scope}
    \end{tikzpicture}
    }
    \caption{Colored representation of $\sigma^k(\bigtriangleup 122)$ for $k = 0 \twodots 3$ (for $p = 3$)}
    \label{fig:sample}
\end{figure}

\begin{figure}[H]
    \begin{center}
    \vstretch{1.732}{
        \includegraphics[height=7cm, keepaspectratio]{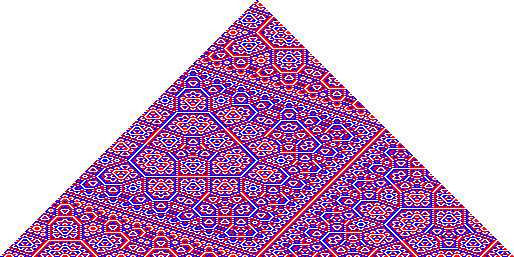}
    }
    \caption{Colored representation of $\sigma^8(\bigtriangleup 121)$ (for $p = 3$)}
    \label{fig:sample3}
    \end{center}
\end{figure}

\begin{figure}[H]
    \centering
    \resizebox{10cm}{!}
    {
    \begin{tikzpicture}
    	\begin{scope}[yscale=.87,xslant=.5]
        \input{figure-sample2.tikz}
    	\end{scope}
    \end{tikzpicture}
    }
    \caption{Colored representation of $\sigma^k(\bigtriangleup 123)$ for $k = 0\twodots3$ (for $p = 5$)}
    \label{fig:sample2}
\end{figure}

\begin{figure}[H]
    \begin{center}
    \vstretch{1.732}{
        \includegraphics[height=7cm, keepaspectratio]{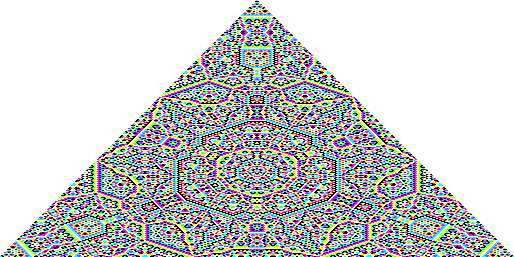}
    }
    \caption{Colored representation of $\sigma^8(\bigtriangleup 111)$ (for $p = 5$)}
    \label{fig:sample4}
    \end{center}
\end{figure}

\subsection{Additivity}
\label{sec:sigma-prop-additivity}

\begin{lemma}
$\forall k \in \mathbb{N}$,
$\forall m, x, y, z, x', y', z' \in \mathbb{F}_p$,
we have the following identities:

\begin{equation}
    \sigma^k(\bigtriangleup xyz) + \sigma^k(\bigtriangleup x'y'z') = \sigma^k(\bigtriangleup(x+x',y+y',z+z'))
\end{equation}
\begin{equation}
    m \cdot \sigma^k(\bigtriangleup xyz) = \sigma^k(\bigtriangleup(mx,my,mz))
\end{equation}

and also:

\begin{equation}
    \sigma^k(\bigtriangledown xyz) + \sigma^k(\bigtriangledown x'y'z') = \sigma^k(\bigtriangledown(x+x',y+y',z+z'))
\end{equation}
\begin{equation}
    m \cdot \sigma^k(\bigtriangledown xyz) = \sigma^k(\bigtriangledown(mx,my,mz))
\end{equation}

where additions and scalar multiplications
on supertiles are performed componentwise on each corner
(see figure~\ref{fig:additivity} and \ref{fig:scalar}).
\end{lemma}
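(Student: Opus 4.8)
The plan is to reduce everything to the single fact, already contained in Lemma~\ref{lemma:sigma-matrix}, that one application of $\sigma$ acts $\mathbb{F}_p$-linearly on corner decorations, and then to propagate this linearity through the iteration by induction on $k$.

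First I would record a purely combinatorial sub-fact: for every $k$, the underlying arrangement of $\sigma^k(t)$ --- the positions of its constituent tiles together with their orientations --- depends only on the orientation of $t$, not on the values decorating it. This is an immediate induction on $k$ from the substitution rule (figures~\ref{fig:subst-up} and~\ref{fig:subst-down}), which chooses where the four children sit and which way they point without ever consulting $x,y,z$. Consequently, given two supertiles $s,s'$ arising from tiles of the same orientation, their tiles are in a canonical bijection and corresponding tiles have the same orientation; this is exactly what makes the componentwise sum $s+s'$ and the scalar multiple $m\cdot s$ well-defined supertile decorations (the value shared by corners that meet at a point remaining consistent by the earlier lemma on meeting corners, since addition and scalar multiplication are applied corner by corner).

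For the base case $k=1$, fix upward tiles $\bigtriangleup xyz$ and $\bigtriangleup x'y'z'$. By Lemma~\ref{lemma:sigma-matrix}, $[\alpha(\bigtriangleup xyz)]+[\alpha(\bigtriangleup x'y'z')] = [x\ y\ z]\cdot A + [x'\ y'\ z']\cdot A = \bigl([x\ y\ z]+[x'\ y'\ z']\bigr)\cdot A = [\alpha(\bigtriangleup(x+x',y+y',z+z'))]$, and similarly with $B,C,D$ in place of $A$ for $\beta,\gamma,\delta$; since by the sub-fact the three supertiles $\sigma(\bigtriangleup xyz)$, $\sigma(\bigtriangleup x'y'z')$ and $\sigma(\bigtriangleup(x+x',y+y',z+z'))$ share the same arrangement, matching the four children tile by tile yields the $k=1$ additive identity. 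Replacing $[x\ y\ z]$ by $m\,[x\ y\ z]$ in the same computation gives the $k=1$ scalar identity, and the downward case is verbatim the same with $\bigtriangledown$ and the roles of the children permuted accordingly.

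Finally the induction: assuming the statement at level $k$, I would write $\sigma^{k+1}(\bigtriangleup(x+x',y+y',z+z')) = \sigma\bigl(\sigma^{k}(\bigtriangleup(x+x',y+y',z+z'))\bigr) = \sigma\bigl(\sigma^{k}(\bigtriangleup xyz) + \sigma^{k}(\bigtriangleup x'y'z')\bigr)$ by the inductive hypothesis, then apply $\sigma$ tile by tile and invoke the $k=1$ identity on each of the finitely many matched pairs of tiles --- legitimate precisely because corresponding tiles share an orientation --- to obtain $\sigma\bigl(\sigma^{k}(\bigtriangleup xyz)\bigr) + \sigma\bigl(\sigma^{k}(\bigtriangleup x'y'z')\bigr) = \sigma^{k+1}(\bigtriangleup xyz) + \sigma^{k+1}(\bigtriangleup x'y'z')$. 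The scalar identity and the two downward identities follow by the identical argument. I expect the only real obstacle to be notational rather than mathematical: making precise the canonical bijection between the tiles of two same-orientation supertiles, and checking carefully that applying $\sigma$ to a componentwise sum of supertiles is the supertile obtained by substituting each matched pair; once that bookkeeping is in place, everything algebraic is just the distributivity of matrix multiplication over addition in $\mathbb{F}_p^{3\times 3}$.
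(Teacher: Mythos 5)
Your proof is correct and follows essentially the same route as the paper: induction on $k$, with the inductive step reduced to the additivity of the one-step substitution, i.e.\ of $\alpha$, $\beta$, $\gamma$ and $\delta$ applied tile by tile. The only cosmetic difference is that you obtain that additivity from the matrix identities of Lemma~\ref{lemma:sigma-matrix} (distributivity of matrix multiplication over addition) where the paper verifies it by direct computation on coordinates; your explicit remark that the underlying arrangement of $\sigma^k(t)$ depends only on the orientation of $t$ is useful bookkeeping that the paper leaves implicit.
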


\begin{proof}
The property is satisfied for all $0$-supertiles (\textit{i.e.} for all tiles);
$\forall m, x, y, z, x', y', z' \in \mathbb{F}_p$:
\begin{equation}
    \bigtriangleup xyz + \bigtriangleup x'y'z' = \bigtriangleup(x+x', y+y', z+z')
\end{equation}
\begin{equation}
    m \cdot \bigtriangleup xyz = \bigtriangleup(mx, my, mz)
\end{equation}

Suppose that, for some $k \in \mathbb{N}$,
the property is satisfied for all $k$-supertiles.

For any $k$-supertile $s$, $\sigma(s)$ is obtained
by applying $\sigma$ to each tile in $s$,
so we just have to prove that $\sigma$ is additive,
which is equivalent to prove that $\alpha$, $\beta$, $\gamma$ and $\delta$ are additive.

$\forall m, x, y, z, x', y', z' \in \mathbb{F}_p$:
\begin{align}
    \alpha(\bigtriangleup xyz + \bigtriangleup x'y'z')
    &= \alpha(\bigtriangleup(x+x', y+y', z+z'))                                    \\
    &= \bigtriangledown(y+y'+z+z', x+x'+z+z', x+x'+y+y')                           \\
    &= \bigtriangledown(y+z, x+z, x+y) + \bigtriangledown(y'+z', x'+z', x'+y')     \\
    &= \alpha(\bigtriangleup xyz) + \alpha(\bigtriangleup x'y'z')
\end{align}
\begin{align}
    \alpha(m \cdot \bigtriangleup xyz) &= \alpha(\bigtriangleup(mx, my, mz))       \\
                                       &= \bigtriangledown(my+mz, mx+mz, mx+my)    \\
                                       &= m \cdot \bigtriangledown(y+z, x+z, x+y)  \\
                                       &= m \cdot \alpha(\bigtriangleup xyz)
\end{align}
The reasoning for the other cases is identical.
\end{proof}

\begin{figure}[H]
    \centering
    \resizebox{8cm}{!}
    {
    \begin{tikzpicture}
    	\begin{scope}[yscale=.87,xslant=.5]
    	    \node[scale=3] at (4,2) {+};
    	    \node[scale=3] at (10,2) {=};
                \draw[dotted,gray] (0,0) -- (4,0);
        \draw[dotted,gray] (0,0) -- (0,4);
        \draw[dotted,gray] (4,0) -- (0,4);
        \draw[dotted,gray] (0,1) -- (3,1);
        \draw[dotted,gray] (1,0) -- (1,3);
        \draw[dotted,gray] (3,0) -- (0,3);
        \draw[dotted,gray] (0,2) -- (2,2);
        \draw[dotted,gray] (2,0) -- (2,2);
        \draw[dotted,gray] (2,0) -- (0,2);
        \draw[dotted,gray] (0,3) -- (1,3);
        \draw[dotted,gray] (3,0) -- (3,1);
        \draw[dotted,gray] (1,0) -- (0,1);
        \draw[dotted,gray] (6,0) -- (10,0);
        \draw[dotted,gray] (6,0) -- (6,4);
        \draw[dotted,gray] (10,0) -- (6,4);
        \draw[dotted,gray] (6,1) -- (9,1);
        \draw[dotted,gray] (7,0) -- (7,3);
        \draw[dotted,gray] (9,0) -- (6,3);
        \draw[dotted,gray] (6,2) -- (8,2);
        \draw[dotted,gray] (8,0) -- (8,2);
        \draw[dotted,gray] (8,0) -- (6,2);
        \draw[dotted,gray] (6,3) -- (7,3);
        \draw[dotted,gray] (9,0) -- (9,1);
        \draw[dotted,gray] (7,0) -- (6,1);
        \draw[dotted,gray] (12,0) -- (16,0);
        \draw[dotted,gray] (12,0) -- (12,4);
        \draw[dotted,gray] (16,0) -- (12,4);
        \draw[dotted,gray] (12,1) -- (15,1);
        \draw[dotted,gray] (13,0) -- (13,3);
        \draw[dotted,gray] (15,0) -- (12,3);
        \draw[dotted,gray] (12,2) -- (14,2);
        \draw[dotted,gray] (14,0) -- (14,2);
        \draw[dotted,gray] (14,0) -- (12,2);
        \draw[dotted,gray] (12,3) -- (13,3);
        \draw[dotted,gray] (15,0) -- (15,1);
        \draw[dotted,gray] (13,0) -- (12,1);
        \node[scale=2] at (0,0) {$1$};
        \node[scale=2] at (1,0) {$0$};
        \node[scale=2] at (0,1) {$2$};
        \node[scale=2] at (1,1) {$0$};
        \node[scale=2] at (0,2) {$1$};
        \node[scale=2] at (2,0) {$2$};
        \node[scale=2] at (1,2) {$2$};
        \node[scale=2] at (0,3) {$1$};
        \node[scale=2] at (1,3) {$1$};
        \node[scale=2] at (0,4) {$0$};
        \node[scale=2] at (2,2) {$1$};
        \node[scale=2] at (3,0) {$0$};
        \node[scale=2] at (2,1) {$0$};
        \node[scale=2] at (3,1) {$2$};
        \node[scale=2] at (4,0) {$1$};
        \node[scale=2] at (6,0) {$0$};
        \node[scale=2] at (7,0) {$2$};
        \node[scale=2] at (6,1) {$1$};
        \node[scale=2] at (7,1) {$0$};
        \node[scale=2] at (6,2) {$1$};
        \node[scale=2] at (8,0) {$2$};
        \node[scale=2] at (7,2) {$1$};
        \node[scale=2] at (6,3) {$2$};
        \node[scale=2] at (7,3) {$1$};
        \node[scale=2] at (6,4) {$1$};
        \node[scale=2] at (8,2) {$0$};
        \node[scale=2] at (9,0) {$1$};
        \node[scale=2] at (8,1) {$2$};
        \node[scale=2] at (9,1) {$2$};
        \node[scale=2] at (10,0) {$2$};
        \node[scale=2] at (12,0) {$1$};
        \node[scale=2] at (13,0) {$2$};
        \node[scale=2] at (12,1) {$0$};
        \node[scale=2] at (13,1) {$0$};
        \node[scale=2] at (12,2) {$2$};
        \node[scale=2] at (14,0) {$1$};
        \node[scale=2] at (13,2) {$0$};
        \node[scale=2] at (12,3) {$0$};
        \node[scale=2] at (13,3) {$2$};
        \node[scale=2] at (12,4) {$1$};
        \node[scale=2] at (14,2) {$1$};
        \node[scale=2] at (15,0) {$1$};
        \node[scale=2] at (14,1) {$2$};
        \node[scale=2] at (15,1) {$1$};
        \node[scale=2] at (16,0) {$0$};
    	\end{scope}
    \end{tikzpicture}
    }
    \caption{$\sigma^2(\bigtriangleup 110) + \sigma^2(\bigtriangleup 021) = \sigma^2(\bigtriangleup 101)$ (for $p = 3$)}
    \label{fig:additivity}
\end{figure}

\begin{figure}[H]
    \centering
    \resizebox{8cm}{!}
    {
    \begin{tikzpicture}
    	\begin{scope}[yscale=.87,xslant=.5]
    	    \node[scale=3] at (-2,2) {$2 \cdot$};
    	    \node[scale=3] at (4,2) {$=$};
        \node[scale=2] at (0,0) {$1$};
\node[scale=2] at (1,0) {$4$};
\node[scale=2] at (0,1) {$0$};
\draw[dotted] (0,0) -- (1,0) -- (0,1) -- cycle;
\node[scale=2] at (1,0) {$4$};
\node[scale=2] at (2,0) {$3$};
\node[scale=2] at (1,1) {$2$};
\draw[dotted] (1,0) -- (2,0) -- (1,1) -- cycle;
\node[scale=2] at (0,1) {$0$};
\node[scale=2] at (1,1) {$2$};
\node[scale=2] at (0,2) {$4$};
\draw[dotted] (0,1) -- (1,1) -- (0,2) -- cycle;
\node[scale=2] at (1,0) {$4$};
\node[scale=2] at (0,1) {$0$};
\node[scale=2] at (1,1) {$2$};
\draw[dotted] (1,0) -- (0,1) -- (1,1) -- cycle;
\node[scale=2] at (2,0) {$3$};
\node[scale=2] at (3,0) {$0$};
\node[scale=2] at (2,1) {$3$};
\draw[dotted] (2,0) -- (3,0) -- (2,1) -- cycle;
\node[scale=2] at (3,0) {$0$};
\node[scale=2] at (4,0) {$2$};
\node[scale=2] at (3,1) {$2$};
\draw[dotted] (3,0) -- (4,0) -- (3,1) -- cycle;
\node[scale=2] at (2,1) {$3$};
\node[scale=2] at (3,1) {$2$};
\node[scale=2] at (2,2) {$0$};
\draw[dotted] (2,1) -- (3,1) -- (2,2) -- cycle;
\node[scale=2] at (3,0) {$0$};
\node[scale=2] at (2,1) {$3$};
\node[scale=2] at (3,1) {$2$};
\draw[dotted] (3,0) -- (2,1) -- (3,1) -- cycle;
\node[scale=2] at (0,2) {$4$};
\node[scale=2] at (1,2) {$4$};
\node[scale=2] at (0,3) {$2$};
\draw[dotted] (0,2) -- (1,2) -- (0,3) -- cycle;
\node[scale=2] at (1,2) {$4$};
\node[scale=2] at (2,2) {$0$};
\node[scale=2] at (1,3) {$3$};
\draw[dotted] (1,2) -- (2,2) -- (1,3) -- cycle;
\node[scale=2] at (0,3) {$2$};
\node[scale=2] at (1,3) {$3$};
\node[scale=2] at (0,4) {$3$};
\draw[dotted] (0,3) -- (1,3) -- (0,4) -- cycle;
\node[scale=2] at (1,2) {$4$};
\node[scale=2] at (0,3) {$2$};
\node[scale=2] at (1,3) {$3$};
\draw[dotted] (1,2) -- (0,3) -- (1,3) -- cycle;
\node[scale=2] at (2,0) {$3$};
\node[scale=2] at (1,1) {$2$};
\node[scale=2] at (2,1) {$3$};
\draw[dotted] (2,0) -- (1,1) -- (2,1) -- cycle;
\node[scale=2] at (1,1) {$2$};
\node[scale=2] at (0,2) {$4$};
\node[scale=2] at (1,2) {$4$};
\draw[dotted] (1,1) -- (0,2) -- (1,2) -- cycle;
\node[scale=2] at (2,1) {$3$};
\node[scale=2] at (1,2) {$4$};
\node[scale=2] at (2,2) {$0$};
\draw[dotted] (2,1) -- (1,2) -- (2,2) -- cycle;
\node[scale=2] at (1,1) {$2$};
\node[scale=2] at (2,1) {$3$};
\node[scale=2] at (1,2) {$4$};
\draw[dotted] (1,1) -- (2,1) -- (1,2) -- cycle;
\node[scale=2] at (6,0) {$2$};
\node[scale=2] at (7,0) {$3$};
\node[scale=2] at (6,1) {$0$};
\draw[dotted] (6,0) -- (7,0) -- (6,1) -- cycle;
\node[scale=2] at (7,0) {$3$};
\node[scale=2] at (8,0) {$1$};
\node[scale=2] at (7,1) {$4$};
\draw[dotted] (7,0) -- (8,0) -- (7,1) -- cycle;
\node[scale=2] at (6,1) {$0$};
\node[scale=2] at (7,1) {$4$};
\node[scale=2] at (6,2) {$3$};
\draw[dotted] (6,1) -- (7,1) -- (6,2) -- cycle;
\node[scale=2] at (7,0) {$3$};
\node[scale=2] at (6,1) {$0$};
\node[scale=2] at (7,1) {$4$};
\draw[dotted] (7,0) -- (6,1) -- (7,1) -- cycle;
\node[scale=2] at (8,0) {$1$};
\node[scale=2] at (9,0) {$0$};
\node[scale=2] at (8,1) {$1$};
\draw[dotted] (8,0) -- (9,0) -- (8,1) -- cycle;
\node[scale=2] at (9,0) {$0$};
\node[scale=2] at (10,0) {$4$};
\node[scale=2] at (9,1) {$4$};
\draw[dotted] (9,0) -- (10,0) -- (9,1) -- cycle;
\node[scale=2] at (8,1) {$1$};
\node[scale=2] at (9,1) {$4$};
\node[scale=2] at (8,2) {$0$};
\draw[dotted] (8,1) -- (9,1) -- (8,2) -- cycle;
\node[scale=2] at (9,0) {$0$};
\node[scale=2] at (8,1) {$1$};
\node[scale=2] at (9,1) {$4$};
\draw[dotted] (9,0) -- (8,1) -- (9,1) -- cycle;
\node[scale=2] at (6,2) {$3$};
\node[scale=2] at (7,2) {$3$};
\node[scale=2] at (6,3) {$4$};
\draw[dotted] (6,2) -- (7,2) -- (6,3) -- cycle;
\node[scale=2] at (7,2) {$3$};
\node[scale=2] at (8,2) {$0$};
\node[scale=2] at (7,3) {$1$};
\draw[dotted] (7,2) -- (8,2) -- (7,3) -- cycle;
\node[scale=2] at (6,3) {$4$};
\node[scale=2] at (7,3) {$1$};
\node[scale=2] at (6,4) {$1$};
\draw[dotted] (6,3) -- (7,3) -- (6,4) -- cycle;
\node[scale=2] at (7,2) {$3$};
\node[scale=2] at (6,3) {$4$};
\node[scale=2] at (7,3) {$1$};
\draw[dotted] (7,2) -- (6,3) -- (7,3) -- cycle;
\node[scale=2] at (8,0) {$1$};
\node[scale=2] at (7,1) {$4$};
\node[scale=2] at (8,1) {$1$};
\draw[dotted] (8,0) -- (7,1) -- (8,1) -- cycle;
\node[scale=2] at (7,1) {$4$};
\node[scale=2] at (6,2) {$3$};
\node[scale=2] at (7,2) {$3$};
\draw[dotted] (7,1) -- (6,2) -- (7,2) -- cycle;
\node[scale=2] at (8,1) {$1$};
\node[scale=2] at (7,2) {$3$};
\node[scale=2] at (8,2) {$0$};
\draw[dotted] (8,1) -- (7,2) -- (8,2) -- cycle;
\node[scale=2] at (7,1) {$4$};
\node[scale=2] at (8,1) {$1$};
\node[scale=2] at (7,2) {$3$};
\draw[dotted] (7,1) -- (8,1) -- (7,2) -- cycle;
    	\end{scope}
    \end{tikzpicture}
    }
    \caption{$2 \cdot \sigma^2(\bigtriangleup 123) = \sigma^2(\bigtriangleup 241)$ (for $p = 5$)}
    \label{fig:scalar}
\end{figure}

\begin{remark}
These identities are reminiscent of generalized additivity
in some elementary cellular automata discovered by Wolfram~\cite{wolfram}.
\end{remark}

\subsection{Symmetries}
\label{sec:sigma-prop-symmetries}

$\forall k \in \mathbb{N}$,
$\forall x, y, z \in \mathbb{F}_p$:
\begin{itemize}
\item if $x = y = z$
      then $\sigma^k(\bigtriangleup xyz)$ and $\sigma^k(\bigtriangledown xyz)$
      have reflection and 3-fold rotational symmetries
      (see figure~\ref{fig:sample4}),
\item if $x = y$ or $y = z$ or $z = x$
      then $\sigma^k(\bigtriangleup xyz)$ and $\sigma^k(\bigtriangledown xyz)$
      have reflection symmetry
      (see figure~\ref{fig:sample3}),
\item if one of $x$, $y$ or $z$ is $0$
      and the two others values sum to $0$ (\textit{i.e.}, the two other values are opposite)
      then $\sigma^k(\bigtriangleup xyz)$ and $\sigma^k(\bigtriangledown xyz)$
      have reflection \quotes{odd} symmetry:
      the values at two symmetrical corners with respect to the bisector passing through the corner of value $0$ are opposite
      (see figure~\ref{fig:stationary}).
\end{itemize}

Some patches can induce other types of symmetry
(see for example remark~\ref{remark-6-fold-symmetry}).

\subsection{Positions within supertiles}
\label{sec:sigma-prop-positions}

\begin{definition}
Let:
\begin{equation}
    \Sigma = \{ \alpha, \beta, \gamma, \delta \}
\end{equation}
\end{definition}

\begin{remark}
\label{remark:alphabet}
$\Sigma$ will be used as an alphabet:
\begin{itemize}
\item $\forall k \in \mathbb{N}$,
      $\Sigma^k$ will denote the set of words of length $k$ over $\Sigma$,
\item $\varepsilon$ will denote the \quotes{empty word}
      (\textit{i.e.} the only element of $\Sigma^0$),
\item $\Sigma^* = \bigcup_{k \in \mathbb{N}} \Sigma^k$
      will denote the set of finite words over $\Sigma$,
\item $\forall d \in \Sigma$,
      $\forall k \in \mathbb{N}$,
      \quotes{$d^k$} will denote \quotes{$d$ repeated $k$ times},
\item $\forall k,k' \in \mathbb{N}$,
      $\forall w \in \Sigma^k$,
      $\forall w' \in \Sigma^{k'}$,
      \quotes{$ww'$} will denote the \quotes{concatenation of $w$ and $w'$}
      (and will belong to $\Sigma^{k+k'}$).
\end{itemize}
\end{remark}

\begin{definition}
$\forall k \in \mathbb{N}$,
a $(k+1)$-supertile contains four $k$-supertiles
(see figure~\ref{fig:position}):
\begin{itemize}
\item for an upward $(k+1)$-supertile $s$:
      the central, bottom left, bottom right and top $k$-supertiles within $s$
      will be respectively given positions $\alpha$, $\beta$, $\gamma$ and $\delta$,
\item for a downward $(k+1)$-supertile $s$:
      the central, top right, top left and bottom $k$-supertiles within $s$
      will be respectively given positions $\alpha$, $\beta$, $\gamma$ and $\delta$.
\end{itemize}
\end{definition}

\begin{figure}[H]
    \centering
    \resizebox{7cm}{!}
    {
    \begin{tikzpicture}
    	\begin{scope}[yscale=.87,xslant=.5]
        \draw (0,0) -- (2,0) -- (0,2) -- cycle;
        \draw (1,0) -- (1,1) -- (0,1) -- cycle;
        \node at (2/3,2/3) {$\alpha$};
        \node at (1/3,1/3) {$\beta$};
        \node at (1+1/3,1/3) {$\gamma$};
        \node at (1/3,1+1/3) {$\delta$};

        \draw (4,0) -- (4,2) -- (2,2) -- cycle;
        \draw (4,1) -- (3,2) -- (3,1) -- cycle;
        \node at (3+1/3,1+1/3) {$\alpha$};
        \node at (4-1/3,1+2/3) {$\beta$};
        \node at (3-1/3,1+2/3) {$\gamma$};
        \node at (4-1/3,0+2/3) {$\delta$};
    	\end{scope}
    \end{tikzpicture}
    }
    \caption{Positions within supertiles}
    \label{fig:position}
\end{figure}
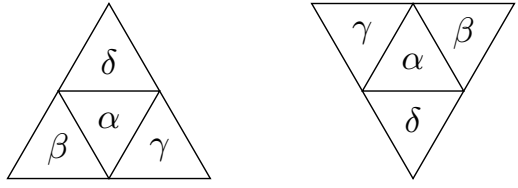

\begin{theo}
\label{theo:tile-position}
$\forall k \in \mathbb{N}$,
the position of any tile $t$ within a $k$-supertile is uniquely
defined by a word $w \in \Sigma^k$
such that $\forall n \in 0 \twodots k-1$,
$w_n$ is the position of the $(k-n-1)$-supertile containing $t$
within the $(k-n)$-supertile containing $t$
(see figure~\ref{fig:positions}).
\end{theo}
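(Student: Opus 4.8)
The statement is essentially a bookkeeping claim: a $k$-supertile decomposes into four $(k-1)$-supertiles with labelled positions, and iterating the decomposition gives each tile a unique address in $\Sigma^k$. The natural proof is by induction on $k$, using the four-fold decomposition $\sigma(s) = \{\alpha(s), \beta(s), \gamma(s), \delta(s)\}$ (interpreted now on the level of supertiles as in the definition of positions, figure~\ref{fig:position}) and the fact that $\alpha$, $\beta$, $\gamma$, $\delta$ are permutations of $T$, so that the four sub-supertiles are genuine supertiles of one lower order.

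\textbf{Plan.} First I would set up the base case: for $k = 0$, a $0$-supertile is a single tile, $\Sigma^0 = \{\varepsilon\}$, and the (empty) position word is trivially unique. For the inductive step, fix $k \geq 1$ and a $k$-supertile $S = \sigma^k(t)$. By definition $S$ is tiled by the four $(k-1)$-supertiles occupying positions $\alpha, \beta, \gamma, \delta$; these four patches are pairwise disjoint and cover $S$, so any tile $t' \in S$ lies in exactly one of them, say the one in position $w_0 \in \Sigma$. By the induction hypothesis applied to that $(k-1)$-supertile, $t'$ has a unique address $w' \in \Sigma^{k-1}$ recording its nested positions; concatenating gives $w = w_0 w' \in \Sigma^k$. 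One then checks that $w$ satisfies the indexing condition in the statement: $w_0$ is the position of the $(k-1)$-supertile containing $t'$ inside $S$ (the $n=0$ case), and the remaining letters $w_1 \cdots w_{k-1}$ are, by the induction hypothesis, exactly the positions at the deeper levels, which are the cases $n = 1, \ldots, k-1$ after reindexing. Uniqueness follows because $w_0$ is forced (the four sub-supertiles are disjoint) and $w'$ is forced by induction.

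\textbf{Main obstacle.} The only real subtlety — and the step I would be most careful about — is making precise that the four $(k-1)$-supertiles in a $k$-supertile are \emph{disjoint} as sets of tiles, i.e. that no tile is shared between two of them. This is geometrically obvious from figures~\ref{fig:abcd} and~\ref{fig:position} (the four smaller triangles overlap only along edges, which carry no tiles, only corner values), but to cite it cleanly I would note that at the level of regions the four sub-triangles of a triangle meet only in lower-dimensional sets, and a tile — being a full two-dimensional triangle — is contained in whichever sub-triangle contains its interior, which is unique. Everything else (that each sub-patch really is a $(k-1)$-supertile, so the induction hypothesis applies) is immediate from the definition $\sigma^k(t) = \sigma^{k-1}(\sigma(t))$ together with the observation that $\sigma(t)$ consists of the four tiles $\alpha(t), \beta(t), \gamma(t), \delta(t)$ and hence $\sigma^{k-1}$ applied to each of these is precisely the sub-supertile in the corresponding position.
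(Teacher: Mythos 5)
Your proof is correct and follows essentially the same route as the paper: induction on $k$, locating the tile in one of the four sub-supertiles and prepending that position letter to the address obtained from the induction hypothesis. The extra care you take over disjointness of the four sub-supertiles is a point the paper leaves implicit, but it does not change the argument.
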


\begin{proof}
The property is satisfied for all $0$-supertiles (\textit{i.e.} for all tiles):
$0$-supertiles contain a unique tile whose position correspond to the empty word $\varepsilon$.

Suppose that, for some $k \in \mathbb{N}$,
the property is satisfied for all $k$-supertiles.

A tile $t$ in a $(k+1)$-supertile $s$
belongs to one of the four $k$-supertiles $s'$ in $s$.
If $s'$ is at position $d$ in $s$,
and $t$ is at position $w$ in $s'$,
then $t$ is at position $dw$ in $s$.
\end{proof}

\begin{figure}[H]
    \centering
    \resizebox{12cm}{!}
    {
    \begin{tikzpicture}
    	\begin{scope}[yscale=.87,xslant=.5]
        \input{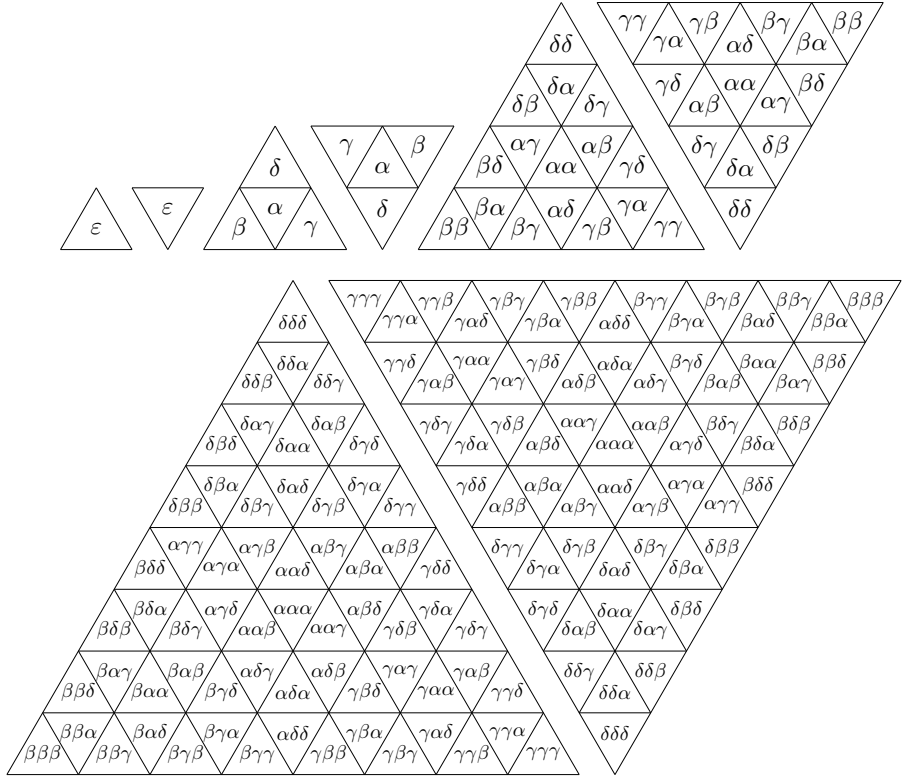}
    	\end{scope}
    \end{tikzpicture}
    }
    \caption{Positions within $k$-supertiles for $k = 0 \twodots 3$}
    \label{fig:positions}
\end{figure}

\begin{remark}
$\forall k \in \mathbb{N}$,
within a $k$-supertile $s$:
\begin{itemize}
\item the central tile has position $\alpha^k$,
\item the bottom left tile (when $s$ is upward)
      or the top right tile (when $s$ is downward)
      has position $\beta^k$,
\item the bottom right tile (when $s$ is upward)
      or the top left tile (when $s$ is downward)
      has position $\gamma^k$,
\item the top tile (when $s$ is upward)
      or the bottom tile (when $s$ is downward)
      has position $\delta^k$.
\end{itemize}
\end{remark}

\begin{lemma}
\label{lemma:flip}
A tile $t$ at position $w$ within a supertile $s$
has the same orientation as $s$
if and only if the number of $\alpha$'s in $w$ is even.
\end{lemma}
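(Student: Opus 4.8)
The plan is to prove the statement by induction on $k = |w|$ --- equivalently, by telescoping along the chain of nested supertiles containing $t$. Two facts will be used throughout. First, as already noted in the proof that $\alpha,\beta,\gamma,\delta$ are permutations of $T$, the function $\alpha$ reverses tile orientation while $\beta$, $\gamma$ and $\delta$ preserve it. Second, $\sigma$ sends an upward tile to a supertile with an upward-triangle outline and a downward tile to a supertile with a downward-triangle outline (figures~\ref{fig:subst-up}, \ref{fig:subst-down}), so by iteration $\sigma^j(t)$ has the same orientation as $t$ for every $j \in \mathbb{N}$. I will encode orientation as an element of $\mathbb{Z}/2\mathbb{Z}$, so that \emph{having opposite orientations} becomes an additive $\mathbb{Z}/2\mathbb{Z}$-valued quantity that behaves well under nesting.

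For the base case $k = 0$ there is nothing to do: $w = \varepsilon$, the only tile of the $0$-supertile $s$ is $s$ itself, and $0$ is even.

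For the inductive step I would write $w = d\,w'$ with $d \in \Sigma$, $w' \in \Sigma^k$, and use theorem~\ref{theo:tile-position} (exactly as in its proof) to place $t$ at position $w'$ inside the $k$-supertile $s'$ that occupies position $d$ within $s$. Writing $s = \sigma^{k+1}(t_0) = \sigma^k(\sigma(t_0))$, the four $k$-supertiles of $s$ are $\sigma^k(\alpha(t_0))$, $\sigma^k(\beta(t_0))$, $\sigma^k(\gamma(t_0))$, $\sigma^k(\delta(t_0))$, so $s'$ equals $\sigma^k(d(t_0))$; by the two facts above its orientation equals that of $d(t_0)$, which agrees with that of $t_0$ (hence of $s$) exactly when $d \neq \alpha$. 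Thus $s'$ and $s$ have opposite orientation if and only if $d = \alpha$.

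To conclude I would add the two comparisons in $\mathbb{Z}/2\mathbb{Z}$: the orientation difference between $t$ and $s$ is the difference between $t$ and $s'$ plus the difference between $s'$ and $s$. The induction hypothesis makes the first term the parity of the number of $\alpha$'s in $w'$, and the step above makes the second term the number of $\alpha$'s in the one-letter word $d$ taken mod $2$; their sum is the parity of the number of $\alpha$'s in $w = d\,w'$, as required. I do not expect any genuine obstacle here: the only thing to get right is the bookkeeping that aligns the recursive position decomposition of theorem~\ref{theo:tile-position} with the orientation behavior of $\alpha,\beta,\gamma,\delta$, together with the elementary geometric remark that $\sigma$, and hence each $\sigma^k$, preserves the orientation of a tile's outline.
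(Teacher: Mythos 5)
Your proposal is correct and is essentially the paper's argument made explicit: the paper reads the word $w$ as a descent from $s$ to $t$ in which each $\alpha$ inverts orientation and the other letters do not, and your induction on $|w|$ with the $\mathbb{Z}/2\mathbb{Z}$ bookkeeping is just a careful formalization of that same idea.
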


\begin{proof}
The position $w$ of a tile $t$ within a supertile $s$
can be interpreted as a descent from $s$ to $t$,
where each $\alpha$ induces an inversion of orientation.
Those inversions are canceled
when they occur an even number of times.
\end{proof}

\begin{definition}
Similarly to definition~\ref{def:row},
we will associate to any position $d \in \Sigma$ the square matrix $[d]$ as follows:
\begin{equation}
    [\alpha] = A
\end{equation}
\begin{equation}
    [\beta] = B
\end{equation}
\begin{equation}
    [\gamma] = C
\end{equation}
\begin{equation}
    [\delta] = D
\end{equation}

Also, $\forall k \in \mathbb{N}$, $\forall w \in \Sigma^k$:
\begin{equation}
    [w] = [w_0] \twodots [w_{k-1}]
\end{equation}
In particular:
\begin{equation}
    [\varepsilon] = I_3
\end{equation}
\end{definition}

\begin{theo}
\label{theo:sigma-prop-rebuilding}
For any supertile $s$, and any tile $t$ at position $w$ in $s$:
\begin{equation}
    [t] = [s] \cdot [w]
\end{equation}
\end{theo}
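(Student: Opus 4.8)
The plan is to prove $[t] = [s]\cdot[w]$ by induction on the level $k$ of the supertile $s$, which by Theorem~\ref{theo:tile-position} coincides with the length of the position word $w \in \Sigma^k$. For the base case $k = 0$ the supertile $s$ is a single tile, so $t = s$ and $w = \varepsilon$; since $[\varepsilon] = I_3$, the identity reads $[t] = [s] = [s]\cdot I_3$ and holds trivially.

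For the inductive step I would assume the statement for all $k$-supertiles and take a $(k+1)$-supertile $s$ with a tile $t$ at position $w$. Following the decomposition used in the proof of Theorem~\ref{theo:tile-position}, I write $w = d w'$ with $d \in \Sigma$ and $w' \in \Sigma^k$, where $d$ is the position in $s$ of the $k$-supertile $s'$ containing $t$, and $w'$ is the position of $t$ inside $s'$. The induction hypothesis applied to $s'$ yields $[t] = [s']\cdot[w']$, so everything reduces to the one-step relation $[s'] = [s]\cdot[d]$, after which associativity of matrix multiplication and the definition $[dw'] = [d]\cdot[w']$ give
\[ [t] = [s']\cdot[w'] = \bigl([s]\cdot[d]\bigr)\cdot[w'] = [s]\cdot\bigl([d]\cdot[w']\bigr) = [s]\cdot[w]. \]

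To establish the one-step relation $[s'] = [s]\cdot[d]$ I would use that every $(k+1)$-supertile is $\sigma^{k+1}(t_0) = \sigma^k\bigl(\sigma(t_0)\bigr)$ for some tile $t_0$. The four tiles of $\sigma(t_0)$ are $\alpha(t_0),\beta(t_0),\gamma(t_0),\delta(t_0)$, occupying positions $\alpha,\beta,\gamma,\delta$ by the very definition of these functions, so the $k$-supertile at position $d$ within $s$ is $\sigma^k\bigl(d(t_0)\bigr)$, where $d(t_0)$ denotes the corresponding one of $\alpha(t_0),\dots,\delta(t_0)$. Then Definition~\ref{def:row} gives $[s] = [\sigma^{k+1}(t_0)] = [t_0]$ and $[s'] = [\sigma^k(d(t_0))] = [d(t_0)]$, while Lemma~\ref{lemma:sigma-matrix} together with the conventions $[\alpha]=A$, $[\beta]=B$, $[\gamma]=C$, $[\delta]=D$ gives $[d(t_0)] = [t_0]\cdot[d]$; combining these, $[s'] = [t_0]\cdot[d] = [s]\cdot[d]$, which closes the induction.

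I expect no serious obstacle: there is no nontrivial computation, only bookkeeping. The one delicate point is keeping the three uses of the bracket notation aligned — the row matrix of a supertile, the square matrix of a single position, and the matrix product associated with a word — and verifying carefully that the $k$-supertile occupying position $d$ inside $\sigma^{k+1}(t_0)$ is exactly $\sigma^k$ applied to the tile occupying position $d$ inside $\sigma(t_0)$, which is what lets Lemma~\ref{lemma:sigma-matrix} (a statement about single tiles) and Definition~\ref{def:row} (a statement about supertiles) be chained together. Once that identification is spelled out, the argument is routine.
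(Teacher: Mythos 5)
Your proof is correct and follows essentially the same induction as the paper: base case with $[\varepsilon]=I_3$, then the decomposition $w=dw'$ and the one-step relation $[s']=[s]\cdot[d]$. The only difference is that the paper asserts $[s']=[s]\cdot[d]$ without comment, whereas you justify it explicitly via $s=\sigma^{k+1}(t_0)$, $s'=\sigma^k(d(t_0))$, Definition~\ref{def:row} and Lemma~\ref{lemma:sigma-matrix} --- a worthwhile addition, not a deviation.
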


\begin{proof}
The property is satisfied for $0$-supertiles (\textit{i.e.} for tiles):
$\forall t \in T$:
\begin{equation}
    [t] \cdot [\varepsilon] = [t] \cdot I_3 = [t]
\end{equation}

Suppose that, for some $k \in \mathbb{N}$,
the property is satisfied for all $k$-supertiles.

For any $(k+1)$-supertile $s$, and any tile $t$ within $s$:
$t$ appears at position $w$ within some $k$-supertile $s'$ at position $d$ within $s$
(and the position of $t$ within $s$ is $dw$).
\begin{equation}
    [s'] = [s] \cdot [d]
\end{equation}
\begin{equation}
    [t] = [s'] \cdot [w]
\end{equation}
Hence:
\begin{equation}
    [t] = [s] \cdot [d][w] = [s] \cdot [dw]
\end{equation}
\end{proof}

\begin{lemma}
\label{lemma:unique}
A supertile $s$ is uniquely determined by one of its tile $t$
and the position $w$ of $t$ within $s$.
\end{lemma}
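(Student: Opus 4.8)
The plan is to reconstruct $s$ explicitly from the pair $(t,w)$, using Theorem~\ref{theo:sigma-prop-rebuilding} to recover the corner data and Lemma~\ref{lemma:flip} to recover the orientation. First I would note that the length $k = |w|$ already pins down the order of $s$: if $s$ is a $k$-supertile, then by Theorem~\ref{theo:tile-position} the position of any of its tiles is a word in $\Sigma^k$, so $k$ is read off from $w$ alone, and $s$ is a $k$-supertile.

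Next I would recover the orientation of $s$. By Lemma~\ref{lemma:flip}, $t$ has the same orientation as $s$ if and only if $w$ contains an even number of $\alpha$'s; since both the orientation of $t$ and the word $w$ are given, the orientation of $s$ is determined. Then I would recover $[s]$: by Theorem~\ref{theo:sigma-prop-rebuilding} we have $[t] = [s]\cdot[w]$ with $[w] = [w_0]\cdots[w_{k-1}]$ a product of the matrices $A,B,C,D$, each of which is invertible in $\mathbb{F}_p^{3\times 3}$ by Lemma~\ref{lemma:abcd-invertible}; hence $[w]$ is invertible and $[s] = [t]\cdot[w]^{-1}$ is determined by $t$ and $w$.

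To finish, I would observe that a $k$-supertile is by definition $\sigma^k(t_0)$ for some tile $t_0$, that $t_0$ has the same orientation as $s$, and that $[t_0] = [\sigma^k(t_0)] = [s]$ by Definition~\ref{def:row}. Since a tile is determined by its orientation together with its three corner values, $t_0$ is uniquely determined, and therefore $s = \sigma^k(t_0)$ is as well.

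The computation is entirely routine once Theorem~\ref{theo:sigma-prop-rebuilding} and Lemma~\ref{lemma:abcd-invertible} are available, so there is no real obstacle; the one point I would be careful to state explicitly is the last step, namely that a supertile is pinned down by its order, its orientation, and its triple of extreme corner values — which holds precisely because a supertile is $\sigma^k$ applied to a single tile, and a tile carries exactly that data.
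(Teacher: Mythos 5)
Your proof is correct and follows the same route as the paper: invert $[w]$ (possible since $A,B,C,D$ are invertible by Lemma~\ref{lemma:abcd-invertible}) to get $[s]=[t]\cdot[w]^{-1}$ from Theorem~\ref{theo:sigma-prop-rebuilding}, and use Lemma~\ref{lemma:flip} to recover the orientation. You simply spell out more carefully than the paper does why the order, orientation, and extreme corner values together pin down the supertile, which is a welcome addition but not a different argument.
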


\begin{proof}
From theorem~\ref{theo:sigma-prop-rebuilding}, we can write:
\begin{equation}
    [s] = [t] \cdot [w]^{-1}
\end{equation}

Also, lemma~\ref{lemma:flip} relates the orientations of $t$ and $s$.
\end{proof}

\subsection{Patterns of zeros}
\label{sec:sigma-prop-patterns}

\begin{lemma}
A supertile $s$ contains an all-zero tile ($\bigtriangleup 000$ or $\bigtriangledown 000$)
if and only if $s$ was built starting from an all-zero tile.
\end{lemma}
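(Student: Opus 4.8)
The plan is to reduce both directions to the matrix identity of Theorem~\ref{theo:sigma-prop-rebuilding} together with the invertibility of the position matrices from Lemma~\ref{lemma:abcd-invertible}.

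First I would dispatch the ``if'' direction. If $s$ was built from an all-zero tile, write $s = \sigma^k(\bigtriangleup 000)$ (or $s = \sigma^k(\bigtriangledown 000)$). Using the scalar identity with $m = 0$, we have $\sigma^k(\bigtriangleup 000) = 0 \cdot \sigma^k(\bigtriangleup xyz)$ for any $x,y,z \in \mathbb{F}_p$, and scaling a supertile by $0 \in \mathbb{F}_p$ sends every corner to $0$; hence every tile of $s$ is $\bigtriangleup 000$ or $\bigtriangledown 000$, so in particular $s$ contains an all-zero tile. (Equivalently, one may argue by a one-line induction on $k$: $\sigma$ sends an all-zero tile to four all-zero tiles, since $0 + 0 = 0$.)

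Next I would handle the ``only if'' direction, which carries the real content. Suppose $s = \sigma^k(t_0)$ contains an all-zero tile $t$, occupying some position $w \in \Sigma^k$ within $s$. Theorem~\ref{theo:sigma-prop-rebuilding} gives $[t] = [s] \cdot [w]$, that is, $[0\ 0\ 0] = [s] \cdot [w]$. Now $[w] = [w_0] \cdots [w_{k-1}]$ is a product of copies of $A$, $B$, $C$, $D$, each invertible over $\mathbb{F}_p$ by Lemma~\ref{lemma:abcd-invertible}, so $[w]$ is invertible; right-multiplying by $[w]^{-1}$ yields $[s] = [0\ 0\ 0]$. Since $[s] = [t_0]$ by Definition~\ref{def:row}, all three corner values of $t_0$ are $0$, i.e. $t_0 = \bigtriangleup 000$ or $t_0 = \bigtriangledown 000$, which is exactly the claim.

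I do not expect a genuine obstacle: the argument hinges entirely on Lemma~\ref{lemma:abcd-invertible} (so that a zero row vector can only arise from a zero row vector under right-multiplication by $[w]$) and on Definition~\ref{def:row} (so that the generating tile's corner values are literally the entries of $[s]$, independently of $k$). The only points that require a little care are that ``all-zero tile'' must be read as covering both orientations, and that the $k = 0$ case, where $s$ is itself a tile and $w = \varepsilon$ with $[w] = I_3$, is already included in the general argument.
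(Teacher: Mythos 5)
Your proposal is correct and follows essentially the same route as the paper: the ``only if'' direction is exactly the paper's one-line computation $[s] = [t]\cdot[w]^{-1} = [0\ 0\ 0]\cdot[w]^{-1} = [0\ 0\ 0]$, resting on Theorem~\ref{theo:sigma-prop-rebuilding} and the invertibility of $[w]$. Your explicit treatment of the trivial ``if'' direction is a small addition the paper leaves implicit.
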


\begin{proof}
If $s$ contains $t \in \{ \bigtriangleup 000, \bigtriangledown 000 \}$, say at position $w$, then:
\begin{equation}
    [s] = [t] \cdot [w]^{-1} = [0\ 0\ 0] \cdot [w]^{-1} = [0\ 0\ 0]
\end{equation}
\end{proof}

\begin{lemma}
$\forall m \in \mathbb{F}_p^*$,
$\forall k \in \mathbb{N}$,
$\forall t \in T$,
the $0$'s at the corners of $m \cdot \sigma^k(t)$ and $\sigma^k(t)$
are exactly located at the same places.
\end{lemma}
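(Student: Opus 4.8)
The plan is to reduce the statement to the scalar-multiplication identity already established in the additivity lemma together with the fact that $\mathbb{F}_p$ is an integral domain. First I would recall that, by that lemma, for any $t \in T$ and any $k \in \mathbb{N}$ we have $m \cdot \sigma^k(t) = \sigma^k(m \cdot t)$, where the scalar multiplication on the supertile $\sigma^k(t)$ is performed componentwise on each corner. In particular, $m \cdot \sigma^k(t)$ occupies exactly the same geometric tiles as $\sigma^k(t)$ — multiplication by $m$ only relabels decorations, it does not move or reorient any tile — so it makes sense to compare, corner by corner (equivalently, position by position using Theorem~\ref{theo:sigma-prop-rebuilding}), the value $v$ held at a given corner of $\sigma^k(t)$ with the value $m v$ held at the corresponding corner of $m \cdot \sigma^k(t)$.

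Next I would invoke that $m \in \mathbb{F}_p^*$ and that $\mathbb{F}_p$ is a field: for $v \in \mathbb{F}_p$, the equation $m v = 0$ holds if and only if $v = 0$ (multiply by $m^{-1}$). Applying this at every corner shows that a corner of $m \cdot \sigma^k(t)$ is decorated with $0$ precisely when the corresponding corner of $\sigma^k(t)$ is decorated with $0$; hence the $0$'s in the two supertiles sit at exactly the same places, which is the claim.

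\textbf{Main obstacle.} There is no real difficulty here; the proof is essentially a one-line consequence of the additivity lemma. The only point that warrants a sentence of care is the well-definedness of "componentwise on each corner": one should note that the earlier lemma guaranteeing that corners meeting at a common point hold the same value ensures that $m \cdot \sigma^k(t)$ is again a legitimate supertile (indeed $\sigma^k(m \cdot t)$), so that "the corners of $m \cdot \sigma^k(t)$" is unambiguous and in bijection with "the corners of $\sigma^k(t)$". Everything else is immediate.
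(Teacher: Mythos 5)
Your proposal is correct and matches the paper's own argument, which simply cites the additivity (scalar multiplication) identity from Section~\ref{sec:sigma-prop-additivity}; you have merely spelled out the remaining step that $mv=0$ if and only if $v=0$ for $m\in\mathbb{F}_p^*$, which the paper leaves implicit. No issues.
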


\begin{proof}
This is a consequence of additivity (see section~\ref{sec:sigma-prop-additivity}).
\end{proof}

Figure~\ref{fig:zero} represents $0$'s in $\sigma^9(\bigtriangleup 100)$
and $\sigma^9(\bigtriangleup 200)$ (for $p = 3$).

\begin{figure}[H]
    \begin{center}
    \vstretch{1.732}{
        \includegraphics[height=7cm, keepaspectratio]{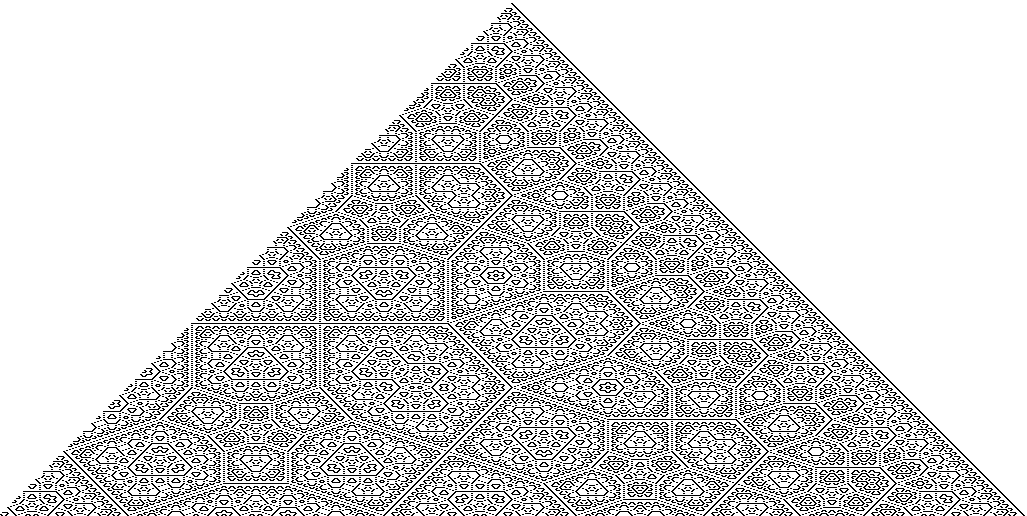}
    }
    \caption{Representation of $0$'s
             (as black pixels)
             in $\sigma^9(\bigtriangleup 100)$
             and $\sigma^9(\bigtriangleup 200)$ (for $p = 5$)}
    \label{fig:zero}
    \end{center}
\end{figure}

\begin{lemma}
Tiles with two $0$'s and some other value $c$
will yield lines of $0$'s and $c$'s parallel to the side holding the two $0$'s
upon repeated applications of $\sigma$.
This pattern is visible in figures~\ref{fig:sample}, \ref{fig:sample3} and \ref{fig:sample4}
as white and colored lines respectively for $0$'s and $c$'s.
\end{lemma}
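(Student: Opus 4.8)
The plan is to reduce to a single representative tile and then run a short induction on the order of the supertile, tracking only the two lines named in the statement. Since the three corners of a tile play symmetric roles in the substitution rule (figures~\ref{fig:subst-up} and~\ref{fig:subst-down}) and the two orientations are interchangeable, it will suffice to treat one case, the other cases being entirely analogous; I take the upward tile $t_0 = \bigtriangleup(0,0,c)$ with $c \in \mathbb{F}_p^*$, so that the \quotes{side holding the two $0$'s} is the bottom side. The goal then reads: for every $k \in \mathbb{N}$, every corner on the bottom side of $\sigma^k(t_0)$ holds $0$, and every corner one step above that side holds $c$ — these being the announced line of $0$'s and the parallel line of $c$'s. (One could also normalize $c = 1$ by scalar additivity, $\sigma^k(\bigtriangleup(0,0,c)) = c \cdot \sigma^k(\bigtriangleup(0,0,1))$, and rescale at the end, but this is not needed.)

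The engine will be the computation of the children of $t_0$ (from figure~\ref{fig:subst-up} or Lemma~\ref{lemma:sigma-matrix}): $\beta(t_0) = \gamma(t_0) = \bigtriangleup(0,0,c) = t_0$, whereas $\delta(t_0) = \bigtriangleup(c,c,c)$ and $\alpha(t_0) = \bigtriangledown(c,c,0)$. The key identity is $\beta(t_0) = \gamma(t_0) = t_0$: the bottom-left and bottom-right $k$-supertiles inside $\sigma^{k+1}(t_0)$ are then both equal to $\sigma^k(t_0)$. I will then argue by induction on $k$. The base case $k = 0$ is the tile $\bigtriangleup(0,0,c)$ itself (bottom corners $0$, apex $c$). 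For the step, decompose $\sigma^{k+1}(t_0)$ into the four $k$-supertiles it contains, placed as in figure~\ref{fig:position}; since geometrically coincident corners of adjacent supertiles carry the same value, each corner value of $\sigma^{k+1}(t_0)$ may be read off from whichever of the four contains it. The bottom side of $\sigma^{k+1}(t_0)$ is covered by the bottom sides of its $\beta$- and $\gamma$-children (which meet at the midpoint of that side), both equal to $\sigma^k(t_0)$, so the induction hypothesis makes every value on it $0$. The line one step above is covered in the same way — its left half by the $\beta$-child and its right half by the $\gamma$-child, abutting with no gap — so the induction hypothesis makes every value on it $c$. Note that the central $\alpha$-child and the top $\delta$-child (which do contain further values, such as $2c$) meet these two lines only at corners already supplied by the $\beta$- and $\gamma$-children, so they play no role; this is why the induction closes so cheaply.

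It then follows that $\sigma^k(\bigtriangleup(0,0,c))$ always carries the all-$0$ line along the side bearing the two $0$'s and, immediately parallel to it, an all-$c$ line, for every $k$; the case of a general tile with two $0$'s — nonzero corner elsewhere, or downward orientation — follows from the reduction above, just as with the other \quotes{similar} reductions in the paper. Combined with additivity, this also explains why such line fragments show up, at every scale, in figures~\ref{fig:sample}, \ref{fig:sample3} and~\ref{fig:sample4}, since tiles of this kind occur there as smaller supertiles (for instance $\beta(\bigtriangleup 122) = \bigtriangleup 100$ when $p = 3$; see figure~\ref{fig:samplenum}). The main obstacle is the geometric bookkeeping in the inductive step: one has to check that the bottom side and the adjacent parallel line of a $(k+1)$-supertile are each entirely partitioned between its $\beta$- and $\gamma$-children, so that the induction hypothesis is available for every relevant corner, and that the values inherited from different children agree wherever they overlap.
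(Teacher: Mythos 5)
Your proof is correct and is essentially the paper's argument made explicit: the paper's entire proof is a pointer to figure~\ref{fig:persistence-line}, which depicts exactly the self-reproduction $\beta(\bigtriangleup(0,0,c)) = \gamma(\bigtriangleup(0,0,c)) = \bigtriangleup(0,0,c)$ that drives your induction. Your computations of the four children and the check that the bottom row and the row above it are covered entirely by the $\beta$- and $\gamma$-children are all accurate, so yours is simply the rigorous version of the paper's proof-by-picture.
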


\begin{proof}
See figure~\ref{fig:persistence-line}.
\end{proof}

\begin{figure}[H]
    \centering
    {
    \begin{tikzpicture}
    	\begin{scope}[yscale=.87,xslant=.5]
        \fill[green!30] (0,0) -- (1,0) -- (0,1) -- cycle;
        \draw[dotted,gray] (0,0) -- (1,0) -- (0,1) -- cycle;
        \node at (0,0) {$0$};
        \node at (1,0) {$0$};
        \node at (0,1) {$c$};

        \node at (2,0) {$\rightarrow$};

        \fill[green!30] (3,0) -- (5,0) -- (4,1) -- (3,1) -- cycle;
        \draw[dotted,gray] (3,0) -- (5,0) -- (3,2) -- cycle;
        \draw[dotted,gray] (4,0) -- (4,1) -- (3,1) -- cycle;
        \node at (3,0) {$0$};
        \node at (4,0) {$0$};
        \node at (5,0) {$0$};
        \node at (3,1) {$c$};
        \node at (4,1) {$c$};

        \node at (6,0) {$\rightarrow$};

        \fill[green!30] (7,0) -- (11,0) -- (10,1) -- (7,1) -- cycle;
        \draw[dotted,gray] (7,0) -- (11,0) -- (9,2) -- (7,2) -- cycle;
        \draw[dotted,gray] (7,2) -- (9,0) -- (9,2);
        \draw[dotted,gray] (7,1) -- (8,0) -- (8,2) -- (10,0) -- (10,1) -- cycle;
        \node at (7,0) {$0$};
        \node at (8,0) {$0$};
        \node at (9,0) {$0$};
        \node at (10,0) {$0$};
        \node at (11,0) {$0$};
        \node at (7,1) {$c$};
        \node at (8,1) {$c$};
        \node at (9,1) {$c$};
        \node at (10,1) {$c$};
    	\end{scope}
    \end{tikzpicture}
    }
    \caption{Lines of $0$'s and $c$'s}
    \label{fig:persistence-line}
\end{figure}
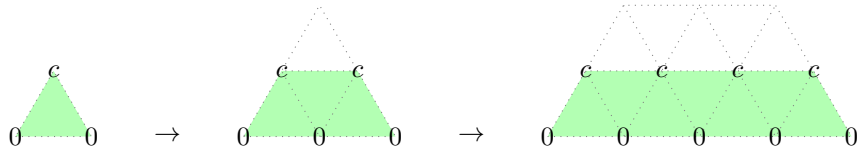

\begin{lemma}
Tiles with one $0$ and two opposite other values $+c$ and $-c$
will yield lines alternating $0$'s and opposite pairs $+c$ / $-c$
upon repeated applications of $\sigma$.
This pattern passes though the center of figure~\ref{fig:stationary}.
\end{lemma}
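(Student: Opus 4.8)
The plan is to fix one representative tile and prove the statement for it; the other seven cases (the three cyclic positions of the $0$-corner, each for an upward or a downward tile) then follow from the dihedral relabelings of the corners and from the orientation flip, all of which intertwine with $\sigma$ in the obvious way. So take $u=\bigtriangleup(c,-c,0)$ with $c\in\mathbb{F}_p^{*}$, drawn with its $0$-corner at the apex, and let $\mu$ be the bisector through that apex (so in lattice coordinates where $u$ has corners $(0,0)$, $(2^{k},0)$, $(0,2^{k})$, the line $\mu$ carries the corners with $2x+y=2^{k}$). I want two facts about $\sigma^{k}(u)$: (i) every corner lying on $\mu$ is decorated with $0$; and (ii) the two lattice lines of corners immediately flanking $\mu$ carry, respectively, the constant values $+c$ and $-c$, with $+c$ on the side of $\mu$ containing the corner of $u$ decorated $+c$. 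Together these give the announced ``lines alternating $0$'s and opposite pairs'': crossing the three parallel lines one reads $+c,\,0,\,-c$.

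For (i) I would run the argument already sketched in \S\ref{sec:sigma-prop-symmetries}. By the additivity lemma of \S\ref{sec:sigma-prop-additivity} we have $\sigma^{k}(\bigtriangleup(c,-c,0))=\sigma^{k}(\bigtriangleup(c,0,0))-\sigma^{k}(\bigtriangleup(0,c,0))$, and $\sigma$ intertwines the reflection $\rho$ fixing the apex and swapping the other two corners: one checks directly on the four local rules that $\rho\circ\alpha=\alpha\circ\rho$, $\rho\circ\beta=\gamma\circ\rho$, $\rho\circ\gamma=\beta\circ\rho$, $\rho\circ\delta=\delta\circ\rho$ (e.g. $\rho\,\beta(\bigtriangleup xyz)=\bigtriangleup(x+y,x,x+z)=\gamma(\bigtriangleup yxz)$), and this propagates to all supertiles by induction, so $\sigma^{k}(\bigtriangleup(0,c,0))=\rho\bigl(\sigma^{k}(\bigtriangleup(c,0,0))\bigr)$. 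Hence the decoration of $\sigma^{k}(u)$ at a corner $P$ is $v(P)-v(\rho P)$, where $v$ is the decoration function of $\sigma^{k}(\bigtriangleup(c,0,0))$; on $\mu$ we have $\rho P=P$ and the value is $0$, and $\rho$-paired corners off $\mu$ always carry opposite values.

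For (ii) I would induct on $k$, carrying the upward and downward versions simultaneously since the relevant self-similarity mixes orientations. The key is that $\delta(\bigtriangleup(c,-c,0))=\bigtriangleup(c,-c,0)$ and $\alpha(\bigtriangleup(c,-c,0))=\bigtriangledown(-c,c,0)$: both are again tiles of the present type and their $0$-corners again lie on $\mu$. Writing $\sigma^{k+1}(u)$ as $\sigma^{k}$ applied to the four tiles of $\sigma(u)$ arranged as in figure~\ref{fig:abcd}, the bisector $\mu$ meets only the $\delta$- and $\alpha$-subsupertiles (it touches the $\beta$- and $\gamma$-subsupertiles only at the base midpoint, which is the vertex where $\mu$ terminates); on the $\delta$-part it restricts to the bisector of $\sigma^{k}(\bigtriangleup(c,-c,0))$ and on the $\alpha$-part to the bisector of $\sigma^{k}(\bigtriangledown(-c,c,0))$. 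By the induction hypothesis each sub-bisector carries $0$ with flanks $+c$ and $-c$; translating the coordinates of each subsupertile into those of $\sigma^{k+1}(u)$ shows the two ``$+c$ sides'' agree (both point toward the globally $+c$-labelled corner, i.e. toward the $\{2x+y<2^{k+1}\}$ side), so the sub-flanks glue into two genuine flanking lines of $\mu$ with the stated constant values. The base cases $k=0,1$ are immediate from the simplified substitution rule (figure~\ref{fig:subst-up-simp}), and lemma~\ref{lemma:extreme} identifies the corners so that the pieces fit together along shared edges.

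The step I expect to be the main obstacle is precisely the geometric bookkeeping in the induction for (ii): one must pin down which corners of $\sigma^{k+1}(u)$ lie on $\mu$ and on each flank, rewrite them in the local coordinates of the $\delta$- and $\alpha$-subsupertiles, and check that ``flanking on the $+c$ side'' survives these coordinate changes, including the orientation reversal introduced by $\alpha$ — which is exactly why the downward-tile statement has to be proved in the same induction. The remaining ingredients (the two additivity identities, the reflection-intertwining of $\sigma$, the small base cases) are routine.
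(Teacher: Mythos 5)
Your proposal is correct, and its engine is the same one the paper's figure~\ref{fig:persistence-alt} encodes: the configuration reproduces itself under $\sigma$ because $\delta(\bigtriangleup(c,-c,0))=\bigtriangleup(c,-c,0)$ and $\alpha(\bigtriangleup(c,-c,0))=\bigtriangledown(-c,c,0)$ (and symmetrically for the downward tile), so the bisector of a $(k+1)$-supertile is covered by the bisectors of its $\delta$- and $\alpha$-parts and the pattern propagates by induction. The paper simply displays one and two applications of $\sigma$ to the diamond patch and lets the picture carry the induction; you package the same fact as a supertile-level induction with explicit coordinates, which is a legitimate (if heavier) formalization, and your gluing checks for the flanking lines do go through. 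The one genuinely different ingredient is your part (i): deriving the zeros on the bisector from additivity, $\sigma^{k}(\bigtriangleup(c,-c,0))=\sigma^{k}(\bigtriangleup(c,0,0))-\rho\bigl(\sigma^{k}(\bigtriangleup(c,0,0))\bigr)$, together with the intertwining of $\rho$ with $\sigma$ — this is exactly the unproved \emph{odd reflection symmetry} claim of section~\ref{sec:sigma-prop-symmetries}, so your argument supplies a proof of that remark as a byproduct, something the paper's local computation does not. The cost is that your route needs the odd/downward bookkeeping you flag at the end; the paper's diamond-patch picture avoids it by never leaving the two-tile local configuration.
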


\begin{proof}
See figure~\ref{fig:persistence-alt}.
\end{proof}

\begin{figure}[H]
    \centering
    {
    \begin{tikzpicture}
    	\begin{scope}[yscale=.87,xslant=.5]
        \fill[green!30] (0,0) -- (1,0) -- (0,1) -- cycle;
        \draw[dotted,gray] (0,0) -- (1,0) -- (0,1) -- cycle;
        \node at (0,0) {$0$};
        \node at (1,0) {$-c$};
        \node at (0,1) {$+c$};

        \node at (2,0) {$\rightarrow$};

        \fill[green!30] (3,0) -- (4,0) -- (4,1) -- (3,1) -- cycle;
        \draw[dotted,gray] (3,0) -- (5,0) -- (3,2) -- cycle;
        \draw[dotted,gray] (4,0) -- (4,1) -- (3,1) -- cycle;
        \node at (3,0) {$0$};
        \node at (4,0) {$-c$};
        \node at (3,1) {$+c$};
        \node at (4,1) {$0$};

        \node at (6,0) {$\rightarrow$};

        \fill[green!30] (7,0) -- (8,0) -- (8,2) -- (9,2) -- (9,1) -- (7,1) -- cycle;
        \draw[dotted,gray] (7,0) -- (9,0) -- (9,2) -- (7,2) -- cycle;
        \draw[dotted,gray] (8,0) -- (7,1) -- (9,1) -- (8,2);
        \draw[dotted,gray] (9,0) -- (7,2) -- cycle;
        \node at (7,0) {$0$};
        \node at (8,1) {$0$};
        \node at (9,2) {$0$};
        \node at (8,0) {$-c$};
        \node at (9,1) {$-c$};
        \node at (7,1) {$+c$};
        \node at (8,2) {$+c$};
    	\end{scope}
    \end{tikzpicture}
    }
    \caption{Lines alternating $0$'s and opposite pairs $+c$ / $-c$}
    \label{fig:persistence-alt}
\end{figure}
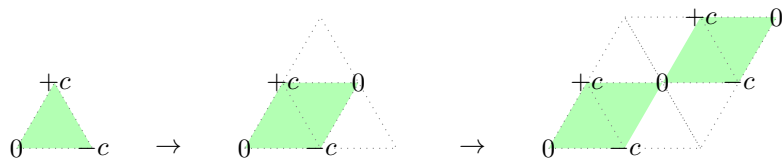

\begin{lemma}
Tiles with one $0$ remain in the corner holding that $0$
upon repeated applications of $\sigma$.
\end{lemma}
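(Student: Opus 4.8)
The statement should be read as the assertion that, for a tile $t$ having exactly one zero corner, every supertile $\sigma^k(t)$ contains $t$ itself, sitting at the extreme corner of $\sigma^k(t)$ corresponding to the zero corner of $t$; in particular that corner tile still has exactly one $0$, in the same position. The plan is to prove this slightly sharper form. By the reflection symmetries recorded in Section~\ref{sec:sigma-prop-symmetries}, or simply by relabelling corners and orientations, it suffices to treat one case, say the bottom left corner of an upward tile $t = \bigtriangleup(0,y,z)$ with $y,z \in \mathbb{F}_p^*$; the other two corners and the downward tiles are handled by the identical argument.

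For this case I would first locate the tile. Since $t$ is upward, $\sigma^k(t)$ is upward, so by the remark following Theorem~\ref{theo:tile-position} the tile occupying its bottom left extreme corner has position $\beta^k$. By Theorem~\ref{theo:sigma-prop-rebuilding} the decoration of that tile is $[t]\cdot B^k$. Now $B - I_3$ has nonzero entries only in its first row, hence $[t]\cdot B = [t]$ whenever the first entry of $[t]$ is $0$; iterating, $[0\ y\ z]\cdot B^k = [0\ y\ z]$ for every $k$. So the tile at position $\beta^k$ carries exactly the values $0,y,z$, i.e. it has a single $0$ at its bottom left corner. For the orientation, the word $\beta^k$ contains no letter $\alpha$, so Lemma~\ref{lemma:flip} gives that this tile has the same orientation as $\sigma^k(t)$, namely the orientation of $t$; therefore the tile in question is literally $t$. (One can also observe directly from the substitution rule that $\beta(\bigtriangleup 0yz) = \bigtriangleup(0,\,0{+}y,\,0{+}z) = \bigtriangleup 0yz$, so $\beta$, and hence $\beta^k$, acts as the identity on such tiles, and then argue exactly as in the proof of Lemma~\ref{lemma:extreme}.)

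Finally I would dispose of the remaining cases mechanically, using that $C - I_3$ and $D - I_3$ have nonzero entries only in their second and third rows respectively: the bottom right (downward: top left) corner has position $\gamma^k$ and satisfies $[x\ 0\ z]\cdot C^k = [x\ 0\ z]$, while the top (downward: bottom) corner has position $\delta^k$ and satisfies $[x\ y\ 0]\cdot D^k = [x\ y\ 0]$, the downward tiles being covered by the very same matrices through Lemma~\ref{lemma:sigma-matrix}. No part of this argument is genuinely hard; the only thing that requires a bit of care is the bookkeeping that matches each extreme corner of a supertile to its position word ($\beta^k$, $\gamma^k$ or $\delta^k$) and to the corresponding matrix, together with the elementary fact that such a unipotent matrix fixes, under left multiplication, precisely the row vectors whose matching coordinate vanishes.
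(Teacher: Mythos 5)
Your proof is correct and rests on the same observation as the paper's (which consists only of a one-step picture showing $\beta(\bigtriangleup 0cd)=\bigtriangleup 0cd$): the corner map $\beta$ (resp.\ $\gamma$, $\delta$) fixes any tile whose corresponding coordinate is $0$, so the tile persists at that extreme corner under iteration. Your matrix formulation via $[0\ y\ z]\cdot B^k=[0\ y\ z]$ and Lemma~\ref{lemma:flip} is just a more explicit writing-out of the induction the figure leaves implicit.
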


\begin{proof}
See figure~\ref{fig:persistence-one}.
\end{proof}

\begin{figure}[H]
    \centering
    {
    \begin{tikzpicture}
    	\begin{scope}[yscale=.87,xslant=.5]
        \fill[green!30] (0,0) -- (1,0) -- (0,1) -- cycle;
        \draw[dotted,gray] (0,0) -- (1,0) -- (0,1) -- cycle;
        \node at (0,0) {$0$};
        \node at (1,0) {$c$};
        \node at (0,1) {$d$};

        \node at (2,0) {$\rightarrow$};

        \fill[green!30] (3,0) -- (4,0) -- (3,1) -- cycle;
        \draw[dotted,gray] (3,0) -- (5,0) -- (3,2) -- cycle;
        \draw[dotted,gray] (4,0) -- (4,1) -- (3,1) -- cycle;
        \node at (3,0) {$0$};
        \node at (4,0) {$c$};
        \node at (3,1) {$d$};
    	\end{scope}
    \end{tikzpicture}
    }
    \caption{Persistence of $0$'s at corners}
    \label{fig:persistence-one}
\end{figure}
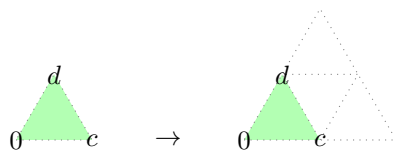

\begin{lemma}
$\forall k \in \mathbb{N}$,
$\forall t \in \{ \bigtriangleup 100, \bigtriangleup 010, \bigtriangleup 001,
                  \bigtriangledown 100, \bigtriangledown 010, \bigtriangledown 001 \}$,
the values at three corners forming an equilateral triangle
in $\sigma^k(t)$ with the same center as $\sigma^k(t)$
cannot all equal $0$ (see figure~\ref{fig:triangle-0}).
\end{lemma}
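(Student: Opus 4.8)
The plan is to prove the six cases simultaneously by exploiting the $120^\circ$ rotational equivariance of $\sigma$, and then to reduce the conclusion to the invertibility of the position matrices of theorem~\ref{theo:sigma-prop-rebuilding}. Fix $k$ and set $S_1=\sigma^k(\bigtriangleup 100)$, $S_2=\sigma^k(\bigtriangleup 010)$, $S_3=\sigma^k(\bigtriangleup 001)$; the three $\bigtriangleup$ cases will be equivalent, and the $\bigtriangledown$ cases go through identically. I would first record that the substitution rule is equivariant under the cyclic relabeling of a tile's corners, i.e.\ under a $120^\circ$ rotation: this is apparent from figures~\ref{fig:subst-up} and~\ref{fig:subst-down}, and matches the algebraic fact that $A$ is the all-ones matrix minus $I_3$ (hence commutes with the cyclic permutation matrix) while $B$, $C$, $D$ are cyclically permuted by conjugation with that matrix. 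By induction on $k$, $\sigma^k$ is equivariant as well; since rotating $\bigtriangleup(x,y,z)$ by $120^\circ$ yields $\bigtriangleup(z,x,y)$, this gives $S_2=\rho(S_1)$ and $S_3=\rho^2(S_1)$, where $\rho$ denotes the $120^\circ$ rotation about the common center $O$ of the three supertiles.

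Next I would note that any equilateral triangle whose vertices are corners of $\sigma^k(t)$ and whose center is $O$ has vertex set $\{Q,\rho Q,\rho^2 Q\}$ for some corner $Q$: the three vertices of an equilateral triangle are the images of any one of them under the two nontrivial rotations about the center, and these rotations are exactly $\rho$ and $\rho^2$; moreover $O$ is never itself a corner of $\sigma^k(t)$, since the side of $\sigma^k(t)$ is divided into $2^k$ equal parts and $2^k$ is not a multiple of $3$, so the triangle is genuinely nondegenerate. Assume for contradiction that $S_1$ takes the value $0$ at $Q$, $\rho Q$ and $\rho^2 Q$. Because $S_2=\rho(S_1)$ and $S_3=\rho^2(S_1)$, the value of $S_1$ at $\rho Q$ equals the value of $S_3$ at $Q$, and the value of $S_1$ at $\rho^2 Q$ equals the value of $S_2$ at $Q$; hence $S_1$, $S_2$ and $S_3$ all take the value $0$ at the single corner $Q$.

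To conclude, I would invoke theorem~\ref{theo:sigma-prop-rebuilding}. The three supertiles share the same underlying geometric subdivision (the way $\sigma$ splits a tile does not depend on its decorations), so a tile having $Q$ as a corner occupies the same position $w\in\Sigma^k$ with the same orientation in each of $S_1$, $S_2$, $S_3$, and $Q$ occupies the same corner slot $i\in\{1,2,3\}$ of it. By theorem~\ref{theo:sigma-prop-rebuilding} and definition~\ref{def:row}, the corner values of that tile in $S_j$ are the entries of $[S_j]\cdot[w]$, and $[S_1]=[1\ 0\ 0]$, $[S_2]=[0\ 1\ 0]$, $[S_3]=[0\ 0\ 1]$, so the value at $Q$ in $S_j$ is the $(j,i)$ entry of $[w]$. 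The three vanishing conditions thus force the $i$-th column of $[w]$ to be zero, making $[w]$ singular; but $[w]=[w_0]\cdots[w_{k-1}]$ is a product of the matrices $A$, $B$, $C$, $D$, each invertible over $\mathbb{F}_p$ by lemma~\ref{lemma:abcd-invertible}, so $[w]$ is invertible --- a contradiction.

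The step I expect to be the main obstacle is pinning down the rotational equivariance ``$S_2=\rho(S_1)$'' as a genuine identity of decorated patches rather than a picture-level observation --- essentially, checking that $\rho$ permutes the positions $\alpha,\beta,\gamma,\delta$ compatibly with the relabeling $\bigtriangleup(x,y,z)\mapsto\bigtriangleup(z,x,y)$ at every level --- together with the minor but necessary remark that the geometric subdivision underlying $\sigma^k$ is independent of the decorations, so that ``the tile at $Q$'' really is the same tile in all three supertiles.
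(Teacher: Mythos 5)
Your proposal is correct. It shares the paper's key idea --- the $120^{\circ}$ rotational equivariance of $\sigma$, which makes $\sigma^k(\bigtriangleup 010)$ and $\sigma^k(\bigtriangleup 001)$ rotated copies of $\sigma^k(\bigtriangleup 100)$, so that vanishing on a central equilateral triple in one of them propagates to the others --- but your endgame differs from the paper's. The paper transfers the vanishing to the \emph{same three corners} of all three basis supertiles, invokes additivity ($\bigtriangleup xyz = x\cdot\bigtriangleup 100 + y\cdot\bigtriangleup 010 + z\cdot\bigtriangleup 001$) to conclude that every upward $k$-supertile vanishes there, and then contradicts lemma~\ref{lemma:unique}. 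You instead transfer the vanishing to a \emph{single corner} $Q$ of all three basis supertiles and observe that the three values in question are exactly the entries of the $i$-th column of $[w]$, so they cannot all vanish because $[w]$ is a product of the invertible matrices $A,B,C,D$ (lemma~\ref{lemma:abcd-invertible}). This bypasses additivity entirely and makes the contradiction completely explicit --- it is the same invertibility fact that underlies lemma~\ref{lemma:unique}, used one level lower. Your two side remarks (that the center is never a corner because $3 \nmid 2^k$, and that the geometric subdivision is independent of the decorations, so ``the tile at $Q$'' is the same position $w$ and corner slot $i$ in all three supertiles) are details the paper leaves implicit but which your argument genuinely needs; both are correct as stated.
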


\begin{proof}
If this was the case, say for $\sigma^k(\bigtriangleup 100)$,
then, by symmetry, the values at the same three corners
in $\sigma^k(\bigtriangleup 010)$
and in $\sigma^k(\bigtriangleup 001)$
would also equal $0$.
Due to additivity ($\bigtriangleup xyz = x \cdot \bigtriangleup 100 + y \cdot \bigtriangleup 010 + z \cdot \bigtriangleup 001$),
the values at the same three corners
in any upward $k$-supertile would equal $0$.
This is a contradiction as from lemma~\ref{lemma:unique},
we can force a nonzero value at one of these three corners.

The reasoning for downward supertiles is similar.
\end{proof}

\begin{figure}[H]
    \centering
    {
    \begin{tikzpicture}
    	\begin{scope}[yscale=.87,xslant=.5]
        \draw (0,0) -- (7,0) -- (0,7) -- cycle;
        \draw[dotted] (2,1) -- (4,2) -- (1,4) -- cycle;

        \node[below,scale=2] at (3.5,0) {$\sigma^k(t)$};

        \node[scale=.5] at (2,1) [circle,fill=red] {};
        \node[scale=.5] at (4,2) [circle,fill=green] {};
        \node[scale=.5] at (1,4) [circle,fill=blue] {};

        \node[scale=.5] at (7/3,7/3) [circle,fill=gray] {};
    	\end{scope}
    \end{tikzpicture}
    }
    \caption{Three corners forming an equilateral triangle with the same center as $\sigma^k(t)$}
    \label{fig:triangle-0}
\end{figure}
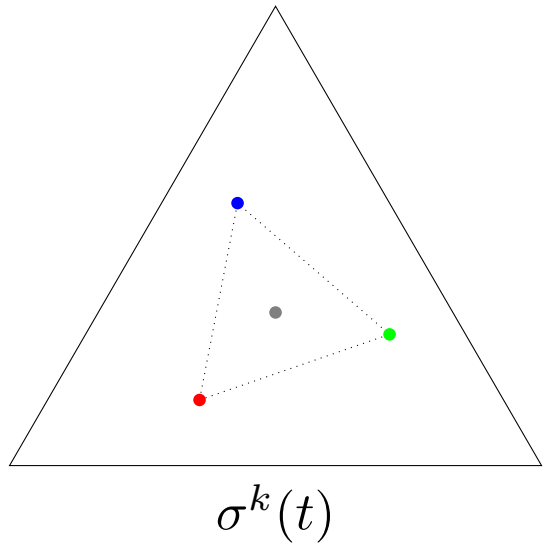

\subsection{Irreducibility}
\label{sec:sigma-prop-irreducibility}

\begin{theo}
$(T^*, \sigma)$ is irreducible.
\end{theo}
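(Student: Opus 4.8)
The plan is to translate irreducibility into a transitivity statement for a matrix group and then invoke classical facts about $\mathrm{SL}_3(\mathbb{F}_p)$.

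First I would unwind what must be shown: for every pair $s,t\in T^*$ there must be $k\geq 1$ and a position $w\in\Sigma^k$ with the tile of $\sigma^k(s)$ at position $w$ equal to $t$. By Theorem~\ref{theo:sigma-prop-rebuilding} that tile has row $[s]\cdot[w]$, and by Lemma~\ref{lemma:flip} it has the orientation of $s$ precisely when $w$ contains an even number of $\alpha$'s. Since $\alpha,\beta,\gamma,\delta$ are permutations of $T$ and fix $\{\bigtriangleup 000,\bigtriangledown 000\}$ (by Lemma~\ref{lemma:sigma-matrix}, $[0\ 0\ 0]\,M=[0\ 0\ 0]$ for $M\in\{A,B,C,D\}$), they restrict to permutations of $T^*$. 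Identifying $T^*$ with $(\mathbb{F}_p^3\setminus\{0\})\times\mathbb{Z}/2\mathbb{Z}$ (row, orientation), the map $w\mapsto([w],\#_\alpha(w)\bmod 2)$ is a monoid homomorphism onto the monoid generated by $(A,1),(B,0),(C,0),(D,0)$, which — being generated by invertible elements of a finite group — is the group
\[
  H=\big\langle(A,1),(B,0),(C,0),(D,0)\big\rangle\ \le\ \mathrm{GL}_3(\mathbb{F}_p)\times\mathbb{Z}/2\mathbb{Z},\qquad (g,\eta)\cdot(v,o)=(vg,\;o+\eta).
\]
So it suffices to show that $H$ acts transitively on $T^*$ (the requirement $k\geq 1$ costs nothing: pad a word by $\delta^{\,p}$, which represents the identity and adds no $\alpha$).

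The algebraic core is that $G:=\langle A,B,C,D\rangle$ — in fact already $\langle B,C,D\rangle$ — contains $\mathrm{SL}_3(\mathbb{F}_p)$. Writing $E_{i\to j}$ for the elementary transvection that adds coordinate $i$ to coordinate $j$, one has $B=E_{1\to 2}E_{1\to 3}$, $C=E_{2\to 1}E_{2\to 3}$, $D=E_{3\to 1}E_{3\to 2}$, while $\det A=2$ (Lemma~\ref{lemma:abcd-invertible}). Forming a few commutators and conjugates among $B,C,D$ — using relations of the form $[E_{i\to j},E_{j\to k}]=E_{i\to k}$ for distinct $i,j,k$ — isolates each single transvection $E_{i\to j}=E_{i\to j}(1)$, and the $E_{i\to j}(1)$ generate $\mathrm{SL}_3(\mathbb{F}_p)$ since $E_{i\to j}(c)=E_{i\to j}(1)^c$. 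As $\det B=\det C=\det D=1$, this gives $\langle B,C,D\rangle=\mathrm{SL}_3(\mathbb{F}_p)$. I would also use the classical fact that $\mathrm{SL}_3(\mathbb{F}_p)$ acts transitively on $\mathbb{F}_p^3\setminus\{0\}$ (complete a nonzero row to a basis and rescale one basis vector to fix the determinant).

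Granting this, transitivity of $H$ is short. Since $\langle B,C,D\rangle=\mathrm{SL}_3(\mathbb{F}_p)$ we get $\mathrm{SL}_3(\mathbb{F}_p)\times\{0\}\le H$, and $(A,1)\in H$. Fix the base tile $(e_1,\mathrm{up})$ with $e_1=[1\ 0\ 0]$. Given any nonzero row $v$: choosing $g\in\mathrm{SL}_3(\mathbb{F}_p)$ with $e_1g=v$ gives $(g,0)\in H$ taking $(e_1,\mathrm{up})$ to $(v,\mathrm{up})$; and, since $e_1A=[0\ 1\ 1]\neq 0$, choosing $h\in\mathrm{SL}_3(\mathbb{F}_p)$ with $(e_1A)h=v$ gives $(A,1)(h,0)=(Ah,1)\in H$ taking $(e_1,\mathrm{up})$ to $(v,\mathrm{down})$. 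Hence the $H$-orbit of $(e_1,\mathrm{up})$ is all of $T^*$, $H$ is transitive, and $(T^*,\sigma)$ is irreducible; note that the orientation bookkeeping never obstructs anything, since $\mathrm{SL}_3(\mathbb{F}_p)$ and its translate $A\cdot\mathrm{SL}_3(\mathbb{F}_p)$ are each already transitive on nonzero rows.

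The main obstacle is the group-theoretic step $\langle B,C,D\rangle\supseteq\mathrm{SL}_3(\mathbb{F}_p)$, i.e.\ extracting all elementary transvections from the three ``double transvections'' $B,C,D$; once that is in hand, the rest is routine bookkeeping with Theorem~\ref{theo:sigma-prop-rebuilding}, Lemma~\ref{lemma:flip}, and standard properties of $\mathrm{SL}_3$.
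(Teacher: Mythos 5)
Your route is genuinely different from the paper's. The paper never leaves the combinatorics of tiles: it builds an explicit chain of reachability lemmas ($\bigtriangleup xyz\sim\bigtriangleup(x,x+y,x+z)$ and its $\gamma$-, $\delta$-analogues, iterated to shift two coordinates by a common amount, then combined through a five-case analysis to shift a single coordinate by $2$, and finally funnelled through $\bigtriangleup 111$ and $\bigtriangledown 111$). You instead reduce irreducibility to transitivity of the group $H=\langle(A,1),(B,0),(C,0),(D,0)\rangle$ on $(\mathbb{F}_p^3\setminus\{0\})\times\mathbb{Z}/2$. That reduction is correct and clean: every $w\in\Sigma^k$ is realized by a tile position, a finitely generated submonoid of a finite group is a subgroup, the $\delta^p$ padding handles $k\ge 1$, and the orientation bookkeeping via $(A,1)$ works because $e_1A\neq 0$. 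If the central algebraic claim holds, your argument is shorter and more conceptual than the paper's, and it identifies exactly which positions realize a given target tile.

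The gap is in that central claim. You assert $\langle B,C,D\rangle=\mathrm{SL}_3(\mathbb{F}_p)$ by writing $B=E_{1\to2}E_{1\to3}$ etc.\ and invoking $[E_{i\to j},E_{j\to k}]=E_{i\to k}$ to ``isolate each single transvection.'' That mechanism does not apply as stated: the commutators you can actually form are commutators of $B,C,D$ themselves, and these involve \emph{opposite} root pairs (e.g.\ $E_{1\to2}$ inside $B$ against $E_{2\to1}$ inside $C$, or $E_{1\to3}$ inside $B$ against $E_{3\to1}$ inside $D$), whose commutator is not a transvection -- indeed a direct computation gives, e.g., $[B,C]=\begin{bmatrix}3&-1&1\\1&0&0\\0&0&1\end{bmatrix}$, which is not unipotent of rank-one defect. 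So no ``few commutators'' of the stated form produce an elementary $E_{i\to j}$. The claim itself is nevertheless true, and a correct completion is available: each of $B,C,D$ is already a single transvection ($B-I=e_1^{\mathsf T}[0\ 1\ 1]$ has rank one), with fixed hyperplanes $\{x=0\},\{y=0\},\{z=0\}$ and directions $[0\,1\,1],[1\,0\,1],[1\,1\,0]$; these directions are linearly independent precisely because their determinant is $\det A=2\neq 0$ (here $p$ odd is used), which forces the group to act irreducibly on $\mathbb{F}_p^3$; then McLaughlin's classification of irreducible linear groups generated by transvections over a prime field of odd characteristic leaves only $\mathrm{SL}_3(\mathbb{F}_p)$ in dimension $3$. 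You must either cite that classification or carry out a nontrivial explicit generation argument -- as written, the key lemma of your proof is asserted rather than proved.
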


\begin{proof}
$\forall t, t' \in T$,
let the relation $t \sim t'$
mean that $\exists k \in \mathbb{N}$ such that $t'$ appears in $\sigma^k(t)$.

The relation \quotes{$\sim$} is naturally reflexive and transitive:
\begin{itemize}
\item $t$ appears in $\sigma^0(t)$,
\item if $t'$ appears in $\sigma^k(t)$
      and $t''$ appears in $\sigma^{k'}(t')$,
      then $t''$ appears in $\sigma^{k+k'}(t)$.
\end{itemize}

\begin{lemma}
\label{lemma:irr1}
$\forall x,y,z \in \mathbb{F}_p$,
$\bigtriangleup xyz \sim \bigtriangleup(x,x+y,x+z)$.
\end{lemma}

$\bigtriangleup(x,x+y,x+z)$ appears in $\sigma(\bigtriangleup xyz)$
(see figure~\ref{fig:subst-up}).

\begin{lemma}
\label{lemma:irr2}
$\forall x \in \mathbb{F}_p^*$,
$\forall d,y,z \in \mathbb{F}_p$,
$\bigtriangleup xyz \sim \bigtriangleup(x,d+y,d+z)$.
\end{lemma}

Applying lemma~\ref{lemma:irr1} $k$ times,
we have $\bigtriangleup xyz \sim \bigtriangleup(x,kx+y,kx+z)$.
As $x$ is invertible, $k \rightarrow kx$ runs through $\mathbb{F}_p$.

\begin{lemma}
\label{lemma:irr3}
$\forall y \in \mathbb{F}_p^*$,
$\forall d,x,z \in \mathbb{F}_p$,
$\bigtriangleup xyz \sim \bigtriangleup(d+x,y,d+z)$.
\end{lemma}

Similar to lemma~\ref{lemma:irr2}.

\begin{lemma}
\label{lemma:irr4}
$\forall z \in \mathbb{F}_p^*$,
$\forall d,x,y \in \mathbb{F}_p$,
$\bigtriangleup xyz \sim \bigtriangleup(d+x,d+y,z)$.
\end{lemma}

Similar to lemma~\ref{lemma:irr2}.

\begin{lemma}
\label{lemma:irr5}
$\forall x,y,z \in \mathbb{F}_p$,
when $(y,z) \ne (0,0)$, $\bigtriangleup xyz \sim \bigtriangleup(x+2,y,z)$.
\end{lemma}

To show $\bigtriangleup xyz \sim \bigtriangleup(x+2,y,z)$,
we will apply each of the lemmas \ref{lemma:irr3}, \ref{lemma:irr4} and \ref{lemma:irr5} once,
in some appropriate order, based on the following identity:
\begin{equation}
    \bigtriangleup 200
    = \bigtriangleup 101
    + \bigtriangleup 110
    - \bigtriangleup 011
\end{equation}

Figure~\ref{fig:irrecucibility-cases} describes the different
cases to consider and table~\ref{tab:irrecucibility-order-lemmas}
indicates an appropriate order of lemmas for each case.

\begin{figure}[H]
    \centering
    \resizebox{10cm}{!}
    {
    \begin{tikzpicture}

        \node at (-1.5,0.5) {\Huge $y = 0$};
        \node at (-1.5,1.5) {\Huge $y = 1$};
        \node at (-1.5,3.5) {\Huge $y \ne 0, 1$};

        \node[rotate=-45, above right] at (.25,-.5) {\huge $x = 0$};
        \node[rotate=-45, above right] at (2.25,-.5) {\huge $x \ne 0$};

        \node[rotate=-45, above right] at (6+.25,-.5) {\huge $x = 0$};
        \node[rotate=-45, above right] at (6+2.25,-.5) {\huge $x \ne 0, p-1$};
        \node[rotate=-45, above right] at (6+4.25,-.5) {\huge $x = p-1$};

        \node[rotate=-45, above right] at (12+.25,-.5) {\huge $x = 0$};
        \node[rotate=-45, above right] at (12+2.25,-.5) {\huge $x \ne 0$};

        \fill[violet!20] (0,1) rectangle (1,5);
        \fill[green!20] (1,1) rectangle (5,5);
        \draw (1,1) -- (1,5);
        \draw[pattern=north west lines, pattern color=gray] (0,0) rectangle (5,1);
        \node at (2.5,5.5) {\Huge $z = 0$};
        \draw[thick] (0,0) rectangle (5,5);
        \node at (.5,3) {\Huge d};
        \node at (3,3) {\Huge b};

        \fill[yellow!20] (7,0) rectangle (11,1);
        \fill[yellow!20] (7,2) rectangle (11,5);
        \fill[blue!20] (6,1) rectangle (7,5);
        \fill[blue!20] (7,1) rectangle (10,2);
        \fill[violet!20] (10,1) rectangle (11,2);
        \fill[red!20] (6,0) rectangle (7,1);
        \draw (7,0) -- (7,1);
        \draw (6,1) -- (11,1);
        \draw (10,1) -- (10,2);
        \draw (11,2) -- (7,2) -- (7,5);
        \node at (8.5,5.5) {\Huge $z = 1$};
        \draw[thick] (6,0) rectangle (11,5);
        \node at (9,3.5) {\Huge a};
        \node at (9,.5) {\Huge a};
        \node at (6.5,.5) {\Huge e};
        \node at (6.5,1.5) {\Huge c};
        \node at (10.5,1.5) {\Huge d};

        \fill[yellow!20] (13,0) rectangle (17,1);
        \fill[yellow!20] (13,2) rectangle (17,5);
        \fill[blue!20] (12,1) rectangle (13,5);
        \fill[green!20] (13,1) rectangle (17,2);
        \fill[red!20] (12,0) rectangle (13,1);
        \draw (13,0) -- (13,5);
        \draw (12,1) -- (17,1);
        \draw (13,2) -- (17,2);
        \node at (14.5,5.5) {\Huge $z \ne 0, 1$};
        \draw[thick] (12,0) rectangle (17,5);
        \node at (15,3.5) {\Huge a};
        \node at (15,.5) {\Huge a};
        \node at (15,1.5) {\Huge b};
        \node at (12.5,3) {\Huge c};
        \node at (12.5,.5) {\Huge e};

    \end{tikzpicture}
    }
    \caption{Different cases for lemma \ref{lemma:irr5}}
    \label{fig:irrecucibility-cases}
\end{figure}
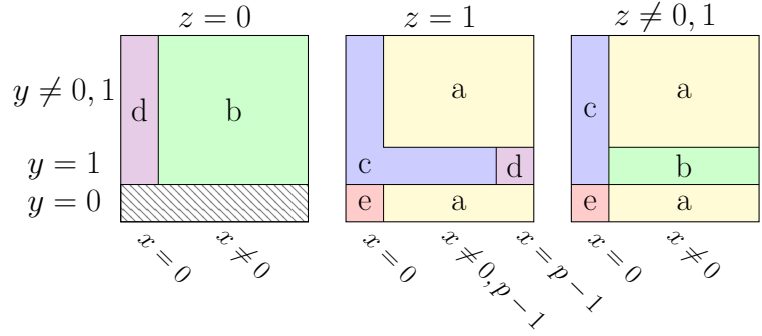

\begin{table} [!htb]
\begin{tabular}{|c|l|}
    \hline
    \rule[-1ex]{0pt}{2.5ex} Case & Order of lemmas \\
    \hline
    \rule[-1ex]{0pt}{2.5ex} a & $\bigtriangleup xyz \stackrel{\ref{lemma:irr2}}{\sim} \bigtriangleup(x,y - 1,z - 1) \stackrel{\ref{lemma:irr3}}{\sim} \bigtriangleup(x + 1,y - 1,z) \stackrel{\ref{lemma:irr4}}{\sim} \bigtriangleup(x + 2,y,z)$ \\
    \rule[-1ex]{0pt}{2.5ex} b & $\bigtriangleup xyz \stackrel{\ref{lemma:irr2}}{\sim} \bigtriangleup(x,y - 1,z - 1) \stackrel{\ref{lemma:irr4}}{\sim} \bigtriangleup(x + 1,y,z - 1) \stackrel{\ref{lemma:irr3}}{\sim} \bigtriangleup(x + 2,y,z)$ \\
    \rule[-1ex]{0pt}{2.5ex} c & $\bigtriangleup xyz \stackrel{\ref{lemma:irr3}}{\sim} \bigtriangleup(x + 1,y,z + 1) \stackrel{\ref{lemma:irr2}}{\sim} \bigtriangleup(x + 1,y - 1,z) \stackrel{\ref{lemma:irr4}}{\sim} \bigtriangleup(x + 2,y,z)$ \\
    \rule[-1ex]{0pt}{2.5ex} d & $\bigtriangleup xyz \stackrel{\ref{lemma:irr3}}{\sim} \bigtriangleup(x + 1,y,z + 1) \stackrel{\ref{lemma:irr4}}{\sim} \bigtriangleup(x + 2,y + 1,z + 1) \stackrel{\ref{lemma:irr2}}{\sim} \bigtriangleup(x + 2,y,z)$ \\
    \rule[-1ex]{0pt}{2.5ex} e & $\bigtriangleup xyz \stackrel{\ref{lemma:irr4}}{\sim} \bigtriangleup(x + 1,y + 1,z) \stackrel{\ref{lemma:irr3}}{\sim} \bigtriangleup(x + 2,y + 1,z + 1) \stackrel{\ref{lemma:irr2}}{\sim} \bigtriangleup(x + 2,y,z)$ \\
    \hline
\end{tabular}
\caption{Order of lemmas for lemma \ref{lemma:irr5}}
\label{tab:irrecucibility-order-lemmas}
\end{table}

\begin{lemma}
\label{lemma:irr6}
$\forall x,x',y,z \in \mathbb{F}_p$,
when $(y,z) \ne (0,0)$, $\bigtriangleup xyz \sim \bigtriangleup x'yz$.
\end{lemma}

Applying lemma~\ref{lemma:irr5} $k$ times,
we have $\bigtriangleup xyz \sim \bigtriangleup(x+2k,y,z)$.
As $2$ is invertible, $k \rightarrow x+2k$ runs through $\mathbb{F}_p$.

\begin{lemma}
\label{lemma:irr7}
$\forall x,y,y',z \in \mathbb{F}_p$,
when $(x,z) \ne (0,0)$, $\bigtriangleup xyz \sim \bigtriangleup xy'z$.
\end{lemma}

Similar to lemma \ref{lemma:irr6}.

\begin{lemma}
\label{lemma:irr8}
$\forall x,y,z,z' \in \mathbb{F}_p$,
when $(x,y) \ne (0,0)$, $\bigtriangleup xyz \sim \bigtriangleup xyz'$.
\end{lemma}

Similar to lemma \ref{lemma:irr6}.

\begin{lemma}
\label{lemma:irr9}
$\forall x,y,z \in \mathbb{F}_p$,
when $(x,y,z) \ne (0,0,0)$,
$\bigtriangleup xyz \sim \bigtriangleup 111$
and $\bigtriangleup 111 \sim \bigtriangleup xyz$.
\end{lemma}

If $x \ne 0$,
then $\bigtriangleup xyz \sim \bigtriangleup xy1
                            \sim \bigtriangleup x11
                            \sim \bigtriangleup 111$
and $\bigtriangleup 111 \sim \bigtriangleup x11
                           \sim \bigtriangleup xy1
                           \sim \bigtriangleup xyz$.

The other cases ($y \ne 0$ and $z \ne 0$) are similar.

\begin{lemma}
\label{lemma:irr10}
$\forall x,y,z \in \mathbb{F}_p$,
when $(x,y,z) \ne (0,0,0)$,
$\bigtriangledown xyz \sim \bigtriangledown 111$
and $\bigtriangledown 111 \sim \bigtriangledown xyz$.
\end{lemma}

Similar to lemma \ref{lemma:irr9}.

\vspace{\baselineskip}

Noting that $\bigtriangleup 111 \sim \bigtriangledown 222$
and that $\bigtriangledown 111 \sim \bigtriangleup 222$,
and using lemmas \ref{lemma:irr9} and \ref{lemma:irr10},
we can now prove that $\forall t, t' \in T^*$, $t \sim t'$:
as depicted in figure~\ref{fig:irrecucibility-conclusion},
we have a \quotes{$\sim$} path from any tile $t$ to any tile $t'$.
\end{proof}

\begin{figure}[H]
    \centering
    \resizebox{10cm}{!}
    {
    \begin{tikzpicture}

        \node (A) at (0,2) {$\bigtriangleup xyz$};
        \node (B) at (2,2) {$\bigtriangleup 222$};
        \node (C) at (4,2) {$\bigtriangledown x'y'z'$};

        \node (D) at (1,1) {$\bigtriangleup 111$};
        \node (E) at (3,1) {$\bigtriangledown 111$};

        \node (F) at (0,0) {$\bigtriangleup x'y'z'$};
        \node (G) at (2,0) {$\bigtriangledown 222$};
        \node (H) at (4,0) {$\bigtriangledown xyz$};

        \draw[<->] (A) -- (D);
        \draw[<->] (D) -- (F);
        \draw[<->] (C) -- (E);
        \draw[<->] (E) -- (H);
        \draw[<->] (D) -- (B);
        \draw[<->] (E) -- (G);
        \draw[->] (D) -- (G);
        \draw[->] (E) -- (B);

    \end{tikzpicture}
    }
    \caption{$\forall t, t' \in T^*$, $t \sim t'$}
    \label{fig:irrecucibility-conclusion}
\end{figure}
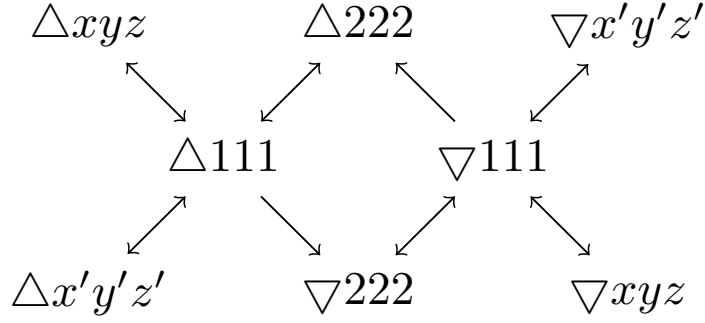

\subsection{Primitivity}
\label{sec:sigma-prop-primitivity}

\begin{theo}
$(T^*, \sigma)$ is primitive.
\end{theo}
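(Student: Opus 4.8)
The plan is to upgrade the irreducibility just established to a \emph{uniform} statement: I want a single $K\in\mathbb{N}$ such that, for every $t,t'\in T^*$, the tile $t'$ appears in $\sigma^K(t)$. Beyond irreducibility, the only extra input needed is the presence of a self-loop in the one-step descendance relation.

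First I would pass to the directed graph $G$ on the vertex set $T^*$ having an edge $t\to t'$ exactly when $t'$ is one of the four tiles of $\sigma(t)$. By the zero-pattern lemma (a supertile contains $\bigtriangleup 000$ or $\bigtriangledown 000$ only if it was built from such a tile), $\sigma$ sends a tile of $T^*$ to four tiles of $T^*$, so $G$ is well defined on $T^*$; and unwinding the definition of supertiles, $t'$ appears in $\sigma^k(t)$ if and only if $G$ has a walk of length exactly $k$ from $t$ to $t'$. In this language, the irreducibility theorem states precisely that $G$ is strongly connected.

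Second, I would exhibit a self-loop. Since $\beta(\bigtriangleup xyz)=\bigtriangleup(x,x+y,x+z)$, we get $\beta(\bigtriangleup 001)=\bigtriangleup 001$, and $\bigtriangleup 001\in T^*$; thus $v:=\bigtriangleup 001$ is a vertex of $G$ carrying an edge $v\to v$. The classical argument for primitive digraphs then concludes: $G$ is finite and strongly connected, so every $t\in T^*$ admits a walk of some length $a_t$ from $t$ to $v$ and every $t'\in T^*$ admits a walk of some length $b_{t'}$ from $v$ to $t'$; putting $K=\max_{t,t'\in T^*}(a_t+b_{t'})$, one builds for each ordered pair $(t,t')$ a walk of length exactly $K$ — go from $t$ to $v$, loop $K-a_t-b_{t'}\ge 0$ times at $v$, then go from $v$ to $t'$ — whence $t'$ appears in $\sigma^K(t)$. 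Equivalently, the abelianized substitution matrix of $(T^*,\sigma)$ is a nonnegative irreducible matrix with a nonzero diagonal entry, hence primitive in the Perron--Frobenius sense, which is the desired conclusion.

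I do not anticipate a genuine obstacle; the only points requiring a word of care are that $\sigma$ indeed preserves $T^*$ (so that $G$ is a graph on $T^*$), which is handled by the zero-pattern lemma, and the routine identification of ``$t'$ occurs in $\sigma^k(t)$'' with ``$G$ has a length-$k$ walk $t\to\cdots\to t'$''. Everything else reduces to the standard fact that a strongly connected finite digraph possessing a loop is aperiodic, and therefore primitive.
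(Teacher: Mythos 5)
Your proof is correct and follows essentially the same route as the paper: the paper also combines irreducibility with a self-loop (it uses $\bigtriangleup 011$, fixed by $\beta$, where you use $\bigtriangleup 001$, equally valid) and pads the walk at that vertex to reach a uniform exponent $K=m+n$. No gaps.
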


\begin{proof}
$\forall t, t' \in T^*$,
let $d(t, t')$ denote the least $k \in \mathbb{N}$
such that $t'$ appears in $\sigma^k(t)$;
$d$ is well defined as $(T^*, \sigma)$ is irreducible.

The tile $\bigtriangleup 011$ is particular as $\bigtriangleup 011$ appears in $\sigma(\bigtriangleup 011)$.

Let:
\begin{align}
    m = \max_{t \in T^*} d(t, \bigtriangleup 011)    \\
    n = \max_{t \in T^*} d(\bigtriangleup 011, t)
\end{align}

$\forall t, t' \in T^*$,
$t'$ appears in $\sigma^{m+n}(t)$:
\begin{equation}
    t
    \underbrace{
        \xrightarrow{\sigma^{d(t, \bigtriangleup 011)}}
        \bigtriangleup 011
        \xrightarrow{\sigma^{m - d(t, \bigtriangleup 011) + n - d(\bigtriangleup 011, t')}}
        \bigtriangleup 011
        \xrightarrow{\sigma^{d(\bigtriangleup 011, t')}}
    }_{\xrightarrow{\sigma^{m+n}}}
    t'
\end{equation}
\end{proof}

Figures~\ref{fig:patch3} and \ref{fig:patch5}
present examples of patches that appear in all sufficiently large supertiles
respectively for $p = 3$ and $p = 5$.

\begin{figure}[H]
    \begin{center}
    \vstretch{1.732}{
        \includegraphics[height=5cm, keepaspectratio]{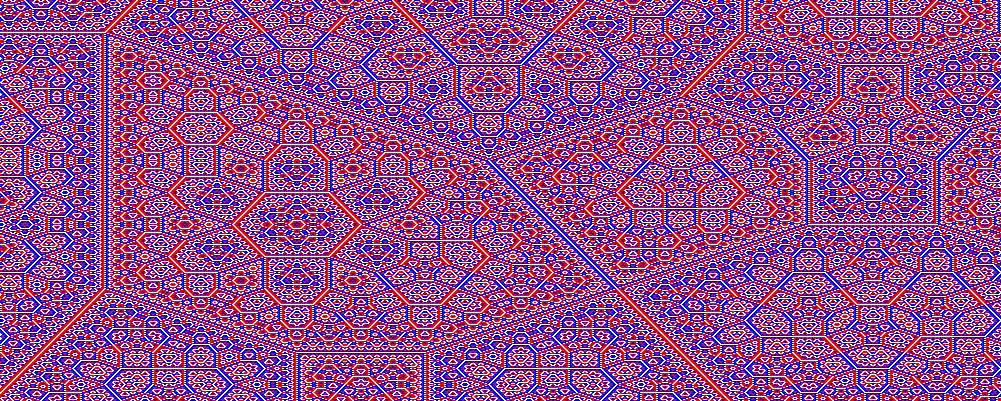}
    }
    \caption{Colored representation of a patch (for $p = 3$)}
    \label{fig:patch3}
    \end{center}
\end{figure}

\begin{figure}[H]
    \begin{center}
    \vstretch{1.732}{
        \includegraphics[height=5cm, keepaspectratio]{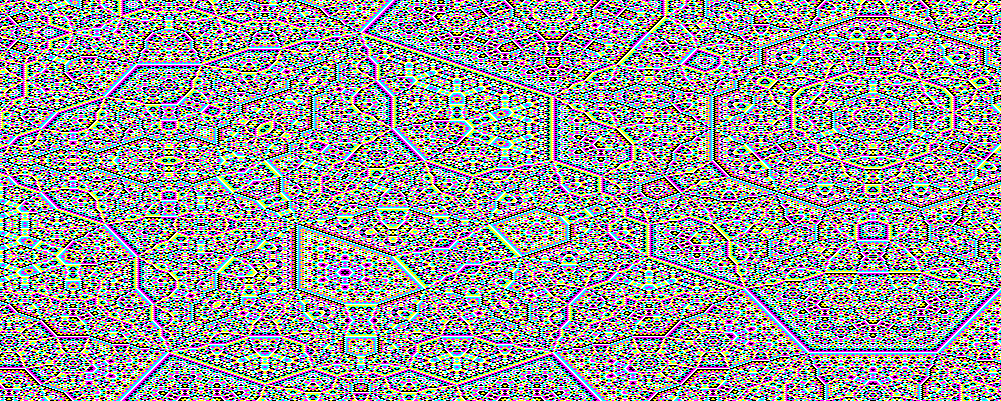}
    }
    \caption{Colored representation of patch (for $p = 5$)}
    \label{fig:patch5}
    \end{center}
\end{figure}

\section{\texorpdfstring{Tiling $S_t$}{Tiling St}}
\label{sec:sigma-tiling-s}

\subsection{Construction}
\label{sec:sigma-tiling-s-cons}

\begin{definition}
$\forall t \in T$,
$\forall k \in \mathbb{N}$,
let:
\begin{equation}
    s_t(k) = \sigma^k(\alpha^{-k}(t))
\end{equation}
\end{definition}

\begin{lemma}
$\forall t \in T$,
the sequence $\{s_t(k)\}_{k \in \mathbb{N}}$ defines a family of nested supertiles
sharing the same central tile $t$.
\end{lemma}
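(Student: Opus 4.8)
The plan is to peel off the definition $s_t(k)=\sigma^k(\alpha^{-k}(t))$ using two elementary structural facts about $\sigma$ and the position $\alpha$, each proved by a routine induction on $k$, and then to read off both claims (nestedness and a common central tile) directly.

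\emph{Step 1: for every $u\in T$ and every $k$, the central tile of $\sigma^k(u)$ is $\alpha^k(u)$.} The base case $k=0$ is immediate, since $\sigma^0(u)=u$ and a single tile is its own central tile. For the step I would write $\sigma^{k+1}(u)=\sigma^k(\sigma(u))$: the patch $\sigma(u)$ has central tile $\alpha(u)$, so applying $\sigma^k$ tile by tile, the central $k$-supertile of $\sigma^{k+1}(u)$ is $\sigma^k(\alpha(u))$, whose central tile is $\alpha^k(\alpha(u))=\alpha^{k+1}(u)$ by the inductive hypothesis. Here I use that the central tile of a $(k+1)$-supertile is the central tile of its central $k$-sub-supertile, which is exactly the statement that positions concatenate (theorem~\ref{theo:tile-position}, together with the remark that the central tile of a $k$-supertile has position $\alpha^k$), so that $\alpha\cdot\alpha^k=\alpha^{k+1}$.

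\emph{Step 2: the central $k$-supertile of $\sigma^{k+1}(u)$ is $\sigma^k(\alpha(u))$.} This is again immediate from $\sigma^{k+1}(u)=\sigma^k(\sigma(u))$: applying $\sigma^k$ to the four-tile patch $\sigma(u)$ produces four $k$-supertiles, and the central one is the image under $\sigma^k$ of the central tile $\alpha(u)$ of $\sigma(u)$.

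\emph{Conclusion.} By Step 1, the central tile of $s_t(k)=\sigma^k(\alpha^{-k}(t))$ is $\alpha^k\bigl(\alpha^{-k}(t)\bigr)=t$ for every $k$, so all the $s_t(k)$ share the same central tile $t$. By Step 2, the central $k$-supertile of $s_t(k+1)=\sigma^{k+1}\bigl(\alpha^{-(k+1)}(t)\bigr)$ is $\sigma^k\bigl(\alpha(\alpha^{-(k+1)}(t))\bigr)=\sigma^k\bigl(\alpha^{-k}(t)\bigr)=s_t(k)$; hence $s_t(0)\subset s_t(1)\subset s_t(2)\subset\cdots$, each supertile being the central sub-supertile of its successor, and the family is nested with common central tile $t$.

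The only point that wants a little care — not really an obstacle — is orientation bookkeeping: $\alpha$ reverses orientation and $\alpha^{-1}$ is a genuine permutation of $T$ (already established), so $\alpha^{-k}(t)$ and $s_t(k)$ are well defined, and the $k$ orientation flips incurred in forming $\sigma^k$ of the central tile cancel the $k$ flips built into $\alpha^{-k}(t)$, returning the central tile to the orientation of $t$. Alternatively one may verify the central-tile claim numerically: $[s_t(k)]=[\alpha^{-k}(t)]=[t]\,A^{-k}$ by definition~\ref{def:row} and lemma~\ref{lemma:sigma-matrix}, so the central tile (at position $\alpha^k$, cf.\ theorem~\ref{theo:sigma-prop-rebuilding}) has row matrix $[t]\,A^{-k}A^{k}=[t]$, using that $A$ is invertible (lemma~\ref{lemma:abcd-invertible}); but the combinatorial route above is cleaner and needs less machinery.
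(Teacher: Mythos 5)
Your proof is correct and follows essentially the same route as the paper: both arguments identify the central $k$-supertile of $s_t(k+1)$ as the image of the central tile of $\sigma(\alpha^{-k-1}(t))$ and compute that it equals $s_t(k)$, and both check that the tile at position $\alpha^k$ in $s_t(k)$ is $\alpha^k(\alpha^{-k}(t))=t$; the paper simply carries out these two computations in the row-matrix calculus $[\cdot]$, which is your own ``alternative'' verification. Your explicit Steps 1--2 and the orientation remark make precise the structural facts the paper leaves implicit, but they are not a different method.
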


\begin{proof}
$\forall k \in \mathbb{N}$:
the central $k$-supertile of $s_t(k+1)$ satisfies:
\begin{equation}
    [\alpha(s_t(k+1))] = [\alpha^{-k-1}(t)][\alpha]
                       = [t][\alpha^{-k-1}][\alpha]
                       = [t][\alpha^{-k}]
                       = [s_t(k)]
\end{equation}
so $s_t(k)$ is \quotes{nested} in the center of $s_t(k+1)$.

Also, the central tile of $s_t(k)$ (at position $\alpha^k$) satisfies:
\begin{equation}
    [\alpha^k(s_t)] = [s_t] \cdot [\alpha^k]
                    = [t] \cdot [\alpha^{-k}] \cdot [\alpha^k]
                    = [t]
\end{equation}
\end{proof}

\begin{lemma}
\label{lemma:s-limit}
$\forall t \in T$,
the sequence $\{s_t(k)\}_{k \in \mathbb{N}}$ has a limit, say $S_t$,
that constitutes a tiling of the plane.
\end{lemma}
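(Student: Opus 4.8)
The plan is to turn the algebraic nesting of the supertiles $s_t(k)$ into a genuinely geometric statement: an increasing sequence of triangular patches of the plane, all with a common centre, whose union is all of $\mathbb{R}^2$ and on which the decorations are consistent.

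First I would fix the placement in the plane. Put $s_t(0) = t$ as one unit triangle of the standard triangular grid of $\mathbb{R}^2$. For the inductive step, recall from the proof of the previous lemma that $\alpha(s_t(k+1)) = s_t(k)$, i.e.\ $s_t(k)$ is exactly the central $k$-supertile of $s_t(k+1)$. Now the central sub-triangle obtained by one application of $\sigma$ to a triangle $\Delta$ of side $2^{k+1}$ (see figure~\ref{fig:abcd}) has side $2^k$, has orientation opposite to that of $\Delta$, and — being the medial triangle of $\Delta$ — has its three vertices at the midpoints of the edges of $\Delta$; in particular it is contained in $\Delta$ and has the same centroid as $\Delta$. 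Consequently, once $s_t(k+1)$ has been placed, there is a unique placement of $s_t(k)$ making it coincide with the central sub-supertile of $s_t(k+1)$; carrying this out for all $k$ embeds every $s_t(k)$ in one fixed triangular grid of the plane, their supports $R_k := \operatorname{supp}(s_t(k))$ satisfy $R_0 \subset R_1 \subset R_2 \subset \cdots$, and all the $R_k$ share a common centroid $O$.

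Next I would define the limit. Iterating $\alpha(s_t(m)) = s_t(m-1)$ gives $\alpha^{j}(s_t(k+j)) = s_t(k)$ for every $j$, so $s_t(k)$ occurs as an honest patch inside each later $s_t(k+j)$; hence, for a fixed unit triangle $\Delta'$ of the grid, the tile that $s_t(k)$ places on $\Delta'$ is the same for all $k$ large enough that $\Delta' \subset R_k$. Define $S_t$ on $\bigcup_k R_k$ by assigning to each such $\Delta'$ this eventually constant tile. By construction $S_t$ restricted to $R_k$ equals $s_t(k)$ for every $k$, so $S_t$ is a well-defined, internally consistent tiling of the region $\bigcup_k R_k$, and $s_t(k) \to S_t$ in the usual local topology on tilings. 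Finally, $R_k$ is an equilateral triangle of side $2^k$ with centroid $O$, so the distance from $O$ to its boundary is $2^k/(2\sqrt{3}) \to \infty$; thus every point of $\mathbb{R}^2$ lies in $R_k$ for all sufficiently large $k$, whence $\bigcup_k R_k = \mathbb{R}^2$ and $S_t$ is a tiling of the whole plane.

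The only delicate point is the geometric bookkeeping of the second paragraph: confirming that the algebraic relation $\alpha(s_t(k+1)) = s_t(k)$ really does realise the supertiles as a concentric, grid-aligned, exhausting family in spite of the orientation flip at every level. Once that is secured, the existence of the limit and the fact that it tiles the plane follow immediately, since concentric equilateral triangles of side $2^k$ exhaust $\mathbb{R}^2$.
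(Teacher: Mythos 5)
Your argument is correct and follows essentially the same route as the paper: the supertiles $s_t(k)$ form a nested, concentric family whose inradius tends to infinity, so they exhaust the plane and the decorations stabilise on every fixed tile. The only difference is one of packaging — the paper delegates the existence of the limit to the Chabauty--Fell topology arguments of Smilansky and Solomon, whereas you construct the limit directly via eventual constancy on each unit triangle, which makes the proof self-contained but is not a different idea.
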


\begin{proof}
The sequence $\{s_t(k)\}_{k \in \mathbb{N}}$ is a family of nested supertiles,
whose incircle radius tends to infinity,
hence, by arguments applied in Smilansky and Solomon~\cite{multiscale},
its limit exists in the sense of \quotes{Chabauty–Fell topology}
and corresponds to a tiling of the plane.
\end{proof}

Figure \ref{fig:stationary} presents the first steps
of the construction of $S_{\bigtriangleup 021}$ (for $p = 3$);
the thick lines correspond to the boundaries of the first nested supertiles;
the color scheme is the one used in section~\ref{sec:sigma-cons-examples}.

\begin{figure}[H]
    \centering
    \resizebox{12cm}{!}
    {
    \begin{tikzpicture}
    	\begin{scope}[yscale=.87,xslant=.5]
        \input{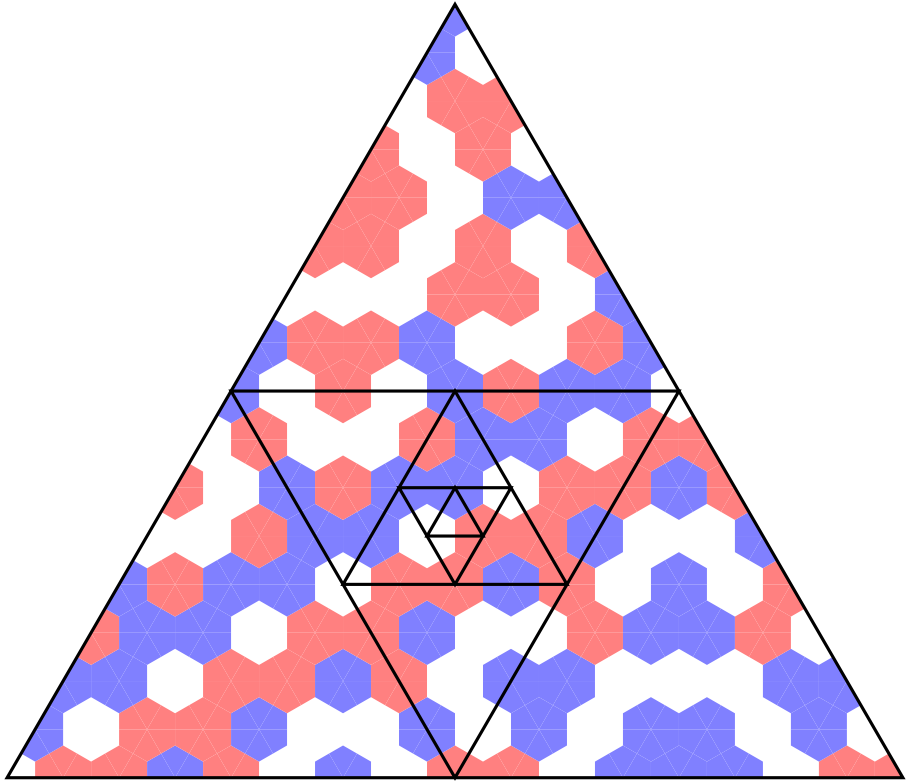}
    	\end{scope}
    \end{tikzpicture}
    }
    \caption{Colored representation of $s_{\bigtriangleup 021}(k)$
             (as nested supertiles) for $k = 0 \twodots 4$ (for $p = 3$)}
    \label{fig:stationary}
\end{figure}

\begin{remark}
$\forall t \in T$,
$S_t$ inherits the symmetries of $t$.
\end{remark}

\begin{remark}
$\forall m \in \mathbb{F}_p$,
$\forall t, t' \in U$ (or $\forall t, t' \in D$):
\begin{equation}
    S_t + S_{t'} = S_{t+t'}
\end{equation}
\begin{equation}
    m \cdot S_t = S_{mt}
\end{equation}
\end{remark}

\subsection{Automaticity}
\label{sec:sigma-tiling-s-automaticity}

For an in-depth introduction to automaticity, see Allouche and Shallit \cite{automatic}.

\begin{definition}
Let $M$ be the deterministic finite automaton with the following characteristics:
\begin{itemize}
\item the set of states is $Q = \{+1, -1\} \times \mathbb{F}_p^{3 \times 3}$,
\item the input alphabet is $\Sigma$,
\item the initial state is $q_0 = (+1, I_3)$,
\item the transitions from state $(f, m)$ are performed as follows:\\
\begin{tabular}{|c|l|}
    \hline
    \rule[-1ex]{0pt}{2.5ex} Input symbol & Output state \\
    \hline
    \rule[-1ex]{0pt}{2.5ex} $\alpha$ & $(+f, [\alpha]^{-1} \cdot m \cdot [\alpha])$ \\
    \rule[-1ex]{0pt}{2.5ex} $\beta$  & $(-f, [\alpha]^{-1} \cdot m \cdot [\beta])$ \\
    \rule[-1ex]{0pt}{2.5ex} $\gamma$ & $(-f, [\alpha]^{-1} \cdot m \cdot [\gamma])$ \\
    \rule[-1ex]{0pt}{2.5ex} $\delta$ & $(-f, [\alpha]^{-1} \cdot m \cdot [\delta])$ \\
    \hline
\end{tabular}
\item $\forall w \in \Sigma^*$,
      the final state of the automaton after processing the input $w$,
      say $(f, m)$, encodes a tile $t'$ as follows:
\begin{itemize}
\item if $f = +1$ then $t$ and $t'$ have the same orientation,
\item if $f = -1$ then $t$ and $t'$ have opposite orientations,
\item also:
\begin{equation}
    [t'] = [t] \cdot [m]
\end{equation}
\end{itemize}
\end{itemize}
\end{definition}

\begin{theo}
$\forall t \in T$,
the tiling $S_t$ is $4$-automatic and can be computed with $M$.
\end{theo}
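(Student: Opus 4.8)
The plan is to exhibit an explicit coordinatisation of the tiles of $S_t$ by finite words over the $4$-letter alphabet $\Sigma=\{\alpha,\beta,\gamma,\delta\}$ and then verify that $M$, reading such a word, returns the orientation and the triple of $\mathbb{F}_p$-decorations of the tile carrying that coordinate. First I would record the coordinate structure: since the $s_t(k)$ form a nested family of supertiles exhausting the plane, every tile $t'$ of $S_t$ lies in $s_t(k)$ for all large $k$, and because $s_t(k)$ sits at position $\alpha$ inside $s_t(k+1)$, the position word of $t'$ in $s_t(k)$ (Theorem~\ref{theo:tile-position}) is $\alpha^{k-k_0}w$ for $k\ge k_0$, where $w\in\Sigma^{k_0}$ does not begin with $\alpha$ and $k_0$ is the least level at which $t'$ occurs; conversely each word not beginning with $\alpha$ (including $\varepsilon$, which coordinatises $t$ itself) is the coordinate of exactly one tile of $S_t$, by uniqueness of positions within supertiles. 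Thus "$S_t$ is $4$-automatic, computed by $M$" means precisely: for every $w\in\Sigma^*$, the state of $M$ after processing $w$ encodes the tile $t_w$ at position $w$ in $s_t(|w|)$, this being consistent because prepending $\alpha$ to $w$ leaves $t_w$ unchanged.

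Next I would obtain a closed form for $t_w$. For $w=d_1\cdots d_n$, Theorem~\ref{theo:sigma-prop-rebuilding} gives $[t_w]=[s_t(n)]\cdot[w]$, and by Definition~\ref{def:row} together with $[\alpha^{n}(u)]=[u]\cdot[\alpha]^{n}$ (Lemma~\ref{lemma:sigma-matrix}, using that $[\alpha]=A$ is invertible by Lemma~\ref{lemma:abcd-invertible}) one gets $[s_t(n)]=[\alpha^{-n}(t)]=[t]\cdot[\alpha]^{-n}$, whence $[t_w]=[t]\cdot[\alpha]^{-n}\cdot[d_1]\cdots[d_n]$. For the orientation: $\sigma$ preserves the overall orientation of a supertile, so $s_t(n)$ has the orientation of $\alpha^{-n}(t)$, which differs from that of $t$ exactly when $n$ is odd; and by Lemma~\ref{lemma:flip}, $t_w$ differs in orientation from $s_t(n)$ exactly when $w$ contains an odd number of $\alpha$'s. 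Since $n\equiv(\#\alpha\text{'s in }w)+(\#\text{non-}\alpha\text{'s in }w)\pmod 2$, the two contributions combine to: $t_w$ has the orientation of $t$ if and only if $w$ contains an even number of letters distinct from $\alpha$. In particular prepending $\alpha$ changes neither $[\alpha]^{-n}[d_1]\cdots[d_n]$ nor this parity, confirming $t_{\alpha w}=t_w$.

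Then I would run the automaton. By induction on $|w|$, after $M$ processes $w=e_1\cdots e_n$ from $q_0=(+1,I_3)$ it is in the state $(f_w,m_w)$ with $m_w=[\alpha]^{-n}\cdot[e_1]\cdots[e_n]$ and $f_w=(-1)^{\#\{i:\,e_i\neq\alpha\}}$; the inductive step reads straight off the transition table, where every input symbol left-multiplies the matrix part by $[\alpha]^{-1}$ and right-multiplies by its own matrix, while the sign is multiplied by $-1$ for $\beta,\gamma,\delta$ and left unchanged for $\alpha$. Applying the decoding rule to $(f_w,m_w)$ yields the tile with decoration $[t]\cdot[m_w]=[t]\cdot[\alpha]^{-n}[e_1]\cdots[e_n]$ and with the orientation of $t$ iff $f_w=+1$; comparing with the previous paragraph, this is exactly $t_w$. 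Since the tiles of $S_t$ are in bijection with their reduced coordinate words and $M$ recovers each tile from its coordinate, $S_t$ is $4$-automatic. The one genuinely delicate point — and the reason the transition table flips the sign on $\beta,\gamma,\delta$ rather than on $\alpha$, which at first looks backwards — is the orientation bookkeeping: reconciling the orientation reversal that $\alpha$ induces inside the substitution with the parity of sign-flips performed by $M$. The parity identity $n\equiv\#\alpha+\#\text{non-}\alpha$ is what makes the two accounts agree, and pinning down that cancellation (together with the matching $[\alpha]^{-n}$ factor in the matrix part) is the crux; the remaining steps are routine inductions.
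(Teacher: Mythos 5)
Your proposal is correct and follows essentially the same route as the paper: position words from Theorem~\ref{theo:tile-position}, the identity $m=[\alpha]^{-k}\cdot[w]$ giving $[t']=[t]\cdot m$ via Theorem~\ref{theo:sigma-prop-rebuilding}, and the parity bookkeeping from Lemma~\ref{lemma:flip} showing that the sign flips on $\beta,\gamma,\delta$ track the orientation of $t'$ relative to $t$. Your explicit reduction to coordinate words not beginning with $\alpha$ just makes precise the paper's closing remark that leading $\alpha$'s fix $q_0$; no gap.
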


\begin{proof}
The incircle radius of nested prototiles $\{ s_t(k) \}_{k \in \mathbb{N}}$
increases without limit, so for any tile $t'$ in the plane,
$\exists k \in \mathbb{N}$ such that $t'$ belongs to $s_t(k)$.

As seen in theorem~\ref{theo:tile-position},
the position of $t'$ within $s_t(k)$
is characterized by a word $w \in \Sigma^k$.

Suppose that the final state of $M$ after processing $w$ equals $(f, m)$.

By construction, $m$ satisfies:
\begin{equation}
    m = \underbrace{[\alpha]^{-1} \ldots [\alpha]^{-1}}_{k \mathrm{\ times}}
        \cdot
        I_3
        \cdot
        [w_0] \ldots [w_{k-1}]
      = [\alpha]^{-k} \cdot [w]
\end{equation}

Also, $t'$ is at position $w$ within $s_t(k)$, so:
\begin{equation}
    [t'] = [s_t(k)] \cdot [w]
         = [\alpha^{-k}(t)] \cdot [w]
         = [t] \cdot [\alpha]^{-k} \cdot [w]
         = [t] \cdot m
\end{equation}
So $m$ gives access to $[t']$.

\vspace{\baselineskip}

By construction, $f = +1$ if and only if $w$ contains
an even cumulative number of $\beta$'s, $\gamma$'s and $\delta$'s.

From lemma~\ref{lemma:flip}:
\begin{itemize}
\item $t$ has the same orientation as $s_t(k)$
      if and only if $k$ is even ($t$ is at position $\alpha^k$),
\item $t'$ has the same orientation as $s_t(k)$
      if and only if $w$ contains an even number of $\alpha$'s,
\item so $t$ and $t'$ have the same orientation
      if and only if $k$ and the number of $\alpha$'s in $w$ have the same parity,
\item this is equivalent to say that
      the cumulative number of $\beta$'s, $\gamma$'s and $\delta$'s in $w$ is even.
\end{itemize}
So $f$ gives access to the orientation of $t'$.
\end{proof}

\begin{remark}
If $k$ is not minimal,
then the input will have extra leading $\alpha$'s;
this will not alter the final state as
$q_0 \xrightarrow{\alpha} q_0$.
\end{remark}

\begin{remark}
$\#\Sigma = 4$, so $S_t$ is \quotes{4}-automatic.
\end{remark}

\begin{remark}
Once the orientation of the central tile is fixed,
we can interpret the position $w$ of any tile in the plane
as a base-$4$ integer,
by assigning the values $0$, $1$, $2$ and $3$
to $\alpha$, $\beta$, $\gamma$ and $\delta$ respectively
(see figure~\ref{fig:plane-coord}).
\end{remark}

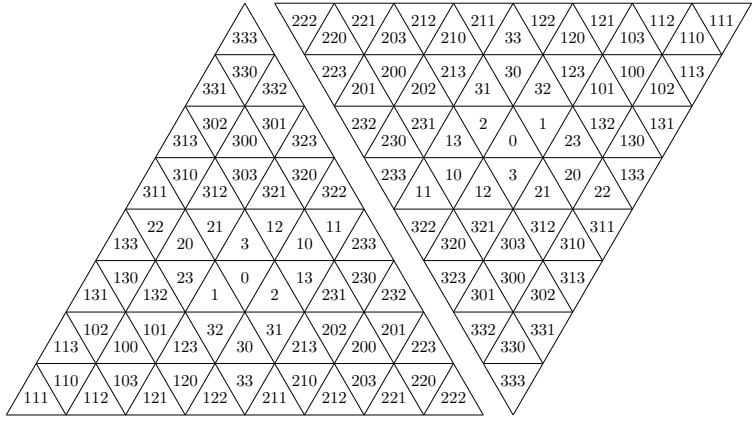
\begin{figure}[H]
    \centering
    \resizebox{10cm}{!}
    {
    \begin{tikzpicture}
    	\begin{scope}[yscale=.87,xslant=.5]
        \node[scale=.8] at (37/6,-35/6) {$0$};\node[scale=.8] at (35/6,-37/6) {$1$};\node[scale=.8] at (41/6,-37/6) {$2$};\node[scale=.8] at (35/6,-31/6) {$3$};\node[scale=.8] at (41/6,-31/6) {$10$};\node[scale=.8] at (43/6,-29/6) {$11$};\node[scale=.8] at (37/6,-29/6) {$12$};\node[scale=.8] at (43/6,-35/6) {$13$};\node[scale=.8] at (29/6,-31/6) {$20$};\node[scale=.8] at (31/6,-29/6) {$21$};\node[scale=.8] at (25/6,-29/6) {$22$};\node[scale=.8] at (31/6,-35/6) {$23$};\node[scale=.8] at (41/6,-43/6) {$30$};\node[scale=.8] at (43/6,-41/6) {$31$};\node[scale=.8] at (37/6,-41/6) {$32$};\node[scale=.8] at (43/6,-47/6) {$33$};\node[scale=.8] at (29/6,-43/6) {$100$};\node[scale=.8] at (31/6,-41/6) {$101$};\node[scale=.8] at (25/6,-41/6) {$102$};\node[scale=.8] at (31/6,-47/6) {$103$};\node[scale=.8] at (25/6,-47/6) {$110$};\node[scale=.8] at (23/6,-49/6) {$111$};\node[scale=.8] at (29/6,-49/6) {$112$};\node[scale=.8] at (23/6,-43/6) {$113$};\node[scale=.8] at (37/6,-47/6) {$120$};\node[scale=.8] at (35/6,-49/6) {$121$};\node[scale=.8] at (41/6,-49/6) {$122$};\node[scale=.8] at (35/6,-43/6) {$123$};\node[scale=.8] at (25/6,-35/6) {$130$};\node[scale=.8] at (23/6,-37/6) {$131$};\node[scale=.8] at (29/6,-37/6) {$132$};\node[scale=.8] at (23/6,-31/6) {$133$};\node[scale=.8] at (53/6,-43/6) {$200$};\node[scale=.8] at (55/6,-41/6) {$201$};\node[scale=.8] at (49/6,-41/6) {$202$};\node[scale=.8] at (55/6,-47/6) {$203$};\node[scale=.8] at (49/6,-47/6) {$210$};\node[scale=.8] at (47/6,-49/6) {$211$};\node[scale=.8] at (53/6,-49/6) {$212$};\node[scale=.8] at (47/6,-43/6) {$213$};\node[scale=.8] at (61/6,-47/6) {$220$};\node[scale=.8] at (59/6,-49/6) {$221$};\node[scale=.8] at (65/6,-49/6) {$222$};\node[scale=.8] at (59/6,-43/6) {$223$};\node[scale=.8] at (49/6,-35/6) {$230$};\node[scale=.8] at (47/6,-37/6) {$231$};\node[scale=.8] at (53/6,-37/6) {$232$};\node[scale=.8] at (47/6,-31/6) {$233$};\node[scale=.8] at (29/6,-19/6) {$300$};\node[scale=.8] at (31/6,-17/6) {$301$};\node[scale=.8] at (25/6,-17/6) {$302$};\node[scale=.8] at (31/6,-23/6) {$303$};\node[scale=.8] at (25/6,-23/6) {$310$};\node[scale=.8] at (23/6,-25/6) {$311$};\node[scale=.8] at (29/6,-25/6) {$312$};\node[scale=.8] at (23/6,-19/6) {$313$};\node[scale=.8] at (37/6,-23/6) {$320$};\node[scale=.8] at (35/6,-25/6) {$321$};\node[scale=.8] at (41/6,-25/6) {$322$};\node[scale=.8] at (35/6,-19/6) {$323$};\node[scale=.8] at (25/6,-11/6) {$330$};\node[scale=.8] at (23/6,-13/6) {$331$};\node[scale=.8] at (29/6,-13/6) {$332$};\node[scale=.8] at (23/6,-7/6) {$333$};\draw(7/2,-17/2) -- (23/2,-17/2);
\draw(7/2,-17/2) -- (7/2,-1/2);
\draw(23/2,-17/2) -- (7/2,-1/2);
\draw(7/2,-15/2) -- (21/2,-15/2);
\draw(9/2,-17/2) -- (9/2,-3/2);
\draw(21/2,-17/2) -- (7/2,-3/2);
\draw(7/2,-13/2) -- (19/2,-13/2);
\draw(11/2,-17/2) -- (11/2,-5/2);
\draw(19/2,-17/2) -- (7/2,-5/2);
\draw(7/2,-11/2) -- (17/2,-11/2);
\draw(13/2,-17/2) -- (13/2,-7/2);
\draw(17/2,-17/2) -- (7/2,-7/2);
\draw(7/2,-9/2) -- (15/2,-9/2);
\draw(15/2,-17/2) -- (15/2,-9/2);
\draw(15/2,-17/2) -- (7/2,-9/2);
\draw(7/2,-7/2) -- (13/2,-7/2);
\draw(17/2,-17/2) -- (17/2,-11/2);
\draw(13/2,-17/2) -- (7/2,-11/2);
\draw(7/2,-5/2) -- (11/2,-5/2);
\draw(19/2,-17/2) -- (19/2,-13/2);
\draw(11/2,-17/2) -- (7/2,-13/2);
\draw(7/2,-3/2) -- (9/2,-3/2);
\draw(21/2,-17/2) -- (21/2,-15/2);
\draw(9/2,-17/2) -- (7/2,-15/2);
\node[scale=.8] at (28/3,-19/6) {$0$};\node[scale=.8] at (29/3,-17/6) {$1$};\node[scale=.8] at (26/3,-17/6) {$2$};\node[scale=.8] at (29/3,-23/6) {$3$};\node[scale=.8] at (26/3,-23/6) {$10$};\node[scale=.8] at (25/3,-25/6) {$11$};\node[scale=.8] at (28/3,-25/6) {$12$};\node[scale=.8] at (25/3,-19/6) {$13$};\node[scale=.8] at (32/3,-23/6) {$20$};\node[scale=.8] at (31/3,-25/6) {$21$};\node[scale=.8] at (34/3,-25/6) {$22$};\node[scale=.8] at (31/3,-19/6) {$23$};\node[scale=.8] at (26/3,-11/6) {$30$};\node[scale=.8] at (25/3,-13/6) {$31$};\node[scale=.8] at (28/3,-13/6) {$32$};\node[scale=.8] at (25/3,-7/6) {$33$};\node[scale=.8] at (32/3,-11/6) {$100$};\node[scale=.8] at (31/3,-13/6) {$101$};\node[scale=.8] at (34/3,-13/6) {$102$};\node[scale=.8] at (31/3,-7/6) {$103$};\node[scale=.8] at (34/3,-7/6) {$110$};\node[scale=.8] at (35/3,-5/6) {$111$};\node[scale=.8] at (32/3,-5/6) {$112$};\node[scale=.8] at (35/3,-11/6) {$113$};\node[scale=.8] at (28/3,-7/6) {$120$};\node[scale=.8] at (29/3,-5/6) {$121$};\node[scale=.8] at (26/3,-5/6) {$122$};\node[scale=.8] at (29/3,-11/6) {$123$};\node[scale=.8] at (34/3,-19/6) {$130$};\node[scale=.8] at (35/3,-17/6) {$131$};\node[scale=.8] at (32/3,-17/6) {$132$};\node[scale=.8] at (35/3,-23/6) {$133$};\node[scale=.8] at (20/3,-11/6) {$200$};\node[scale=.8] at (19/3,-13/6) {$201$};\node[scale=.8] at (22/3,-13/6) {$202$};\node[scale=.8] at (19/3,-7/6) {$203$};\node[scale=.8] at (22/3,-7/6) {$210$};\node[scale=.8] at (23/3,-5/6) {$211$};\node[scale=.8] at (20/3,-5/6) {$212$};\node[scale=.8] at (23/3,-11/6) {$213$};\node[scale=.8] at (16/3,-7/6) {$220$};\node[scale=.8] at (17/3,-5/6) {$221$};\node[scale=.8] at (14/3,-5/6) {$222$};\node[scale=.8] at (17/3,-11/6) {$223$};\node[scale=.8] at (22/3,-19/6) {$230$};\node[scale=.8] at (23/3,-17/6) {$231$};\node[scale=.8] at (20/3,-17/6) {$232$};\node[scale=.8] at (23/3,-23/6) {$233$};\node[scale=.8] at (32/3,-35/6) {$300$};\node[scale=.8] at (31/3,-37/6) {$301$};\node[scale=.8] at (34/3,-37/6) {$302$};\node[scale=.8] at (31/3,-31/6) {$303$};\node[scale=.8] at (34/3,-31/6) {$310$};\node[scale=.8] at (35/3,-29/6) {$311$};\node[scale=.8] at (32/3,-29/6) {$312$};\node[scale=.8] at (35/3,-35/6) {$313$};\node[scale=.8] at (28/3,-31/6) {$320$};\node[scale=.8] at (29/3,-29/6) {$321$};\node[scale=.8] at (26/3,-29/6) {$322$};\node[scale=.8] at (29/3,-35/6) {$323$};\node[scale=.8] at (34/3,-43/6) {$330$};\node[scale=.8] at (35/3,-41/6) {$331$};\node[scale=.8] at (32/3,-41/6) {$332$};\node[scale=.8] at (35/3,-47/6) {$333$};\draw(12,-1/2) -- (4,-1/2);
\draw(12,-1/2) -- (12,-17/2);
\draw(4,-1/2) -- (12,-17/2);
\draw(12,-3/2) -- (5,-3/2);
\draw(11,-1/2) -- (11,-15/2);
\draw(5,-1/2) -- (12,-15/2);
\draw(12,-5/2) -- (6,-5/2);
\draw(10,-1/2) -- (10,-13/2);
\draw(6,-1/2) -- (12,-13/2);
\draw(12,-7/2) -- (7,-7/2);
\draw(9,-1/2) -- (9,-11/2);
\draw(7,-1/2) -- (12,-11/2);
\draw(12,-9/2) -- (8,-9/2);
\draw(8,-1/2) -- (8,-9/2);
\draw(8,-1/2) -- (12,-9/2);
\draw(12,-11/2) -- (9,-11/2);
\draw(7,-1/2) -- (7,-7/2);
\draw(9,-1/2) -- (12,-7/2);
\draw(12,-13/2) -- (10,-13/2);
\draw(6,-1/2) -- (6,-5/2);
\draw(10,-1/2) -- (12,-5/2);
\draw(12,-15/2) -- (11,-15/2);
\draw(5,-1/2) -- (5,-3/2);
\draw(11,-1/2) -- (12,-3/2);
    	\end{scope}
    \end{tikzpicture}
    }
    \caption{Positions in the plane as base-$4$ integers
             (the central tile has position $0$)}
    \label{fig:plane-coord}
\end{figure}

\subsection{Self-similarity}
\label{sec:sigma-tiling-s-self-similarity}

\begin{theo}
$\forall t \in T^*$,
the tiling $S_t$ is self-similar.
\end{theo}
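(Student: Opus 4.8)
The plan is to leverage the tower description $S_t=\lim_{k}s_t(k)$, with $s_t(k)=\sigma^k(\alpha^{-k}(t))$, together with the fact that $\alpha$ is a permutation. The first step is a commutation identity: for every $k\in\mathbb{N}$,
\[
  \sigma\bigl(s_t(k)\bigr)
  = \sigma\bigl(\sigma^k(\alpha^{-k}(t))\bigr)
  = \sigma^{k+1}\bigl(\alpha^{-k}(t)\bigr)
  = \sigma^{k+1}\bigl(\alpha^{-(k+1)}(\alpha(t))\bigr)
  = s_{\alpha(t)}(k+1).
\]
To justify the last equality I would also record, via Theorem~\ref{theo:sigma-prop-rebuilding} and Lemma~\ref{lemma:flip}, that the central unit tile of $\sigma^{k+1}(\alpha^{-k}(t))$ has row matrix $[t][\alpha^{-k}][\alpha^{k+1}]=[t][\alpha]$ and orientation obtained from that of $t$ by an odd ($2k+1$) number of flips, i.e.\ it is exactly $\alpha(t)$; this pins down that the parent tile on the right is $\alpha(t)$ and not $\alpha^{-1}(t)$.

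The second step is to pass to the limit. Reading $\sigma$ as the usual inflate–and–subdivide operation with linear expansion the pure dilation $2\cdot\mathrm{Id}$ (normalised so that it fixes the centroid of the central tile), $\sigma$ is continuous for the Chabauty–Fell topology invoked in Lemma~\ref{lemma:s-limit}, and it respects the nesting $s_t(k)\subset s_t(k+1)$ precisely because the sub‑supertile $s_t(k)$ sits at the \emph{central} position $\alpha$ inside $s_t(k+1)$. Hence $\sigma(S_t)=\sigma\bigl(\lim_k s_t(k)\bigr)=\lim_k s_{\alpha(t)}(k+1)=S_{\alpha(t)}$.

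The third step iterates this. Since $\alpha$ is a permutation of the finite set $T$ (and restricts to a permutation of $T^*$), it has a finite order $n\ge1$; because $\alpha$ inverts orientation, $n$ is even, so $n\ge2$. Applying the previous identity $n$ times gives $\sigma^n(S_t)=S_{\alpha^n(t)}=S_t$. Thus $S_t$ is a fixed point of the substitution $\sigma^n$, whose expansion is the similarity $2^n\cdot\mathrm{Id}$, so $S_t$ is a self‑similar tiling; combining with primitivity of $(T^*,\sigma)$, hence of $(T^*,\sigma^n)$, one gets that this self‑similarity is non‑degenerate (the hull is minimal and $S_t$ is genuinely aperiodic).

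The main obstacle is the second step: the algebra is immediate, but one must check carefully that \emph{applying the substitution to the whole plane} — expanding $S_t$ by $2$ and re‑subdividing — is exactly the limit of the expanded‑and‑re‑subdivided finite supertiles $\sigma(s_t(k))$. This is where one uses continuity of $\sigma$ and, crucially, that $\alpha$ is the central position (so the expansion map can be chosen fixing the common centroid of all the $s_t(k)$ and the nested patches survive substitution); the orientation bookkeeping through Lemma~\ref{lemma:flip} is the other point that needs care but is otherwise routine.
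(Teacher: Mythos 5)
Your proposal is correct and follows the same backbone as the paper: the commutation identity $\sigma(s_t(k))=s_{\alpha(t)}(k+1)$ (which is immediate from $\alpha^{-k}=\alpha^{-(k+1)}\circ\alpha$), the passage to the limit giving $\sigma(S_t)=S_{\alpha(t)}$, and then iteration until $\alpha$ returns to the identity. The one place you genuinely diverge is the last step: you invoke the fact that $\alpha$ is a permutation of the finite set $T$ and therefore has some finite order $n$, whereas the paper computes an explicit exponent, showing $A^{2h_p}=I_3$ with $h_p=3$ for $p=3$ and $h_p=\mathrm{ord}_p(4)$ otherwise, via the identity $A^{2n}=\frac{4^n+2}{3}I_3+\frac{4^n-1}{3}A$ over $\mathbb{Z}$. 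Your version is shorter and avoids any matrix computation; the paper's version buys an explicit self-similarity exponent $2h_p$, which is more informative (and is the kind of quantitative statement one wants when describing the expansion constant $2^{2h_p}$ of the self-similarity). Your parity remark (that $n$ is even because $\alpha$ reverses orientation) is consistent with, and in fact forced by, $\alpha^n=\mathrm{id}_T$. The extra verifications you flag — identifying the central tile of $\sigma(s_t(k))$ as $\alpha(t)$ via Theorem~\ref{theo:sigma-prop-rebuilding} and Lemma~\ref{lemma:flip}, and the continuity of $\sigma$ for the Chabauty–Fell topology on the nested, centrally aligned supertiles — are exactly the points the paper treats implicitly, so including them does no harm.
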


\begin{proof}
$\forall k \in \mathbb{N}$:
\begin{equation}
    \sigma(s_t(k)) = s_{\alpha(t)}(k+1)
\end{equation}
So, to the limit when $k \rightarrow \infty$:
\begin{equation}
    \sigma(S_t) = S_{\alpha(t)}
\end{equation}

Let $h_p$ be defined as:
\begin{equation}
    h_p = \left\{ \begin{array}{cl}
                3           & \mathrm{if\ } p = 3 \\
                ord_p(4)    & \mathrm{if\ } p > 3
          \end{array} \right.
\end{equation}

When $p = 3$:
\begin{equation}
    A^{2h_3} = A^6 = I_3
\end{equation}

$\forall n \in \mathbb{N}$, we have (in $\mathbb{Z}^{3 \times 3}$):
\begin{equation}
    A^{2n} = \frac{4^n+2}{3} \cdot I_3 + \frac{4^n-1}{3} \cdot A
\end{equation}

So when $p > 3$, in $F_p$:
\begin{equation}
    4^{2h_p} = 1
\end{equation}
And:
\begin{equation}
    A^{2h_p} = \frac{1+2}{3} \cdot I_3 + \frac{1-1}{3} \cdot A = I_3
\end{equation}

So,
$\forall k \in \mathbb{N}$:
\begin{equation}
    \alpha^{2h_p} = id_T
\end{equation}

Applying $\sigma^{2h_p}$ to $S_t$:
\begin{equation}
    \sigma^{2h_p}(S_t) = S_{\alpha^{2h_p}(t)} = S_t
\end{equation}

$S_t$ is self-similar with respect to $\sigma^{2h_p}$.
\end{proof}

\subsection{Nonperiodicity}
\label{sec:sigma-tiling-s-nonperiodicity}

\begin{theo}
\label{theo:nonperiod-3}
$\forall t \in T^*$,
the tiling $S_t$ is nonperiodic.
\end{theo}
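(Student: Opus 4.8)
The plan is to run the classical argument for self‑similar substitution tilings: assume $S_t$ has a nonzero period, manufacture periods of arbitrarily small length by repeatedly passing to the ``de‑substituted'' tiling, and contradict the fact that every tile of $S_t$ is a unit equilateral triangle. First I would record the elementary half of how $\sigma$ interacts with translations: because $\sigma$ acts by inflating by a factor $2$ and then subdividing tile by tile, one has $\sigma(\mathcal{T}+v)=\sigma(\mathcal{T})+2v$ for every tiling $\mathcal{T}$ and every $v\in\mathbb{R}^2$, hence $\sigma^{2h_p}(\mathcal{T}+v)=\sigma^{2h_p}(\mathcal{T})+Nv$ with $N=2^{2h_p}$. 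Together with the self‑similarity $\sigma^{2h_p}(S_t)=S_t$ proved above, a period $v$ of $S_t$ immediately yields the period $Nv$; what is actually needed is the reverse implication, and that is where the real content sits.

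The key lemma to establish is \emph{recognizability} (the unique composition property) of $S_t$ for $t\in T^*$: for each $m\in\mathbb{N}$ there is a radius $R_m$ such that, for every tile $t'$ of $S_t$, the patch of $S_t$ lying in the disk of radius $R_m$ about $t'$ determines both the $m$‑supertile of $S_t$ containing $t'$ and the position $w\in\Sigma^m$ of $t'$ within it. Granting this, the decomposition of $S_t$ into $m$‑supertiles is a locally defined, hence translation‑equivariant, partition; so a period $v$ of $S_t$ permutes the $2h_p$‑supertiles, the operation ``merge into $2h_p$‑supertiles, then deflate by $N$'' is well defined (by recognizability) and sends $S_t$ to $S_t$ (because $\sigma^{2h_p}(S_t)=S_t$ forces the merged‑and‑deflated tiling to coincide with $S_t$), and it carries $v$ to the period $v/N$ of $S_t$. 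Iterating gives periods $v/N^{j}\to 0$; but for $j$ large, two distinct unit triangles $T$ and $T+v/N^{j}$ of $S_t$ would have overlapping interiors, contradicting that $S_t$ is a tiling. The hypothesis $t\in T^*$ is essential: for $t=\bigtriangleup 000$ the tiling $S_{\bigtriangleup 000}$ is constant — and periodic — so recognizability must and does fail there. (Finite local complexity of $S_t$, immediate from edge‑to‑edge unit triangles over a finite tile set, is used freely to make the radii $R_m$ meaningful.)

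The main obstacle is therefore proving recognizability, and I would do it directly from the explicit substitution rules of figures~\ref{fig:subst-up} and~\ref{fig:subst-down} rather than citing a general theorem — general recognizability results for substitution tilings typically presuppose nonperiodicity and so are unavailable here. The scheme is: (i) handle level $m=1$ by showing the $1$‑supertile grouping is locally detectable. Geometrically there are only finitely many ways to group the unit triangles of $S_t$ into side‑$2$ triangles; I would single out the correct one using the decorations, exploiting that the central tile $\alpha(\cdot)$ of a $1$‑supertile is exactly the tile all three of whose vertices are ``edge‑midpoint'' vertices of that supertile, and turning this into a finite test: read off the decorations on a bounded ring of tiles around each vertex, use the arithmetic relation forced by the rule (each midpoint value is the sum of the two corresponding endpoint values) together with lemma~\ref{lemma:unique} to reconstruct the candidate $1$‑supertiles, and use primitivity of $(T^*,\sigma)$ — which guarantees $S_t$ is never locally constant, exactly the feature absent for $\bigtriangleup 000$ — to eliminate every competing grouping. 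Then (ii) bootstrap: after recovering the $1$‑supertile decomposition on a large patch, merging and deflating produces a patch of $S_{\alpha^{-1}(t)}$ with $\alpha^{-1}(t)\in T^*$ (since $\alpha$ permutes $T^*$ and $\sigma(S_{\alpha^{-1}(t)})=S_t$), to which the same criterion applies, recovering the level‑$2$ decomposition, and so on, with $R_m$ growing geometrically. Making step (i) fully rigorous — pinning down an explicit finite neighborhood and an explicit decoration test that separates the true $1$‑supertile grouping from all others — is the technical heart of the proof, and the step I expect to cost the most work.
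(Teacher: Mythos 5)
Your route is genuinely different from the paper's, but as written it has a gap at exactly the point you flag: recognizability is the entire content of the proof in this setting, and you have only sketched it. The geometric subdivision underlying $\sigma$ is the standard quadrisection of triangles, so the undecorated tiling of $S_t$ is the fully periodic triangular lattice and the $1$-supertile grouping is \emph{not} determined by geometry -- all four translates of the side-$2$ grid are geometrically admissible. Hence recognizability must be extracted entirely from the $\mathbb{F}_p$-decorations, and your step (i) -- an explicit finite-radius decoration test that rejects the three wrong groupings at every location of $S_t$ -- is not carried out. It is not obvious that such a test exists: primitivity gives you that $S_t$ is not globally constant, but to kill a competing grouping you need a local witness near \emph{every} tile, uniformly in the location, and nothing in your sketch produces one. (You are right that you cannot cite the general Moss\'e/Solomyak-type theorems, since those derive recognizability \emph{from} aperiodicity.) Until step (i) is supplied, the argument is a plan rather than a proof; the deflation bookkeeping in the second paragraph (periods scale by $N^{-1}$, hence arbitrarily short periods, contradiction) is fine but rests entirely on that missing lemma.

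For contrast, the paper sidesteps recognizability altogether. Using primitivity and self-similarity it shows $S_t$ contains the supertiles $\sigma^k(\bigtriangleup 110)$ for all $k$, and by explicitly iterating the substitution rule it locates in these supertiles closed hexagonal paths of $0$'s of side length $2^w$ whose neighbours are all nonzero (figures~\ref{fig:nonperiodic-1}--\ref{fig:nonperiodic-4}). A translation symmetry $x\ne 0$ would force such a path $h$ with incircle diameter exceeding $\lvert x\rvert$ to collide with $h+x$, creating an all-zero tile, which lemma~\ref{lemma:unique} forbids for $t\in T^*$. That argument is elementary and entirely local in the decorations; if you want to complete your approach, the same kind of explicit decoration patterns are what you would need to build your recognizability test from, at which point the paper's shortcut is the cheaper path.
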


\begin{proof}
As a consequence of primitivity and self-similarity,
$S_t$ contains all supertiles $\sigma^k(t')$ where $k \in \mathbb{N}$ and $t' \in T^*$.

\vspace{\baselineskip}

In particular,
$\forall k \in \mathbb{N}$,
$S_t$ contains $\sigma^k(\bigtriangleup 110)$.

\vspace{\baselineskip}

As depicted in figure~\ref{fig:nonperiodic-1}:
\begin{itemize}
\item $\sigma(\bigtriangleup 110)$ contains a diamond patch combining
      $\bigtriangleup 121$ and $\bigtriangledown 112$,
\item $\sigma^{k+1}(\bigtriangleup 110)$ contains a diamond patch combining
      $\bigtriangleup (k+1,k+2,1)$ and $\bigtriangledown (k+1,1,k+2)$,
\item we will fix $k = p-2$ to continue.
\end{itemize}

\begin{figure}[H]
    \centering
    {
    \begin{tikzpicture}
    	\begin{scope}[yscale=.87,xslant=.5]
        \node[scale=2,gray] at (1,-1) {$\sigma^0$};
        \node[scale=2,gray] at (4.5,-1) {$\sigma^1$};
        \node[scale=2,gray] at (8.5,-1) {$\sigma^{k+1}$};

        \fill[green!30] (0,0) -- (1,0) -- (0,1) -- cycle;
        \draw[dotted,gray] (0,0) -- (1,0) -- (0,1) -- cycle;
        \node at (0,0) {$1$};
        \node at (1,0) {$1$};
        \node at (0,1) {$0$};

        \node at (2,0) {$\rightarrow$};

        \fill[green!30] (3,0) -- (4,0) -- (4,1) -- (3,1) -- cycle;
        \draw[dotted,gray] (3,0) -- (5,0) -- (3,2) -- cycle;
        \draw[dotted,gray] (4,0) -- (4,1) -- (3,1) -- cycle;
        \node at (3,0) {$1$};
        \node at (3,1) {$1$};
        \node at (4,1) {$1$};
        \node at (4,0) {$2$};

        \node at (6,0) {$\xrightarrow{k}$};

        \fill[green!30] (7,1) -- (8,1) -- (8,2) -- (7,2) -- cycle;
        \draw[dotted,gray] (7,0) -- (9,0) -- (9,2) -- (7,2) -- cycle;
        \draw[dotted,gray] (8,0) -- (8,2);
        \draw[dotted,gray] (9,0) -- (7,2);
        \draw[dotted,gray] (8,0) -- (7,1) -- (9,1) -- (8,2);
        \node at (7,2) {$1$};
        \node at (8,2) {$k+1$};
        \node at (7,1) {$k+1$};
        \node at (8,1) {$k+2$};
    	\end{scope}
    \end{tikzpicture}
    }
    \caption{From $\bigtriangleup 110$ to diamond patch}
    \label{fig:nonperiodic-1}
\end{figure}

As depicted in figure~\ref{fig:nonperiodic-2}:
\begin{itemize}
\item $\sigma^{p-1}(\bigtriangleup 110)$ contains a diamond patch combining
      $\bigtriangleup (-1,0,1)$ and $\bigtriangledown (-1,1,0)$,
\item $\sigma^p(\bigtriangleup 110)$ contains a diamond patch combining
      $\bigtriangleup 011$ and $\bigtriangledown 011$.
\item $\sigma^{p+1}(\bigtriangleup 110)$ contains a hexagonal patch
      with $1$'s around a central $2$.
\end{itemize}

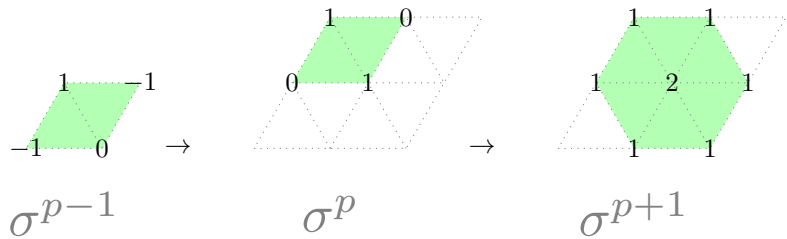
\begin{figure}[H]
    \centering
    {
    \begin{tikzpicture}
    	\begin{scope}[yscale=.87,xslant=.5]
        \node[scale=2,gray] at (1,-1) {$\sigma^{p-1}$};
        \node[scale=2,gray] at (4.5,-1) {$\sigma^p$};
        \node[scale=2,gray] at (8.5,-1) {$\sigma^{p+1}$};

        \fill[green!30] (0,0) -- (1,0) -- (1,1) -- (0,1) -- cycle;
        \draw[dotted,gray] (0,0) -- (1,0) -- (1,1) -- (0,1) -- cycle;
        \draw[dotted,gray] (1,0) -- (0,1);
        \node at (0,0) {$-1$};
        \node at (1,0) {$0$};
        \node at (0,1) {$1$};
        \node at (1,1) {$-1$};

        \node at (2,0) {$\rightarrow$};

        \fill[green!30] (3,1) -- (4,1) -- (4,2) -- (3,2) -- cycle;
        \draw[dotted,gray] (3,0) -- (5,0) -- (5,2) -- (3,2) -- cycle;
        \draw[dotted,gray] (4,0) -- (4,2);
        \draw[dotted,gray] (5,0) -- (3,2);
        \draw[dotted,gray] (4,0) -- (3,1) -- (5,1) -- (4,2);
        \node at (3,2) {$1$};
        \node at (4,2) {$0$};
        \node at (3,1) {$0$};
        \node at (4,1) {$1$};

        \node at (6,0) {$\rightarrow$};

        \fill[green!30] (8,0) -- (9,0) -- (9,1) -- (8,2) -- (7,2) -- (7,1) -- cycle;
        \draw[dotted,gray] (7,0) -- (9,0) -- (9,2) -- (7,2) -- cycle;
        \draw[dotted,gray] (8,0) -- (8,2);
        \draw[dotted,gray] (9,0) -- (7,2);
        \draw[dotted,gray] (8,0) -- (7,1) -- (9,1) -- (8,2);
        \node at (7,2) {$1$};
        \node at (8,2) {$1$};
        \node at (7,1) {$1$};
        \node at (8,1) {$2$};
        \node at (9,1) {$1$};
        \node at (8,0) {$1$};
        \node at (9,0) {$1$};
    	\end{scope}
    \end{tikzpicture}
    }
    \caption{From diamond patch to hexagonal patch}
    \label{fig:nonperiodic-2}
\end{figure}

As depicted in figure~\ref{fig:nonperiodic-3}:
\begin{itemize}
\item $\sigma^{k'+p+1}(\bigtriangleup 110)$ contains a hexagonal patch
      with a central $2$,
      surrounded by values $2k'+1$,
      themselves surrounded by values $2k'-1$ and $4k'-2$,
\item we fix $k' = \frac{p-1}{2}$ to continue.
\end{itemize}

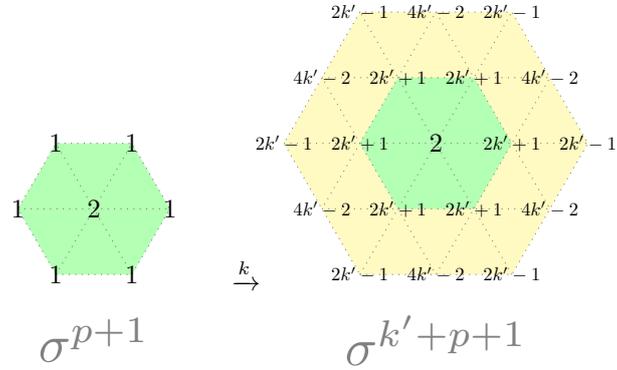
\begin{figure}[H]
    \centering
    {
    \begin{tikzpicture}
    	\begin{scope}[yscale=.87,xslant=.5]
        \node[scale=2,gray] at (2,-1) {$\sigma^{p+1}$};
        \node[scale=2,gray] at (6.5,-1) {$\sigma^{k'+p+1}$};

        \fill[green!30] (1,0) -- (2,0) -- (2,1) -- (1,2) -- (0,2) -- (0,1) -- cycle;
        \draw[dotted,gray] (1,0) -- (2,0) -- (2,1) -- (1,2) -- (0,2) -- (0,1) -- cycle;
        \draw[dotted,gray] (1,0) -- (1,2);
        \draw[dotted,gray] (2,0) -- (0,2);
        \draw[dotted,gray] (0,1) -- (2,1);
        \node at (0,2) {$1$};
        \node at (1,2) {$1$};
        \node at (0,1) {$1$};
        \node at (1,1) {$2$};
        \node at (2,1) {$1$};
        \node at (1,0) {$1$};
        \node at (2,0) {$1$};

        \node at (3.5,0) {$\xrightarrow{k}$};

        \fill[yellow!30] (5,0) -- (7,0) -- (7,2) -- (5,4) -- (3,4) -- (3,2) -- cycle;
        \fill[green!30] (5,1) -- (6,1) -- (6,2) -- (5,3) -- (4,3) -- (4,2) -- cycle;
        \draw[dotted,gray] (5,0) -- (7,0) -- (7,2) -- (5,4) -- (3,4) -- (3,2) -- cycle;
        \draw[dotted,gray] (5,0) -- (5,4);
        \draw[dotted,gray] (7,0) -- (3,4);
        \draw[dotted,gray] (3,2) -- (7,2);
        \draw[dotted,gray] (4,1) -- (7,1) -- (4,4) -- cycle;
        \draw[dotted,gray] (3,3) -- (6,3) -- (6,0) -- cycle;

        \node at (5,2) {$2$};

        \node[scale=.7] at (4,2) {$2k'+1$};
        \node[scale=.7] at (6,2) {$2k'+1$};
        \node[scale=.7] at (4,3) {$2k'+1$};
        \node[scale=.7] at (5,3) {$2k'+1$};
        \node[scale=.7] at (5,1) {$2k'+1$};
        \node[scale=.7] at (6,1) {$2k'+1$};

        \node[scale=.7] at (3,2) {$2k'-1$};
        \node[scale=.7] at (7,2) {$2k'-1$};
        \node[scale=.7] at (3,4) {$2k'-1$};
        \node[scale=.7] at (5,4) {$2k'-1$};
        \node[scale=.7] at (5,0) {$2k'-1$};
        \node[scale=.7] at (7,0) {$2k'-1$};

        \node[scale=.7] at (6,0) {$4k'-2$};
        \node[scale=.7] at (7,1) {$4k'-2$};
        \node[scale=.7] at (6,3) {$4k'-2$};
        \node[scale=.7] at (4,4) {$4k'-2$};
        \node[scale=.7] at (3,3) {$4k'-2$};
        \node[scale=.7] at (4,1) {$4k'-2$};
    	\end{scope}
    \end{tikzpicture}
    }
    \caption{Iterating substitution $\sigma$ on hexagonal patches}
    \label{fig:nonperiodic-3}
\end{figure}

As depicted in figure~\ref{fig:nonperiodic-4}:
\begin{itemize}
\item $\sigma^{\frac{3p+1}{2}}(\bigtriangleup 110)$
      contains a hexagonal path of $0$'s
      only in contact with nonzero values,
\item this hexagonal path of $0$'s
      doubles in size at each application of $\sigma$
      (see section~\ref{sec:sigma-prop-patterns}),
\item $\sigma^{w+\frac{3p+1}{2}}(\bigtriangleup 110)$
      contains a hexagonal path of $0$'s
      with side length $2^w$
      only in contact with nonzero values.
\end{itemize}

\begin{figure}[H]
    \centering
    {
    \begin{tikzpicture}
    	\begin{scope}[yscale=.87,xslant=.5]
        \node[scale=2,gray] at (3.5,-1) {$\sigma^{\frac{3p+1}{2}}$};

        \fill[green!30] (2,0) -- (4,0) -- (4,2) -- (2,4) -- (0,4) -- (0,2) -- cycle;
        \hexgrid{2}{0}{2};

        \node at (2,2) {$2$};

        \node at (1,2) {$0$};
        \node at (3,2) {$0$};
        \node at (1,3) {$0$};
        \node at (2,3) {$0$};
        \node at (2,1) {$0$};
        \node at (3,1) {$0$};

        \node[scale=.7] at (0,2) {$p-2$};
        \node[scale=.7] at (4,2) {$p-2$};
        \node[scale=.7] at (0,4) {$p-2$};
        \node[scale=.7] at (2,4) {$p-2$};
        \node[scale=.7] at (2,0) {$p-2$};
        \node[scale=.7] at (4,0) {$p-2$};

        \node[scale=.7] at (3,0) {$2p-4$};
        \node[scale=.7] at (4,1) {$2p-4$};
        \node[scale=.7] at (3,3) {$2p-4$};
        \node[scale=.7] at (1,4) {$2p-4$};
        \node[scale=.7] at (0,3) {$2p-4$};
        \node[scale=.7] at (1,1) {$2p-4$};

        \node at (6,0) {$\rightarrow$};

        \node[scale=2,gray] at (10.5,-1) {$\sigma^{\frac{3p+3}{2}}$};

        \fill[green!30] (8,1) -- (11,1) -- (11,4) -- (8,7) -- (5,7) -- (5,4) -- cycle;
        \fill[white] (8,3) -- (9,3) -- (9,4) -- (8,5) -- (7,5) -- (7,4) -- cycle;
        \hexgrid{8}{0}{4};

        \node at (8,3) {$2$};
        \node at (9,3) {$2$};
        \node at (9,4) {$2$};
        \node at (8,5) {$2$};
        \node at (7,5) {$2$};
        \node at (7,4) {$2$};

        \node at (8,2) {$0$};
        \node at (9,2) {$0$};
        \node at (10,2) {$0$};
        \node at (10,3) {$0$};
        \node at (10,4) {$0$};
        \node at (9,5) {$0$};
        \node at (8,6) {$0$};
        \node at (7,6) {$0$};
        \node at (6,6) {$0$};
        \node at (6,5) {$0$};
        \node at (6,4) {$0$};
        \node at (7,3) {$0$};

        \node[scale=.7] at (8,1) {$p-2$};
        \node[scale=.7] at (9,1) {$2p-4$};
        \node[scale=.7] at (10,1) {$2p-4$};
        \node[scale=.7] at (11,1) {$p-2$};
        \node[scale=.7] at (11,2) {$2p-4$};
        \node[scale=.7] at (11,3) {$2p-4$};
        \node[scale=.7] at (11,4) {$p-2$};
        \node[scale=.7] at (10,5) {$2p-4$};
        \node[scale=.7] at (9,6) {$2p-4$};
        \node[scale=.7] at (8,7) {$p-2$};
        \node[scale=.7] at (7,7) {$2p-4$};
        \node[scale=.7] at (6,7) {$2p-4$};
        \node[scale=.7] at (5,7) {$p-2$};
        \node[scale=.7] at (5,6) {$2p-4$};
        \node[scale=.7] at (5,5) {$2p-4$};
        \node[scale=.7] at (5,4) {$p-2$};
        \node[scale=.7] at (6,3) {$2p-4$};
        \node[scale=.7] at (7,2) {$2p-4$};

    	\end{scope}
    \end{tikzpicture}
    }
    \caption{Hexagonal paths of $0$'s}
    \label{fig:nonperiodic-4}
\end{figure}
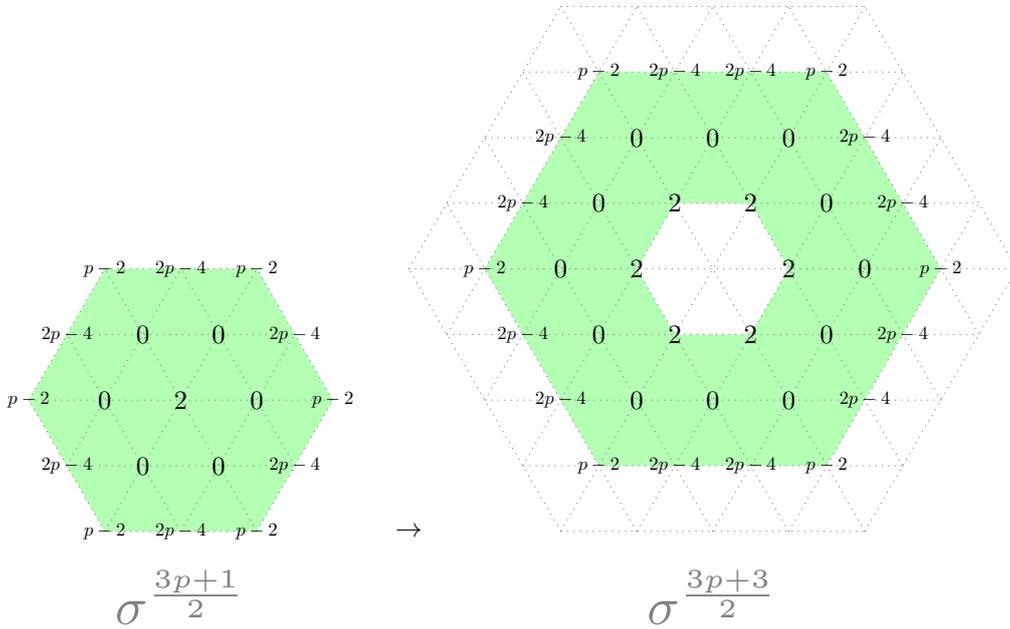

Suppose that $S_t = S_t + x$ for some vector $x \ne 0$.
We can find a hexagonal path of $0$'s, say $h$, with an incircle with diameter $> \lvert x \lvert$.
$h$ would necessarily collide with $h + x$
(and this would induce forbidden all-zero tiles),
hence $S_t$ cannot be periodic (see figure~\ref{fig:nonperiodic-5}).
\end{proof}

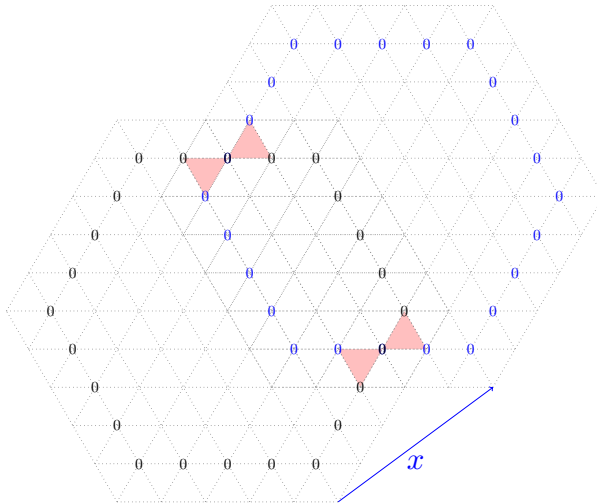
\begin{figure}[H]
    \centering
    \resizebox{8cm}{!}
    {
    \begin{tikzpicture}
    	\begin{scope}[yscale=.87,xslant=.5]
    	    \fill[red!25] (3,3) -- (4,3) -- (4,2) -- cycle;
    	    \fill[red!25] (5,3) -- (4,3) -- (4,4) -- cycle;
    	    \fill[red!25] (-2,7) -- (-2,8) -- (-3,8) -- cycle;
    	    \fill[red!25] (-2,9) -- (-2,8) -- (-1,8) -- cycle;

    	    \hexgrid{0}{-1}{5};
        \hexgrid{2}{2}{5};

    	    \begin{scope}[text=blue]
            \hexOfZeros{2}{3}{4};
        \end{scope}
        \hexOfZeros{0}{0}{4};

        \draw[->,blue] (5,-1) -- (7,2);
        \node[scale=2,below,blue] at (6,.5) {$x$};
    	\end{scope}
    \end{tikzpicture}
    }
    \caption{Collision of hexagonal paths of $0$'s}
    \label{fig:nonperiodic-5}
\end{figure}

\section{\texorpdfstring{Tiling $H_{b,c}$}{Tiling Hbc}}
\label{sec:sigma-tiling-h}

\subsection{Construction}
\label{sec:sigma-tiling-h-cons}

\begin{definition}
$\forall b \in \mathbb{F}_p$,
$\forall c \in \mathbb{F}_p^*$,
let $h_{b,c}$
be the hexagonal patch of six tiles with the value $c$
at the center and six $b$'s around
as depicted in figure~\ref{fig:hexagon-h}.
\end{definition}

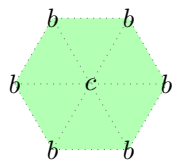
\begin{figure}[H]
    \centering
    {
    \begin{tikzpicture}
    	\begin{scope}[yscale=.87,xslant=.5]
        \fill[green!30] (1,0) -- (2,0) -- (2,1) -- (1,2) -- (0,2) -- (0,1) -- cycle;
        \draw[dotted,gray] (1,0) -- (2,0) -- (2,1) -- (1,2) -- (0,2) -- (0,1) -- cycle;
        \draw[dotted,gray] (1,0) -- (1,2);
        \draw[dotted,gray] (2,0) -- (0,2);
        \draw[dotted,gray] (0,1) -- (2,1);
        \node at (0,2) {$b$};
        \node at (1,2) {$b$};
        \node at (0,1) {$b$};
        \node at (1,1) {$c$};
        \node at (2,1) {$b$};
        \node at (1,0) {$b$};
        \node at (2,0) {$b$};
    	\end{scope}
    \end{tikzpicture}
    }
    \caption{Patch $h_{b,c}$}
    \label{fig:hexagon-h}
\end{figure}

\begin{lemma}
$\forall t \in T^*$,
$b \in \mathbb{F}_p$,
$\forall c \in \mathbb{F}_p^*$,
$h_{b,c}$ appears in $\sigma^k(t)$ for some $k \in \mathbb{N}$.
\end{lemma}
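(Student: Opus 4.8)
The plan is to reduce the statement to three ingredients already established: the explicit hexagonal patches produced in the proof of Theorem~\ref{theo:nonperiod-3}, scalar additivity of $\sigma$ (section~\ref{sec:sigma-prop-additivity}), and irreducibility of $(T^*,\sigma)$ (section~\ref{sec:sigma-prop-irreducibility}).

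First I would recover, from the nonperiodicity argument, that $h_{b,2}$ occurs in a supertile of $\bigtriangleup 110$ for every $b\in\mathbb{F}_p$. Indeed, as shown there (see figures~\ref{fig:nonperiodic-2} and~\ref{fig:nonperiodic-3}), for every $k'\in\mathbb{N}$ the supertile $\sigma^{k'+p+1}(\bigtriangleup 110)$ contains a hexagon of six tiles carrying the value $2$ at the common central vertex and the value $2k'+1$ at each of the six outer vertices; that hexagon is precisely $h_{2k'+1,2}$ (see figure~\ref{fig:hexagon-h}). Since $p$ is odd, $2$ is invertible in $\mathbb{F}_p$, so $k'\mapsto 2k'+1$ is a bijection of $\mathbb{F}_p$; hence for each $b\in\mathbb{F}_p$ there is a $k\in\mathbb{N}$ with $h_{b,2}$ a sub-patch of $\sigma^{k}(\bigtriangleup 110)$.

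Next I would lift the central value from $2$ to an arbitrary $c\in\mathbb{F}_p^*$ by scalar multiplication. Given $b\in\mathbb{F}_p$ and $c\in\mathbb{F}_p^*$, set $m=2^{-1}c\in\mathbb{F}_p^*$ and $b_0=2bc^{-1}\in\mathbb{F}_p$. By the previous paragraph there is $k$ with $h_{b_0,2}$ inside $\sigma^{k}(\bigtriangleup 110)$. By the additivity lemma (section~\ref{sec:sigma-prop-additivity}), $\sigma^{k}(\bigtriangleup(m,m,0))=m\cdot\sigma^{k}(\bigtriangleup 110)$, and scalar multiplication on supertiles is performed componentwise on corners, so the sub-patch of $\sigma^{k}(\bigtriangleup(m,m,0))$ sitting at the same location as $h_{b_0,2}$ is $m\cdot h_{b_0,2}=h_{mb_0,\,2m}=h_{b,c}$. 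Note $\bigtriangleup(m,m,0)\in U^*\subseteq T^*$ because $m\neq 0$. Finally, by irreducibility of $(T^*,\sigma)$, for the given $t\in T^*$ there is $j\in\mathbb{N}$ with $\bigtriangleup(m,m,0)$ occurring as a tile of $\sigma^{j}(t)$; then $\sigma^{j+k}(t)$ contains the sub-supertile $\sigma^{k}(\bigtriangleup(m,m,0))$, hence contains $h_{b,c}$, which is the claim.

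The steps above are essentially bookkeeping once the earlier results are in hand, so I expect no serious obstacle. The one point that needs care is verifying that the hexagon extracted in the first step genuinely matches the definition of $h_{b,c}$ — six tiles, a shared central vertex of value $c$, and all six remaining vertices equal to $b$ — and that the componentwise scalar-multiplication correspondence between corners of $\sigma^{k}(\bigtriangleup 110)$ and $\sigma^{k}(\bigtriangleup(m,m,0))$ is tracked correctly; everything else is a direct appeal to additivity and irreducibility.
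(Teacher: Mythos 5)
Your proof is correct and follows essentially the same route as the paper: extract the hexagons $h_{2k'+1,2}$ from iterated substitution on $\bigtriangleup 110$ (figure~\ref{fig:nonperiodic-3}), rescale by $c/2$ using additivity to reach $h_{b,c}$, and transfer to arbitrary $t\in T^*$ via irreducibility/primitivity. You merely make explicit the bookkeeping (the carrier tile $\bigtriangleup(m,m,0)$ and the final appeal to irreducibility) that the paper leaves implicit.
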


\begin{proof}
From section~\ref{sec:sigma-tiling-s-nonperiodicity},
we know that this is the case for $h_{1,2}$.

Also, $\forall k \in \mathbb{N}$,
$h_{1+2k,2}$ appears in $\sigma^k(h_{1,2})$;
as $k \rightarrow 1+2k$ runs through $\mathbb{F}_p$,
we can find all the patches $h_{b,2}$ with $b \in \mathbb{F}_p$.

Through additivity (see section~\ref{sec:sigma-prop-additivity}),
we can write $h_{b,c} = \frac{c}{2} \cdot h_{\frac{2b}{c},2}$.
\end{proof}

\begin{lemma}
$b \in \mathbb{F}_p$,
$\forall k \in \mathbb{N}$,
$\forall t \in T^*$,
$h_{b,0}$ does not appear in $\sigma^k(t)$.
\end{lemma}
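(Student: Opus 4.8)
The statement asserts that the hexagonal patch $h_{b,0}$ (value $0$ at the center with six $b$'s around it) never appears in any supertile grown from a nonzero tile. The cleanest approach is to show that the patch $h_{b,0}$ is simply \emph{impossible} as a configuration of tiles, regardless of where it sits: I would argue that the local constraints imposed by the substitution rule force the six surrounding values to be $0$ as well, so $h_{b,0}$ can only occur as (part of) $\bigtriangleup 000$ / $\bigtriangledown 000$ supertiles, which by an earlier lemma only arise from all-zero tiles. Since $t \in T^*$, the supertile $\sigma^k(t)$ contains no all-zero tile, and we are done.

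\textbf{Key steps.} First I would fix coordinates: a hexagonal patch of six small triangles around a common vertex $v$ consists of three upward and three downward tiles, alternating, all sharing the corner $v$. By the lemma that corners meeting at a point hold the same value, the value at $v$ is well defined; call it $c$ (here $c=0$). Label the six \emph{outer} vertices of the hexagon $v_1,\dots,v_6$ in cyclic order; each pair of consecutive outer vertices $v_i,v_{i+1}$ together with $v$ spans one of the six tiles. Now I use the structure of the substitution: in any supertile the value at an interior vertex is obtained by summing the values at the two endpoints of the edge it bisects (this is exactly the rule illustrated in figure~\ref{fig:subst-meet-same}, and formalized via the matrices $B,C,D$ — interior vertices are always "sums"). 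The center $v$ of a hexagonal patch in any $\sigma^{k}(t)$ with $k\ge 1$ is such a bisecting vertex for three distinct edges of the coarser tiling; more carefully, within the hexagon each outer vertex $v_i$ is itself a midpoint whose value is the sum of $v$ and the value at the opposite outer vertex across $v$ along that line, i.e. $v_i = c + v_{i+3}$ (the three "diameters" of the hexagon). With $c=0$ this gives $v_i = v_{i+3}$ for $i=1,2,3$.

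Then I would exploit a second relation: two \emph{adjacent} outer vertices $v_i,v_{i+1}$ are endpoints of an edge of length one bounding a tile of the hexagon, and in the next-coarser picture the edge $[v_{i-1},v_{i+1}]$ (the outer edge of a $2$-tile) has $v_i$ as its midpoint, so $v_i = v_{i-1}+v_{i+1}$. Combining $v_i=v_{i+3}$ with $v_i = v_{i-1}+v_{i+1}$ around the cycle forces a linear system over $\mathbb{F}_p$ whose only solution is $v_1=\dots=v_6=0$ (the $6$-cycle recurrence $a_i=a_{i-1}+a_{i+1}$ together with the antipodal identification collapses everything to $0$ since $p$ is odd). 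Hence the patch is really $h_{0,0}$, a sub-patch of an all-zero supertile, so by the lemma "a supertile contains an all-zero tile iff it was built from an all-zero tile" and the fact that $t\in T^*$, no such patch occurs. For the base cases $k=0$ (a single tile, no hexagonal patch fits) and $k$ small, one checks directly that a hexagonal patch only first appears for $k\ge 2$, at which point the midpoint relations above are available.

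\textbf{Main obstacle.} The delicate point is justifying the midpoint relations $v_i = v_{i-1}+v_{i+1}$ and $v_i = c + v_{i+3}$ rigorously: one must verify that \emph{any} unit hexagon sitting inside some $\sigma^k(t)$ actually arises with its outer vertices being interior bisecting vertices of the coarser supertile, i.e. that the hexagon is not straddling the boundary of the initial tile in a way that leaves some outer vertex "free". The honest way to handle this is to descend using theorem~\ref{theo:sigma-prop-rebuilding}: each of the six tiles of the hexagon has a position word, these words agree on a long common prefix (they all lie in one small sub-supertile), and near the bottom the relative positions are forced — then the relations are read off from the explicit matrices $A,B,C,D$. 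Alternatively, and perhaps more cleanly, one can invoke additivity: write $h_{b,0}$ and note that if it appeared we could, as in the proof of the "triangle of zeros" lemma, superpose translates/rotations and the basis tiles $\bigtriangleup 100,\bigtriangleup 010,\bigtriangleup 001$ to force an all-zero hexagonal patch in \emph{every} supertile of the appropriate orientation, contradicting lemma~\ref{lemma:unique} (which lets us pin a nonzero value at a prescribed corner). I expect the additivity-plus-\ref{lemma:unique} route to be the shortest, mirroring the zero-triangle argument almost verbatim, with the linear-algebra computation above serving as the concrete check that $h_{b,0}$ genuinely forces $b=0$.
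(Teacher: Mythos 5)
There is a genuine gap: the two ``midpoint relations'' on which your linear-algebra argument rests are false. A vertex created during the $j$-th application of $\sigma$ bisects exactly \emph{one} edge of the level-$(j{-}1)$ lattice, and the endpoints of that edge are in general not among the six outer vertices of the unit hexagon; moreover the six outer vertices are not all created at the same step, so no uniform relation of the form $v_i=c+v_{i+3}$ or $v_i=v_{i-1}+v_{i+1}$ holds. Concretely, the patch $h_{1,2}$ does occur (figure~\ref{fig:nonperiodic-2}), and there $v_i=1$, $c=2$, so $v_i\ne c+v_{i+3}$ and $v_i\ne v_{i-1}+v_{i+1}$. Worse, if your relations were valid they would show that \emph{every} unit hexagon with central value $0$ has an all-zero outer ring, which is contradicted by figure~\ref{fig:nonperiodic-4}, where the $0$'s adjacent to the central $2$ are centers of hexagons with nonzero neighbours. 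Your fallback route (``superpose translates and basis tiles as in the triangle-of-zeros lemma'') is also not a proof as stated: that lemma exploited the $3$-fold symmetry permuting $\bigtriangleup 100,\bigtriangleup 010,\bigtriangleup 001$ while fixing a concentric triangle of corners, and there is no analogous symmetry fixing an arbitrary unit hexagon, nor does additivity alone distinguish the constant ring $b\ne 0$ from the non-constant center-$0$ hexagons that genuinely occur.

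The paper's argument is a short descent on the minimal $k$ for which $h_{b,0}$ appears in $\sigma^k(t)$, splitting on whether the central vertex is old or new at step $k$. If it is new, it bisects a level-$(k{-}1)$ edge whose endpoints are the two opposite ring vertices along that direction, so $0=b+b=2b$, forcing $b=0$, which is excluded because $t\in T^*$ admits no all-zero tile. If it is old, then all six ring vertices are new and each equals $0+u_i$ where $u_i$ is the value at the corresponding level-$(k{-}1)$ neighbour; hence $u_i=b$ for all $i$ and $h_{b,0}$ already appeared in $\sigma^{k-1}(t)$, contradicting minimality. This case split on old-versus-new is the idea your proposal is missing; it replaces your (unsound) attempt to impose simultaneous linear constraints on all six ring values.
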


\begin{proof}

\end{proof}
$\forall b \in \mathbb{F}_p$,
let $k$ be the least integer such that $h_{b,0}$ appears in $\sigma^k(t)$.

Note that $b \ne 0$ as an all-zero tile cannot appear in $\sigma^k(t)$.

If the $b$'s were generated during the $k$-th application of $\sigma$,
then $h_{b,0}$ would appear in $\sigma^{k-1}(t)$:
this is a contradiction as $k$ was supposed to be minimal.

If the central $0$ was generated during the $k$-th application of $\sigma$,
then this $0$ would be the sum of two surrounding and opposite $b$'s:
this is a contradiction as $2b \ne 0$ (as $p$ is an odd prime number).

\begin{lemma}
\label{lemma:BCDI}
\begin{equation}
    B^p = C^p = D^p = I_3
\end{equation}
\end{lemma}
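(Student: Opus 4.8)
The plan is to exploit the fact that each of $B$, $C$ and $D$ is unipotent with an especially tame nilpotent part. First I would observe, by direct inspection of the matrices introduced in lemma~\ref{lemma:sigma-matrix}, that for each $M \in \{B, C, D\}$ the matrix $N = M - I_3$ satisfies $N^2 = 0$: in every case $N$ has a single nonzero off-diagonal row or column (of length two), so the product $N \cdot N$ pairs that row with that column trivially and vanishes. This is the only computation the argument needs, and it is a three-line check.

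Next, since $I_3$ and $N$ commute, the binomial theorem is valid in $\mathbb{F}_p^{3 \times 3}$, and applying it gives
\begin{equation}
    M^p = (I_3 + N)^p
        = \sum_{j=0}^{p} \binom{p}{j} N^j
        = I_3 + p N + \binom{p}{2} N^2 + \dots + N^p .
\end{equation}
Every term with $j \ge 2$ is zero because $N^2 = 0$, and the term $p N$ is zero because we are in characteristic $p$. Hence $M^p = I_3$ for each of $M = B, C, D$, which is exactly the assertion $B^p = C^p = D^p = I_3$.

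The main obstacle — such as it is — is merely bookkeeping: one must carry out the $N^2 = 0$ verification separately for the three matrices, but each is a $3 \times 3$ matrix with at most two nonzero entries outside the diagonal, so no genuine difficulty arises and the calculation can be stated in a sentence. As an alternative one could invoke the general fact that a unipotent element of $GL_n(\mathbb{F}_p)$ with $n \le p$ has order dividing $p$, but the self-contained observation that $(M - I_3)^2 = 0$ is both sharper and cleaner in this setting, so I would present the proof in the form above.
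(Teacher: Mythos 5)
Your proof is correct and is essentially the paper's argument in slightly more abstract clothing: the paper writes out $B^k = I_3 + kN$ in closed form (with $N = B - I_3$) and sets $k = p$, which is exactly your binomial expansion truncated by $N^2 = 0$. Both proofs conclude by reducing the coefficient $p$ to $0$ in characteristic $p$, so there is no substantive difference.
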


\begin{proof}
\begin{equation}
   B^k = \begin{bmatrix}
                1 & 1 & 1    \\
                0 & 1 & 0    \\
                0 & 0 & 1
            \end{bmatrix}^k
       = \begin{bmatrix}
                1 & k & k    \\
                0 & 1 & 0    \\
                0 & 0 & 1
            \end{bmatrix}
\end{equation}
Hence:
\begin{equation}
   B^p = \begin{bmatrix}
                1 & p & p    \\
                0 & 1 & 0    \\
                0 & 0 & 1
            \end{bmatrix}
       = \begin{bmatrix}
                1 & 0 & 0    \\
                0 & 1 & 0    \\
                0 & 0 & 1
            \end{bmatrix}
       = I_3
\end{equation}

The cases for $C$ and $D$ are similar.
\end{proof}

\begin{lemma}
\label{lemma:bcdi}
\begin{equation}
    \beta^p = \gamma^p = \delta^p = id_T
\end{equation}
\end{lemma}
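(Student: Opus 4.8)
The plan is to deduce this from the matrix statement Lemma~\ref{lemma:BCDI} together with the correspondence between the maps $\beta,\gamma,\delta$ on $T$ and the matrices $B,C,D$ established in Lemma~\ref{lemma:sigma-matrix}.

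First I would note that $\beta$, $\gamma$ and $\delta$ all preserve the orientation of a tile (this was already used in the proof that $\alpha,\beta,\gamma,\delta$ are permutations of $T$, since only $\alpha$ inverts orientation). Hence for every $k\in\mathbb{N}$ the tile $\beta^k(t)$ has the same orientation as $t$, and likewise for $\gamma^k$ and $\delta^k$.

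Next, by Lemma~\ref{lemma:sigma-matrix} we have $[\beta(t)]=[t]\cdot B$ for all $t\in T$; a trivial induction on $k$ then gives $[\beta^k(t)]=[t]\cdot B^k$, and similarly $[\gamma^k(t)]=[t]\cdot C^k$ and $[\delta^k(t)]=[t]\cdot D^k$. Specialising to $k=p$ and applying Lemma~\ref{lemma:BCDI},
\begin{equation}
    [\beta^p(t)] = [t]\cdot B^p = [t]\cdot I_3 = [t],
\end{equation}
and the same for $\gamma^p$ and $\delta^p$.

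Finally, a tile is completely determined by its orientation together with its row matrix of corner values (by Definition~\ref{def:tile-matrix}, $\bigtriangleup xyz$ and $\bigtriangledown xyz$ are the only tiles with matrix $[x\ y\ z]$, and they are distinguished by orientation). Combining this with the two observations above --- same orientation as $t$, same corner values as $t$ --- yields $\beta^p(t)=\gamma^p(t)=\delta^p(t)=t$ for every $t\in T$, i.e.\ $\beta^p=\gamma^p=\delta^p=id_T$. There is essentially no obstacle here; the only point that needs a moment's attention is the orientation bookkeeping, which is immediate since none of $\beta$, $\gamma$, $\delta$ ever inverts orientation.
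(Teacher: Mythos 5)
Your proof is correct and follows exactly the same route as the paper, which simply cites Lemma~\ref{lemma:BCDI} together with the fact that $\beta$, $\gamma$ and $\delta$ preserve tile orientations; you have merely spelled out the induction $[\beta^k(t)]=[t]\cdot B^k$ and the observation that orientation plus corner matrix determines a tile. No issues.
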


\begin{proof}
This follows from lemma~\ref{lemma:BCDI}
and the fact that $\beta$, $\gamma$ and $\delta$ preserve tile orientations.
\end{proof}

\begin{remark}
\label{remark:corner}
This means that $\forall k \in \mathbb{N}$, $\forall t \in T$,
$t$ appears at the three extreme corners of $\sigma^{kp}(t)$;
see right supertile in figure~\ref{fig:sample}.
\end{remark}

\begin{definition}
$\forall b \in \mathbb{F}_p$,
$\forall c \in \mathbb{F}_p^*$,
$\forall k \in \mathbb{N}$,
let:
\begin{equation}
    h_{b,c}(k) = \sigma^{kp}(h_{b,c})
\end{equation}
\end{definition}

\begin{lemma}
$\forall b \in \mathbb{F}_p$,
$\forall c \in \mathbb{F}_p^*$,
the sequence $\{ h_{b,c}(k) \}_{k \in \mathbb{N}}$
defines a sequence of nested patches
sharing sharing the same central value $c$ surrounded by $b$’s.
\end{lemma}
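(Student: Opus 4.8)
The plan is to reduce the whole statement to a single step, $k\mapsto k+1$, and there to the observation that the six innermost tiles of $\sigma^{p}(h_{b,c})$ already reassemble $h_{b,c}$ itself — which is exactly what lemma~\ref{lemma:bcdi} delivers.

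First I would fix notation for $h_{b,c}$: write $t_1,\dots,t_6$ for its six tiles, listed cyclically around the common central vertex $v_0$ (three upward, three downward, alternating; see figure~\ref{fig:hexagon-h}), noting that $v_0$ is one corner of each $t_i$. Since $\sigma$ sends a tile to a supertile of the \emph{same} orientation (figures~\ref{fig:subst-up} and \ref{fig:subst-down}), $\sigma^{p}(t_i)$ is a $p$-supertile whose big-triangle orientation is that of $t_i$, and the corner of $\sigma^{p}(t_i)$ sitting at $v_0$ is — depending on which of the three corners of $t_i$ meets $v_0$ — the tile at position $\beta^{p}$, $\gamma^{p}$ or $\delta^{p}$ inside that supertile. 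By lemma~\ref{lemma:bcdi} each of these equals $id_T$, so that corner tile is literally $t_i$ again (equivalently, invoke remark~\ref{remark:corner}). I would then record the elementary geometric point that $\sigma^{p}(t_1),\dots,\sigma^{p}(t_6)$ still tile a hexagonal region around $v_0$, that each contributes a unit corner tile at $v_0$, and that two cyclically adjacent such corner tiles meet along a full unit edge (the inward unit segment of the edge shared by the two adjacent $p$-supertiles). Hence $t_1,\dots,t_6$ are reassembled, in the same cyclic order and with the same orientations, into a copy of $h_{b,c}$ sitting at the centre of $\sigma^{p}(h_{b,c})$; in particular its central value is still $c$ and the six surrounding values are still $b$.

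Next I would upgrade this to the full statement by induction on $k$. Because $\sigma$ acts on a patch tile by tile, if a patch $P$ sits centrally inside $\sigma^{p}(h_{b,c})$ then $\sigma^{kp}(P)$ sits (at the corresponding scaled, still central, location) inside $\sigma^{kp}\!\bigl(\sigma^{p}(h_{b,c})\bigr)=\sigma^{(k+1)p}(h_{b,c})=h_{b,c}(k+1)$. Taking $P=h_{b,c}$ gives $h_{b,c}(k)=\sigma^{kp}(h_{b,c})\subseteq h_{b,c}(k+1)$ centrally, and iterating from $h_{b,c}(0)=h_{b,c}$ shows that every $h_{b,c}(k)$ contains the fixed patch $h_{b,c}$ — hence the common central $c$ surrounded by $b$'s — at its centre, while its incircle radius, of order $2^{kp}$, grows without bound. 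This is precisely the assertion of the lemma, and it sets up the limiting tiling $H_{b,c}$ in the same ``nested supertiles'' spirit as lemma~\ref{lemma:s-limit}.

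The only genuinely delicate part is the geometric bookkeeping in the middle paragraph: identifying which of $\beta,\gamma,\delta$ governs the corner at $v_0$ for each $t_i$, and checking that the unit corner tiles of neighbouring $p$-supertiles actually share an edge, so that the reconstruction of $h_{b,c}$ is honest rather than a mere coincidence of the three corner values (a point one could also phrase via lemma~\ref{lemma:flip} to control orientations). Everything else is a formal consequence of lemma~\ref{lemma:bcdi}, remark~\ref{remark:corner} and the tile-by-tile nature of $\sigma$; no computation is required, and the primality of $p$ enters only through those earlier lemmas.
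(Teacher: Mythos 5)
Your proposal is correct and follows essentially the same route as the paper: the key step is that lemma~\ref{lemma:bcdi} (equivalently remark~\ref{remark:corner}) forces each of the six $p$-supertiles $\sigma^p(t_i)$ to reproduce $t_i$ at the corner meeting the central vertex, so the central hexagonal patch of $h_{b,c}(k+1)$ is again $h_{b,c}$, and the nesting follows by iteration. Your version merely spells out the geometric bookkeeping (which of $\beta,\gamma,\delta$ governs each corner, and the edge-matching of adjacent corner tiles) that the paper delegates to figure~\ref{fig:h0-h1}.
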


\begin{proof}
$h_{b,c}(0)$ is composed of six tiles $s_0, \ldots, s_5$
sharing the same central value $c$ surrounded by $b$’s.

Following remark~\ref{remark:corner},
$h_{b,c}(1)$ is composed of six supertiles $\sigma^p(s_0), \ldots, \sigma^p(s_5)$
sharing the same central value $c$ surrounded by $b$’s
(see figure~\ref{fig:h0-h1}).

So $h_{b,c}(0)$ is nested in $h_{b,c}(1)$.\\
Applying $\sigma^p$ again,
$h_{b,c}(1)$ is nested in $h_{b,c}(2)$,
$h_{b,c}(2)$ is nested in $h_{b,c}(3)$,
etc.
\end{proof}

\begin{figure}[H]
    \centering
    \resizebox{10cm}{!}
    {
    \begin{tikzpicture}
    	\begin{scope}[yscale=.87,xslant=.5]
        \fill[green!30] (1,0)
 -- (0,1)
 -- (-1,1)
 -- (-1,0)
 -- (0,-1)
 -- (1,-1)
 -- cycle;
\fill[gray!30] (4,0) -- (5,0) -- (4,1) -- cycle;
\fill[gray!30] (0,4) -- (0,5) -- (1,4) -- cycle;
\fill[gray!30] (0,4) -- (0,5) -- (-1,5) -- cycle;
\fill[gray!30] (-4,4) -- (-5,5) -- (-4,5) -- cycle;
\fill[gray!30] (-4,4) -- (-5,5) -- (-5,4) -- cycle;
\fill[gray!30] (-4,0) -- (-5,0) -- (-5,1) -- cycle;
\fill[gray!30] (-4,0) -- (-5,0) -- (-4,-1) -- cycle;
\fill[gray!30] (0,-4) -- (0,-5) -- (-1,-4) -- cycle;
\fill[gray!30] (0,-4) -- (0,-5) -- (1,-5) -- cycle;
\fill[gray!30] (4,-4) -- (5,-5) -- (4,-5) -- cycle;
\fill[gray!30] (4,-4) -- (5,-5) -- (5,-4) -- cycle;
\fill[gray!30] (4,0) -- (5,0) -- (5,-1) -- cycle;
\draw[dotted] (0,0) -- (5,0) -- (0,5);
\draw[dotted] (1,0) -- (0,1);
\node at (1,0) {$b$};
\node at (1/3,1/3) {$s_0$};
\node[scale=2] at (5/3,5/3) {$\sigma^p(s_0)$};
\node at (13/3,1/3) {$s_0$};
\node at (4,0) {$c$};
\node at (5,0) {$b$};
\node at (4,1) {$b$};
\node at (1/3,13/3) {$s_0$};
\node at (1,4) {$b$};
\draw[dotted] (0,0) -- (0,5) -- (-5,5);
\draw[dotted] (0,1) -- (-1,1);
\node at (0,1) {$b$};
\node at (-1/3,2/3) {$s_1$};
\node[scale=2] at (-5/3,10/3) {$\sigma^p(s_1)$};
\node at (-1/3,14/3) {$s_1$};
\node at (0,4) {$c$};
\node at (0,5) {$b$};
\node at (-1,5) {$b$};
\node at (-13/3,14/3) {$s_1$};
\node at (-4,5) {$b$};
\draw[dotted] (0,0) -- (-5,5) -- (-5,0);
\draw[dotted] (-1,1) -- (-1,0);
\node at (-1,1) {$b$};
\node at (-2/3,1/3) {$s_2$};
\node[scale=2] at (-10/3,5/3) {$\sigma^p(s_2)$};
\node at (-14/3,13/3) {$s_2$};
\node at (-4,4) {$c$};
\node at (-5,5) {$b$};
\node at (-5,4) {$b$};
\node at (-14/3,1/3) {$s_2$};
\node at (-5,1) {$b$};
\draw[dotted] (0,0) -- (-5,0) -- (0,-5);
\draw[dotted] (-1,0) -- (0,-1);
\node at (-1,0) {$b$};
\node at (-1/3,-1/3) {$s_3$};
\node[scale=2] at (-5/3,-5/3) {$\sigma^p(s_3)$};
\node at (-13/3,-1/3) {$s_3$};
\node at (-4,0) {$c$};
\node at (-5,0) {$b$};
\node at (-4,-1) {$b$};
\node at (-1/3,-13/3) {$s_3$};
\node at (-1,-4) {$b$};
\draw[dotted] (0,0) -- (0,-5) -- (5,-5);
\draw[dotted] (0,-1) -- (1,-1);
\node at (0,-1) {$b$};
\node at (1/3,-2/3) {$s_4$};
\node[scale=2] at (5/3,-10/3) {$\sigma^p(s_4)$};
\node at (1/3,-14/3) {$s_4$};
\node at (0,-4) {$c$};
\node at (0,-5) {$b$};
\node at (1,-5) {$b$};
\node at (13/3,-14/3) {$s_4$};
\node at (4,-5) {$b$};
\draw[dotted] (0,0) -- (5,-5) -- (5,0);
\draw[dotted] (1,-1) -- (1,0);
\node at (1,-1) {$b$};
\node at (2/3,-1/3) {$s_5$};
\node[scale=2] at (10/3,-5/3) {$\sigma^p(s_5)$};
\node at (14/3,-13/3) {$s_5$};
\node at (4,-4) {$c$};
\node at (5,-5) {$b$};
\node at (5,-4) {$b$};
\node at (14/3,-1/3) {$s_5$};
\node at (5,-1) {$b$};
\node at (0,0) {$c$};
    	\end{scope}
    \end{tikzpicture}
    }
    \caption{$h_{b,c}(0)$ is nested in $h_{b,c}(1)$}
    \label{fig:h0-h1}
\end{figure}
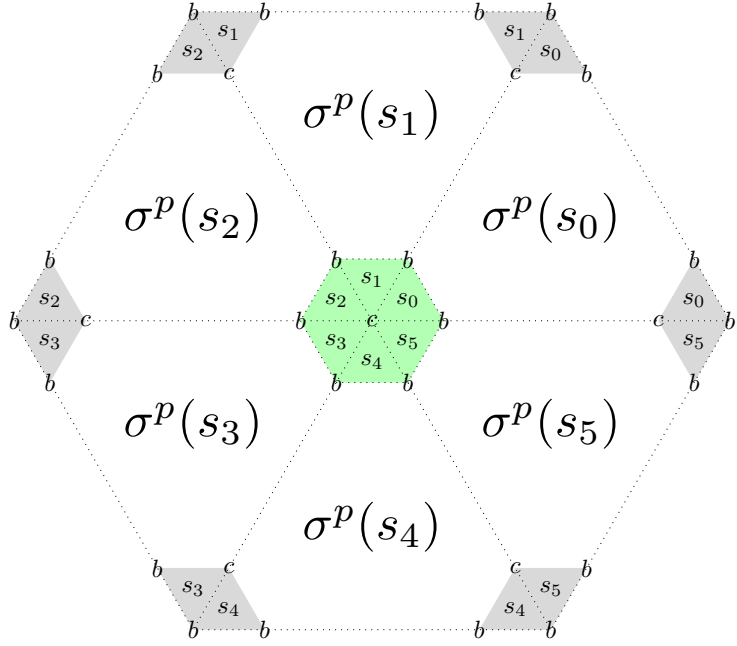

\begin{lemma}
$\forall b \in \mathbb{F}_p$,
$\forall c \in \mathbb{F}_p^*$,
the sequence $\{ h_{b,c}(k) \}_{k \in \mathbb{N}}$ has a limit, say $H_{b,c}$,
that constitutes a tiling of the plane.
\end{lemma}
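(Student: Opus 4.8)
The plan is to follow the template of lemma~\ref{lemma:s-limit}, replacing the nested supertiles used there by the nested hexagonal patches $h_{b,c}(k)$. First I would record the geometric picture: writing $h_{b,c}$ as the union of six unit tiles $s_0,\dots,s_5$ meeting at a common central vertex carrying the value $c$, remark~\ref{remark:corner} shows that for every $k$ the supertile $\sigma^{kp}(s_i)$ carries the tile $s_i$ at the corner sitting at that vertex. Consequently the six supertiles $\sigma^{kp}(s_0),\dots,\sigma^{kp}(s_5)$ still meet around that vertex, each occupying a $60^\circ$ wedge, and their triangular supports tile a regular hexagon $H_k$ of side $2^{kp}$ centred there; since each supertile tiles its own support, $h_{b,c}(k)$ is a genuine gapless, non-overlapping tiling of $H_k$.

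Next I would invoke the preceding lemma, which guarantees that the patches are nested (sharing the central $c$ surrounded by $b$'s), so the tilings of the various $H_k$ agree on their overlaps. Because the inradius of $H_k$ equals $2^{kp}\cdot\tfrac{\sqrt3}{2}$, which tends to infinity, the regions $H_k$ exhaust $\mathbb{R}^2$, and hence $\bigcup_k h_{b,c}(k)$ is a well-defined tiling of the whole plane. Finally, exactly as in lemma~\ref{lemma:s-limit}, convergence of $\{h_{b,c}(k)\}_{k\in\mathbb{N}}$ to this limit $H_{b,c}$ in the Chabauty–Fell topology follows from the fact that it is a nested family of patches whose supports grow without bound, by the arguments of Smilansky and Solomon~\cite{multiscale}.

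The only point requiring care — the main obstacle, though a mild one — is verifying that the six supertiles really do fit together seamlessly around the central vertex: one must check that adjacent supertiles $\sigma^{kp}(s_i)$ and $\sigma^{kp}(s_{i+1})$ agree along the shared edge emanating from the centre, so that no gap or overlap is introduced at the larger scale. This reduces to the compatibility already built into the substitution (adjacent tiles share corner values, and $\sigma$ preserves this by the earlier lemma that corners meeting at the same point hold the same value), applied to the two supertiles; so no genuinely new work is needed beyond citing those facts, and the bulk of the argument is the geometric bookkeeping of paragraph one together with the Chabauty–Fell limit argument already used for $S_t$.
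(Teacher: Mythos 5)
Your proposal is correct and follows the same route as the paper, which simply notes that the argument is the one already used for $S_t$ in lemma~\ref{lemma:s-limit}: a nested family of patches whose supports exhaust the plane converges in the Chabauty--Fell sense by the argument of Smilansky and Solomon. Your elaboration of the hexagonal geometry and the seam-compatibility of the six supertiles fills in details the paper leaves implicit, but introduces nothing that departs from its approach.
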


\begin{proof}
The arguments are similar to that of lemma~\ref{lemma:s-limit}.
\end{proof}

Figure~\ref{fig:stationary-h} presents the first steps
of the construction of $H_{2,1}$ (for $p = 3$);
the thick lines correspond to the boundaries of the first nested patches;
the color scheme is the one used in section~\ref{sec:sigma-cons-examples}.

Figure~\ref{fig:stationary-h-2} presents the central part of $H_{0,1}$.

\begin{remark}
\label{remark-6-fold-symmetry}
$\forall b \in \mathbb{F}_p$,
$\forall c \in \mathbb{F}_p^*$,
$H_{b,c}$ has reflection and 6-fold rotational symmetries.
\end{remark}

\begin{figure}[H]
    \centering
    \resizebox{10cm}{!}
    {
    \begin{tikzpicture}
    	\begin{scope}[yscale=.87,xslant=.5]
        \input{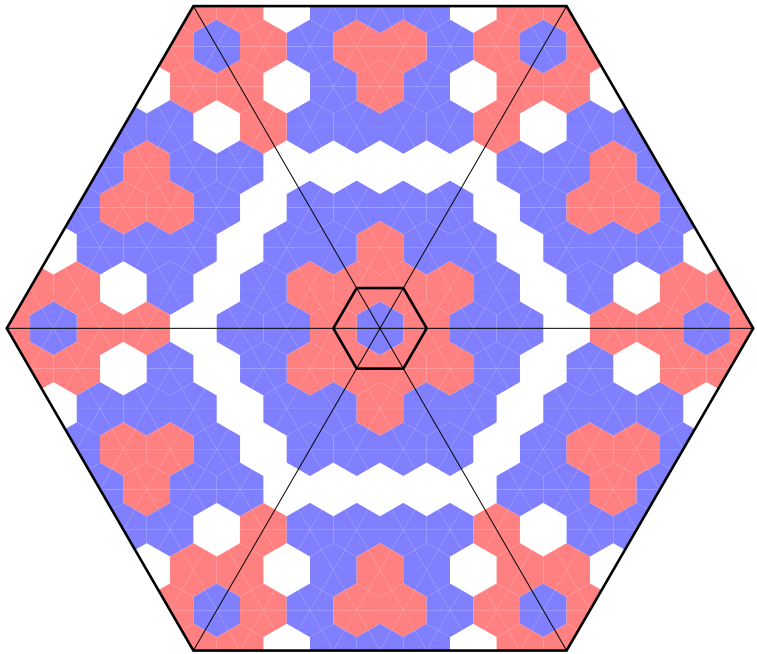}

        \draw (-8,0) -- (8,0);
        \draw (0,-8) -- (0,8);
        \draw (8,-8) -- (-8,8);
        \draw[ultra thick] (0,-1) -- (1,-1) -- (1,0) -- (0,1) -- (-1,1) -- (-1,0) -- cycle;
        \draw[ultra thick] (0,-8) -- (8,-8) -- (8,0) -- (0,8) -- (-8,8) -- (-8,0) -- cycle;
    	\end{scope}
    \end{tikzpicture}
    }
    \caption{Colored representation of $h_{2,1}(k)$ for $k = 0, 1$ (for $p = 3$)}
    \label{fig:stationary-h}
\end{figure}

\begin{figure}[H]
    \begin{center}
    \vstretch{1.732}{
        \includegraphics[height=7cm, keepaspectratio]{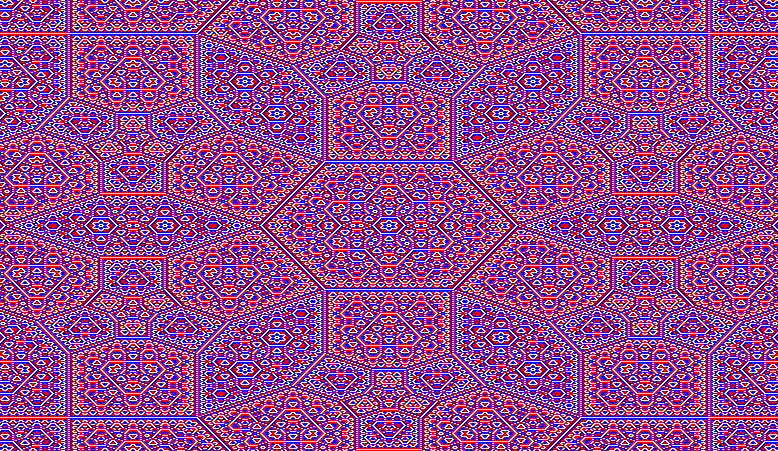}
    }
    \caption{Colored representation of the central part of $H_{0,1}$ (for $p = 3$)}
    \label{fig:stationary-h-2}
    \end{center}
\end{figure}

\begin{remark}
$\forall m, c, c' \in \mathbb{F}_p^*$,
$\forall b, b' \in \mathbb{F}_p$:
\begin{equation}
    H_{b,c} + H_{b',c'} = H_{b+b',c+c'} \mathrm{\ (provided\ that\ }c + c' \ne 0 \mathrm{)}
\end{equation}
\begin{equation}
    m \cdot H_{b,c}= H_{mb,mc}
\end{equation}
\end{remark}

\subsection{Automaticity}
\label{sec:sigma-tiling-h-automaticity}

\begin{definition}
Let $N$ be the deterministic finite automaton with the following characteristics:
\begin{itemize}
\item the set of states is $R = \{+1, -1\} \times \mathbb{F}_p^{3 \times 3}$,
\item the input alphabet is $\Sigma$,
\item the initial state is $r_0 = (+1, I_3)$,
\item the transitions from state $(f, m)$ are performed as follows:\\
\begin{tabular}{|c|l|}
    \hline
    \rule[-1ex]{0pt}{2.5ex} Input symbol & Output state \\
    \hline
    \rule[-1ex]{0pt}{2.5ex} $\alpha$ & $(-f, [\beta]^{-1} \cdot m \cdot [\alpha])$ \\
    \rule[-1ex]{0pt}{2.5ex} $\beta$  & $(+f, [\beta]^{-1} \cdot m \cdot [\beta])$ \\
    \rule[-1ex]{0pt}{2.5ex} $\gamma$ & $(+f, [\beta]^{-1} \cdot m \cdot [\gamma])$ \\
    \rule[-1ex]{0pt}{2.5ex} $\delta$ & $(+f, [\beta]^{-1} \cdot m \cdot [\delta])$ \\
    \hline
\end{tabular}
\item $\forall w \in \Sigma^*$,
      the final state of the automaton after processing the input $w$,
      say $(f, m)$,
      encodes a tile $t$ as follow:
\begin{itemize}
\item if $f = +1$, then $t$ is an upward tile,
\item if $f = -1$, then $t$ is a downward tile,
\item also:
\begin{equation}
    [t] = [c\ b\ b] \cdot m
\end{equation}
\end{itemize}
     \end{itemize}
\end{definition}

\begin{theo}
$\forall b \in \mathbb{F}_p$,
$\forall c \in \mathbb{F}_p^*$,
the tiling $H_{b,c}$ is $4$-automatic
and can be computed with $N$.
\end{theo}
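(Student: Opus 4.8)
The plan is to transcribe the proof that $S_t$ is $4$-automatic, with the nested supertiles $s_t(k)$ replaced by the nested hexagonal patches $h_{b,c}(k)=\sigma^{kp}(h_{b,c})$ and the single root tile replaced by the six-tile hexagon $h_{b,c}$. First I would note, exactly as in the proof of lemma~\ref{lemma:s-limit}, that the incircle radius of the $h_{b,c}(k)$ grows without bound, so every tile $t'$ of $H_{b,c}$ lies in some $h_{b,c}(k)$. By construction $h_{b,c}(k)$ is covered by the six supertiles $\sigma^{kp}(s_0),\dots,\sigma^{kp}(s_5)$, where $s_0,\dots,s_5$ are the six triangles of $h_{b,c}$ (three upward, three downward), and $t'$ lies in exactly one of them, say in sector $i$; moreover, by lemma~\ref{lemma:bcdi} (equivalently lemma~\ref{lemma:BCDI}), the tile sitting at the hexagon's central corner of $\sigma^{kp}(s_i)$ is again $s_i$ --- this is remark~\ref{remark:corner}, and it is what makes the six pieces nest consistently around the centre. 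Since each $s_i$ carries the central value $c$ at its central corner, the row vectors $[s_i]$ run through the three patterns $[c\ b\ b]$, $[b\ c\ b]$, $[b\ b\ c]$, each once in each orientation.

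Next I would fix, once and for all, a short word $u_i\in\Sigma^{\ast}$ for each of the six sectors and address $t'$ by $w=u_i v$, where $v\in\Sigma^{kp}$ is the position of $t'$ inside $\sigma^{kp}(s_i)$ (so $[t']=[s_i]\cdot[v]$ by theorem~\ref{theo:sigma-prop-rebuilding}). Two elementary observations make $N$ behave as claimed. First, because $B^{p}=C^{p}=D^{p}=B^{-p}=I_3$, the automaton $N$ processes each of the blocks $\beta^{p}$, $\gamma^{p}$, $\delta^{p}$ as the identity transformation of its state set; this absorbs the ambiguity in the address coming from the choice of $k$ (passing from $k$ to $k+1$ merely inserts one such block after $u_i$), and plays the role that ``$q_0\xrightarrow{\alpha}q_0$'' played for $S_t$. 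Second, after processing a word $w$ from the initial state $(+1,I_3)$, the matrix component is $m=B^{-|w|}[w]$ and the flip component is $(-1)^{a}$, with $a$ the number of $\alpha$'s in $w$; since $|w|$ equals $|u_i|$ plus a multiple of $p$ and powers of $B$ commute, $m=B^{-|u_i|}[u_i][v]$, hence $[c\ b\ b]\cdot m=\big([c\ b\ b]\,B^{-|u_i|}[u_i]\big)\cdot[v]$, and the number of $\alpha$'s in $w$ is the number in $u_i$ plus the number in $v$.

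The whole statement then reduces to choosing each $u_i$ so that running $N$ on $u_i$ alone reaches a state $(f_i,m_i)$ that encodes $s_i$, i.e. $[c\ b\ b]\cdot m_i=[s_i]$ and $f_i=+1$ or $-1$ according to the orientation of $s_i$. Granting this, $[c\ b\ b]\cdot m=[s_i]\cdot[v]=[t']$, and the final flip bit is $+1$ if and only if $t'$ is upward, by lemma~\ref{lemma:flip} applied to $t'$ inside $\sigma^{kp}(s_i)$ --- the same orientation bookkeeping as in the $S_t$ proof. The required identity $[c\ b\ b]\,B^{-|u_i|}[u_i]=[s_i]$ is linear in $(b,c)$ and the vectors $[c\ b\ b]$ span only a two-dimensional space, so it suffices to verify it for two independent choices of $(b,c)$; this is a finite computation with the integer matrices $A,B,C,D$ reduced modulo $p$, taking $u_i=\varepsilon$ for the sector with $s_i=\bigtriangleup(c,b,b)$ and short explicit words for the remaining five. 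Finally $\#\Sigma=4$ gives the ``$4$''.

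The step I expect to be the main obstacle is precisely the choice of the six prefixes $u_i$: one must exhibit words, independent of $b$, $c$ and $k$, that (a) reconstruct the correct base tile $s_i$ under the single output rule $[t]=[c\ b\ b]\cdot m$ in both orientations and all three row-patterns, and (b) remain compatible with the $\beta^{p}/\gamma^{p}/\delta^{p}$ nesting at the centre --- in particular, prepending $u_i$ must not destroy the block-absorption property of the preceding observation. Once the prefixes are in hand, everything else is a routine adaptation of the $S_t$ argument.
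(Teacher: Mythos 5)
Your framework is sound up to the decisive step: the six-sector decomposition, the nesting of $\sigma^{kp}(s_i)$ at the corner carrying the central value $c$ (via $\beta^p=\gamma^p=\delta^p=id_T$), the state formula $m=B^{-|w|}[w]$, and the absorption of the blocks $\beta^p,\gamma^p,\delta^p$ are all correct and match the paper's setup. But your route through the sectors is genuinely different from the paper's, and the difference is exactly where the gap sits. The paper never tries to reach sectors $1,\dots,5$ through the automaton at all: it runs $N$ only on the position word of the sector-$0$ representative $t_0$ and recovers $t_r$ afterwards by the explicit rotation $[t_r]=[t_0]\cdot P^r$, where $P$ is the cyclic permutation matrix, with an orientation flip when $r$ is odd (this is where the $6$-fold symmetry of $H_{b,c}$ is exploited). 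You instead want to encode the sector into a prefix $u_i$ of the input word, which requires exhibiting, for each $i$, a word satisfying
\begin{equation}
    [c\ b\ b]\cdot B^{-|u_i|}[u_i]=[s_i]\quad\text{for all } b,c,
\end{equation}
together with the correct parity of $\alpha$'s for the orientation bit. You correctly flag this as the main obstacle, but you neither exhibit the words nor prove they exist, and existence is not a formality: it asks that the reachable set $\{B^{-n}[u]\}$ meet a prescribed $3$-dimensional affine family of matrices (those agreeing with $P$ or $P^2$ on the plane $\{[u\ v\ v]\}$), subject to the constraint $\det[u]=2^{a}$ with $a$ the number of $\alpha$'s, which couples the matrix condition to the flip bit. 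A quick check shows no single-letter prefix works (for instance $[c\ b\ b]\,B^{-1}C=[b,\ b-c,\ 2b-2c]$, never identically $[b\ c\ b]$ or $[b\ b\ c]$), so \quotes{short explicit words} is optimistic; one can argue existence by showing that $B$, $C$, $D$ are three transvections with no common invariant subspace, hence generate $SL_3(\mathbb{F}_p)$, and then doing bookkeeping with word lengths modulo $p$ and $\alpha$-parities, but none of that is in your proposal.

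As written, the proposal therefore proves the theorem only modulo an unestablished existence claim that carries most of the weight. If you want to keep the prefix scheme you must supply that argument; the cheaper fix is the paper's: move the sector index outside the automaton and output $([c\ b\ b]\cdot m)\cdot P^r$ with an orientation flip for odd $r$, which needs no prefixes and works uniformly in $p$. Everything else in your write-up (incircle growth, $k$-independence via the absorbed blocks, the orientation bookkeeping inside a sector via lemma~\ref{lemma:flip}) then goes through as you describe.
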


\begin{proof}
As depicted in figure~\ref{fig:automaticity-h}:
\begin{itemize}
\item $H_{b,c}$ can be divided into six sectors
      (related to the six tiles composing the central hexagonal patch $h_{b,c}$),
\item any tile $t_s$ in sector $s$ belong to a family $t_0, \ldots, t_5$
      where $t_r$ belongs to sector $r$
      and is obtained by rotating counterclockwise $t_0$ by $60 \cdot r$ degrees around the origin.
\end{itemize}

\begin{figure}[H]
    \centering
    \resizebox{10cm}{!}
    {
    \begin{tikzpicture}
    	\begin{scope}[yscale=.87,xslant=.5]
        \hexgrid{0}{-5}{5};

        \fill[green!30] (1,0) -- (0,1) -- (-1,1) -- (-1,0) -- (0,-1) -- (1,-1) -- cycle;

                \draw[dotted] (0,0) -- (7,0);
        \node[scale=2] at (3,3) {Sector $0$};
        \fill[yellow!50] (2,1) -- (3,1) -- (2,2) -- cycle;
        \node at (7/3,4/3) {$t_0$};
        \node at (2,1) {$x$};
        \node at (3,1) {$y$};
        \node at (2,2) {$z$};
        \draw[dotted] (0,0) -- (0,7);
        \node[scale=2] at (-3,6) {Sector $1$};
        \fill[yellow!50] (-1,3) -- (-1,4) -- (-2,4) -- cycle;
        \node at (-4/3,11/3) {$t_1$};
        \node at (-1,3) {$x$};
        \node at (-1,4) {$y$};
        \node at (-2,4) {$z$};
        \draw[dotted] (0,0) -- (-7,7);
        \node[scale=2] at (-6,3) {Sector $2$};
        \fill[yellow!50] (-3,2) -- (-4,3) -- (-4,2) -- cycle;
        \node at (-11/3,7/3) {$t_2$};
        \node at (-3,2) {$x$};
        \node at (-4,3) {$y$};
        \node at (-4,2) {$z$};
        \draw[dotted] (0,0) -- (-7,0);
        \node[scale=2] at (-3,-3) {Sector $3$};
        \fill[yellow!50] (-2,-1) -- (-3,-1) -- (-2,-2) -- cycle;
        \node at (-7/3,-4/3) {$t_3$};
        \node at (-2,-1) {$x$};
        \node at (-3,-1) {$y$};
        \node at (-2,-2) {$z$};
        \draw[dotted] (0,0) -- (0,-7);
        \node[scale=2] at (3,-6) {Sector $4$};
        \fill[yellow!50] (1,-3) -- (1,-4) -- (2,-4) -- cycle;
        \node at (4/3,-11/3) {$t_4$};
        \node at (1,-3) {$x$};
        \node at (1,-4) {$y$};
        \node at (2,-4) {$z$};
        \draw[dotted] (0,0) -- (7,-7);
        \node[scale=2] at (6,-3) {Sector $5$};
        \fill[yellow!50] (3,-2) -- (4,-3) -- (4,-2) -- cycle;
        \node at (11/3,-7/3) {$t_5$};
        \node at (3,-2) {$x$};
        \node at (4,-3) {$y$};
        \node at (4,-2) {$z$};

        \node[scale=1.5] at (0,0) {$c$};
        \node[scale=1.5] at (1,0) {$b$};
        \node[scale=1.5] at (0,1) {$b$};
        \node[scale=1.5] at (-1,1) {$b$};
        \node[scale=1.5] at (-1,0) {$b$};
        \node[scale=1.5] at (0,-1) {$b$};
        \node[scale=1.5] at (1,-1) {$b$};
    	\end{scope}
    \end{tikzpicture}
    }
    \caption{$H_{b,c}$ divided into six sectors}
    \label{fig:automaticity-h}
\end{figure}
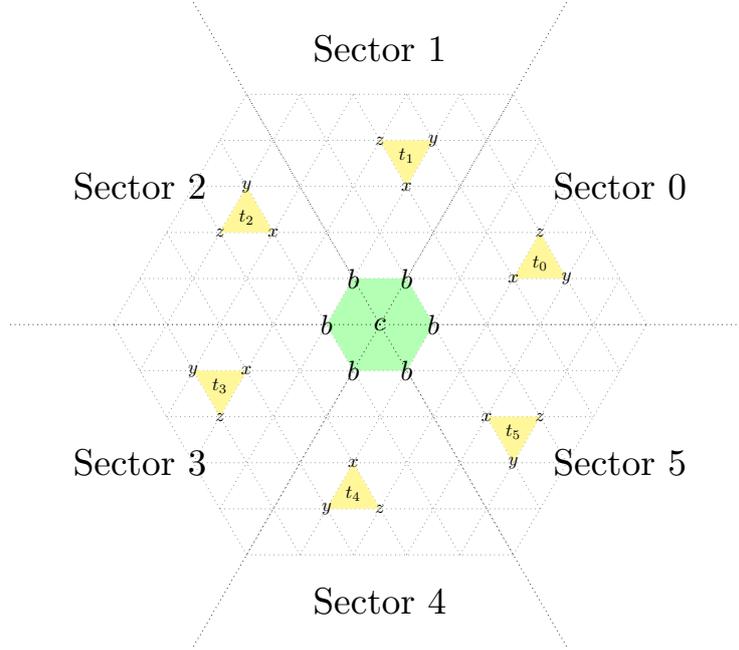

The incircle radius of nested prototiles $\{ h_{b,c}(k) \}_{k \in \mathbb{N}}$ increases without limit,
so $\forall r \in 0 \twodots 5$, for any tile $t_r$ in sector $r$ in the plane,
$\exists k \in \mathbb{N}$
such that $t_r$ belongs to $h_{b,c}(k)$.

$t_0$ belongs to $\sigma^{kp}(\bigtriangleup bcc)$.

As seen in theorem~\ref{theo:tile-position},
the position of $t_0$ within $\sigma^{kp}(\bigtriangleup bcc)$
is characterized by a word $w \in \Sigma^{kp}$.

Suppose that the final state of $N$ after processing $w$ equals $(f, m)$.

By construction, $m$ satisfies:
\begin{equation}
    m = \underbrace{[\beta]^{-1} \ldots [\beta]^{-1}}_{kp \mathrm{\ times}}
        \cdot
        I_3
        \cdot
        [w_0] \ldots [w_{kp-1}]
      = \underbrace{[\beta]^{-kp}}_{=id_T \mathrm{\ (see\ lemma~\ref{lemma:bcdi})}} \cdot [w]
      = [w]
\end{equation}

Also, $t_0$ is at position $w$ within $\sigma^{kp}(\bigtriangleup bcc)$, so:
\begin{equation}
    [t_0] = [\sigma^{kp}(\bigtriangleup bcc)] \cdot [w]
          = [\bigtriangleup bcc] \cdot [w]
          = [b\ c\ c] \cdot [w]
          = [b\ c\ c] \cdot [m]
\end{equation}
So $m$ gives access to $[t_0]$.

\vspace{\baselineskip}

By construction, $f = +1$ if and only if $w$ contains
an even number of $\alpha$'s.

From lemma~\ref{lemma:flip}:
\begin{itemize}
\item $t_0$ has the same orientation as $\bigtriangleup bcc$
      (\textit{i.e.} is an upward tile)
      if and only if the number of $\alpha$'s in $w$ is even.
\end{itemize}
So $f$ gives access to the orientation of $t_0$.

\vspace{\baselineskip}

If $r \ne 0$, then $t_r$ can be deduced from $t_0$ as follows:
\begin{itemize}
\item
\begin{equation}
    [t_r] = [t_0] \cdot \begin{bmatrix}
                            0 & 0 & 1    \\
                            1 & 0 & 0    \\
                            0 & 1 & 0
                        \end{bmatrix}^r
\end{equation}
\item $t_0$ and $t_r$ have the same orientation
      if and only if $r$ is even.
\end{itemize}
\end{proof}

\begin{remark}
If $k$ is not minimal,
then the input will have extra leading $\beta$'s;
this will not alter the final state as
$r_0 \xrightarrow{\beta} r_0$.
\end{remark}

\begin{remark}
$\#\Sigma = 4$, so $H_{b,c}$ is \quotes{4}-automatic.
\end{remark}

\begin{remark}
We can interpret the position $w$ of any tile in sector $0$
as a base-$4$ integer,
by assigning the values $1$, $0$, $2$ and $3$
to $\alpha$, $\beta$, $\gamma$ and $\delta$ respectively;
tiles whose position does not contain $\alpha$'s form a Sierpi\'nski triangle
(see figure~\ref{fig:sector-coord}).
\end{remark}

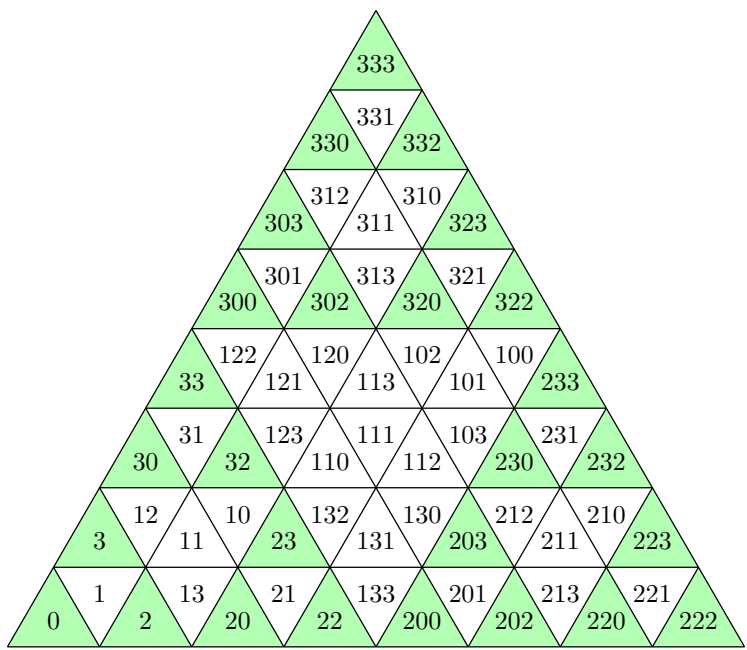
\begin{figure}[H]
    \centering
    \resizebox{10cm}{!}
    {
    \begin{tikzpicture}
    	\begin{scope}[yscale=.87,xslant=.5]
        \fill[green!30] (0,0) -- (1,0) -- (0,1) -- cycle;
\node[scale=.8] at (1/3,1/3) {$0$};
\node[scale=.8] at (2/3,2/3) {$1$};
\fill[green!30] (1,0) -- (2,0) -- (1,1) -- cycle;
\node[scale=.8] at (4/3,1/3) {$2$};
\fill[green!30] (0,1) -- (1,1) -- (0,2) -- cycle;
\node[scale=.8] at (1/3,4/3) {$3$};
\node[scale=.8] at (5/3,5/3) {$10$};
\node[scale=.8] at (4/3,4/3) {$11$};
\node[scale=.8] at (2/3,5/3) {$12$};
\node[scale=.8] at (5/3,2/3) {$13$};
\fill[green!30] (2,0) -- (3,0) -- (2,1) -- cycle;
\node[scale=.8] at (7/3,1/3) {$20$};
\node[scale=.8] at (8/3,2/3) {$21$};
\fill[green!30] (3,0) -- (4,0) -- (3,1) -- cycle;
\node[scale=.8] at (10/3,1/3) {$22$};
\fill[green!30] (2,1) -- (3,1) -- (2,2) -- cycle;
\node[scale=.8] at (7/3,4/3) {$23$};
\fill[green!30] (0,2) -- (1,2) -- (0,3) -- cycle;
\node[scale=.8] at (1/3,7/3) {$30$};
\node[scale=.8] at (2/3,8/3) {$31$};
\fill[green!30] (1,2) -- (2,2) -- (1,3) -- cycle;
\node[scale=.8] at (4/3,7/3) {$32$};
\fill[green!30] (0,3) -- (1,3) -- (0,4) -- cycle;
\node[scale=.8] at (1/3,10/3) {$33$};
\node[scale=.8] at (11/3,11/3) {$100$};
\node[scale=.8] at (10/3,10/3) {$101$};
\node[scale=.8] at (8/3,11/3) {$102$};
\node[scale=.8] at (11/3,8/3) {$103$};
\node[scale=.8] at (7/3,7/3) {$110$};
\node[scale=.8] at (8/3,8/3) {$111$};
\node[scale=.8] at (10/3,7/3) {$112$};
\node[scale=.8] at (7/3,10/3) {$113$};
\node[scale=.8] at (5/3,11/3) {$120$};
\node[scale=.8] at (4/3,10/3) {$121$};
\node[scale=.8] at (2/3,11/3) {$122$};
\node[scale=.8] at (5/3,8/3) {$123$};
\node[scale=.8] at (11/3,5/3) {$130$};
\node[scale=.8] at (10/3,4/3) {$131$};
\node[scale=.8] at (8/3,5/3) {$132$};
\node[scale=.8] at (11/3,2/3) {$133$};
\fill[green!30] (4,0) -- (5,0) -- (4,1) -- cycle;
\node[scale=.8] at (13/3,1/3) {$200$};
\node[scale=.8] at (14/3,2/3) {$201$};
\fill[green!30] (5,0) -- (6,0) -- (5,1) -- cycle;
\node[scale=.8] at (16/3,1/3) {$202$};
\fill[green!30] (4,1) -- (5,1) -- (4,2) -- cycle;
\node[scale=.8] at (13/3,4/3) {$203$};
\node[scale=.8] at (17/3,5/3) {$210$};
\node[scale=.8] at (16/3,4/3) {$211$};
\node[scale=.8] at (14/3,5/3) {$212$};
\node[scale=.8] at (17/3,2/3) {$213$};
\fill[green!30] (6,0) -- (7,0) -- (6,1) -- cycle;
\node[scale=.8] at (19/3,1/3) {$220$};
\node[scale=.8] at (20/3,2/3) {$221$};
\fill[green!30] (7,0) -- (8,0) -- (7,1) -- cycle;
\node[scale=.8] at (22/3,1/3) {$222$};
\fill[green!30] (6,1) -- (7,1) -- (6,2) -- cycle;
\node[scale=.8] at (19/3,4/3) {$223$};
\fill[green!30] (4,2) -- (5,2) -- (4,3) -- cycle;
\node[scale=.8] at (13/3,7/3) {$230$};
\node[scale=.8] at (14/3,8/3) {$231$};
\fill[green!30] (5,2) -- (6,2) -- (5,3) -- cycle;
\node[scale=.8] at (16/3,7/3) {$232$};
\fill[green!30] (4,3) -- (5,3) -- (4,4) -- cycle;
\node[scale=.8] at (13/3,10/3) {$233$};
\fill[green!30] (0,4) -- (1,4) -- (0,5) -- cycle;
\node[scale=.8] at (1/3,13/3) {$300$};
\node[scale=.8] at (2/3,14/3) {$301$};
\fill[green!30] (1,4) -- (2,4) -- (1,5) -- cycle;
\node[scale=.8] at (4/3,13/3) {$302$};
\fill[green!30] (0,5) -- (1,5) -- (0,6) -- cycle;
\node[scale=.8] at (1/3,16/3) {$303$};
\node[scale=.8] at (5/3,17/3) {$310$};
\node[scale=.8] at (4/3,16/3) {$311$};
\node[scale=.8] at (2/3,17/3) {$312$};
\node[scale=.8] at (5/3,14/3) {$313$};
\fill[green!30] (2,4) -- (3,4) -- (2,5) -- cycle;
\node[scale=.8] at (7/3,13/3) {$320$};
\node[scale=.8] at (8/3,14/3) {$321$};
\fill[green!30] (3,4) -- (4,4) -- (3,5) -- cycle;
\node[scale=.8] at (10/3,13/3) {$322$};
\fill[green!30] (2,5) -- (3,5) -- (2,6) -- cycle;
\node[scale=.8] at (7/3,16/3) {$323$};
\fill[green!30] (0,6) -- (1,6) -- (0,7) -- cycle;
\node[scale=.8] at (1/3,19/3) {$330$};
\node[scale=.8] at (2/3,20/3) {$331$};
\fill[green!30] (1,6) -- (2,6) -- (1,7) -- cycle;
\node[scale=.8] at (4/3,19/3) {$332$};
\fill[green!30] (0,7) -- (1,7) -- (0,8) -- cycle;
\node[scale=.8] at (1/3,22/3) {$333$};
\draw(0,0) -- (8,0);
\draw(0,0) -- (0,8);
\draw(8,0) -- (0,8);
\draw(0,1) -- (7,1);
\draw(1,0) -- (1,7);
\draw(7,0) -- (0,7);
\draw(0,2) -- (6,2);
\draw(2,0) -- (2,6);
\draw(6,0) -- (0,6);
\draw(0,3) -- (5,3);
\draw(3,0) -- (3,5);
\draw(5,0) -- (0,5);
\draw(0,4) -- (4,4);
\draw(4,0) -- (4,4);
\draw(4,0) -- (0,4);
\draw(0,5) -- (3,5);
\draw(5,0) -- (5,3);
\draw(3,0) -- (0,3);
\draw(0,6) -- (2,6);
\draw(6,0) -- (6,2);
\draw(2,0) -- (0,2);
\draw(0,7) -- (1,7);
\draw(7,0) -- (7,1);
\draw(1,0) -- (0,1);
    	\end{scope}
    \end{tikzpicture}
    }
    \caption{Positions in the sector $0$ as base-$4$ integers
             (the bottom left tile has position $0$)}
    \label{fig:sector-coord}
\end{figure}

\subsection{Self-similarity}
\label{sec:sigma-tiling-h-self-similarity}

\begin{theo}
$\forall b \in \mathbb{F}_p$,
$\forall c \in \mathbb{F}_p^*$,
the tiling $H_{b,c}$ is self-similar.
\end{theo}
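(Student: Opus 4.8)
The plan is to imitate the argument used for the self-similarity of $S_t$ in Section~\ref{sec:sigma-tiling-s-self-similarity}, but the situation here turns out to be cleaner: thanks to $\beta^p=\gamma^p=\delta^p=id_T$ (lemma~\ref{lemma:bcdi}) and remark~\ref{remark:corner}, the central patch $h_{b,c}$ is already reproduced, in its original position, at the centre of $\sigma^p(h_{b,c})$, so no extra power beyond $\sigma^p$ is needed to return to the \emph{same} tiling.

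First I would record the identity $\sigma^p\big(h_{b,c}(k)\big)=\sigma^p\big(\sigma^{kp}(h_{b,c})\big)=\sigma^{(k+1)p}(h_{b,c})=h_{b,c}(k+1)$ for every $k\in\mathbb{N}$, which is immediate from the definition $h_{b,c}(k)=\sigma^{kp}(h_{b,c})$. Next, since $\{h_{b,c}(k)\}_{k\in\mathbb{N}}$ is a sequence of nested patches all containing the fixed central hexagon $h_{b,c}$ — so the limit tiling $H_{b,c}$ carries a distinguished origin — and since $\sigma^p$ is continuous in the Chabauty–Fell topology (as used in lemma~\ref{lemma:s-limit}) and merely shifts the index of the sequence by one, passing to the limit $k\to\infty$ yields $\sigma^p(H_{b,c})=H_{b,c}$. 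Finally, since one application of $\sigma$ is an inflation by the linear factor $2$ (each unit triangle is subdivided into four half-size triangles), $\sigma^p$ is an inflation by $2^p$; invariance of $H_{b,c}$ under this inflation-and-subdivision is exactly the assertion that $H_{b,c}$ is self-similar with respect to $\sigma^p$, in the same sense already in force for $S_t$.

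The only point that genuinely needs care is justifying that ``taking the limit commutes with $\sigma^p$'' produces the same tiling rather than a translate of it. Here I would lean on lemma~\ref{lemma:bcdi} and remark~\ref{remark:corner}: writing $s_0,\dots,s_5$ for the six tiles forming $h_{b,c}$, applying $\sigma^p$ to each $s_i$ yields a supertile whose corner at the centre is, by remark~\ref{remark:corner}, the tile $s_i$ itself, so $h_{b,c}$ sits at the centre of $\sigma^p(h_{b,c})=h_{b,c}(1)$ in its original location. Consequently every $h_{b,c}(k)$ carries the same centred hexagonal patch, the limit $H_{b,c}$ has a canonical origin which $\sigma^p$ fixes, and together with nestedness this pins down $\sigma^p(H_{b,c})=H_{b,c}$ on the nose. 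A secondary, purely bookkeeping step is to state the inflation constant $2^p$ explicitly so that the notion of ``self-similar'' invoked here matches the one used for $S_t$.
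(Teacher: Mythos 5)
Your proposal is correct and takes essentially the same route as the paper, whose proof consists solely of the identity $\sigma^p(H_{b,c}) = H_{b,c}$; you simply make explicit the justification (the index shift $\sigma^p(h_{b,c}(k)) = h_{b,c}(k+1)$, the fixed central hexagon guaranteed by lemma~\ref{lemma:bcdi} and remark~\ref{remark:corner}, and passage to the limit) that the paper leaves implicit from its construction of the nested patches.
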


\begin{proof}
\begin{equation}
    \sigma^p(H_{b,c}) = H_{b,c}
\end{equation}
\end{proof}

\subsection{Nonperiodicity}
\label{sec:sigma-tiling-h-nonperiodicity}

\begin{theo}
$\forall b \in \mathbb{F}_p$,
$\forall c \in \mathbb{F}_p^*$,
the tiling $H_{b,c}$ is nonperiodic.
\end{theo}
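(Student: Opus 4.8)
The plan is to follow the proof of Theorem~\ref{theo:nonperiod-3} almost verbatim, the only new ingredient being the observation that $H_{b,c}$, exactly like the tilings $S_t$, contains arbitrarily large hexagonal paths of $0$'s while containing no all-zero tile. First I would record that each of the six tiles of the central patch $h_{b,c}$ carries the value $c$ at one of its corners, so all six of them belong to $T^*$ (here $c\ne 0$ is used). Consequently, by the lemma relating all-zero tiles to all-zero seeds (Section~\ref{sec:sigma-prop-patterns}), none of the nested patches $h_{b,c}(k)=\sigma^{kp}(h_{b,c})$ contains an all-zero tile, and since $H_{b,c}$ is the increasing union of these patches, neither does $H_{b,c}$.

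Next I would show that $H_{b,c}$ contains $\sigma^{j}(t')$ for every $j\in\mathbb{N}$ and every $t'\in T^*$. Fix one tile $s\in T^*$ of $h_{b,c}$; by construction of $H_{b,c}$ as the limit of the nested sequence $\{h_{b,c}(k)\}$, the supertile $\sigma^{kp}(s)$ appears in $H_{b,c}$ for every $k$. By primitivity (Section~\ref{sec:sigma-prop-primitivity}) there is an $M$, independent of $t'$, such that $t'$ appears in $\sigma^{m}(s)$ whenever $m\ge M$; choosing $k$ with $kp\ge j+M$, the tile $t'$ occurs in the patch $\sigma^{kp-j}(s)$, hence $\sigma^{j}(t')$ occurs inside $\sigma^{kp}(s)$, which is a patch of $H_{b,c}$. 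In particular, for every $w\in\mathbb{N}$ the tiling $H_{b,c}$ contains $\sigma^{w+\frac{3p+1}{2}}(\bigtriangleup 110)$, and therefore — exactly as in Figure~\ref{fig:nonperiodic-4} — it contains a hexagonal path of $0$'s of side length $2^{w}$ whose only neighbours are nonzero values.

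To conclude, I would argue by contradiction as in Section~\ref{sec:sigma-tiling-s-nonperiodicity}: suppose $H_{b,c}=H_{b,c}+x$ for some vector $x\ne 0$. Choosing $w$ large enough that such a hexagonal path $h$ of $0$'s has an incircle of diameter $>\lvert x\rvert$, the rings $h$ and $h+x$ are both paths of $0$'s in $H_{b,c}$ and are forced to cross; but every transversal crossing of two such rings contains a tile all three of whose corners lie on $h\cup(h+x)$ and are thus $0$ (Figure~\ref{fig:nonperiodic-5}), contradicting the first step. Hence $H_{b,c}$ is nonperiodic.

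The only step that needs any care is the combinatorial bookkeeping of the second paragraph — one must make sure that a level-$j$ supertile over an arbitrary seed in $T^*$ genuinely sits inside the deeply nested level-$kp$ supertiles built over the tiles of the central hexagon — but this is a routine consequence of primitivity, so I do not anticipate a real obstacle; the geometry of the collision in the last step is imported unchanged from the nonperiodicity proof for $S_t$, and the absence of all-zero tiles is guaranteed precisely because $c\ne 0$.
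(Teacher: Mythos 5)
Your proposal is correct and follows exactly the route the paper intends: the paper's own proof is simply the remark that the argument of Section~\ref{sec:sigma-tiling-s-nonperiodicity} carries over, and you have supplied precisely the details that make that transfer work (no all-zero tiles because $c\ne 0$, occurrence of all supertiles $\sigma^{j}(t')$ via primitivity applied to the nested patches $h_{b,c}(k)$, and the hexagonal-ring collision). No gaps.
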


\begin{proof}
Similar to section~\ref{sec:sigma-tiling-s-nonperiodicity}.
\end{proof}

\section{One-dimensional case}

The structure of this section
is similar to that of sections~\ref{sec:sigma-cons} to \ref{sec:sigma-tiling-h}.

For reasons of simplicity,
we will omit the proofs
when they do not present any particular difficulty.

\subsection{\texorpdfstring{Substitution $\tau$}{Substitution tau}}
\label{sec:tau-cons}

\subsubsection{Tiles}
\label{sec:tau-cons-tiles}

\begin{definition}
Tiles on which the substitution $\tau$ operates
correspond to horizontal unit segments whose endpoints are
decorated with elements of $\mathbb{F}_p$
as depicted in figure~\ref{fig:stern-tile}:
\begin{itemize}
\item the tile whose left and right endpoints
      are decorated respectively with the values $x$ and $y$
      will be denoted by $\square(x,y)$
      (or $\square\ xy$ when there is no ambiguity).
\end{itemize}
\end{definition}

\begin{table} [!htb]
    \centering
    \resizebox{2cm}{!}
    {
    \begin{tikzpicture}
        \draw (0,1) -- (5,1);
        \node[scale=1] at (0,1) [circle,fill=black] {};
        \node[scale=1] at (5,1) [circle,fill=black] {};
        \node[scale=2] at (.5,.5) {$x$};
        \node[scale=2] at (4.5,.5) {$y$};
    \end{tikzpicture}
    }
\caption{The tile $\square\ xy$}
\label{fig:stern-tile}
\end{table}

\begin{definition}
The set of tiles will be denoted by $P$:
\begin{equation}
    P = \{ \square\ xy \mid x, y \in \mathbb{F}_p \}
\end{equation}
\end{definition}

\begin{definition}
We will also be interested in the set of tiles having at least one nonzero endpoint:
\begin{equation}
    P^* = P \setminus \{ \square\ 00 \}
\end{equation}
\end{definition}

\begin{definition}
\label{def:tile-matrix-tau}
We will associate to any tile $t$
the row matrix $[t]$ with the values at its two endpoints
as follows:
$\forall x,y \in \mathbb{F}_p$:
\begin{equation}
    [\square\ xy] = [x\ y]
\end{equation}
\end{definition}

\subsubsection{Substitution rule}
\label{sec:tau-cons-rule}

\begin{definition}
The substitution $\tau$ transforms a tile $t = \square\ xy$
into two tiles $\lambda(t) = \square(x,x+y)$ and $\rho(t) = \square(x+y,y)$
as depicted in figure~\ref{fig:tau}.
\end{definition}

\begin{figure}[H]
    \centering
    \resizebox{10cm}{!}
    {
    \begin{tikzpicture}
        \draw (-1,1) -- (4,1);
        \node[scale=1] at (-1,1) [circle,fill=black] {};
        \node[scale=1] at (4,1) [circle,fill=black] {};
        \node[scale=2] at (-.5,.5) {$x$};
        \node[scale=2] at (3.5,.5) {$y$};
        \node[scale=2] at (1.5,1.5) {$t$};

        \node[scale=2] at (5.5,1) {$\rightarrow$};

        \draw (7,1) -- (12,1);
        \node[scale=1] at (7,1) [circle,fill=black] {};
        \node[scale=1] at (12,1) [circle,fill=black] {};
        \node[scale=2] at (7.5,.5) {$x$};
        \node[scale=2] at (11,.5) {$x+y$};
        \node[scale=2] at (9.5,1.5) {$\lambda(t)$};

        \draw (12,1) -- (17,1);
        \node[scale=1] at (17,1) [circle,fill=black] {};
        \node[scale=2] at (13,.5) {$x+y$};
        \node[scale=2] at (16.5,.5) {$y$};
        \node[scale=2] at (14.5,1.5) {$\rho(t)$};
    \end{tikzpicture}
    }
    \caption{The substitution rule for the tile $\square\ xy$}
    \label{fig:tau}
\end{figure}

\begin{lemma}
$\forall k \in \mathbb{N}^*$,
$\forall t \in P$,
using the notation introduced in definition~\ref{def:tile-matrix-tau},
we can conveniently relate $\lambda(t)$ and $\rho(t)$
to $t$
by means of multiplication by certain matrices $L$ and $R$
as follows:
\begin{equation}
    [\lambda(t)]
        = [t] \cdot \underbrace{
            \begin{bmatrix}
                1 & 1    \\
                0 & 1
            \end{bmatrix}
        }_L
\end{equation}
\begin{equation}
    [\rho(t)]
        = [t] \cdot \underbrace{
            \begin{bmatrix}
                1 & 0    \\
                1 & 1
            \end{bmatrix}
        }_R
\end{equation}
\end{lemma}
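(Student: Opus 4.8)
The plan is to prove both identities by the same direct verification used in the proof of Lemma~\ref{lemma:sigma-matrix}, now specialized to the two-dimensional row matrices attached to tiles of $\tau$. I would start from Definition~\ref{def:tile-matrix-tau}, which gives $[\square\ xy] = [x\ y]$, together with the definition of $\tau$, which states that $\lambda(\square\ xy) = \square(x,x+y)$ and $\rho(\square\ xy) = \square(x+y,y)$ (see figure~\ref{fig:tau}).

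For the first identity I would then compute, for all $x,y \in \mathbb{F}_p$,
\begin{equation}
    [\lambda(\square\ xy)] = [\square(x,x+y)] = [x\ x+y] = [x\ y] \cdot L = [\square\ xy] \cdot L,
\end{equation}
where the middle equality is simply the product of the row matrix $[x\ y]$ with $L = \begin{bmatrix} 1 & 1 \\ 0 & 1 \end{bmatrix}$. The second identity is handled in exactly the same way:
\begin{equation}
    [\rho(\square\ xy)] = [\square(x+y,y)] = [x+y\ y] = [x\ y] \cdot R = [\square\ xy] \cdot R,
\end{equation}
with $R = \begin{bmatrix} 1 & 0 \\ 1 & 1 \end{bmatrix}$.

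There is no genuine obstacle here: the only thing to check is a $1\times 2$ by $2\times 2$ matrix multiplication. Note moreover that $\tau$ is defined uniformly on all tiles, with none of the orientation bookkeeping present in the two-dimensional case, so no induction on $k$ is needed and the parameter $k \in \mathbb{N}^*$ in fact plays no role in this lemma (it could be dropped, or read as a harmless artifact of the parallel with the two-dimensional sections). The real point of the statement is to install the matrix formalism — the row matrices $[t]$, the matrices $L$ and $R$, and later $[w]$ for words $w$ over $\{\lambda,\rho\}$ — that mirrors Lemma~\ref{lemma:sigma-matrix} and will be reused throughout the one-dimensional development (invertibility of $L$ and $R$, positions within supertiles, rebuilding a supertile from one of its tiles, and so on).
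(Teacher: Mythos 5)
Your verification is correct and is exactly the computation the paper intends: the paper omits the proof of this lemma as presenting no particular difficulty, and its two-dimensional analogue (Lemma~\ref{lemma:sigma-matrix}) is proved by precisely this kind of direct row-vector-times-matrix check. Your observation that the quantifier $\forall k \in \mathbb{N}^*$ is vestigial here is also accurate.
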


\begin{lemma}
\label{lemma:lr-invertible}
The matrices $L$ and $R$ are invertible in $\mathbb{F}_p^{2 \times 2}$.
\end{lemma}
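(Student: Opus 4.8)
The statement is the one-dimensional counterpart of lemma~\ref{lemma:abcd-invertible}, and I would prove it in exactly the same style: by computing determinants. Both $L$ and $R$ are unit lower/upper triangular $2\times 2$ matrices with $1$'s on the diagonal, so the natural approach is simply to read off $\det(L)$ and $\det(R)$ and observe that each equals $1$, hence is a unit in $\mathbb{F}_p$ for every odd prime $p$ (indeed, the matrices are already invertible over $\mathbb{Z}$, with inverses $\bigl[\begin{smallmatrix}1 & -1\\0 & 1\end{smallmatrix}\bigr]$ and $\bigl[\begin{smallmatrix}1 & 0\\-1 & 1\end{smallmatrix}\bigr]$ respectively, so one could alternatively exhibit the inverses directly).

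Concretely, the single computational step is
\[
    \det(L) = \det \begin{bmatrix} 1 & 1 \\ 0 & 1 \end{bmatrix} = 1,
    \qquad
    \det(R) = \det \begin{bmatrix} 1 & 0 \\ 1 & 1 \end{bmatrix} = 1 .
\]
Since $1 \ne 0$ in $\mathbb{F}_p$, a matrix in $\mathbb{F}_p^{2\times 2}$ with nonzero determinant is invertible, which gives the claim for both $L$ and $R$.

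\textbf{Expected obstacle.} There is essentially none: the only thing to be careful about is that invertibility is being asserted over the field $\mathbb{F}_p$ rather than over $\mathbb{Z}$, but since the determinants are literally $1$ (not just $\pm 1$ up to sign, and in particular not divisible by $p$), this distinction causes no trouble. This lemma is included only so that later arguments — the analogue of the statement that $\lambda$ and $\rho$ are permutations of $P$, and the reconstruction formula $[t] = [s]\cdot[w]$ in the one-dimensional setting — can invert $L$ and $R$ freely; accordingly I would keep the proof as terse as the proof of lemma~\ref{lemma:abcd-invertible}, namely just the two determinant evaluations above.
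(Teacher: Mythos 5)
Your proof is correct and matches the paper's approach: the paper omits the proof of this lemma (as announced for the one-dimensional section), but its proof of the analogous Lemma~\ref{lemma:abcd-invertible} is exactly the determinant computation you give, and $\det(L)=\det(R)=1$ settles the claim.
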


\begin{lemma}
The functions $\lambda$ and $\rho$ are permutations of P.
\end{lemma}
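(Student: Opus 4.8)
The plan is to mimic the earlier proof that $\alpha$, $\beta$, $\gamma$, $\delta$ are permutations of $T$, but the argument is in fact simpler here since there is no tile orientation to keep track of. First I would note that definition~\ref{def:tile-matrix-tau} sets up a bijection $P \to \mathbb{F}_p^{1\times 2}$ via $t \mapsto [t]$: the map $\square\, xy \mapsto [x\ y]$ is manifestly injective and surjective onto the set of row vectors. Under this identification, the preceding lemma says precisely that $\lambda$ corresponds to the map $[t] \mapsto [t]\cdot L$ and $\rho$ to $[t] \mapsto [t]\cdot R$, i.e.\ to right multiplication by the fixed matrices $L$ and $R$ on $\mathbb{F}_p^{1\times 2}$.

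Next, since $L$ and $R$ are invertible by lemma~\ref{lemma:lr-invertible}, right multiplication by $L$ (resp.\ $R$) is a bijection of $\mathbb{F}_p^{1\times 2}$: it is injective because $[t]\cdot L = [t']\cdot L$ forces $[t] = [t']$ upon right-multiplying by $L^{-1}$, and surjective because any row vector $v$ equals $(v\cdot L^{-1})\cdot L$. Transporting this back through the bijection $t\mapsto[t]$ shows that $\lambda$ and $\rho$ are bijections of $P$, hence permutations. One could equally invoke finiteness of $P$ (it has $p^2$ elements) and deduce surjectivity from injectivity alone, exactly as was done for $T$.

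There is essentially no obstacle to overcome; the only point worth checking is that the matrix relations $[\lambda(t)] = [t]\,L$ and $[\rho(t)] = [t]\,R$ hold for \emph{every} $t\in P$ rather than merely on a spanning set, and this is immediate from the explicit formulas $\lambda(\square\, xy)=\square(x,x+y)$ and $\rho(\square\, xy)=\square(x+y,y)$ read off from figure~\ref{fig:tau}. In fact the inverses can be exhibited directly, $\lambda^{-1}(\square\, uv)=\square(u,v-u)$ and $\rho^{-1}(\square\, uv)=\square(u-v,v)$, which makes the bijectivity completely transparent.
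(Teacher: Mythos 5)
Your proof is correct and follows essentially the route the paper intends: the paper omits this proof but its two-dimensional analogue (that $\alpha,\beta,\gamma,\delta$ are permutations of $T$) is proved exactly by combining the matrix description with the invertibility of the corresponding matrices, which is what you do with $L$ and $R$ here. Your explicit inverses $\lambda^{-1}(\square\,uv)=\square(u,v-u)$ and $\rho^{-1}(\square\,uv)=\square(u-v,v)$ are a nice concrete check, and you are right that the orientation bookkeeping needed in the triangular case is absent here.
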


\begin{lemma}
\label{lemma:extreme-tau}
$\forall k \in \mathbb{N}$,
$\forall t \in P$,
the values at the two extreme endpoints of $\tau^k(t)$
are exactly the values at the endpoints of $t$.
\end{lemma}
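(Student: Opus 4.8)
The plan is to prove this by induction on $k$, in exact parallel with the two-dimensional Lemma~\ref{lemma:extreme}. The base case $k = 0$ is immediate: $\tau^0(t) = t$, so the endpoints of $\tau^0(t)$ are by definition the endpoints of $t$.

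For the inductive step, I would assume that for some $k \in \mathbb{N}$ the values at the two extreme endpoints of $\tau^k(s)$ agree with those of $s$ for every tile $s$, and then examine $\tau^{k+1}(s) = \tau(\tau^k(s))$. Since $\tau$ acts tile by tile and each application of $\tau$ replaces a tile $\square\ xy$ by the ordered pair $\lambda(\square\ xy) = \square(x, x+y)$ followed by $\rho(\square\ xy) = \square(x+y, y)$, the leftmost tile of $\tau(\tau^k(s))$ is $\lambda$ applied to the leftmost tile of $\tau^k(s)$, and the rightmost tile is $\rho$ applied to the rightmost tile of $\tau^k(s)$. The key observation is that $\lambda$ preserves the value at the left endpoint (the first coordinate of $\square(x, x+y)$ is again $x$) and $\rho$ preserves the value at the right endpoint (the second coordinate of $\square(x+y, y)$ is again $y$). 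Combining this with the induction hypothesis — which says the extreme endpoints of $\tau^k(s)$ already carry the endpoint values of $s$ — gives that the extreme endpoints of $\tau^{k+1}(s)$ carry the endpoint values of $s$, closing the induction.

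I do not anticipate any real obstacle here; the only thing to be careful about is the bookkeeping of \textit{order}, namely that within $\tau(\square\ xy)$ the tile $\lambda(\square\ xy)$ sits strictly to the left of $\rho(\square\ xy)$, so that "leftmost tile of the image" really is $\lambda$ of "leftmost tile", and symmetrically on the right. This is exactly what Figure~\ref{fig:tau} records, so I would cite that figure at the point where the order is used. If desired, one could instead phrase the inductive step purely algebraically via the matrices $L$ and $R$: the first column of $L$ is $\begin{bmatrix} 1 \\ 0 \end{bmatrix}$ and the second column of $R$ is $\begin{bmatrix} 0 \\ 1 \end{bmatrix}$, which is the matrix form of the statements "$\lambda$ fixes the left value" and "$\rho$ fixes the right value"; but the tile-level argument above is the cleaner write-up and mirrors the proof of Lemma~\ref{lemma:extreme} most directly.
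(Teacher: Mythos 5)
Your proof is correct and is exactly the argument the paper intends: the paper omits the proof of Lemma~\ref{lemma:extreme-tau} as routine, and your induction (leftmost tile of $\tau(s)$ is $\lambda$ of the leftmost tile of $s$, $\lambda$ fixes the left value, symmetrically for $\rho$ on the right) is the direct one-dimensional transcription of the paper's proof of Lemma~\ref{lemma:extreme}. Nothing to add.
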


\begin{definition}
\label{def:tau-row}
Following lemma~\ref{lemma:extreme-tau},
we will associate to any supertile $s$
the row matrix $[s]$ with the values at its two extreme endpoints
as follows:
$\forall k \in \mathbb{N}$,
$\forall x,y \in \mathbb{F}_p$:
\begin{equation}
    [\tau^k(\square\ xy)] = [x\ y]
\end{equation}
\end{definition}

\begin{lemma}
Endpoints that meet at the same point hold the same value.
\end{lemma}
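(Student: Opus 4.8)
The plan is to argue by induction on $k$, transcribing the planar argument given in section~\ref{sec:sigma-cons-rule} (the lemma stating that corners meeting at a point agree). The base case $k = 0$ is immediate: a $0$-supertile is a single unit segment, so at most one endpoint of $\tau^0(t) = t$ lies over any given point of the line, and there is nothing to check.

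For the inductive step I would fix $k \in \mathbb{N}$, assume the property for all $k$-supertiles, and examine the action of $\tau$ on a $k$-supertile $s$. Writing $s$ as a chain of unit tiles $t_1, t_2, \dots, t_n$ laid end to end, the supertile $\tau(s)$ is the chain $\lambda(t_1), \rho(t_1), \lambda(t_2), \rho(t_2), \dots, \lambda(t_n), \rho(t_n)$, so its vertices are of exactly two types. The first type is the vertex interior to each $\tau(t_i)$: if $t_i = \square\,uv$ then $\lambda(t_i) = \square(u, u+v)$ and $\rho(t_i) = \square(u+v, v)$, and both of these tiles carry the value $u+v$ at that shared point (see figure~\ref{fig:tau}). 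The second type is the vertex at which $\rho(t_i)$ abuts $\lambda(t_{i+1})$; this is the image under $\tau$ of the vertex of $s$ where $t_i$ met $t_{i+1}$. By the inductive hypothesis the right endpoint of $t_i$ and the left endpoint of $t_{i+1}$ hold a common value $w$, and since $\rho$ fixes the value at the right endpoint while $\lambda$ fixes the value at the left endpoint — equivalently, by lemma~\ref{lemma:extreme-tau} applied with $k = 1$ — both $\rho(t_i)$ and $\lambda(t_{i+1})$ carry $w$ at that point. Hence every vertex of $\tau(s)$ is consistently labelled, and the induction closes.

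I do not expect any real obstacle here: the only thing to get right is the bookkeeping, namely that the two families of vertices above genuinely exhaust all coincidences of endpoints in $\tau(s)$, which holds because $\tau$ cuts each unit segment into two, creating one new interior vertex per tile and leaving the original vertices of $s$ untouched. Since this is a word-for-word analogue of the two-dimensional statement, I expect it to be among the proofs this section chooses to omit, so a single sentence pointing to figure~\ref{fig:tau} and to the planar argument would be an acceptable substitute for the full write-up.
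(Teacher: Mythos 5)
Your proof is correct, and it is precisely the one-dimensional transcription of the paper's own inductive proof of the corresponding planar lemma (new interior vertices get the value $x+y$ from both sides; pre-existing vertices keep their common value because $\lambda$ and $\rho$ fix the left and right endpoints respectively). The paper itself omits the proof of this lemma, as announced at the start of the one-dimensional section, so your argument matches the intended one.
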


We will therefore, from now on, simplify the figures by indicating
just the values where the endpoints meet
(see figure~\ref{fig:tau-simple}).

\begin{figure}[H]
    \centering
    \resizebox{10cm}{!}
    {
    \begin{tikzpicture}
        \draw (0,0) -- (1,0);
        \node[below] at (0,0) {$x$};
        \node[below] at (1,0) {$y$};
        \node[scale=.2] at (0,0) [circle,fill=black] {};
        \node[scale=.2] at (1,0) [circle,fill=black] {};

        \node at (2,0) {$\rightarrow$};

        \draw (3,0) -- (5,0);
        \node[below] at (3,0) {$x$};
        \node[below] at (4,0) {$x+y$};
        \node[below] at (5,0) {$y$};
        \node[scale=.2] at (3,0) [circle,fill=black] {};
        \node[scale=.2] at (4,0) [circle,fill=black] {};
        \node[scale=.2] at (5,0) [circle,fill=black] {};
    \end{tikzpicture}
    }
    \caption{The \quotes{simplified} substitution rule for the tile $\square\ xy$}
    \label{fig:tau-simple}
\end{figure}
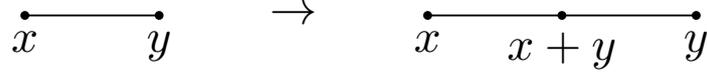

\begin{remark}
$\forall k \in \mathbb{N}$,
$\forall x, y, z \in \mathbb{F}_p$,
the bottom border of $\sigma^k(\bigtriangleup xyz)$
matches $\tau^k(\square\ xy)$
(see figure~\ref{fig:sigma-tau}).
\end{remark}

\begin{figure}[H]
    \centering
    \resizebox{10cm}{!}
    {
    \begin{tikzpicture}
    \begin{scope}[yscale=.87,xslant=.5]
        \node[scale=.5] at (1,-2) [circle,fill=black] {};
\node[scale=.5] at (2,-2) [circle,fill=black] {};
\node[scale=.5] at (3,-2) [circle,fill=black] {};
\node[scale=.5] at (4,-2) [circle,fill=black] {};
\node[scale=.5] at (4,-2) [circle,fill=black] {};
\node[scale=.5] at (5,-2) [circle,fill=black] {};
\node[scale=.5] at (4,-2) [circle,fill=black] {};
\node[scale=.5] at (6,-2) [circle,fill=black] {};
\node[scale=.5] at (7,-2) [circle,fill=black] {};
\node[scale=.5] at (7,-2) [circle,fill=black] {};
\node[scale=.5] at (8,-2) [circle,fill=black] {};
\node[scale=.5] at (7,-2) [circle,fill=black] {};
\node[scale=.5] at (8,-2) [circle,fill=black] {};
\node[scale=.5] at (9,-2) [circle,fill=black] {};
\node[scale=.5] at (9,-2) [circle,fill=black] {};
\node[scale=.5] at (10,-2) [circle,fill=black] {};
\node[scale=.5] at (9,-2) [circle,fill=black] {};
\node[scale=.5] at (8,-2) [circle,fill=black] {};
\node[scale=2] at (0,0) {$1$};
\node[scale=2] at (1,0) {$2$};
\node[scale=2] at (0,1) {$2$};
\draw[dotted] (0,0) -- (1,0) -- (0,1) -- cycle;
\node[below,scale=2] at (1,-2) {$1$};
\node[below,scale=2] at (2,-2) {$2$};
\draw (1,-2) -- (2,-2);
\node[scale=2] at (2,0) {$1$};
\node[scale=2] at (3,0) {$0$};
\node[scale=2] at (2,1) {$0$};
\draw[dotted] (2,0) -- (3,0) -- (2,1) -- cycle;
\node[below,scale=2] at (3,-2) {$1$};
\node[below,scale=2] at (4,-2) {$0$};
\draw (3,-2) -- (4,-2);
\node[scale=2] at (3,0) {$0$};
\node[scale=2] at (4,0) {$2$};
\node[scale=2] at (3,1) {$1$};
\draw[dotted] (3,0) -- (4,0) -- (3,1) -- cycle;
\node[below,scale=2] at (4,-2) {$0$};
\node[below,scale=2] at (5,-2) {$2$};
\draw (4,-2) -- (5,-2);
\node[scale=2] at (2,1) {$0$};
\node[scale=2] at (3,1) {$1$};
\node[scale=2] at (2,2) {$2$};
\draw[dotted] (2,1) -- (3,1) -- (2,2) -- cycle;
\node[scale=2] at (3,0) {$0$};
\node[scale=2] at (2,1) {$0$};
\node[scale=2] at (3,1) {$1$};
\draw[dotted] (3,0) -- (2,1) -- (3,1) -- cycle;
\node[below,scale=2] at (4,-2) {$0$};
\node[scale=2] at (5,0) {$1$};
\node[scale=2] at (6,0) {$1$};
\node[scale=2] at (5,1) {$1$};
\draw[dotted] (5,0) -- (6,0) -- (5,1) -- cycle;
\node[below,scale=2] at (6,-2) {$1$};
\node[below,scale=2] at (7,-2) {$1$};
\draw (6,-2) -- (7,-2);
\node[scale=2] at (6,0) {$1$};
\node[scale=2] at (7,0) {$0$};
\node[scale=2] at (6,1) {$0$};
\draw[dotted] (6,0) -- (7,0) -- (6,1) -- cycle;
\node[below,scale=2] at (7,-2) {$1$};
\node[below,scale=2] at (8,-2) {$0$};
\draw (7,-2) -- (8,-2);
\node[scale=2] at (5,1) {$1$};
\node[scale=2] at (6,1) {$0$};
\node[scale=2] at (5,2) {$0$};
\draw[dotted] (5,1) -- (6,1) -- (5,2) -- cycle;
\node[scale=2] at (6,0) {$1$};
\node[scale=2] at (5,1) {$1$};
\node[scale=2] at (6,1) {$0$};
\draw[dotted] (6,0) -- (5,1) -- (6,1) -- cycle;
\node[below,scale=2] at (7,-2) {$1$};
\node[scale=2] at (7,0) {$0$};
\node[scale=2] at (8,0) {$2$};
\node[scale=2] at (7,1) {$1$};
\draw[dotted] (7,0) -- (8,0) -- (7,1) -- cycle;
\node[below,scale=2] at (8,-2) {$0$};
\node[below,scale=2] at (9,-2) {$2$};
\draw (8,-2) -- (9,-2);
\node[scale=2] at (8,0) {$2$};
\node[scale=2] at (9,0) {$2$};
\node[scale=2] at (8,1) {$0$};
\draw[dotted] (8,0) -- (9,0) -- (8,1) -- cycle;
\node[below,scale=2] at (9,-2) {$2$};
\node[below,scale=2] at (10,-2) {$2$};
\draw (9,-2) -- (10,-2);
\node[scale=2] at (7,1) {$1$};
\node[scale=2] at (8,1) {$0$};
\node[scale=2] at (7,2) {$1$};
\draw[dotted] (7,1) -- (8,1) -- (7,2) -- cycle;
\node[scale=2] at (8,0) {$2$};
\node[scale=2] at (7,1) {$1$};
\node[scale=2] at (8,1) {$0$};
\draw[dotted] (8,0) -- (7,1) -- (8,1) -- cycle;
\node[below,scale=2] at (9,-2) {$2$};
\node[scale=2] at (5,2) {$0$};
\node[scale=2] at (6,2) {$1$};
\node[scale=2] at (5,3) {$2$};
\draw[dotted] (5,2) -- (6,2) -- (5,3) -- cycle;
\node[scale=2] at (6,2) {$1$};
\node[scale=2] at (7,2) {$1$};
\node[scale=2] at (6,3) {$0$};
\draw[dotted] (6,2) -- (7,2) -- (6,3) -- cycle;
\node[scale=2] at (5,3) {$2$};
\node[scale=2] at (6,3) {$0$};
\node[scale=2] at (5,4) {$2$};
\draw[dotted] (5,3) -- (6,3) -- (5,4) -- cycle;
\node[scale=2] at (6,2) {$1$};
\node[scale=2] at (5,3) {$2$};
\node[scale=2] at (6,3) {$0$};
\draw[dotted] (6,2) -- (5,3) -- (6,3) -- cycle;
\node[scale=2] at (7,0) {$0$};
\node[scale=2] at (6,1) {$0$};
\node[scale=2] at (7,1) {$1$};
\draw[dotted] (7,0) -- (6,1) -- (7,1) -- cycle;
\node[below,scale=2] at (8,-2) {$0$};
\node[scale=2] at (6,1) {$0$};
\node[scale=2] at (5,2) {$0$};
\node[scale=2] at (6,2) {$1$};
\draw[dotted] (6,1) -- (5,2) -- (6,2) -- cycle;
\node[scale=2] at (7,1) {$1$};
\node[scale=2] at (6,2) {$1$};
\node[scale=2] at (7,2) {$1$};
\draw[dotted] (7,1) -- (6,2) -- (7,2) -- cycle;
\node[scale=2] at (6,1) {$0$};
\node[scale=2] at (7,1) {$1$};
\node[scale=2] at (6,2) {$1$};
\draw[dotted] (6,1) -- (7,1) -- (6,2) -- cycle;
    \end{scope}
    \end{tikzpicture}
    }
    \caption{$\sigma^k(\bigtriangleup 122)$ and $\tau^k(\square\ 12)$ for $k = 0 \twodots 2$ (for $p = 3$)}
    \label{fig:sigma-tau}
\end{figure}

\subsubsection{Examples}
\label{sec:tau-cons-examples}

Figure~\ref{fig:sample-tau} depicts $\tau^k(\square\ 12)$ for $k = 0 \twodots 4$ numerically (for $p = 3$).

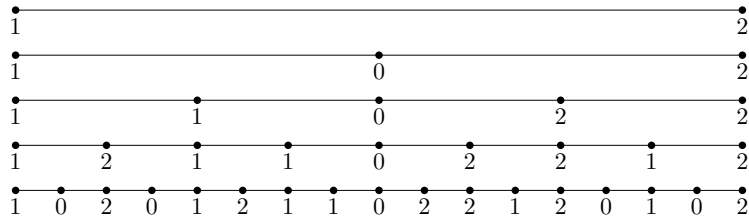
\begin{figure}[H]
    \centering
    \resizebox{10cm}{!}
    {
    \begin{tikzpicture}
                \draw (0,8) -- (32,8);
        \node[scale=1] at (0,8) [circle,fill=black] {};
        \node[scale=3,below] at (0,8) {1};
        \node[scale=1] at (32,8) [circle,fill=black] {};
        \node[scale=3,below] at (32,8) {2};
        \draw (0,6) -- (32,6);
        \node[scale=1] at (0,6) [circle,fill=black] {};
        \node[scale=3,below] at (0,6) {1};
        \node[scale=1] at (16,6) [circle,fill=black] {};
        \node[scale=3,below] at (16,6) {0};
        \node[scale=1] at (32,6) [circle,fill=black] {};
        \node[scale=3,below] at (32,6) {2};
        \draw (0,4) -- (32,4);
        \node[scale=1] at (0,4) [circle,fill=black] {};
        \node[scale=3,below] at (0,4) {1};
        \node[scale=1] at (8,4) [circle,fill=black] {};
        \node[scale=3,below] at (8,4) {1};
        \node[scale=1] at (16,4) [circle,fill=black] {};
        \node[scale=3,below] at (16,4) {0};
        \node[scale=1] at (24,4) [circle,fill=black] {};
        \node[scale=3,below] at (24,4) {2};
        \node[scale=1] at (32,4) [circle,fill=black] {};
        \node[scale=3,below] at (32,4) {2};
        \draw (0,2) -- (32,2);
        \node[scale=1] at (0,2) [circle,fill=black] {};
        \node[scale=3,below] at (0,2) {1};
        \node[scale=1] at (4,2) [circle,fill=black] {};
        \node[scale=3,below] at (4,2) {2};
        \node[scale=1] at (8,2) [circle,fill=black] {};
        \node[scale=3,below] at (8,2) {1};
        \node[scale=1] at (12,2) [circle,fill=black] {};
        \node[scale=3,below] at (12,2) {1};
        \node[scale=1] at (16,2) [circle,fill=black] {};
        \node[scale=3,below] at (16,2) {0};
        \node[scale=1] at (20,2) [circle,fill=black] {};
        \node[scale=3,below] at (20,2) {2};
        \node[scale=1] at (24,2) [circle,fill=black] {};
        \node[scale=3,below] at (24,2) {2};
        \node[scale=1] at (28,2) [circle,fill=black] {};
        \node[scale=3,below] at (28,2) {1};
        \node[scale=1] at (32,2) [circle,fill=black] {};
        \node[scale=3,below] at (32,2) {2};
        \draw (0,0) -- (32,0);
        \node[scale=1] at (0,0) [circle,fill=black] {};
        \node[scale=3,below] at (0,0) {1};
        \node[scale=1] at (2,0) [circle,fill=black] {};
        \node[scale=3,below] at (2,0) {0};
        \node[scale=1] at (4,0) [circle,fill=black] {};
        \node[scale=3,below] at (4,0) {2};
        \node[scale=1] at (6,0) [circle,fill=black] {};
        \node[scale=3,below] at (6,0) {0};
        \node[scale=1] at (8,0) [circle,fill=black] {};
        \node[scale=3,below] at (8,0) {1};
        \node[scale=1] at (10,0) [circle,fill=black] {};
        \node[scale=3,below] at (10,0) {2};
        \node[scale=1] at (12,0) [circle,fill=black] {};
        \node[scale=3,below] at (12,0) {1};
        \node[scale=1] at (14,0) [circle,fill=black] {};
        \node[scale=3,below] at (14,0) {1};
        \node[scale=1] at (16,0) [circle,fill=black] {};
        \node[scale=3,below] at (16,0) {0};
        \node[scale=1] at (18,0) [circle,fill=black] {};
        \node[scale=3,below] at (18,0) {2};
        \node[scale=1] at (20,0) [circle,fill=black] {};
        \node[scale=3,below] at (20,0) {2};
        \node[scale=1] at (22,0) [circle,fill=black] {};
        \node[scale=3,below] at (22,0) {1};
        \node[scale=1] at (24,0) [circle,fill=black] {};
        \node[scale=3,below] at (24,0) {2};
        \node[scale=1] at (26,0) [circle,fill=black] {};
        \node[scale=3,below] at (26,0) {0};
        \node[scale=1] at (28,0) [circle,fill=black] {};
        \node[scale=3,below] at (28,0) {1};
        \node[scale=1] at (30,0) [circle,fill=black] {};
        \node[scale=3,below] at (30,0) {0};
        \node[scale=1] at (32,0) [circle,fill=black] {};
        \node[scale=3,below] at (32,0) {2};
    \end{tikzpicture}
    }
    \caption{$\tau^k(\square\ 12)$ for $k = 0 \twodots 4$ (for $p = 3$)}
    \label{fig:sample-tau}
\end{figure}

Figure~\ref{fig:sample-tau-2} depicts $\tau^k(\square\ 12)$ for $k = 0 \twodots 4$ graphically with colors (for $p = 3$).

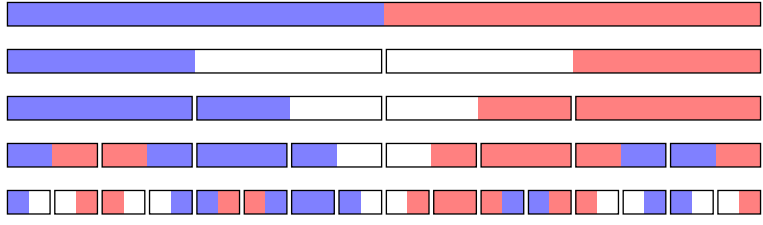
\begin{figure}[H]
    \centering
    \resizebox{10cm}{!}
    {
    \begin{tikzpicture}
                \fill[blue!50] (0+.1,8) rectangle (16,9);
        \fill[red!50] (16,8) rectangle (32-.1,9);
        \draw[ultra thick] (0+.1,8) rectangle (32-.1,9);
        \fill[blue!50] (0+.1,6) rectangle (8,7);
        \fill[red!50] (24,6) rectangle (32-.1,7);
        \draw[ultra thick] (0+.1,6) rectangle (16-.1,7);
        \draw[ultra thick] (16+.1,6) rectangle (32-.1,7);
        \fill[blue!50] (0+.1,4) rectangle (4,5);
        \fill[blue!50] (4,4) rectangle (8-.1,5);
        \fill[blue!50] (8+.1,4) rectangle (12,5);
        \fill[red!50] (20,4) rectangle (24-.1,5);
        \fill[red!50] (24+.1,4) rectangle (28,5);
        \fill[red!50] (28,4) rectangle (32-.1,5);
        \draw[ultra thick] (0+.1,4) rectangle (8-.1,5);
        \draw[ultra thick] (8+.1,4) rectangle (16-.1,5);
        \draw[ultra thick] (16+.1,4) rectangle (24-.1,5);
        \draw[ultra thick] (24+.1,4) rectangle (32-.1,5);
        \fill[blue!50] (0+.1,2) rectangle (2,3);
        \fill[red!50] (2,2) rectangle (4-.1,3);
        \fill[red!50] (4+.1,2) rectangle (6,3);
        \fill[blue!50] (6,2) rectangle (8-.1,3);
        \fill[blue!50] (8+.1,2) rectangle (10,3);
        \fill[blue!50] (10,2) rectangle (12-.1,3);
        \fill[blue!50] (12+.1,2) rectangle (14,3);
        \fill[red!50] (18,2) rectangle (20-.1,3);
        \fill[red!50] (20+.1,2) rectangle (22,3);
        \fill[red!50] (22,2) rectangle (24-.1,3);
        \fill[red!50] (24+.1,2) rectangle (26,3);
        \fill[blue!50] (26,2) rectangle (28-.1,3);
        \fill[blue!50] (28+.1,2) rectangle (30,3);
        \fill[red!50] (30,2) rectangle (32-.1,3);
        \draw[ultra thick] (0+.1,2) rectangle (4-.1,3);
        \draw[ultra thick] (4+.1,2) rectangle (8-.1,3);
        \draw[ultra thick] (8+.1,2) rectangle (12-.1,3);
        \draw[ultra thick] (12+.1,2) rectangle (16-.1,3);
        \draw[ultra thick] (16+.1,2) rectangle (20-.1,3);
        \draw[ultra thick] (20+.1,2) rectangle (24-.1,3);
        \draw[ultra thick] (24+.1,2) rectangle (28-.1,3);
        \draw[ultra thick] (28+.1,2) rectangle (32-.1,3);
        \fill[blue!50] (0+.1,0) rectangle (1,1);
        \fill[red!50] (3,0) rectangle (4-.1,1);
        \fill[red!50] (4+.1,0) rectangle (5,1);
        \fill[blue!50] (7,0) rectangle (8-.1,1);
        \fill[blue!50] (8+.1,0) rectangle (9,1);
        \fill[red!50] (9,0) rectangle (10-.1,1);
        \fill[red!50] (10+.1,0) rectangle (11,1);
        \fill[blue!50] (11,0) rectangle (12-.1,1);
        \fill[blue!50] (12+.1,0) rectangle (13,1);
        \fill[blue!50] (13,0) rectangle (14-.1,1);
        \fill[blue!50] (14+.1,0) rectangle (15,1);
        \fill[red!50] (17,0) rectangle (18-.1,1);
        \fill[red!50] (18+.1,0) rectangle (19,1);
        \fill[red!50] (19,0) rectangle (20-.1,1);
        \fill[red!50] (20+.1,0) rectangle (21,1);
        \fill[blue!50] (21,0) rectangle (22-.1,1);
        \fill[blue!50] (22+.1,0) rectangle (23,1);
        \fill[red!50] (23,0) rectangle (24-.1,1);
        \fill[red!50] (24+.1,0) rectangle (25,1);
        \fill[blue!50] (27,0) rectangle (28-.1,1);
        \fill[blue!50] (28+.1,0) rectangle (29,1);
        \fill[red!50] (31,0) rectangle (32-.1,1);
        \draw[ultra thick] (0+.1,0) rectangle (2-.1,1);
        \draw[ultra thick] (2+.1,0) rectangle (4-.1,1);
        \draw[ultra thick] (4+.1,0) rectangle (6-.1,1);
        \draw[ultra thick] (6+.1,0) rectangle (8-.1,1);
        \draw[ultra thick] (8+.1,0) rectangle (10-.1,1);
        \draw[ultra thick] (10+.1,0) rectangle (12-.1,1);
        \draw[ultra thick] (12+.1,0) rectangle (14-.1,1);
        \draw[ultra thick] (14+.1,0) rectangle (16-.1,1);
        \draw[ultra thick] (16+.1,0) rectangle (18-.1,1);
        \draw[ultra thick] (18+.1,0) rectangle (20-.1,1);
        \draw[ultra thick] (20+.1,0) rectangle (22-.1,1);
        \draw[ultra thick] (22+.1,0) rectangle (24-.1,1);
        \draw[ultra thick] (24+.1,0) rectangle (26-.1,1);
        \draw[ultra thick] (26+.1,0) rectangle (28-.1,1);
        \draw[ultra thick] (28+.1,0) rectangle (30-.1,1);
        \draw[ultra thick] (30+.1,0) rectangle (32-.1,1);
    \end{tikzpicture}
    }
    \caption{Colored representation of $\tau^k(\square\ 12)$ for $k = 0 \twodots 4$ (for $p = 3$)}
    \label{fig:sample-tau-2}
\end{figure}

Figure~\ref{fig:sample-tau-3} depicts $\tau^k(\square\ 12)$ for $k = 0 \twodots 4$ graphically with colors (for $p = 5$).

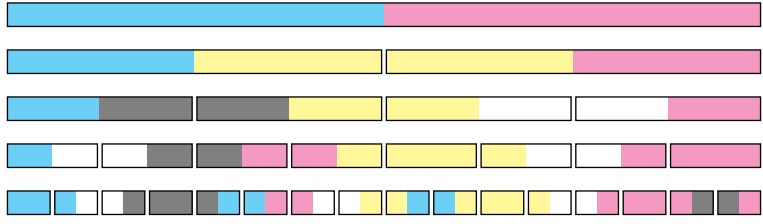
\begin{figure}[H]
    \centering
    \resizebox{10cm}{!}
    {
    \begin{tikzpicture}
                \fill[cyan!50] (0+.1,8) rectangle (16,9);
        \fill[magenta!50] (16,8) rectangle (32-.1,9);
        \draw[ultra thick] (0+.1,8) rectangle (32-.1,9);
        \fill[cyan!50] (0+.1,6) rectangle (8,7);
        \fill[yellow!50] (8,6) rectangle (16-.1,7);
        \fill[yellow!50] (16+.1,6) rectangle (24,7);
        \fill[magenta!50] (24,6) rectangle (32-.1,7);
        \draw[ultra thick] (0+.1,6) rectangle (16-.1,7);
        \draw[ultra thick] (16+.1,6) rectangle (32-.1,7);
        \fill[cyan!50] (0+.1,4) rectangle (4,5);
        \fill[black!50] (4,4) rectangle (8-.1,5);
        \fill[black!50] (8+.1,4) rectangle (12,5);
        \fill[yellow!50] (12,4) rectangle (16-.1,5);
        \fill[yellow!50] (16+.1,4) rectangle (20,5);
        \fill[magenta!50] (28,4) rectangle (32-.1,5);
        \draw[ultra thick] (0+.1,4) rectangle (8-.1,5);
        \draw[ultra thick] (8+.1,4) rectangle (16-.1,5);
        \draw[ultra thick] (16+.1,4) rectangle (24-.1,5);
        \draw[ultra thick] (24+.1,4) rectangle (32-.1,5);
        \fill[cyan!50] (0+.1,2) rectangle (2,3);
        \fill[black!50] (6,2) rectangle (8-.1,3);
        \fill[black!50] (8+.1,2) rectangle (10,3);
        \fill[magenta!50] (10,2) rectangle (12-.1,3);
        \fill[magenta!50] (12+.1,2) rectangle (14,3);
        \fill[yellow!50] (14,2) rectangle (16-.1,3);
        \fill[yellow!50] (16+.1,2) rectangle (18,3);
        \fill[yellow!50] (18,2) rectangle (20-.1,3);
        \fill[yellow!50] (20+.1,2) rectangle (22,3);
        \fill[magenta!50] (26,2) rectangle (28-.1,3);
        \fill[magenta!50] (28+.1,2) rectangle (30,3);
        \fill[magenta!50] (30,2) rectangle (32-.1,3);
        \draw[ultra thick] (0+.1,2) rectangle (4-.1,3);
        \draw[ultra thick] (4+.1,2) rectangle (8-.1,3);
        \draw[ultra thick] (8+.1,2) rectangle (12-.1,3);
        \draw[ultra thick] (12+.1,2) rectangle (16-.1,3);
        \draw[ultra thick] (16+.1,2) rectangle (20-.1,3);
        \draw[ultra thick] (20+.1,2) rectangle (24-.1,3);
        \draw[ultra thick] (24+.1,2) rectangle (28-.1,3);
        \draw[ultra thick] (28+.1,2) rectangle (32-.1,3);
        \fill[cyan!50] (0+.1,0) rectangle (1,1);
        \fill[cyan!50] (1,0) rectangle (2-.1,1);
        \fill[cyan!50] (2+.1,0) rectangle (3,1);
        \fill[black!50] (5,0) rectangle (6-.1,1);
        \fill[black!50] (6+.1,0) rectangle (7,1);
        \fill[black!50] (7,0) rectangle (8-.1,1);
        \fill[black!50] (8+.1,0) rectangle (9,1);
        \fill[cyan!50] (9,0) rectangle (10-.1,1);
        \fill[cyan!50] (10+.1,0) rectangle (11,1);
        \fill[magenta!50] (11,0) rectangle (12-.1,1);
        \fill[magenta!50] (12+.1,0) rectangle (13,1);
        \fill[yellow!50] (15,0) rectangle (16-.1,1);
        \fill[yellow!50] (16+.1,0) rectangle (17,1);
        \fill[cyan!50] (17,0) rectangle (18-.1,1);
        \fill[cyan!50] (18+.1,0) rectangle (19,1);
        \fill[yellow!50] (19,0) rectangle (20-.1,1);
        \fill[yellow!50] (20+.1,0) rectangle (21,1);
        \fill[yellow!50] (21,0) rectangle (22-.1,1);
        \fill[yellow!50] (22+.1,0) rectangle (23,1);
        \fill[magenta!50] (25,0) rectangle (26-.1,1);
        \fill[magenta!50] (26+.1,0) rectangle (27,1);
        \fill[magenta!50] (27,0) rectangle (28-.1,1);
        \fill[magenta!50] (28+.1,0) rectangle (29,1);
        \fill[black!50] (29,0) rectangle (30-.1,1);
        \fill[black!50] (30+.1,0) rectangle (31,1);
        \fill[magenta!50] (31,0) rectangle (32-.1,1);
        \draw[ultra thick] (0+.1,0) rectangle (2-.1,1);
        \draw[ultra thick] (2+.1,0) rectangle (4-.1,1);
        \draw[ultra thick] (4+.1,0) rectangle (6-.1,1);
        \draw[ultra thick] (6+.1,0) rectangle (8-.1,1);
        \draw[ultra thick] (8+.1,0) rectangle (10-.1,1);
        \draw[ultra thick] (10+.1,0) rectangle (12-.1,1);
        \draw[ultra thick] (12+.1,0) rectangle (14-.1,1);
        \draw[ultra thick] (14+.1,0) rectangle (16-.1,1);
        \draw[ultra thick] (16+.1,0) rectangle (18-.1,1);
        \draw[ultra thick] (18+.1,0) rectangle (20-.1,1);
        \draw[ultra thick] (20+.1,0) rectangle (22-.1,1);
        \draw[ultra thick] (22+.1,0) rectangle (24-.1,1);
        \draw[ultra thick] (24+.1,0) rectangle (26-.1,1);
        \draw[ultra thick] (26+.1,0) rectangle (28-.1,1);
        \draw[ultra thick] (28+.1,0) rectangle (30-.1,1);
        \draw[ultra thick] (30+.1,0) rectangle (32-.1,1);
    \end{tikzpicture}
    }
    \caption{Colored representation of $\tau^k(\square\ 12)$ for $k = 0 \twodots 4$ (for $p = 5$)}
    \label{fig:sample-tau-3}
\end{figure}

\subsubsection{Additivity}
\label{sec:tau-prop-additivity}

\begin{lemma}
$\forall m, x, y, x', y' \in \mathbb{F}_p$,
$\forall k \in \mathbb{N}$,
we have the following identities:

\begin{equation}
    \tau^k(\square\ xy) + \tau^k(\square\ x'y') = \tau^k(\square(x+x',y+y'))
\end{equation}
\begin{equation}
    m \cdot \tau^k(\square\ xy) = \tau^k(\square(mx,my))                                              \\
\end{equation}

where additions and scalar multiplications
on supertiles are performed componentwise on each endpoint
(see figure~\ref{fig:additivity-tau}).
\end{lemma}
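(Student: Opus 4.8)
The plan is to prove both identities simultaneously by induction on $k$, exactly paralleling the proof of additivity for $\sigma$ in section~\ref{sec:sigma-prop-additivity}.

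For the base case $k = 0$, a $0$-supertile is just a tile, and the identities $\square\ xy + \square\ x'y' = \square(x+x',\,y+y')$ and $m \cdot \square\ xy = \square(mx,\,my)$ hold by the very definition of componentwise addition and scalar multiplication on endpoints. For the inductive step, I would assume the identities hold for all $k$-supertiles and observe that $\tau^{k+1} = \tau \circ \tau^{k}$, where $\tau$ applied to a supertile is obtained by applying $\tau$ to each of its tiles and concatenating the results. Since addition and scalar multiplication of supertiles are performed endpoint-by-endpoint, it therefore suffices to show that $\tau$ itself is additive and commutes with scalars, which in turn reduces to the same statement for $\lambda$ and $\rho$.

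That last point is a one-line computation, which is the only content of the argument. From $\lambda(\square\ xy) = \square(x,\,x+y)$ one gets $\lambda(\square\ xy) + \lambda(\square\ x'y') = \square\bigl(x+x',\,(x+y)+(x'+y')\bigr) = \square\bigl(x+x',\,(x+x')+(y+y')\bigr) = \lambda(\square(x+x',\,y+y'))$, and $\lambda(m \cdot \square\ xy) = \square(mx,\,mx+my) = \square(mx,\,m(x+y)) = m \cdot \lambda(\square\ xy)$; the computations for $\rho(\square\ xy) = \square(x+y,\,y)$ are identical. Combining this with the inductive hypothesis yields the identities for $(k+1)$-supertiles, closing the induction.

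I would also note, as an alternative route (and essentially a remark), that one can invoke the matrix relations $[\lambda(t)] = [t]\cdot L$ and $[\rho(t)] = [t]\cdot R$: for any fixed matrix $M$ the map $[t] \mapsto [t]\cdot M$ is $\mathbb{F}_p$-linear in its left argument, so every endpoint value in $\tau^{k}(t)$ depends $\mathbb{F}_p$-linearly on $[t]$, and both identities follow immediately from linearity. There is no real obstacle in either approach; the only step deserving a word of care is the reduction from additivity of $\tau^{k}$ to additivity of the single-step substitution $\tau$, which works precisely because $\tau$ acts on a supertile tile-by-tile — the same observation already used for $\sigma$.
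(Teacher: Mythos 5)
Your proof is correct and is exactly the argument the paper intends: the paper omits this proof as routine (stating that proofs in the one-dimensional section are omitted when they present no particular difficulty), and your induction on $k$ with the reduction to additivity of $\lambda$ and $\rho$ is the direct adaptation of the paper's proof of the analogous additivity lemma for $\sigma$.
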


\begin{figure}[H]
    \centering
    \resizebox{8cm}{!}
    {
    \begin{tikzpicture}
    	    \node[scale=5] at (16,8.5) {+};
    	    \node[scale=5] at (16,2.5) {=};
                \draw (0,12) -- (32,12);
        \node[scale=1] at (0,12) [circle,fill=black] {};
        \node[scale=4,below] at (0,12) {1};
        \node[scale=1] at (4,12) [circle,fill=black] {};
        \node[scale=4,below] at (4,12) {0};
        \node[scale=1] at (8,12) [circle,fill=black] {};
        \node[scale=4,below] at (8,12) {4};
        \node[scale=1] at (12,12) [circle,fill=black] {};
        \node[scale=4,below] at (12,12) {2};
        \node[scale=1] at (16,12) [circle,fill=black] {};
        \node[scale=4,below] at (16,12) {3};
        \node[scale=1] at (20,12) [circle,fill=black] {};
        \node[scale=4,below] at (20,12) {3};
        \node[scale=1] at (24,12) [circle,fill=black] {};
        \node[scale=4,below] at (24,12) {0};
        \node[scale=1] at (28,12) [circle,fill=black] {};
        \node[scale=4,below] at (28,12) {2};
        \node[scale=1] at (32,12) [circle,fill=black] {};
        \node[scale=4,below] at (32,12) {2};
        \draw (0,6) -- (32,6);
        \node[scale=1] at (0,6) [circle,fill=black] {};
        \node[scale=4,below] at (0,6) {0};
        \node[scale=1] at (4,6) [circle,fill=black] {};
        \node[scale=4,below] at (4,6) {3};
        \node[scale=1] at (8,6) [circle,fill=black] {};
        \node[scale=4,below] at (8,6) {3};
        \node[scale=1] at (12,6) [circle,fill=black] {};
        \node[scale=4,below] at (12,6) {1};
        \node[scale=1] at (16,6) [circle,fill=black] {};
        \node[scale=4,below] at (16,6) {3};
        \node[scale=1] at (20,6) [circle,fill=black] {};
        \node[scale=4,below] at (20,6) {4};
        \node[scale=1] at (24,6) [circle,fill=black] {};
        \node[scale=4,below] at (24,6) {1};
        \node[scale=1] at (28,6) [circle,fill=black] {};
        \node[scale=4,below] at (28,6) {4};
        \node[scale=1] at (32,6) [circle,fill=black] {};
        \node[scale=4,below] at (32,6) {3};
        \draw (0,0) -- (32,0);
        \node[scale=1] at (0,0) [circle,fill=black] {};
        \node[scale=4,below] at (0,0) {1};
        \node[scale=1] at (4,0) [circle,fill=black] {};
        \node[scale=4,below] at (4,0) {3};
        \node[scale=1] at (8,0) [circle,fill=black] {};
        \node[scale=4,below] at (8,0) {2};
        \node[scale=1] at (12,0) [circle,fill=black] {};
        \node[scale=4,below] at (12,0) {3};
        \node[scale=1] at (16,0) [circle,fill=black] {};
        \node[scale=4,below] at (16,0) {1};
        \node[scale=1] at (20,0) [circle,fill=black] {};
        \node[scale=4,below] at (20,0) {2};
        \node[scale=1] at (24,0) [circle,fill=black] {};
        \node[scale=4,below] at (24,0) {1};
        \node[scale=1] at (28,0) [circle,fill=black] {};
        \node[scale=4,below] at (28,0) {1};
        \node[scale=1] at (32,0) [circle,fill=black] {};
        \node[scale=4,below] at (32,0) {0};
    \end{tikzpicture}
    }
    \caption{$\tau^3(\square\ 12) + \tau^3(\square\ 03) = \tau^3(\square\ 10)$ (for $p = 5$)}
    \label{fig:additivity-tau}
\end{figure}
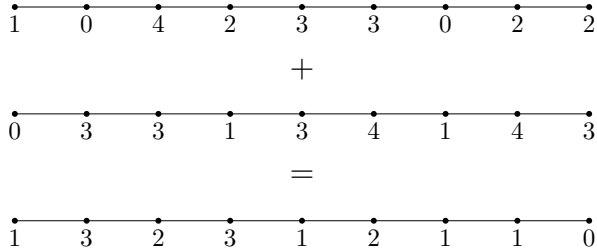

\subsubsection{Symmetries}
\label{sec:tau-prop-symmetries}

$\forall k \in \mathbb{N}$,
$\forall x, y \in \mathbb{F}_p$:
\begin{itemize}
\item if $x = y$
      then $\tau^k(\square\ xy)$
      has reflection symmetry:
      the values at two symmetrical endpoints
      with respect to the center of the supertile are equal,
\item if $x = -y$
      then $\tau^k(\square\ xy)$
      has reflection \quotes{odd} symmetry:
      the values at two symmetrical endpoints
      with respect to the center of the supertile are opposite and sum to $0$
      (see figures~\ref{fig:sample-tau} and \ref{fig:sample-tau-2}).
\end{itemize}

\subsubsection{Positions within supertiles}
\label{sec:tau-prop-positions}

\begin{theo}
\label{lemma:tau-tile-position}
\label{theo:tau-prop-rebuilding}
$\forall k \in \mathbb{N}$,
$\forall n \in 0 \twodots 2^k-1$,
let $w \in \{0,1\}^k$ be the binary expansion of $n$.
The $n$-th tile $t$ within a $k$-supertile $s$ satisfies:
\begin{equation}
    [t] = [s] \cdot [w]
\end{equation}
where:
\begin{equation}
    [0] = L, [1] = R
\end{equation}
\end{theo}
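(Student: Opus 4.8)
The plan is to prove this by induction on $k$, mirroring the two-dimensional arguments of Theorem~\ref{theo:tile-position} and Theorem~\ref{theo:sigma-prop-rebuilding}, with the four-way branching there replaced by a binary (left/right) branching. First I would extend the bracket notation to binary words: for $w = w_0 \cdots w_{k-1} \in \{0,1\}^k$, set $[w] = [w_0] \cdots [w_{k-1}]$ with $[0] = L$ and $[1] = R$, and $[\varepsilon] = I_2$ for the empty word. I would also record that $n \mapsto (\text{binary expansion of } n \text{ padded to length } k)$ is a bijection from $\{0,\twodots,2^k-1\}$ onto $\{0,1\}^k$, so that the statement simultaneously asserts that the position of a tile within a $k$-supertile is uniquely encoded by a word of $\{0,1\}^k$ (the one-dimensional analogue of Theorem~\ref{theo:tile-position}).

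The base case $k = 0$ is immediate: a $0$-supertile is a single tile $s$, the only index is $n = 0$ with binary expansion $\varepsilon$, and $[s]\cdot[\varepsilon] = [s]\cdot I_2 = [s]$. For the inductive step, assume the claim for all $k$-supertiles and let $s = \tau^{k+1}(u)$ be a $(k+1)$-supertile obtained from a tile $u = \square\ xy$, so that $[s] = [u]$ by Definition~\ref{def:tau-row}. Since $\tau(u) = \square(x,x+y)\,\square(x+y,y) = \lambda(u)\,\rho(u)$, the supertile $s = \tau^k(\lambda(u))\,\tau^k(\rho(u))$ decomposes as the left $k$-supertile $\tau^k(\lambda(u))$ followed by the right $k$-supertile $\tau^k(\rho(u))$; by Lemma~\ref{lemma:extreme-tau} together with the identities $[\lambda(u)] = [u]L$ and $[\rho(u)] = [u]R$, these halves have row matrices
\begin{equation}
    [\tau^k(\lambda(u))] = [\lambda(u)] = [u]\cdot L = [s]\cdot L, \qquad [\tau^k(\rho(u))] = [\rho(u)] = [u]\cdot R = [s]\cdot R .
\end{equation}
Now let $t$ be the $n$-th tile of $s$ for $n \in 0 \twodots 2^{k+1}-1$, let $w = w_0 w_1 \cdots w_k \in \{0,1\}^{k+1}$ be the binary expansion of $n$, and put $w' = w_1 \cdots w_k \in \{0,1\}^k$, the binary expansion of $n \bmod 2^k$. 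If $w_0 = 0$ then $n < 2^k$ and $t$ is the $n$-th tile of the left half $\tau^k(\lambda(u))$, so the induction hypothesis gives $[t] = [\tau^k(\lambda(u))]\cdot[w'] = [s]\cdot L\cdot[w'] = [s]\cdot[0][w'] = [s]\cdot[w]$. If $w_0 = 1$ then $t$ is the $(n-2^k)$-th tile of the right half $\tau^k(\rho(u))$, and likewise $[t] = [s]\cdot R\cdot[w'] = [s]\cdot[1][w'] = [s]\cdot[w]$. Either way $[t] = [s]\cdot[w]$, which closes the induction.

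The only point requiring care is fixing the bit-ordering convention consistently, namely that the most significant bit of $n$ selects the left or right half and the remaining bits index the position inside that half; this is exactly what makes the recursion $[w_0 w_1 \cdots w_k] = [w_0]\cdot[w_1 \cdots w_k]$ match the geometric nesting of halves, after which the matrix bookkeeping is routine. As a sanity check, the result is consistent with the remark (see figure~\ref{fig:sigma-tau}) that the bottom border of $\sigma^k(\bigtriangleup xyz)$ coincides with $\tau^k(\square\ xy)$: the action of $L$ on the first two coordinates agrees with that of $B$, and of $R$ with $C$, so restricting Theorem~\ref{theo:sigma-prop-rebuilding} to the word $\beta^{\ast}\gamma^{\ast}$-type descents along the bottom edge recovers the one-dimensional formula.
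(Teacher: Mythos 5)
Your induction is correct and is exactly the one-dimensional analogue of the arguments for Theorems~\ref{theo:tile-position} and \ref{theo:sigma-prop-rebuilding}, which is what the paper intends here (it omits this proof as presenting no particular difficulty). The decomposition $\tau^{k+1}(u)=\tau^k(\lambda(u))\,\tau^k(\rho(u))$, the use of $[\tau^k(\lambda(u))]=[u]\cdot L$ and $[\tau^k(\rho(u))]=[u]\cdot R$, and the most-significant-bit convention all check out.
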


\begin{lemma}
\label{lemma:tau-unique}
A supertile $s$ is uniquely determined by one of its tile $t$
and the position $w$ of $t$ within $s$.
\end{lemma}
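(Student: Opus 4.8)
The plan is to transport the proof of Lemma~\ref{lemma:unique} to the one-dimensional setting, where it becomes even simpler because the tiles carry no orientation. First I would invoke Theorem~\ref{theo:tau-prop-rebuilding}: if $t$ is the tile at position $w \in \{0,1\}^k$ within the $k$-supertile $s$, then $[t] = [s] \cdot [w]$, where $[w] = [w_0] \cdots [w_{k-1}]$ is the product over the letters of $w$ of the matrices $L$ and $R$ (with $[0] = L$, $[1] = R$).

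Next, by Lemma~\ref{lemma:lr-invertible} the matrices $L$ and $R$ are invertible in $\mathbb{F}_p^{2 \times 2}$, hence so is the product $[w]$, with inverse $[w]^{-1} = [w_{k-1}]^{-1} \cdots [w_0]^{-1}$. Therefore
\begin{equation}
    [s] = [t] \cdot [w]^{-1},
\end{equation}
so the row matrix $[s]$ — that is, the pair of values at the two extreme endpoints of $s$ (cf.\ Definition~\ref{def:tau-row} and Lemma~\ref{lemma:extreme-tau}) — is completely determined by $t$ and $w$.

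Finally I would observe that $w$ also pins down the level of the supertile, since the position of a tile within a $k$-supertile is, by the conventions fixed when positions were introduced, a word of length exactly $k$; here $k = \lvert w \rvert$. Combining this with Definition~\ref{def:tau-row}, which identifies a $k$-supertile with $\tau^k$ applied to the tile recording its two extreme endpoint values, we conclude that $s = \tau^k(\square\ xy)$ with $[x\ y] = [t] \cdot [w]^{-1}$ and $k = \lvert w \rvert$ is uniquely determined. I do not anticipate any genuine obstacle: the only point deserving a moment's care is the recovery of $k$ from $w$, and that is immediate from the definition of positions.
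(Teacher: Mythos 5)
Your proof is correct and is exactly the argument the paper intends: it omits the proof of this lemma as routine, and the analogous two-dimensional Lemma~\ref{lemma:unique} is proved in precisely this way, via $[s] = [t]\cdot[w]^{-1}$ using invertibility of the position matrices. Your added remarks that $k = \lvert w\rvert$ recovers the level and that no orientation bookkeeping is needed in the one-dimensional case are apt.
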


\subsubsection{Patterns of zeros}
\label{sec:tau-prop-patterns}

\begin{lemma}
A supertile $\tau^k(t)$ contains $\square\ 00$
if and only if $t = \square\ 00$.
\end{lemma}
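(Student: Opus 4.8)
This is the one-dimensional counterpart of the earlier lemma stating that a supertile $s$ contains an all-zero tile if and only if $s$ was built from an all-zero tile, and the proof will follow the same template, using theorem~\ref{theo:tau-prop-rebuilding} together with lemma~\ref{lemma:lr-invertible}.

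For the nontrivial direction, suppose that $\tau^k(t)$ contains $\square\ 00$ as its $n$-th tile for some $n \in 0 \twodots 2^k-1$, and let $w \in \{0,1\}^k$ be the binary expansion of $n$. By theorem~\ref{theo:tau-prop-rebuilding},
\begin{equation}
    [\square\ 00] = [\tau^k(t)] \cdot [w] = [t] \cdot [w],
\end{equation}
using definition~\ref{def:tau-row} for $[\tau^k(t)] = [t]$. Now $[w] = [w_0] \cdots [w_{k-1}]$ is a product of matrices each equal to $L$ or $R$, and by lemma~\ref{lemma:lr-invertible} these are invertible in $\mathbb{F}_p^{2 \times 2}$, so $[w]$ is invertible. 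Hence
\begin{equation}
    [t] = [\square\ 00] \cdot [w]^{-1} = [0\ 0] \cdot [w]^{-1} = [0\ 0],
\end{equation}
so $t = \square\ 00$.

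For the converse, if $t = \square\ 00$ then a one-line induction on $k$ shows that every tile of $\tau^k(\square\ 00)$ is $\square\ 00$: the base case $k=0$ is immediate, and $\tau(\square\ 00) = \square(0,0+0),\ \square(0+0,0) = \square\ 00,\ \square\ 00$, so applying $\tau$ to a supertile all of whose tiles are $\square\ 00$ again yields only $\square\ 00$ tiles (one may alternatively invoke additivity from section~\ref{sec:tau-prop-additivity}, writing $\tau^k(\square\ 00) = 0 \cdot \tau^k(\square\ 11)$). In particular $\tau^k(\square\ 00)$ contains $\square\ 00$. I do not expect any real obstacle here; the only point requiring care is confirming that theorem~\ref{theo:tau-prop-rebuilding} is stated for arbitrary tiles within $k$-supertiles (it is) and that $[w]^{-1}$ is legitimate, which is exactly lemma~\ref{lemma:lr-invertible}.
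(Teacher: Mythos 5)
Your proof is correct and is exactly the argument the paper intends: the paper omits this proof as routine, and its two-dimensional counterpart (the lemma on all-zero tiles in $\sigma$-supertiles) is proved by precisely the same computation $[t] = [0\ 0]\cdot[w]^{-1} = [0\ 0]$ using the rebuilding theorem and invertibility of $L$ and $R$. Nothing to add.
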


\begin{lemma}
$\forall m \in \mathbb{F}_p^*$,
$\forall k \in \mathbb{N}$,
$\forall t \in P$,
the $0$'s at the endpoints of $m \cdot \tau^k(t)$ and $\tau^k(t)$
are exactly located at the same places.
\end{lemma}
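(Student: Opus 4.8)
The plan is to deduce this lemma directly from the additive structure of $\tau$ established in section~\ref{sec:tau-prop-additivity}, in exact parallel with the treatment of the two-dimensional case in section~\ref{sec:sigma-prop-patterns}. First I would recall the scalar-multiplication identity from the additivity lemma: for every tile $\square\, xy$ and every $k \in \mathbb{N}$ we have $m \cdot \tau^k(\square\, xy) = \tau^k(\square(mx,my))$, where the multiplication on the left is carried out componentwise on each endpoint of the $k$-supertile. In particular the two supertiles $\tau^k(t)$ and $m \cdot \tau^k(t)$ share the same underlying geometric subdivision into unit tiles, so there is a well-defined correspondence between their endpoints, and if an endpoint of $\tau^k(t)$ holds the value $v \in \mathbb{F}_p$ then the corresponding endpoint of $m \cdot \tau^k(t)$ holds the value $mv$.

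Next I would invoke the hypothesis $m \in \mathbb{F}_p^*$, which means $m$ is a unit in the field $\mathbb{F}_p$; hence $mv = 0$ if and only if $v = 0$. Applying this equivalence at every endpoint of the supertile shows that an endpoint carries $0$ in $\tau^k(t)$ precisely when the corresponding endpoint carries $0$ in $m \cdot \tau^k(t)$, which is exactly the assertion of the lemma.

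I do not expect any genuine obstacle here. The only point requiring a little care is the observation that scalar multiplication on supertiles is literally componentwise on endpoints, so that the phrase \enquote{at the same places} is meaningful and the two supertiles have matching endpoint sets; but this is precisely the content of the $\tau$-additivity lemma, and once it is available the rest is a one-line observation about invertibility in a field. As an alternative, one could give a self-contained induction on $k$ mirroring the proof of the additivity lemma, using $\lambda$ and $\rho$ and the matrices $L$, $R$, but appealing to additivity is cleaner and keeps the one-dimensional exposition aligned with section~\ref{sec:sigma-prop-patterns}.
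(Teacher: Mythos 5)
Your argument is correct and is essentially the paper's: the two-dimensional analogue is proved there with the single line ``this is a consequence of additivity,'' and the one-dimensional version is omitted as presenting no particular difficulty. Your appeal to the scalar identity $m\cdot\tau^k(\square\,xy)=\tau^k(\square(mx,my))$ together with the invertibility of $m$ in $\mathbb{F}_p$ is exactly the intended reasoning.
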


\begin{remark}
Tiles with one $0$ remain in the endpoint holding that $0$
upon repeated applications of $\tau$:
\begin{equation}
    \tau(\square\ 0y) = \square\ 0y \ \square\ yy
\end{equation}
\begin{equation}
    \tau(\square\ x0) = \square\ xx \ \square\ x0
\end{equation}
\end{remark}

\begin{lemma}
$\forall k \in \mathbb{N}$,
$\forall t \in \{ \square\ 10, \square\ 01 \}$,
the values at two symmetrical endpoints in $\tau^k(t)$
with respect to the center of $\tau^k(t)$
cannot both equal $0$.
\end{lemma}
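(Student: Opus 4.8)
The statement is the one-dimensional analogue of the lemma proved earlier for $\sigma$-supertiles (the one about three corners of an equilateral triangle not all being $0$). The plan is to mimic that earlier argument: use additivity (section~\ref{sec:tau-prop-additivity}) together with lemma~\ref{lemma:tau-unique} to derive a contradiction from assuming that two symmetrically-placed endpoints of $\tau^k(\square\ 10)$ both vanish.

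\medskip

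First I would set up the symmetry. By section~\ref{sec:tau-prop-symmetries}, since $1 = -(-1)$, $\tau^k(\square\ 01)$ has reflection ``odd'' symmetry about its center, and likewise $\tau^k(\square\ 10)$. Concretely, if the $j$-th and $(2^k-j)$-th endpoints are the two symmetrical endpoints in question, then in $\tau^k(\square\ 10)$ the value at one is the negative of the value at the other, and the same holds in $\tau^k(\square\ 01)$ (with the roles of the tiles interchanged, these two supertiles are mirror images of one another up to sign). Suppose, for contradiction, that both symmetrical endpoints of $\tau^k(\square\ 10)$ equal $0$ for some $k$ and some symmetrical pair. By the odd symmetry, both symmetrical endpoints of $\tau^k(\square\ 01)$ then also equal $0$ at the same two positions: $\tau^k(\square\ 01)$ is obtained from $\tau^k(\square\ 10)$ by reversing left–right, which sends the pair of symmetrical positions to itself, and a $0$ stays a $0$ under negation.

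\medskip

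Next I would invoke additivity. For any $x,y \in \mathbb{F}_p$ we have $\square\ xy = x \cdot \square\ 10 + y \cdot \square\ 01$, so by the additivity lemma of section~\ref{sec:tau-prop-additivity}, $\tau^k(\square\ xy) = x \cdot \tau^k(\square\ 10) + y \cdot \tau^k(\square\ 01)$ endpoint by endpoint. If both $\tau^k(\square\ 10)$ and $\tau^k(\square\ 01)$ have $0$ at the two symmetrical positions, then so does $\tau^k(\square\ xy)$ for every tile $\square\ xy$. In particular every $k$-supertile has $0$ at (at least) one of those two positions. But lemma~\ref{lemma:tau-unique} says a supertile is determined by one of its tiles and that tile's position, so given a position $w$ in a $k$-supertile I can choose a tile $t$ with a prescribed nonzero value there; equivalently $[s] = [t]\cdot [w]^{-1}$ shows I can realize an arbitrary $[s]$, hence any value I like at a fixed interior endpoint. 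Picking $[s]$ so that the value at one of the two symmetrical positions is nonzero contradicts the previous sentence.

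\medskip

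The main obstacle is bookkeeping of \emph{which} endpoint is symmetrical to which, i.e.\ making precise that ``reversing the tile reverses the endpoint ordering and sends the symmetrical pair to itself.'' Once the indexing is pinned down via theorem~\ref{theo:tau-prop-rebuilding} (the $n$-th tile corresponds to the binary word $w$, and reversing a supertile corresponds to reversing and bit-complementing $w$, i.e.\ swapping $L \leftrightarrow R$), the symmetry claim becomes a mechanical check on matrices $L$ and $R$, and the rest is the additivity-plus-uniqueness contradiction above, identical in spirit to the proof of the corresponding two-dimensional lemma.
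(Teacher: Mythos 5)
Your proof follows exactly the strategy the paper uses for the two-dimensional analogue (the lemma on three corners forming a centred equilateral triangle in $\sigma^k(t)$): a symmetry relates the two generating supertiles, additivity spreads the hypothetical pair of zeros to every $k$-supertile, and Lemma~\ref{lemma:tau-unique} (invertibility of $[w]$) lets one manufacture a supertile with a nonzero value at one of the two positions, giving the contradiction. The paper omits the proof of the one-dimensional statement as routine, and this is evidently the intended argument; the skeleton of your proof is correct.

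One assertion in your write-up is false, although it does not sink the proof: neither $\square\ 10$ nor $\square\ 01$ satisfies the condition $x=-y$ from section~\ref{sec:tau-prop-symmetries}, so $\tau^k(\square\ 10)$ does \emph{not} have the reflection ``odd'' symmetry (already for $k=1$ its endpoints read $1,1,0$, which is not odd about the centre), and the phrase ``since $1=-(-1)$'' does not apply to these tiles. What is true, and what your argument actually needs, is the other fact you state in the same breath: $\tau^k(\square\ 01)$ is the left--right mirror image of $\tau^k(\square\ 10)$ with \emph{no} sign change, which follows by induction from the symmetry of the rule $\tau(\square\ xy)=\square(x,x+y)\ \square(x+y,y)$ under reversal of the segment together with the swap $x\leftrightarrow y$. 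That reversal sends the symmetric pair of positions $\{j,\,2^k-j\}$ to itself, so two zeros at such a pair in $\tau^k(\square\ 10)$ force two zeros at the same pair in $\tau^k(\square\ 01)$, which is all the additivity step requires. Delete the appeal to odd symmetry (and the spurious ``up to sign'') and keep the mirror-image argument; the rest stands.
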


\subsubsection{Irreducibility}
\label{sec:tau-prop-irreducibility}

\begin{theo}
$(P^*, \tau)$ is irreducible.
\end{theo}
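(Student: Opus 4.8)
The plan is to follow the same strategy used for the two-dimensional statement ``$(T^*,\sigma)$ is irreducible''. First I would introduce the relation $\sim$ on $P$ by declaring $t \sim t'$ whenever $t'$ appears in $\tau^k(t)$ for some $k \in \mathbb{N}$, and observe that $\sim$ is reflexive ($t$ appears in $\tau^0(t)$) and transitive (if $t'$ appears in $\tau^k(t)$ and $t''$ appears in $\tau^{k'}(t')$, then applying $\tau^{k'}$ to $\tau^k(t)$ shows that $t''$ appears in $\tau^{k+k'}(t)$).

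The two elementary moves come directly from the substitution rule: for all $x,y \in \mathbb{F}_p$ we have $\square\ xy \sim \square(x,x+y)$ and $\square\ xy \sim \square(x+y,y)$, since $\lambda(\square\ xy) = \square(x,x+y)$ and $\rho(\square\ xy) = \square(x+y,y)$ both appear in $\tau(\square\ xy)$. Iterating the first move $k$ times gives $\square\ xy \sim \square(x,kx+y)$, and iterating the second gives $\square\ xy \sim \square(x+ky,y)$. When $x \ne 0$ the value $kx+y$ runs through $\mathbb{F}_p$ as $k$ varies, so $\square\ xy \sim \square\ xz$ for every $z \in \mathbb{F}_p$ (a ``right-endpoint move'', pivoting on the nonzero left endpoint); symmetrically, when $y \ne 0$, $\square\ xy \sim \square\ zy$ for every $z$ (a ``left-endpoint move'', pivoting on the nonzero right endpoint).

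It then remains to show that $t \sim t'$ for all $t, t' \in P^*$; by transitivity it suffices to prove $t \sim \square\ 11$ and $\square\ 11 \sim t$ for every $t = \square\ xy \in P^*$, and I would do this by splitting on which endpoint is nonzero. If $x \ne 0$, then $\square\ xy \sim \square\ x1 \sim \square\ 11$ (a right-endpoint move pivoting on $x$, then a left-endpoint move pivoting on the $1$), and likewise $\square\ 11 \sim \square\ x1 \sim \square\ xy$. If instead $y \ne 0$, then $\square\ xy \sim \square\ 1y \sim \square\ 11$ and $\square\ 11 \sim \square\ 1y \sim \square\ xy$. Since $t \in P^*$ forces at least one coordinate to be nonzero, one of the two cases always applies, and chaining through the hub $\square\ 11$ completes the argument.

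The argument is essentially routine, and there is no serious obstacle; the only point requiring attention is that each application of a left- or right-endpoint move needs its pivot coordinate to be nonzero, which is precisely what the case split guarantees (and why $\square\ 11$, having both coordinates nonzero, is a convenient hub reachable from and to every element of $P^*$). This mirrors lemmas~\ref{lemma:irr1}--\ref{lemma:irr10} and figure~\ref{fig:irrecucibility-conclusion} in the two-dimensional setting, but is considerably shorter because a segment has only two endpoints whereas a triangle has three corners.
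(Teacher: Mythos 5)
Your proposal is correct and follows essentially the same route as the paper: the same reflexive--transitive relation $\sim$, the same two pivoting lemmas (change the right endpoint when the left one is nonzero, and vice versa, by iterating $\lambda$ and $\rho$ respectively), and the same use of $\square\ 11$ as a hub with the same case split on which endpoint is nonzero. No substantive differences to report.
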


\begin{lemma}
\label{lemma:inx-tau}
$\forall x \in \mathbb{F}_p^*$,
$\forall y,y' \in \mathbb{F}_p$,
$\square\ xy \sim \square\ xy'$.
\end{lemma}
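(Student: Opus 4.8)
The plan is to imitate, in this simpler one‑dimensional setting, the argument used for Lemma~\ref{lemma:irr2} in the two‑dimensional case. First I would record that the relation ``$\sim$'', defined by $t \sim t'$ iff $t'$ appears in $\tau^k(t)$ for some $k \in \mathbb{N}$, is reflexive and transitive, by exactly the same elementary reasoning as in the proof of the irreducibility of $(T^*, \sigma)$: $t$ appears in $\tau^0(t)$, and if $t'$ appears in $\tau^k(t)$ while $t''$ appears in $\tau^{k'}(t')$, then $t''$ appears in $\tau^{k+k'}(t)$.

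The key observation is that $\lambda$ fixes the left endpoint. Since $\tau(\square\ xy)$ consists of the two tiles $\lambda(\square\ xy) = \square(x,x+y)$ and $\rho(\square\ xy) = \square(x+y,y)$ (see figure~\ref{fig:tau}), the tile $\square(x,x+y)$ appears in $\tau(\square\ xy)$, so $\square\ xy \sim \square(x,x+y)$. Iterating this and using the matrix identity $[\lambda^k(t)] = [t]\cdot L^k$ together with
\begin{equation}
    L^k = \begin{bmatrix} 1 & k \\ 0 & 1 \end{bmatrix},
\end{equation}
one obtains $\square\ xy \sim \lambda^k(\square\ xy) = \square(x,\,kx+y)$ for every $k \in \mathbb{N}$; here $\lambda$ being a permutation of $P$ guarantees that the endpoint‑matrix identity pins down the tile.

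Finally, since $x \in \mathbb{F}_p^*$, the map $k \mapsto kx$ is surjective onto $\mathbb{F}_p$ as $k$ ranges over $0 \twodots p-1$, so $kx+y$ takes every value in $\mathbb{F}_p$; in particular there is a $k$ with $kx+y = y'$, giving $\square\ xy \sim \square\ xy'$. I do not expect any genuine obstacle here — the only point needing a line of care is the computation of $L^k$ and the remark that $\lambda$ is a permutation (so that the identity on endpoint matrices determines the tile); everything else is bookkeeping. This lemma, together with its mirror statement obtained from $\rho$ (which fixes the right endpoint), will then feed into the $\tau$‑analogues of Lemmas~\ref{lemma:irr6}--\ref{lemma:irr10} and the concluding connectedness argument, exactly as in the two‑dimensional proof.
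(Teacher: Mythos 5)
Your proof is correct and follows essentially the same route as the paper: observe that $\square(x,x+y)=\lambda(\square\ xy)$ appears in $\tau(\square\ xy)$, iterate to get $\square\ xy \sim \square(x,kx+y)$, and use the invertibility of $x$ so that $kx+y$ runs through all of $\mathbb{F}_p$. The extra remarks about transitivity of $\sim$ and the explicit form of $L^k$ are harmless elaborations of the same argument.
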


\begin{proof}
$\square\ xy \sim \square(x,x+y)$.\\
$\forall k \in \mathbb{N}$, applying this relation $k$ times,
we obtain $\square\ xy \sim \square(x,kx+y)$.\\
As $x$ is invertible, $k \rightarrow kx+y$ runs through $\mathbb{F}_p$.
\end{proof}

\begin{lemma}
\label{lemma:iny-tau}
$\forall x,x' \in \mathbb{F}_p^*$,
$\forall y \in \mathbb{F}_p^*$,
$\square\ xy \sim \square\ x'y$.
\end{lemma}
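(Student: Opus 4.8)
The plan is to mimic the proof of Lemma~\ref{lemma:inx-tau}, but to iterate the right child $\rho$ instead of the left child $\lambda$. First I would record the one-step relation $\square\ xy \sim \rho(\square\ xy) = \square(x+y,y)$, which holds because $\rho(\square\ xy)$ is literally the second tile of $\tau(\square\ xy)$ (see figure~\ref{fig:tau}); reflexivity and transitivity of $\sim$ then upgrade this to $\square\ xy \sim \rho^k(\square\ xy)$ for every $k \in \mathbb{N}$.

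Next I would compute the iterate explicitly. Since $\rho(\square(a,b)) = \square(a+b,b)$, an easy induction (equivalently, $[\rho^k(t)] = [t]\cdot R^k$ with $R^k = \left[\begin{smallmatrix}1&0\\k&1\end{smallmatrix}\right]$) gives $\rho^k(\square\ xy) = \square(x+ky,\ y)$. Because $y \in \mathbb{F}_p^*$ is invertible, $k \mapsto x+ky$ runs through $\mathbb{F}_p$, so for the prescribed target $x'$ there is a $k$ with $x+ky = x'$, whence $\square\ xy \sim \square\ x'y$. Note that the hypothesis $x, x' \ne 0$ is not actually needed for the congruence argument; only $y \ne 0$ is used (the nonvanishing of $x, x'$ just keeps the tiles in $P^*$).

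I expect no genuine obstacle here, as the argument is the exact mirror image of Lemma~\ref{lemma:inx-tau}; indeed one could alternatively obtain it for free by observing that $\tau$ commutes with the left--right reflection $\square\ xy \mapsto \square\ yx$ (reversing the patch $\square(x,x+y)\,\square(x+y,y)$ together with each of its tiles yields $\square(y,x+y)\,\square(x+y,x) = \tau(\square\ yx)$), so that $t \sim t'$ holds if and only if the reflected tiles satisfy the same relation; applying this reflection to Lemma~\ref{lemma:inx-tau} converts ``change the right endpoint when the left is nonzero'' into exactly the present statement. The only point demanding any care is the bookkeeping in the iteration formula, namely checking that it is $x+ky$ (and not, say, $x-ky$) that is attained, which is immediate from the shape of $\rho$.
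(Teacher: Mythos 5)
Your proof is correct and is exactly what the paper intends: its own proof of this lemma is just ``Similar to Lemma~\ref{lemma:inx-tau}'', meaning iterate $\rho$ instead of $\lambda$ to get $\square\ xy \sim \square(x+ky,y)$ and use invertibility of $y$. Your side remarks (that only $y\ne 0$ is needed, and that the statement also follows from the left--right reflection symmetry of $\tau$) are accurate but not required.
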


\begin{proof}
Similar to lemma~\ref{lemma:inx-tau}.
\end{proof}

\begin{lemma}
$\forall t \in P^*$,
$t \sim \square\ 11$ and $\square\ 11 \sim t$.
\end{lemma}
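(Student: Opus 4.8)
The plan is to reduce the claim to the two preceding lemmas (lemma~\ref{lemma:inx-tau} and lemma~\ref{lemma:iny-tau}), handling separately the tiles whose left endpoint is nonzero and the remaining tiles $\square\ 0y$ with $y \ne 0$. Throughout I will use that $\sim$ is reflexive and transitive, as in the two-dimensional case.

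First I would treat $t = \square\ xy$ with $x \ne 0$. Lemma~\ref{lemma:inx-tau} (with $y' = 1$) gives $\square\ xy \sim \square\ x1$, and since the right endpoint $1$ is nonzero, lemma~\ref{lemma:iny-tau} (with $x' = 1$) gives $\square\ x1 \sim \square\ 11$; transitivity yields $\square\ xy \sim \square\ 11$. Reading the same two lemmas in the other order gives $\square\ 11 \sim \square\ x1 \sim \square\ xy$. So the statement already holds for every $t \in P^*$ with nonzero left endpoint.

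The only remaining case is $t = \square\ 0y$ with $y \ne 0$ (note $\square\ 00 \notin P^*$). For $\square\ 0y \sim \square\ 11$, I would use the substitution rule (figure~\ref{fig:tau}): $\tau(\square\ 0y) = \square\ 0y\ \square\ yy$, so $\square\ 0y \sim \square\ yy$, and since $y \ne 0$ the previous case applies to $\square\ yy$, giving $\square\ yy \sim \square\ 11$. For the converse $\square\ 11 \sim \square\ 0y$, I would first reach $\square(-y,y)$: by lemma~\ref{lemma:inx-tau}, $\square\ 11 \sim \square\ 1y$, and by lemma~\ref{lemma:iny-tau} (legitimate because $-y \ne 0$ and $y \ne 0$), $\square\ 1y \sim \square(-y,y)$; then one application of $\tau$ gives $\tau(\square(-y,y)) = \square(-y,0)\ \square(0,y)$, so $\square(-y,y) \sim \square\ 0y$, and chaining produces $\square\ 11 \sim \square\ 0y$.

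The main difficulty is exactly this last case: neither irreducibility lemma applies directly to $\square\ 0y$, because the endpoint one would need to vary is the zero endpoint. The remedy is a single explicit application of $\tau$ in each direction — creating a nonzero left endpoint via $\square\ 0y \to \square\ yy$, and destroying one by cancellation via $\square(-y,y) \to \square\ 0y$ — which brings the argument back inside the scope of lemmas~\ref{lemma:inx-tau} and~\ref{lemma:iny-tau}. (Alternatively, if a left–right reversal symmetry of $\tau$ is available, the second case could instead be obtained by reversing the first case applied to $\square\ y0$.)
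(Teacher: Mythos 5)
Your proof is correct, and its first case ($x \ne 0$: chain lemma~\ref{lemma:inx-tau} then lemma~\ref{lemma:iny-tau} through $\square\ x1$) is exactly the paper's argument. Where you diverge is the subcase $t = \square\ 0y$. The paper simply runs the symmetric chain $\square\ 0y \sim \square\ 1y \sim \square\ 11$, i.e.\ it applies lemma~\ref{lemma:iny-tau} with left endpoint $0$; this is legitimate under the form of that lemma its proof actually establishes ($\square\ xy \sim \square(x+ky,y)$ sweeps the left endpoint through all of $\mathbb{F}_p$ whenever $y \ne 0$), even though the lemma's stated hypothesis restricts $x,x'$ to $\mathbb{F}_p^*$. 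You instead take that stated hypothesis at face value and route around it with two explicit substitution steps, $\square\ 0y \to \square\ yy$ via $\rho$ and $\square(-y,y) \to \square\ 0y$ via $\rho$ again, which is a valid and slightly more conservative workaround; the cost is a longer chain, the benefit is that you never invoke lemma~\ref{lemma:iny-tau} outside its literal scope. Either reading closes the case, so there is no gap--- but it is worth observing that the cleanest fix is simply to state lemma~\ref{lemma:iny-tau} with $x,x' \in \mathbb{F}_p$ arbitrary, after which your detour (and the asymmetry between the two cases) disappears.
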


\begin{proof}
Let $[t] = [x\ y]$ (necessarily, $(x,y) \ne (0,0)$).\\
If $x \ne 0$, then:
\begin{itemize}
\item $\square\ xy \sim \square\ x1 \sim \square\ 11$,
\item $\square\ 11 \sim \square\ x1 \sim \square\ xy$.
\end{itemize}
If $y \ne 0$, then:
\begin{itemize}
\item $\square\ xy \sim \square\ 1y \sim \square\ 11$,
\item $\square\ 11 \sim \square\ 1y \sim \square\ xy$.
\end{itemize}

Hence, $\forall t, t' \in P^*$, $t \sim \square\ 11 \sim t'$.
\end{proof}

\subsubsection{Primitivity}
\label{sec:tau-prop-primitivity}

\begin{theo}
$(P^*, \tau)$ is primitive.
\end{theo}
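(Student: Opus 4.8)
The plan is to follow the proof that $(T^*, \sigma)$ is primitive, with the distinguished supertile $\bigtriangleup 011$ replaced by a one-dimensional analogue. The key observation is that the tile $\square\ 01$ is self-reproducing: since $\lambda(\square\ 0y) = \square(0, 0+y) = \square\ 0y$, we have $\tau(\square\ 01) = \square\ 01 \ \square\ 11$, so $\square\ 01$ appears in $\tau(\square\ 01)$, and by induction $\square\ 01$ appears in $\tau^k(\square\ 01)$ for every $k \in \mathbb{N}$. (Symmetrically, $\square\ 10$ would do, via $\rho(\square\ x0) = \square\ x0$.)

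Since $(P^*, \tau)$ is irreducible, for all $t, t' \in P^*$ the number $d(t, t')$ --- the least $k \in \mathbb{N}$ with $t'$ appearing in $\tau^k(t)$ --- is well defined. As $P^*$ is finite I would set
\begin{align}
    m &= \max_{t \in P^*} d(t, \square\ 01),    \\
    n &= \max_{t \in P^*} d(\square\ 01, t),
\end{align}
both finite.

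Then, given arbitrary $t, t' \in P^*$, I would build an occurrence of $t'$ inside $\tau^{m+n}(t)$ in three stages: first pass from $t$ to $\square\ 01$ in $d(t, \square\ 01)$ applications of $\tau$; then stay at $\square\ 01$ for a further $m - d(t, \square\ 01) + n - d(\square\ 01, t')$ applications, which is legitimate because $\square\ 01$ occurs in each of its own iterates and because this exponent is nonnegative by the choice of $m$ and $n$; finally pass from $\square\ 01$ to $t'$ in $d(\square\ 01, t')$ applications. The three exponents sum to $m + n$, so $t'$ appears in $\tau^{m+n}(t)$ for all $t, t' \in P^*$, which is exactly primitivity with uniform exponent $m + n$.

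I do not anticipate any real difficulty: the entire argument is the bookkeeping of the two-dimensional proof with $\Sigma = \{\alpha, \beta, \gamma, \delta\}$ replaced by $\{\lambda, \rho\}$ and triangular supertiles replaced by intervals. The only point that must be identified is the self-reproducing tile, and that is immediate from the substitution rule for $\tau$.
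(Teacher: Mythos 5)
Your proof is correct and is exactly the intended argument: the paper omits the proof of this theorem as routine, and the natural proof is precisely the transplant of the two-dimensional primitivity argument with the self-reproducing tile $\bigtriangleup 011$ replaced by $\square\ 01$ (which indeed satisfies $\tau(\square\ 01) = \square\ 01\ \square\ 11$). Nothing further is needed.
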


\subsection{\texorpdfstring{Tiling $V_y$}{Tiling Vy}}
\label{sec:tau-tiling-v}

\subsubsection{Construction}
\label{sec:tau-tiling-v-cons}

\begin{lemma}
$\forall y \in \mathbb{F}_p$,
the sequence $\{ \tau^k(\square\ 0y) \}_{k \in \mathbb{N}}$ has a limit, say $V_y$,
that constitutes a tiling of the half-line.
\end{lemma}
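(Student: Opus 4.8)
The plan is to realize $\{\tau^k(\square\ 0y)\}_{k\in\mathbb{N}}$ as a family of nested patches whose union exhausts the half-line, and then to conclude by the same convergence argument already used in Lemma~\ref{lemma:s-limit}.

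First I would record the structural identity underlying the nesting. From the persistence-of-zero remark in section~\ref{sec:tau-prop-patterns} we have $\tau(\square\ 0y) = \square\ 0y\ \square\ yy$; applying $\tau^k$ to both sides and using that $\tau$ acts tilewise on a sequence of tiles gives
\begin{equation}
    \tau^{k+1}(\square\ 0y) = \tau^k(\square\ 0y)\ \tau^k(\square\ yy).
\end{equation}
Since $\tau$ replaces a unit tile by two tiles occupying twice the length (see figure~\ref{fig:tau}), $\tau^k(\square\ 0y)$ is a sequence of $2^k$ unit tiles; placing its left endpoint at the origin, it tiles the segment $[0,2^k]$. The displayed identity, read as a concatenation of two blocks of $2^k$ tiles, shows that the leftmost $2^k$ tiles of $\tau^{k+1}(\square\ 0y)$ are exactly the tiles of $\tau^k(\square\ 0y)$ in the same positions; the blocks abut without overlap or gap, and the decoration at the shared point is consistent by the lemma asserting that endpoints meeting at the same point hold the same value, while the leftmost extreme endpoint stays equal to $0$ by Lemma~\ref{lemma:extreme-tau}. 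Hence $\tau^k(\square\ 0y)$ and $\tau^{k'}(\square\ 0y)$ agree on $[0,2^k]$ for all $k' \ge k$.

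Next I would pass to the limit. For any fixed tile slot $n \in \mathbb{N}$ on the half-line, as soon as $2^k > n$ the $n$-th tile of $\tau^k(\square\ 0y)$ is independent of $k$; defining $V_y$ to carry that tile in slot $n$ yields a well-defined tiling of $[0,+\infty)$. Since the lengths $2^k$ of the nested patches tend to infinity, the arguments of Smilansky and Solomon~\cite{multiscale} invoked in Lemma~\ref{lemma:s-limit} apply verbatim in the one-dimensional setting and show that $V_y$ is the limit of $\{\tau^k(\square\ 0y)\}_{k\in\mathbb{N}}$ in the Chabauty–Fell topology and constitutes a tiling of the half-line.

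There is no real obstacle here: the only point deserving attention is the very first step, namely that the two blocks in the substitution of $\square\ 0y$ really concatenate coherently (no overlap, no gap, matching decoration at the junction), which is exactly what the simplified substitution rule and Lemma~\ref{lemma:extreme-tau} guarantee. Everything after that is the verbatim one-dimensional analogue of Lemma~\ref{lemma:s-limit}.
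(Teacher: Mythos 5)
Your proof is correct and rests on exactly the same key fact as the paper's: $\tau(\square\ 0y) = \square\ 0y\ \square\ yy$ begins with $\square\ 0y$, so the iterates form nested prefixes whose limit is the fixed point of the substitution. The paper states this more tersely in the language of $2$-uniform morphisms and pure morphic words, while you spell out the nesting and invoke the Chabauty--Fell convergence argument, but the substance is the same.
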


\begin{proof}
The substitution $\tau$
can be seen as a $2$-uniform morphism
operating on the alphabet $P$.

As $\tau(\square\ 0y) = \square\ 0y \square\ yy$ starts with $\square\ 0y$,
the given limit exists (and is a pure morphic word).
\end{proof}

Figure~\ref{fig:stationary-tau} presents the first steps
of the construction of $V_3$ (for $p = 5$).

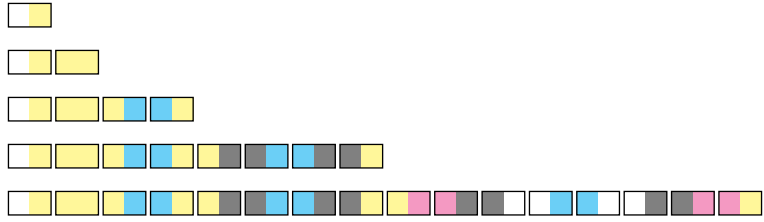
\begin{figure}[H]
    \centering
    \resizebox{10cm}{!}
    {
    \begin{tikzpicture}
                \fill[yellow!50] (1,8) rectangle (2-.1,9);
        \draw[ultra thick] (0+.1,8) rectangle (2-.1,9);
        \fill[yellow!50] (1,6) rectangle (2-.1,7);
        \fill[yellow!50] (2+.1,6) rectangle (3,7);
        \fill[yellow!50] (3,6) rectangle (4-.1,7);
        \draw[ultra thick] (0+.1,6) rectangle (2-.1,7);
        \draw[ultra thick] (2+.1,6) rectangle (4-.1,7);
        \fill[yellow!50] (1,4) rectangle (2-.1,5);
        \fill[yellow!50] (2+.1,4) rectangle (3,5);
        \fill[yellow!50] (3,4) rectangle (4-.1,5);
        \fill[yellow!50] (4+.1,4) rectangle (5,5);
        \fill[cyan!50] (5,4) rectangle (6-.1,5);
        \fill[cyan!50] (6+.1,4) rectangle (7,5);
        \fill[yellow!50] (7,4) rectangle (8-.1,5);
        \draw[ultra thick] (0+.1,4) rectangle (2-.1,5);
        \draw[ultra thick] (2+.1,4) rectangle (4-.1,5);
        \draw[ultra thick] (4+.1,4) rectangle (6-.1,5);
        \draw[ultra thick] (6+.1,4) rectangle (8-.1,5);
        \fill[yellow!50] (1,2) rectangle (2-.1,3);
        \fill[yellow!50] (2+.1,2) rectangle (3,3);
        \fill[yellow!50] (3,2) rectangle (4-.1,3);
        \fill[yellow!50] (4+.1,2) rectangle (5,3);
        \fill[cyan!50] (5,2) rectangle (6-.1,3);
        \fill[cyan!50] (6+.1,2) rectangle (7,3);
        \fill[yellow!50] (7,2) rectangle (8-.1,3);
        \fill[yellow!50] (8+.1,2) rectangle (9,3);
        \fill[black!50] (9,2) rectangle (10-.1,3);
        \fill[black!50] (10+.1,2) rectangle (11,3);
        \fill[cyan!50] (11,2) rectangle (12-.1,3);
        \fill[cyan!50] (12+.1,2) rectangle (13,3);
        \fill[black!50] (13,2) rectangle (14-.1,3);
        \fill[black!50] (14+.1,2) rectangle (15,3);
        \fill[yellow!50] (15,2) rectangle (16-.1,3);
        \draw[ultra thick] (0+.1,2) rectangle (2-.1,3);
        \draw[ultra thick] (2+.1,2) rectangle (4-.1,3);
        \draw[ultra thick] (4+.1,2) rectangle (6-.1,3);
        \draw[ultra thick] (6+.1,2) rectangle (8-.1,3);
        \draw[ultra thick] (8+.1,2) rectangle (10-.1,3);
        \draw[ultra thick] (10+.1,2) rectangle (12-.1,3);
        \draw[ultra thick] (12+.1,2) rectangle (14-.1,3);
        \draw[ultra thick] (14+.1,2) rectangle (16-.1,3);
        \fill[yellow!50] (1,0) rectangle (2-.1,1);
        \fill[yellow!50] (2+.1,0) rectangle (3,1);
        \fill[yellow!50] (3,0) rectangle (4-.1,1);
        \fill[yellow!50] (4+.1,0) rectangle (5,1);
        \fill[cyan!50] (5,0) rectangle (6-.1,1);
        \fill[cyan!50] (6+.1,0) rectangle (7,1);
        \fill[yellow!50] (7,0) rectangle (8-.1,1);
        \fill[yellow!50] (8+.1,0) rectangle (9,1);
        \fill[black!50] (9,0) rectangle (10-.1,1);
        \fill[black!50] (10+.1,0) rectangle (11,1);
        \fill[cyan!50] (11,0) rectangle (12-.1,1);
        \fill[cyan!50] (12+.1,0) rectangle (13,1);
        \fill[black!50] (13,0) rectangle (14-.1,1);
        \fill[black!50] (14+.1,0) rectangle (15,1);
        \fill[yellow!50] (15,0) rectangle (16-.1,1);
        \fill[yellow!50] (16+.1,0) rectangle (17,1);
        \fill[magenta!50] (17,0) rectangle (18-.1,1);
        \fill[magenta!50] (18+.1,0) rectangle (19,1);
        \fill[black!50] (19,0) rectangle (20-.1,1);
        \fill[black!50] (20+.1,0) rectangle (21,1);
        \fill[cyan!50] (23,0) rectangle (24-.1,1);
        \fill[cyan!50] (24+.1,0) rectangle (25,1);
        \fill[black!50] (27,0) rectangle (28-.1,1);
        \fill[black!50] (28+.1,0) rectangle (29,1);
        \fill[magenta!50] (29,0) rectangle (30-.1,1);
        \fill[magenta!50] (30+.1,0) rectangle (31,1);
        \fill[yellow!50] (31,0) rectangle (32-.1,1);
        \draw[ultra thick] (0+.1,0) rectangle (2-.1,1);
        \draw[ultra thick] (2+.1,0) rectangle (4-.1,1);
        \draw[ultra thick] (4+.1,0) rectangle (6-.1,1);
        \draw[ultra thick] (6+.1,0) rectangle (8-.1,1);
        \draw[ultra thick] (8+.1,0) rectangle (10-.1,1);
        \draw[ultra thick] (10+.1,0) rectangle (12-.1,1);
        \draw[ultra thick] (12+.1,0) rectangle (14-.1,1);
        \draw[ultra thick] (14+.1,0) rectangle (16-.1,1);
        \draw[ultra thick] (16+.1,0) rectangle (18-.1,1);
        \draw[ultra thick] (18+.1,0) rectangle (20-.1,1);
        \draw[ultra thick] (20+.1,0) rectangle (22-.1,1);
        \draw[ultra thick] (22+.1,0) rectangle (24-.1,1);
        \draw[ultra thick] (24+.1,0) rectangle (26-.1,1);
        \draw[ultra thick] (26+.1,0) rectangle (28-.1,1);
        \draw[ultra thick] (28+.1,0) rectangle (30-.1,1);
        \draw[ultra thick] (30+.1,0) rectangle (32-.1,1);
    \end{tikzpicture}
    }
    \caption{Colored representation of $\tau^k(\square\ 03)$ for $k = 0 \twodots 4$ (for $p = 5$)}
    \label{fig:stationary-tau}
\end{figure}

\begin{lemma}
$V_y$ is directly related to Dijkstra's \quotes{fusc} function:
the ($0$-based) $n$-th tile in $V_y$ equals:
\begin{equation}
    V_y(n) = y \cdot \square(fusc(n)\ mod\ p, fusc(n+1)\ mod\ p)
\end{equation}
\end{lemma}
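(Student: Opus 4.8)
The plan is to first reduce to the case $y = 1$ and to reinterpret everything in terms of the values carried by the \emph{endpoints} of the supertiles $\tau^k(\square\ 01)$. By additivity (Section~\ref{sec:tau-prop-additivity}) we have $\tau^k(\square\ 0y) = y \cdot \tau^k(\square\ 01)$ for every $k$, hence $V_y = y \cdot V_1$ tile by tile, and it suffices to prove $V_1(n) = \square(fusc(n)\bmod p,\ fusc(n+1)\bmod p)$. Moreover, since $\tau$ is a $2$-uniform morphism with $\tau(\square\ 01) = \square\ 01\ \square\ 11$ starting with $\square\ 01$, the word $\tau^k(\square\ 01)$ is the length-$2^k$ prefix of $V_1$; so for any fixed $n$ it is enough to identify the $n$-th tile of $\tau^k(\square\ 01)$ for any single $k$ with $2^k > n$. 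For such a $k$, write $a_k(0), a_k(1), \ldots, a_k(2^k)$ for the values at the consecutive endpoints of $\tau^k(\square\ 01)$ (well defined by the lemma that \quotes{endpoints that meet at the same point hold the same value}); then the $m$-th tile of $\tau^k(\square\ 01)$ is exactly $\square(a_k(m), a_k(m+1))$.

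The core of the argument is the claim that $a_k(n) = fusc(n) \bmod p$ for all $k \in \mathbb{N}$ and all $0 \le n \le 2^k$, proved by induction on $k$. The base case $k = 0$ is immediate: $a_0(0) = 0 = fusc(0)$ and $a_0(1) = 1 = fusc(1)$. For the inductive step, applying $\tau$ to the tile $\square(a_k(i), a_k(i+1))$ produces $\square(a_k(i),\ a_k(i)+a_k(i+1))$ followed by $\square(a_k(i)+a_k(i+1),\ a_k(i+1))$, which after tracking indices says precisely $a_{k+1}(2i) = a_k(i)$ and $a_{k+1}(2i+1) = a_k(i) + a_k(i+1)$ (the overlap $a_{k+1}(2i+2) = a_k(i+1)$ being consistent). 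Now invoke the defining recurrences of $fusc$: for an even argument $2i$, $a_{k+1}(2i) = a_k(i) = fusc(i) \bmod p = fusc(2i) \bmod p$; for an odd argument $2i+1$, $a_{k+1}(2i+1) = (fusc(i) + fusc(i+1)) \bmod p = fusc(2i+1) \bmod p$. This closes the induction.

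Combining the two steps, the $n$-th tile of $V_1$ equals $\square(a_k(n), a_k(n+1)) = \square(fusc(n)\bmod p,\ fusc(n+1)\bmod p)$, and scaling by $y$ via additivity gives $V_y(n) = y \cdot \square(fusc(n)\bmod p,\ fusc(n+1)\bmod p)$, as asserted. I do not expect any genuine obstacle: the only point requiring care is the bookkeeping that matches the doubling of the endpoint index under $\tau$ with the doubling of the argument in the $fusc$ recurrence, together with the reduction to $y=1$. As an alternative one could instead invoke Theorem~\ref{theo:tau-prop-rebuilding}, writing $[V_y(n)] = [0\ y] \cdot [w]$ with $w$ the binary expansion of $n$ (leading zeros are harmless since $[0\ y]\cdot L = [0\ y]$, reflecting $\lambda(\square\ 0y) = \square\ 0y$), and then recognising the product of the matrices $L=[0]$, $R=[1]$ read off the bits of $n$ as the classical Stern--Brocot matrix product computing $(fusc(n), fusc(n+1))$; the direct induction above is, however, shorter and self-contained.
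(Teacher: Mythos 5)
Your proof is correct and is essentially a fully detailed version of the paper's argument, which simply observes that the construction of $V_y$ mimics that of Stern's diatomic series; your induction on $k$ (matching $a_{k+1}(2i)=a_k(i)$ and $a_{k+1}(2i+1)=a_k(i)+a_k(i+1)$ with the recurrences $fusc(2n)=fusc(n)$ and $fusc(2n+1)=fusc(n)+fusc(n+1)$) together with the additivity reduction to $y=1$ is exactly the right way to make that one-line remark rigorous.
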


\begin{proof}
The construction of $V_y$ mimics that of Stern's diatomic series
(see figure~\ref{fig:stern}).
\end{proof}

\begin{remark}
$\forall m, y, y' \in \mathbb{F}_p$:
\begin{equation}
    V_y + V_{y'} = V_{y+y'}
\end{equation}
\begin{equation}
    m \cdot V_y = V_{my}
\end{equation}
\end{remark}

\subsubsection{Automaticity}
\label{sec:tau-tiling-v-automaticity}

\begin{theo}
$\forall y \in \mathbb{F}_p$,
the tiling $V_y$ is $2$-automatic.
\end{theo}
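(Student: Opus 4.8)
The plan is to mimic the construction of the automata $M$ and $N$: build a deterministic finite automaton that reads the binary expansion of $n$, most significant digit first, and whose final state encodes the $n$-th tile of $V_y$.

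First I would record precisely what the $n$-th tile of $V_y$ is. Since $V_y$ is the nested limit of the supertiles $\tau^k(\square\ 0y)$ (and $\tau(\square\ 0y)$ begins with $\square\ 0y$), for every $n$ and every $k$ at least the bit-length of $n$ the $n$-th tile of $V_y$ coincides with the $n$-th tile of $\tau^k(\square\ 0y)$. By lemma~\ref{lemma:extreme-tau} (or definition~\ref{def:tau-row}) the extreme endpoints of $\tau^k(\square\ 0y)$ carry the values $0$ and $y$, so $[\tau^k(\square\ 0y)] = [0\ y]$; applying theorem~\ref{theo:tau-prop-rebuilding} with $s = \tau^k(\square\ 0y)$ gives, for the $n$-th tile $t$ and the $k$-bit binary expansion $w$ of $n$,
\begin{equation}
    [t] = [0\ y] \cdot [w], \qquad [0] = L,\ [1] = R.
\end{equation}

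Next I would take the automaton $W$ with state set $\mathbb{F}_p^{2\times 2}$ (a finite set), input alphabet $\{0,1\}$, initial state $I_2$, and transitions sending a state $m$ to $m \cdot L$ on input $0$ and to $m \cdot R$ on input $1$. After processing a word $w \in \{0,1\}^*$ the state is exactly $m = [w]$, and I declare the output attached to a state $m$ to be the tile $t$ with $[t] = [0\ y]\cdot m$; then $W$ computes $V_y$. To see that leading zeros are harmless — the point handled by the self-loop remarks for $M$ and $N$ — I would observe the one-line identity $[0\ y]\cdot L = [0\ y]$ (a $1\times 2$ by $2\times 2$ product), which gives $[0\ y]\cdot L^{j}[w] = [0\ y]\cdot [w]$ for all $j$, so padding the expansion of $n$ with leading zeros does not change the output. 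As $\#\{0,1\} = 2$, this shows $V_y$ is $2$-automatic.

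Nothing here is genuinely hard; the only step needing a moment's care is the leading-zero bookkeeping, and it collapses to the identity $[0\ y]\cdot L = [0\ y]$. As a consistency check, this automaton is just the matrix encoding of the recursions $fusc(2n)=fusc(n)$ and $fusc(2n+1)=fusc(n)+fusc(n+1)$ acting on the endpoint pair $(fusc(n),fusc(n+1))$, matching the \quotes{fusc} description of $V_y$ stated above, with the fixed vector $[0\ y]$ of $L$ corresponding to the base case $(fusc(0),fusc(1)) = (0,1)$.
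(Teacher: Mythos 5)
Your construction is correct, and it is essentially the explicit automaton the paper gives for this tiling, so the only thing worth noting is where the two arguments place their weight. The paper's formal proof is a one-line appeal to the general fact (from Allouche--Shallit) that the fixed point of a $2$-uniform morphism is $2$-automatic, and only afterwards exhibits a concrete automaton $O$; your proof \emph{is} the concrete automaton, made self-contained via theorem~\ref{theo:tau-prop-rebuilding}, which buys a constructive and verifiable statement at the cost of redoing a small amount of standard bookkeeping. The one substantive difference from the paper's $O$ is the treatment of leading zeros: $O$ conjugates each transition by $L^{-1}$ so that the initial state $I_2$ carries a self-loop on input $0$ (the state after reading $w$ is $L^{-|w|}[w]$, and the output row vector is unchanged because $[0\ y]\cdot L^{-1}=[0\ y]$), whereas you keep the plain transitions $m\mapsto mL$, $m\mapsto mR$ and absorb the padding into the output map via the identity $[0\ y]\cdot L=[0\ y]$. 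Both are valid; your version has a marginally simpler transition table, the paper's has the padding-invariance visible in the state graph itself rather than only in the output function.
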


\begin{proof}
As a fixed point of a $2$-uniform morphism, $V_y$ is $2$-automatic.
\end{proof}

Let $O$ be the deterministic finite automaton with the following characteristics:
\begin{itemize}
\item the set of states is $\mathbb{F}_p^{2 \times 2}$,
\item the input alphabet is $\{0,1\}$,
\item the initial state is $I_2$,
\item the transitions from state $m$ are performed as follows:\\
\begin{tabular}{|c|l|}
    \hline
    \rule[-1ex]{0pt}{2.5ex} Input symbol & Output state \\
    \hline
    \rule[-1ex]{0pt}{2.5ex} $0$ & $L^{-1} \cdot m \cdot L$ \\
    \rule[-1ex]{0pt}{2.5ex} $1$ & $L^{-1} \cdot m \cdot R$ \\
    \hline
\end{tabular}
\item $\forall w \in \{0,1\}^*$,
      the final state of the automaton after processing the input $w$, say $m$,
      encodes a tile $t'$ as follows:
\begin{equation}
    [t'] = [0\ y] \cdot m
\end{equation}
\end{itemize}

$V_y$ can be computed with $O$.

\subsubsection{Self-similarity}
\label{sec:tau-tiling-v-self-similarity}

\begin{theo}
$\forall y \in \mathbb{F}_p$,
the tiling $V_y$ is self-similar.
\end{theo}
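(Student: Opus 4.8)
The plan is to show directly that $\tau(V_y)=V_y$, so that $V_y$ is self-similar with respect to $\tau$ itself (expansion factor $2$); unlike the cases of $S_t$ in section~\ref{sec:sigma-tiling-s-self-similarity} and of $H_{b,c}$, no passage to a power of the substitution is needed here, because $V_y$ is anchored at its left endpoint rather than at a moving centre. First I would recall from section~\ref{sec:tau-tiling-v-cons} that $\tau(\square\ 0y)=\square\ 0y\ \square\ yy$, so $\lambda(\square\ 0y)=\square\ 0y$ again and the tile $\square\ 0y$ is the leftmost tile of every $\tau^k(\square\ 0y)$; hence $\tau^k(\square\ 0y)$ is a prefix of $\tau^{k+1}(\square\ 0y)$ and $V_y=\lim_{k\to\infty}\tau^k(\square\ 0y)$ is the pure morphic fixed point of the $2$-uniform morphism $\tau$ on the alphabet $P$.

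Next I would apply $\tau$ to the half-line tiling $V_y$, meaning: subdivide each unit tile $\square\ xy$ into the two half-length tiles $\lambda(\square\ xy)=\square(x,x+y)$ and $\rho(\square\ xy)=\square(x+y,y)$, then rescale by $2$ so that the left endpoint of the leftmost tile is again the origin and all tiles have unit length. Because $\tau$ is $2$-uniform, the image under $\tau$ of the length-$2^k$ prefix $\tau^k(\square\ 0y)$ of $V_y$ is exactly the length-$2^{k+1}$ prefix $\tau^{k+1}(\square\ 0y)$ of $V_y$; letting $k\to\infty$ gives $\tau(V_y)=V_y$, which is the asserted self-similarity.

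The only point deserving justification — and the nearest thing to an obstacle — is the interchange of $\tau$ with the limit, that is, continuity of $\tau$ for the topology in which $V_y$ is defined (the half-line analogue of the Chabauty–Fell topology invoked in lemma~\ref{lemma:s-limit}, equivalently convergence of ever-longer prefixes of the associated one-sided word over $P$). This is immediate from $2$-uniformity: if two half-line tilings share their first $2^k$ tiles, then their images under $\tau$ share their first $2^{k+1}$ tiles, so $\tau$ is continuous and commutes with the prefix limit, and the identities $\tau(\tau^k(\square\ 0y))=\tau^{k+1}(\square\ 0y)$ pass to the limit. I would record this as a single remark rather than expand it.
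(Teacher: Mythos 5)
Your proposal is correct and takes essentially the same route as the paper, whose entire proof is the single identity $\tau(V_y)=V_y$; you simply supply the justification (the prefix/fixed-point argument for the $2$-uniform morphism and the continuity of $\tau$ with respect to the prefix limit) that the paper omits as routine. No gaps.
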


\begin{proof}
\begin{equation}
    \tau(V_y) = V_y
\end{equation}
\end{proof}

\subsubsection{Nonperiodicity}
\label{sec:tau-tiling-v-nonperiodicity}

\begin{theo}
$\forall y \in \mathbb{F}_p^*$,
the tiling $V_y$ is nonperiodic.
\end{theo}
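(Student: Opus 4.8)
The plan is to reduce the statement to the one-dimensional sequence $u(n) := fusc(n) \bmod p$ and then rule out eventual periodicity of $u$ by a generating-function argument over $\mathbb{F}_p$. This is close in spirit to the reference to section~\ref{sec:sigma-tiling-s-nonperiodicity} (primitivity plus self-similarity already embed the relevant patches in $V_y$), but it is cleaner to work directly with the fusc sequence.

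First I would use $V_y = y\cdot V_1$ (a consequence of additivity) together with the fact that $t \mapsto y\,t$ is a bijection of $P$ when $y \in \mathbb{F}_p^*$: this shows $V_y$ is eventually periodic if and only if $V_1$ is. By the lemma relating $V_y$ to the \quotes{fusc} function, the $n$-th tile of $V_1$ is $\square(u(n),u(n+1))$, so $V_1$ is eventually periodic if and only if $u$ is. Suppose, for contradiction, that $u$ has eventual period $T$. Then the series $\bar F(x) := \sum_{n\ge 1} u(n)\,x^{n-1} \in \mathbb{F}_p[[x]]$ is a rational function, say $\bar F(x) = P(x)/(1-x^T)$ with $P \in \mathbb{F}_p[x]$. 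On the other hand, splitting $\sum_{n\ge1}fusc(n)x^{n}$ over even and odd $n$ and applying the fusc recursion gives the identity $F(x) = (1+x+x^2)F(x^2)$ for $F(x) = \sum_{n\ge1}fusc(n)x^{n-1}$; reducing mod $p$ yields $\bar F(x) = (1+x+x^2)\bar F(x^2)$. Combining this with rationality and cancelling the common factor $1-x^T$ from $1-x^{2T} = (1-x^T)(1+x^T)$ produces the polynomial functional equation
\begin{equation}
  P(x)\,(1+x^T) = (1+x+x^2)\,P(x^2)\quad\text{in }\mathbb{F}_p[x].
\end{equation}
Comparing leading terms forces $\deg P = T-2$ (in particular $T \ge 3$, and incidentally $u$ is even purely periodic), while $P(0) = u(1) = 1$, so $P \ne 0$.

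The heart of the argument is the final step. Over $\overline{\mathbb{F}_p}$ the roots of $1+x+x^2$ are the primitive cube roots of unity (or the single value $1$, of multiplicity $2$, when $p=3$); for such a root $\omega$ one has $1+\omega^T \in \{\,2,\,-\omega,\,-\omega^2\,\}$, which is nonzero exactly because $p$ is odd. Hence $\gcd_{\mathbb{F}_p[x]}(1+x+x^2,\,1+x^T) = 1$. Plugging this into the functional equation and inducting on $k$ shows $(1+x+x^2)^k \mid P(x)$ for every $k\ge 1$: the case $k=1$ is immediate from the equation, and if $(1+x+x^2)^k \mid P(x)$ then $(1+x+x^2)^k \mid P(x^2)$ since $(1+x^2+x^4)^k = (1+x+x^2)^k(1-x+x^2)^k$, so $(1+x+x^2)^{k+1} \mid (1+x+x^2)P(x^2) = P(x)(1+x^T)$, and coprimality lets us remove the factor $1+x^T$. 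Thus $P$ would be divisible by arbitrarily high powers of $1+x+x^2$, forcing $P=0$ and contradicting $P(0)=1$. Therefore $u$, hence $V_1$, hence $V_y$, is not eventually periodic.

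I expect the only subtle point — and the precise place where the hypothesis $p$ odd is used — to be the coprimality $\gcd(1+x+x^2,\,1+x^T)=1$; in characteristic $2$ it fails for $T=3$ (there $1+x^3=(1+x)(1+x+x^2)$), which is exactly the degenerate case where $fusc \bmod 2$ really is periodic of period $3$. So the plan is to carry out the steps above and be careful that oddness of $p$ is invoked at precisely that point; everything else (the fusc identity, the degree count, the induction) is routine and parity-free.
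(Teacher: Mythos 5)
Your proof is correct, but it takes a genuinely different route from the paper's. The paper first reduces to an odd period $m$ (via $fusc(2n)=fusc(n)$), then uses primitivity and self-similarity of $V_y$ to locate copies of the patch $\square\ 01\ \square\ 10$ (as the central tiles of $\tau^{p+1/2}(\square(\frac{1}{2},\frac{1}{2}))$, which is where oddness of $p$ enters there, since $\frac{1}{2}$ must exist in $\mathbb{F}_p$), and finally blows this patch up with $\tau^{k'-1}$ to exhibit two distinct tiles at distance $2^{k'}-1$; since an odd $m$ divides $2^{\mathrm{ord}_2(m)}-1$, this contradicts $m$-periodicity. You instead work entirely with $u(n)=fusc(n)\bmod p$, derive the functional equation $\bar F(x)=(1+x+x^2)\bar F(x^2)$, and run a coprimality-and-divisibility induction in $\mathbb{F}_p[x]$ showing $(1+x+x^2)^k\mid P(x)$ for all $k$. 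Your argument is self-contained (it needs neither primitivity nor self-similarity of the tiling, only the identification of $V_1$ with fusc mod $p$ and the scaling $V_y=y\cdot V_1$), and it isolates exactly where oddness of $p$ is used, namely $\gcd(1+x+x^2,\,1+x^T)=1$, correctly flagging that this is what fails for $p=2$ where fusc mod $2$ is genuinely $3$-periodic. The paper's approach, by contrast, is the same patch-propagation technique it uses for the two-dimensional tilings, so it fits the narrative of the paper better and requires no generating-function machinery. One cosmetic remark: your degree count $\deg P=T-2$ only forces $T\ge 2$, not $T\ge 3$ as you assert parenthetically, but nothing in the argument depends on that claim.
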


\begin{proof}
\begin{remark}
As $y$ is invertible,
the periodicity of $V_y$ is equivalent
to the periodicity of the \quotes{fusc} function reduced modulo $p$.
\end{remark}

Suppose that $V_y$ is periodic.

\begin{lemma}
The least period of $V_y$ would be odd.
\end{lemma}

If $V_y$ is $2m$-periodic with $m > 0$, then:
\begin{equation}
    fusc(n) = fusc(2n) = fusc(2n + 2m) = fusc(2(n+m)) = fusc(n + m)
\end{equation}
So we can assume that the period is odd.

\vspace{\baselineskip}

As $V_y$ is primitive and self-similar,
$\forall k \in \mathbb{N}$,
$V_y$ contains $\tau^k(\square(\frac{1}{2},\frac{1}{2}))$.

$\forall k \ge 0$:
\begin{itemize}
\item the two central tiles of $\tau^{k+1}(\square(\frac{1}{2},\frac{1}{2}))$
      are $\square(k+\frac{1}{2},1)$ and $\square(1,k+\frac{1}{2})$,
\item in particular, when $k = p - \frac{1}{2}$,
      we have the central tiles $\square\ 01$ and $\square\ 10$ (in that order).
\end{itemize}

$\forall k' > 0$:
\begin{itemize}
\item $\tau^{k'-1}(\square\ 01\ \square\ 10)$ contains $2^{k'}$ tiles,
\item the leftmost being $t = \square\ 01$
      and the rightmost being $t' = \square\ 10$,
\item the tiles $t$ and $t'$ are at distance $2^{k'}-1$,
      and $t \ne t'$.
\end{itemize}

Suppose that $V_y$ is $m$-periodic (with $m$ odd):
\begin{itemize}
\item $m$ divides $2^{ord_2(m)}-1$,
\item so the leftmost and rightmost tiles
      in $\tau^{ord_2(m)-1}(\square\ 01\ \square\ 10)$ must be the same, a contradiction,
\item such supertiles appear infinitely many times in $V_y$,
\item hence $V_y$ is neither periodic nor eventually periodic.
\end{itemize}
\end{proof}

\begin{corrollary}
The function \quotes{$k \rightarrow fusc(k) \mod p$} is not eventually periodic.
\end{corrollary}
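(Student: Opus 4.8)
The plan is to bootstrap from the just-proved theorem: for $y \in \mathbb{F}_p^*$, the tiling $V_y$ is nonperiodic, and moreover the argument shows it is not \emph{eventually} periodic. First I would fix the canonical choice $y = 1$, so that $V_1$ is the tiling whose $n$-th tile is $\square(fusc(n) \bmod p,\ fusc(n+1) \bmod p)$ by the lemma relating $V_y$ to the ``fusc'' function. The sequence of tiles of $V_1$ is therefore precisely the sequence $n \mapsto (fusc(n) \bmod p,\ fusc(n+1) \bmod p)$ read off endpoint-wise, and in particular the left-endpoint labels of consecutive tiles recover $k \mapsto fusc(k) \bmod p$ exactly.

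Next I would observe that eventual periodicity of the integer sequence $k \mapsto fusc(k) \bmod p$ would force eventual periodicity of the tiling $V_1$: if $fusc(k) \bmod p = fusc(k+m) \bmod p$ for all $k \ge N$, then also $fusc(k+1) \bmod p = fusc(k+1+m) \bmod p$ for all $k \ge N$, so the pair $(fusc(k),fusc(k+1)) \bmod p$ — hence the $k$-th tile of $V_1$ — agrees with the $(k+m)$-th tile for all $k \ge N$. That is exactly the statement that the one-dimensional tiling $V_1$ is eventually periodic, which the proof of the nonperiodicity theorem has already excluded (the contradiction there was derived from $m$-periodicity with $m$ odd, and the reduction at the start of that proof — $2m$-periodicity implies $m$-periodicity via $fusc(2n) = fusc(n)$ — applies verbatim to \emph{eventual} periodicity as well, pushing any eventual period down to an odd one and then hitting the same supertile-collision contradiction, since such supertiles $\tau^{k}(\square\,01\,\square\,10)$ occur infinitely often). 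So the corollary follows by contraposition.

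There is essentially no main obstacle here; the corollary is a direct translation of the preceding theorem through the $V_y$–$fusc$ dictionary. The only point requiring a line of care is confirming that the nonperiodicity proof genuinely rules out \emph{eventual} periodicity and not merely strict periodicity — but the proof is written with that in mind (it concludes ``$V_y$ is neither periodic nor eventually periodic''), because the offending supertiles recur infinitely far out, so any tail of $V_1$ still contains a copy of $\tau^{\mathrm{ord}_2(m)-1}(\square\,01\,\square\,10)$ whose leftmost and rightmost tiles would be forced equal by an eventual period $m$, contradicting $\square\,01 \ne \square\,10$.
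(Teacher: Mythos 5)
Your proposal is correct and follows essentially the same route as the paper: the corollary is read off from the nonperiodicity of $V_1$ via the lemma identifying the $n$-th tile of $V_y$ with $(fusc(n) \bmod p,\ fusc(n+1) \bmod p)$, together with the observation (made explicitly in the paper's remark) that periodicity of $V_y$ for invertible $y$ is equivalent to periodicity of $fusc \bmod p$. Your extra care in checking that the halving reduction $fusc(2n)=fusc(n)$ and the recurring supertiles $\tau^{k}(\square\,01\,\square\,10)$ rule out \emph{eventual} periodicity, not just strict periodicity, is exactly the point the paper relies on when its theorem concludes that $V_y$ is ``neither periodic nor eventually periodic.''
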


\subsection{\texorpdfstring{Tiling $W_t$}{Tiling Wt}}
\label{sec:tau-tiling-w}

\subsubsection{Construction}
\label{sec:tau-tiling-w-cons}

\begin{lemma}
\begin{equation}
    L^p = I_2
\end{equation}
\end{lemma}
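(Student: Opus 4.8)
The plan is to mimic the computation carried out in Lemma~\ref{lemma:BCDI} for the matrices $B$, $C$, $D$, since $L$ is just the $2\times 2$ upper-triangular unipotent analogue of those. First I would establish by a one-line induction on $k$ that
\begin{equation}
    L^k = \begin{bmatrix} 1 & 1 \\ 0 & 1 \end{bmatrix}^k = \begin{bmatrix} 1 & k \\ 0 & 1 \end{bmatrix},
\end{equation}
the inductive step being the immediate matrix product $\begin{bmatrix} 1 & k \\ 0 & 1 \end{bmatrix}\begin{bmatrix} 1 & 1 \\ 0 & 1 \end{bmatrix} = \begin{bmatrix} 1 & k+1 \\ 0 & 1 \end{bmatrix}$.

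Then I would specialise to $k = p$: since we are working in $\mathbb{F}_p$, the entry $p$ reduces to $0$, so
\begin{equation}
    L^p = \begin{bmatrix} 1 & p \\ 0 & 1 \end{bmatrix} = \begin{bmatrix} 1 & 0 \\ 0 & 1 \end{bmatrix} = I_2,
\end{equation}
which is exactly the claim. (Equivalently, $L = I_2 + N$ where $N$ is nilpotent of square zero, so $L^p = I_2 + pN = I_2$ by the binomial theorem.)

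There is really no obstacle here; the only thing to be careful about is that the identity $L^k = \begin{bmatrix} 1 & k \\ 0 & 1 \end{bmatrix}$ should be read with $k$ reduced modulo $p$ once we are inside $\mathbb{F}_p^{2\times 2}$, which is precisely what makes $k = p$ collapse to the identity. This lemma will presumably be used, in the same way as Lemma~\ref{lemma:bcdi} was deduced from Lemma~\ref{lemma:BCDI}, to conclude $\lambda^p = \rho^p = id_P$ is \emph{not} true (indeed $\lambda^p \ne id$ in general since $\rho$ does not appear), but rather that the relevant compositions of $\lambda$ controlling the construction of $W_t$ have order dividing $p$, giving a sequence of nested supertiles analogous to the $h_{b,c}(k)$ of Section~\ref{sec:sigma-tiling-h}.
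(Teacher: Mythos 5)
Your computation is correct and is exactly the argument the paper gives for the analogous $3\times 3$ matrices $B$, $C$, $D$ in Lemma~\ref{lemma:BCDI} (the paper omits the proof of this particular lemma as routine): $L^k$ is unitriangular with off-diagonal entry $k$, so $L^p = I_2$ over $\mathbb{F}_p$. One correction to your closing aside: since a tile of $P$ is completely determined by its row matrix and carries no orientation, $L^p = I_2$ (and likewise $R^p = I_2$) does yield $\lambda^p = \rho^p = id_P$; in particular $\lambda^p(t) = t$, which is precisely what makes $\tau^p(t)$ start with $t$ and the supertiles $\tau^{kp}(t)$ nest.
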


\begin{lemma}
$\forall t \in P$,
the sequence $\{ \tau^{kp}(t) \}_{k \in \mathbb{N}}$ has a limit, say $W_t$,
that constitutes a tiling of the half-line.
\end{lemma}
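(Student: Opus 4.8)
The plan is to reproduce, almost verbatim, the argument used for the tiling $V_y$ in Section~\ref{sec:tau-tiling-v-cons}, with $\tau$ replaced by the iterate $\tau^p$ and the self-stabilising tile $\square\ 0y$ replaced by an arbitrary $t \in P$. The point is that a $2^p$-uniform morphism of the finite alphabet $P$ that fixes its argument as a prefix of its image automatically has a pure morphic limit word, and that such a word, being a one-sided infinite sequence of unit segments, tiles the half-line.

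First I would note that $\tau$ sends each tile to two tiles (Section~\ref{sec:tau-cons-rule}), so $\tau^p$ sends each tile to $2^p$ tiles; thus $\tau^p$ is a $2^p$-uniform morphism on $P$ and $\tau^{kp}(t) = (\tau^p)^k(t)$ for all $k$. The key step is then to show that $t$ is the leftmost tile of $\tau^p(t)$, i.e. that $\tau^p(t)$ begins with $t$. By Theorem~\ref{theo:tau-prop-rebuilding}, the $0$-th (leftmost) tile of a $k$-supertile $s$ — the tile whose position word is the length-$k$ binary expansion $0\cdots0$ of $0$ — has row matrix $[s]\cdot L^k$. Applying this with $k = p$ and $s = t$, and using the preceding lemma $L^p = I_2$, the leftmost tile of $\tau^p(t)$ has row matrix $[t]\cdot L^p = [t]\cdot I_2 = [t]$. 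Since a tile of $P$ is uniquely determined by its two endpoint values, i.e. by its row matrix (Definition~\ref{def:tile-matrix-tau}), this leftmost tile is exactly $t$.

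From there the conclusion is immediate. Writing $\tau^p(t) = t\, t_1 \cdots t_{2^p - 1}$, we have $\tau^{(k+1)p}(t) = \tau^{kp}(\tau^p(t)) = \tau^{kp}(t)\,\tau^{kp}(t_1)\cdots\tau^{kp}(t_{2^p-1})$, so $\tau^{kp}(t)$ is an initial segment of $\tau^{(k+1)p}(t)$; hence the sequence $\{\tau^{kp}(t)\}_{k\in\mathbb N}$ is nested, its common lengths $2^{kp}$ grow without bound, and it converges to a one-sided infinite sequence of unit tiles $W_t$, a pure morphic word for $\tau^p$. As each $\tau^{kp}(t)$ tiles the interval $[0,2^{kp}]$ and these tilings are mutually compatible, $W_t$ tiles $[0,\infty)$.

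I do not expect any real obstacle: the only slightly delicate points are the identification of the leftmost tile's position with the all-$0$ word and the invocation of $L^p = I_2$, both of which follow directly from results already established; the degenerate case $t = \square\ 00$ is trivial since $W_{\square\ 00}$ is then the all-zero tiling of the half-line.
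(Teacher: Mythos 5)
Your proof is correct and follows essentially the same route as the paper: the paper likewise observes that $\tau^p$ is a $2^p$-uniform morphism on $P$ and that $\tau^p(t)$ starts with $t$ (which is exactly what the preceding lemma $L^p = I_2$ is there to guarantee), so the limit exists as a pure morphic word. You merely spell out the justification that the leftmost tile of $\tau^p(t)$ is $t$, which the paper leaves implicit.
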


\begin{proof}
The substitution $\tau^p$
can be seen as a $2^p$-uniform morphism
operating on the alphabet $P$.

As $\tau^p(t)$ starts with $t$,
the given limit exists (and is a pure morphic word).
\end{proof}

Figure~\ref{fig:stationary-tau-w} presents the first steps
of the construction of $W_{\square\ 10}$ (for $p = 5$).

\begin{figure}[H]
    \centering
    \resizebox{10cm}{!}
    {
    \begin{tikzpicture}
                \fill[cyan!50] (0+.1,2) rectangle (1,3);
        \draw[ultra thick] (0+.1,2) rectangle (2-.1,3);
        \fill[cyan!50] (0+.1,0) rectangle (1,1);
        \fill[black!50] (3,0) rectangle (4-.1,1);
        \fill[black!50] (4+.1,0) rectangle (5,1);
        \fill[magenta!50] (5,0) rectangle (6-.1,1);
        \fill[magenta!50] (6+.1,0) rectangle (7,1);
        \fill[yellow!50] (7,0) rectangle (8-.1,1);
        \fill[yellow!50] (8+.1,0) rectangle (9,1);
        \fill[yellow!50] (9,0) rectangle (10-.1,1);
        \fill[yellow!50] (10+.1,0) rectangle (11,1);
        \fill[magenta!50] (13,0) rectangle (14-.1,1);
        \fill[magenta!50] (14+.1,0) rectangle (15,1);
        \fill[magenta!50] (15,0) rectangle (16-.1,1);
        \fill[magenta!50] (16+.1,0) rectangle (17,1);
        \fill[magenta!50] (17,0) rectangle (18-.1,1);
        \fill[magenta!50] (18+.1,0) rectangle (19,1);
        \fill[yellow!50] (21,0) rectangle (22-.1,1);
        \fill[yellow!50] (22+.1,0) rectangle (23,1);
        \fill[yellow!50] (23,0) rectangle (24-.1,1);
        \fill[yellow!50] (24+.1,0) rectangle (25,1);
        \fill[magenta!50] (25,0) rectangle (26-.1,1);
        \fill[magenta!50] (26+.1,0) rectangle (27,1);
        \fill[black!50] (27,0) rectangle (28-.1,1);
        \fill[black!50] (28+.1,0) rectangle (29,1);
        \fill[cyan!50] (31,0) rectangle (32-.1,1);
        \fill[cyan!50] (32+.1,0) rectangle (33,1);
        \fill[black!50] (33,0) rectangle (34-.1,1);
        \fill[black!50] (34+.1,0) rectangle (35,1);
        \fill[yellow!50] (35,0) rectangle (36-.1,1);
        \fill[yellow!50] (36+.1,0) rectangle (37,1);
        \fill[magenta!50] (39,0) rectangle (40-.1,1);
        \fill[magenta!50] (40+.1,0) rectangle (41,1);
        \fill[yellow!50] (43,0) rectangle (44-.1,1);
        \fill[yellow!50] (44+.1,0) rectangle (45,1);
        \fill[black!50] (45,0) rectangle (46-.1,1);
        \fill[black!50] (46+.1,0) rectangle (47,1);
        \fill[cyan!50] (47,0) rectangle (48-.1,1);
        \fill[cyan!50] (48+.1,0) rectangle (49,1);
        \fill[yellow!50] (49,0) rectangle (50-.1,1);
        \fill[yellow!50] (50+.1,0) rectangle (51,1);
        \fill[magenta!50] (51,0) rectangle (52-.1,1);
        \fill[magenta!50] (52+.1,0) rectangle (53,1);
        \fill[yellow!50] (53,0) rectangle (54-.1,1);
        \fill[yellow!50] (54+.1,0) rectangle (55,1);
        \fill[cyan!50] (55,0) rectangle (56-.1,1);
        \fill[cyan!50] (56+.1,0) rectangle (57,1);
        \fill[magenta!50] (57,0) rectangle (58-.1,1);
        \fill[magenta!50] (58+.1,0) rectangle (59,1);
        \fill[cyan!50] (59,0) rectangle (60-.1,1);
        \fill[cyan!50] (60+.1,0) rectangle (61,1);
        \fill[cyan!50] (61,0) rectangle (62-.1,1);
        \fill[cyan!50] (62+.1,0) rectangle (63,1);
        \draw[ultra thick] (0+.1,0) rectangle (2-.1,1);
        \draw[ultra thick] (2+.1,0) rectangle (4-.1,1);
        \draw[ultra thick] (4+.1,0) rectangle (6-.1,1);
        \draw[ultra thick] (6+.1,0) rectangle (8-.1,1);
        \draw[ultra thick] (8+.1,0) rectangle (10-.1,1);
        \draw[ultra thick] (10+.1,0) rectangle (12-.1,1);
        \draw[ultra thick] (12+.1,0) rectangle (14-.1,1);
        \draw[ultra thick] (14+.1,0) rectangle (16-.1,1);
        \draw[ultra thick] (16+.1,0) rectangle (18-.1,1);
        \draw[ultra thick] (18+.1,0) rectangle (20-.1,1);
        \draw[ultra thick] (20+.1,0) rectangle (22-.1,1);
        \draw[ultra thick] (22+.1,0) rectangle (24-.1,1);
        \draw[ultra thick] (24+.1,0) rectangle (26-.1,1);
        \draw[ultra thick] (26+.1,0) rectangle (28-.1,1);
        \draw[ultra thick] (28+.1,0) rectangle (30-.1,1);
        \draw[ultra thick] (30+.1,0) rectangle (32-.1,1);
        \draw[ultra thick] (32+.1,0) rectangle (34-.1,1);
        \draw[ultra thick] (34+.1,0) rectangle (36-.1,1);
        \draw[ultra thick] (36+.1,0) rectangle (38-.1,1);
        \draw[ultra thick] (38+.1,0) rectangle (40-.1,1);
        \draw[ultra thick] (40+.1,0) rectangle (42-.1,1);
        \draw[ultra thick] (42+.1,0) rectangle (44-.1,1);
        \draw[ultra thick] (44+.1,0) rectangle (46-.1,1);
        \draw[ultra thick] (46+.1,0) rectangle (48-.1,1);
        \draw[ultra thick] (48+.1,0) rectangle (50-.1,1);
        \draw[ultra thick] (50+.1,0) rectangle (52-.1,1);
        \draw[ultra thick] (52+.1,0) rectangle (54-.1,1);
        \draw[ultra thick] (54+.1,0) rectangle (56-.1,1);
        \draw[ultra thick] (56+.1,0) rectangle (58-.1,1);
        \draw[ultra thick] (58+.1,0) rectangle (60-.1,1);
        \draw[ultra thick] (60+.1,0) rectangle (62-.1,1);
        \draw[ultra thick] (62+.1,0) rectangle (64-.1,1);
    \end{tikzpicture}
    }
    \caption{Colored representation of $\tau^{kp}(\square\ 10)$ for $k = 0, 1$ (for $p = 5$)}
    \label{fig:stationary-tau-w}
\end{figure}

\begin{lemma}
$\forall y \in \mathbb{F}_p$:
\begin{equation}
    V_y = W_{\square\ 0y}
\end{equation}
\end{lemma}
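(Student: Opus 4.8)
The plan is to recognise both $V_y$ and $W_{\square\ 0y}$ as the tile-by-tile limit of the same nested family of half-line patches, exploiting that the defining sequence of $W_{\square\ 0y}$ is a cofinal subsequence of that of $V_y$.

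First I would record that $\tau^k(\square\ 0y)$ is a prefix of $\tau^{k+1}(\square\ 0y)$ for every $k$. Indeed, applying $\tau^k$ to the identity $\tau(\square\ 0y) = \square\ 0y\ \square\ yy$ from section~\ref{sec:tau-prop-patterns} yields $\tau^{k+1}(\square\ 0y) = \tau^k(\square\ 0y)\,\tau^k(\square\ yy)$, and since $\tau$ is $2$-uniform the patch $\tau^k(\square\ 0y)$ consists of exactly its first $2^k$ tiles. Consequently, for every ($0$-based) index $n$, the $n$-th tile of $V_y$ is, by the construction of the limit in section~\ref{sec:tau-tiling-v-cons}, the $n$-th tile of $\tau^k(\square\ 0y)$ for any $k$ with $2^k > n$.

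The same reasoning, iterating $p$ substitution steps at a time, shows that $\tau^{kp}(\square\ 0y)$ is a prefix of $\tau^{(k+1)p}(\square\ 0y)$, so the $n$-th tile of $W_{\square\ 0y}$ is the $n$-th tile of $\tau^{kp}(\square\ 0y)$ for any $k$ with $2^{kp} > n$. Now fix $n$ and pick $k$ large enough that $2^{kp} > n$; the index $kp$ then also satisfies $2^{kp} > n$, so the $n$-th tile of $V_y$ agrees with the $n$-th tile of $\tau^{kp}(\square\ 0y)$, which in turn is the $n$-th tile of $W_{\square\ 0y}$. Since $n$ is arbitrary, $V_y = W_{\square\ 0y}$.

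There is no genuine obstacle here: the content is entirely the observation that passing to the cofinal subsequence $\{kp\}_{k \in \mathbb{N}}$ of $\{k\}_{k \in \mathbb{N}}$ does not change the limit of a nested sequence of patches. The only point requiring a little care is to phrase the two limit constructions uniformly as ``the $n$-th tile stabilises'', so that the comparison is literally tile-for-tile; once that is done the equality is immediate.
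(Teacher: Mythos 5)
Your proof is correct. The paper omits the proof of this lemma as presenting no particular difficulty, and your argument — that $\{\tau^{kp}(\square\ 0y)\}_k$ is a cofinal subsequence of the nested prefix sequence $\{\tau^k(\square\ 0y)\}_k$, so the two tile-by-tile limits stabilise to the same value at every index — is exactly the intended elementary justification.
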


\begin{remark}
$\forall m \in \mathbb{F}_p$,
$\forall t, t' \in P$:
\begin{equation}
    W_t + W_{t'} = W_{t+t'}
\end{equation}
\begin{equation}
    m \cdot W_t = W_{mt}
\end{equation}
\end{remark}

\subsubsection{Automaticity}
\label{sec:tau-tiling-w-automaticity}

\begin{theo}
$\forall t \in P$,
$W_t$ is $2$-automatic.
\end{theo}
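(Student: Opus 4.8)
The plan is to recognize $W_t$ as the one-sided fixed point of a \emph{uniform} morphism and then invoke the standard interchangeability between $k$-automaticity and $k^n$-automaticity.

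First I would recall, from the construction of $W_t$ in section~\ref{sec:tau-tiling-w-cons}, that $\tau^p$ acts as a $2^p$-uniform morphism on the alphabet $P$, and that $\lambda^p = id_P$: indeed the leftmost tile of $\tau^m(t)$ is $\lambda^m(t)$, with $[\lambda^m(t)] = [t]\cdot L^m$, so $[\lambda^{kp}(t)] = [t]\cdot (L^p)^k = [t]$ since $L^p = I_2$, and $\lambda$ is a permutation of $P$. Hence $\tau^{kp}(t)$ begins with $t$ and $W_t = \lim_{k\to\infty}\tau^{kp}(t)$ is exactly the pure morphic word obtained by iterating the $2^p$-uniform morphism $\tau^p$, prolongable on the letter $t$. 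By Cobham's theorem on uniform tag systems (see Allouche and Shallit~\cite{automatic}), such a fixed point is $2^p$-automatic; this is precisely the argument already used for $V_y$, now carried out with block length $p$ rather than $1$.

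Next I would apply the classical fact (again see~\cite{automatic}) that for every integer $k\ge 2$ and every $n\ge 1$ a sequence is $k$-automatic if and only if it is $k^n$-automatic. Taking $k=2$ and $n=p$, the $2^p$-automaticity of $W_t$ established above is equivalent to its $2$-automaticity, which is the assertion of the theorem. To make the result effective I would also exhibit an explicit deterministic finite automaton with output, in the spirit of the automaton $O$ used for $V_y$: its states are the matrices of $\mathbb{F}_p^{2\times 2}$, its initial state is $I_2$, its input alphabet is the set of base-$2^p$ digits $\{0,\dots,2^p-1\}$, and reading a digit $d$ with binary expansion $b_1\cdots b_p$ sends the state $m$ to $m\cdot[b_1]\cdots[b_p]$ with $[0]=L$ and $[1]=R$; the tile attached to a terminal state $m$ is the tile $t'$ with $[t']=[t]\cdot m$. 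By Theorem~\ref{theo:tau-prop-rebuilding}, fed the base-$2^p$ expansion of $n$, this automaton returns the $n$-th tile of $W_t$, and it is insensitive to leading zeros because the digit $0$ corresponds to $L^p = I_2$.

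The main obstacle here is genuinely minor: there is no quantitative estimate, only the bookkeeping of passing between base $2$ and base $2^p$. The single point that must be verified with care is leading-zero invariance of the base-$2^p$ automaton, and this is exactly where the identity $L^p = I_2$ (equivalently $\lambda^p = id_P$) intervenes --- the same identity that already ensured $\tau^{kp}(t)$ starts with $t$ and hence that $W_t$ is well defined.
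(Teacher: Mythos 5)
Your argument is correct and matches the paper's (largely omitted) reasoning: the construction section already exhibits $W_t$ as the fixed point of the $2^p$-uniform morphism $\tau^p$ prolongable on $t$, and automaticity follows, with $L^p = I_2$ doing exactly the work you assign it. The only cosmetic difference is that the paper reuses the base-$2$ automaton $O$ directly (whose state after reading $w$ is $L^{-|w|}[w]$, leading-zero invariant for the same reason), whereas you pass through $2^p$-automaticity and the standard $k$ versus $k^n$ equivalence; both routes are fine.
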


We can use the same automaton as in section~\ref{sec:tau-tiling-v-automaticity}
to compute $W_t$.

\subsubsection{Self-similarity}
\label{sec:tau-tiling-w-self-similarity}

\begin{theo}
$\forall t \in P$,
$W_t$ is self-similar.
\end{theo}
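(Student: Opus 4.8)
The plan is to mirror the self-similarity proof for $H_{b,c}$ in section~\ref{sec:sigma-tiling-h-self-similarity}, namely to show the single identity
\begin{equation}
    \tau^p(W_t) = W_t,
\end{equation}
which exhibits $W_t$ as a fixed point of the inflation $\tau^p$ and hence as a self-similar tiling of the half-line.

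First I would recall that $W_t$ was constructed in section~\ref{sec:tau-tiling-w-cons} as the limit of the nested sequence $\{\tau^{kp}(t)\}_{k\in\mathbb{N}}$, and that the nesting works precisely because $\tau^p(t)$ begins with the tile $t$. That last fact I would re-derive cleanly from $L^p = I_2$: the leftmost tile of $\tau^p(t)$ occupies position $0^p$ within the $p$-supertile $\tau^p(t)$, which by Theorem~\ref{theo:tau-prop-rebuilding} has row matrix $[t]\cdot L^p = [t]\cdot I_2 = [t]$, so the leftmost tile of $\tau^p(t)$ is $t$ itself (equivalently $\lambda^p = id_P$). Consequently $\tau^{(k+1)p}(t) = \tau^p\bigl(\tau^{kp}(t)\bigr)$ extends $\tau^{kp}(t)$, and $W_t$ is the unique pure morphic word over the alphabet $P$ fixed by the $2^p$-uniform morphism $\tau^p$ that starts with $t$.

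Next I would pass to the limit. Since $\tau^p$, viewed as a $2^p$-uniform morphism, acts continuously on (finite or right-infinite) words and commutes with the operation of taking longer and longer prefixes, we get
\begin{equation}
    \tau^p(W_t) = \tau^p\Bigl(\lim_{k\to\infty}\tau^{kp}(t)\Bigr) = \lim_{k\to\infty}\tau^{(k+1)p}(t) = W_t,
\end{equation}
the reindexing by one not affecting the limit. This says exactly that the tiling $W_t$ is invariant under the substitution $\tau^p$, i.e. it is self-similar, which is what the theorem asserts. I would also remark that this is consistent with the special case $V_y = W_{\square\,0y}$, for which $\tau(V_y)=V_y$ was already shown, and that $\tau^p(V_y)=V_y$ follows either directly or as the $t=\square\,0y$ instance.

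I do not expect a genuine obstacle here: the only point requiring a word of justification is that $\tau^p$ commutes with the prefix limit, which is the standard continuity property of uniform morphisms extended to infinite words, so the argument is essentially the same routine limit computation used for $H_{b,c}$, with $L^p = I_2$ playing the role that $[\beta]^p = id_T$ (lemma~\ref{lemma:bcdi}) played there.
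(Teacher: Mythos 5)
Your proposal is correct and takes exactly the same route as the paper, whose entire proof is the single identity $\tau^p(W_t) = W_t$; you simply supply the justification (via $L^p = I_2$, hence $\lambda^p = id_P$, and continuity of the uniform morphism under the prefix limit) that the paper leaves implicit.
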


\begin{proof}
\begin{equation}
    \tau^p(W_t) = W_t
\end{equation}
\end{proof}

\subsubsection{Nonperiodicity}
\label{sec:tau-tiling-w-nonperiodicity}

\begin{theo}
$\forall t \in P^*$,
$W_t$ is nonperiodic.
\end{theo}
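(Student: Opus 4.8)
I would prove this by transposing, almost verbatim, the argument used for $V_y$ in section~\ref{sec:tau-tiling-v-nonperiodicity}, but applying it to the whole family $\{W_s : s \in P^*\}$ at once; the only genuinely new ingredient is a relation playing the role that $fusc(2n)=fusc(n)$ played there. First I would record the relevant structure. Since $L^p=I_2$ we have $\lambda^p=id_P$, so for every $s\in P$ the supertile $\tau^p(s)$ starts with $s$ and $W_s=\lim_k\tau^{kp}(s)$ is the unique fixed point of $\tau^p$ whose first tile is $s$. Hence $\tau(W_s)$ and $W_{\lambda(s)}$ are both fixed points of $\tau^p$ starting with $\lambda(s)$, so $\tau(W_s)=W_{\lambda(s)}$. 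Putting $s_1=\lambda^{p-1}(t)$ (so $\lambda(s_1)=t$, and $s_1\in P^*$ because $\lambda$ permutes $P^*$), we get $W_t=\tau(W_{s_1})$, and unfolding $\tau$ tile by tile yields, for all $n\in\mathbb{N}$,
\begin{equation}
    W_t(2n)=\lambda\bigl(W_{s_1}(n)\bigr)\quad\text{and}\quad W_t(2n+1)=\rho\bigl(W_{s_1}(n)\bigr).
\end{equation}

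Next I would reduce to an odd period. Suppose some $W_t$ with $t\in P^*$ were eventually periodic with period $m$, and $m$ even, say $m=2m'$. Then for all large $n$ we have $W_t(2n)=W_t(2n+2m')=W_t\bigl(2(n+m')\bigr)$, and since $\lambda$ is injective the displayed relation forces $W_{s_1}(n)=W_{s_1}(n+m')$ for all large $n$; thus $W_{s_1}$ is eventually periodic with period $m'$, and $s_1\in P^*$. Iterating, it suffices to derive a contradiction from the hypothesis that some $W_t$ with $t\in P^*$ is eventually periodic with an \emph{odd} period $q$.

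Now I would produce the obstruction exactly as for $V_y$. The tile $\square(\tfrac{1}{2},\tfrac{1}{2})$ lies in $P^*$ (as $\tfrac12=2^{-1}\neq0$ in $\mathbb{F}_p$), so primitivity of $(P^*,\tau)$ together with $\tau^p(W_t)=W_t$ shows that $W_t$ contains $\tau^k\bigl(\square(\tfrac12,\tfrac12)\bigr)$ for every $k\in\mathbb{N}$, and, frequencies being positive, it does so at arbitrarily large positions. As computed in section~\ref{sec:tau-tiling-v-nonperiodicity}, the two central tiles of $\tau^{k+1}\bigl(\square(\tfrac12,\tfrac12)\bigr)$ are $\square(k+\tfrac12,1)$ and $\square(1,k+\tfrac12)$; choosing the integer $k=\tfrac{p-1}{2}$, so that $2k+1\equiv0$, i.e. $k+\tfrac12\equiv0$ in $\mathbb{F}_p$, we conclude that $W_t$ contains the two-tile patch $\square\,01\,\square\,10$, hence contains $\tau^{k'-1}(\square\,01\,\square\,10)$ for every $k'\geq1$, again at arbitrarily large positions. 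Such a patch has $2^{k'}$ tiles; since $\lambda(\square\,0y)=\square\,0y$ and $\rho(\square\,x0)=\square\,x0$, its leftmost tile is $\square\,01$ and its rightmost tile is $\square\,10$, two distinct tiles at distance $2^{k'}-1$.

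Finally, as $q$ is odd, $q\mid 2^{ord_q(2)}-1$; taking $k'=ord_q(2)$ gives, arbitrarily far out in $W_t$ and in particular beyond any pre-period, two distinct tiles at a distance divisible by $q$, contradicting eventual $q$-periodicity. Hence no $W_t$ with $t\in P^*$ is eventually periodic, and in particular $W_t$ is nonperiodic. I expect the only delicate point to be the descent $W_t\rightsquigarrow W_{\lambda^{-1}(t)}$ of the second paragraph and the attendant bookkeeping of pre-periods — once the identity $\tau(W_s)=W_{\lambda(s)}$ and the position-doubling relation are in place, the remainder is a transcription of the $V_y$ proof.
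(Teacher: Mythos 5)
Your proof is correct, but it takes a genuinely different (and longer) route than the paper's. The paper disposes of $W_t$ in four lines by quoting the already-proved nonperiodicity of $V_1$: if $W_t$ were $m$-periodic, pick $n$ with $V_1(n)\ne V_1(n+m)$ and $k$ with $n+m<2^k$; then $\tau^k(\square\ 01)$ --- the length-$2^k$ prefix of $V_1$ --- contains two distinct tiles at distance $m$, and by primitivity and self-similarity this supertile occurs infinitely often in $W_t$, which kills periodicity and eventual periodicity at once. You instead re-run the entire $V_y$ argument directly on $W_t$: you establish $\tau(W_s)=W_{\lambda(s)}$ and the position-doubling identities $W_t(2n)=\lambda(W_{\lambda^{-1}(t)}(n))$ and $W_t(2n+1)=\rho(W_{\lambda^{-1}(t)}(n))$ to replace $fusc(2n)=fusc(n)$ in the reduction to an odd period, and then reproduce the $\square\ 01\ \square\ 10$ obstruction together with $q \mid 2^{ord_q(2)}-1$. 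All of these steps check out: $\lambda$ does permute $P^*$ since it fixes $\square\ 00$; $\tau(W_s)$ and $W_{\lambda(s)}$ are both fixed points of $\tau^p$ prolongable on $\lambda(s)$, hence equal; and the central-tile computation for $\tau^{k+1}(\square(\frac{1}{2},\frac{1}{2}))$ is the one already verified in the $V_y$ proof. What your version buys is self-containedness --- it never invokes the $V_y$ theorem --- and the identity $\tau(W_s)=W_{\lambda(s)}$ is a nice structural fact not recorded in the paper; what it costs is the extra bookkeeping of the parity descent across the whole family $\{W_s\}_{s\in P^*}$, which the paper's one-line reduction to $V_1$ avoids entirely.
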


\begin{proof}
Suppose that $W_t$ is $m$-periodic for some $m > 0$.

We know that $V_1$ is nonperiodic, so for some $n \in \mathbb{N}$,
$V_1(n) \ne V_1(n + m)$.

Let $k \in \mathbb{N}$ be such that $n + m < 2^k$.

As $W_t$ is primitive and self-similar,
it contains infinitely many copies of $\tau^k(\square\ 01)$.
This $k$-supertile has two different tiles at distance $m$,
so $W_t$ cannot be periodic or eventually periodic.
\end{proof}

\section{Acknowledgments}
I would like to express my very great appreciation
to Mihai Prunescu, Neil Sloane and Dirk Frettl\"oh
for their valuable and constructive suggestions.

\clearpage

\appendix

\section{Code sample (PARI/GP)}

The following PARI/GP program
creates an image of $\sigma^8(\bigtriangleup 120)$ (for $p = 3$).
This image is used on the front page.

\VerbatimInput[frame=single]{sample-pari.gp}

\section{Other substitutions}
\label{sec:other}

This section describes substitutions similar to $\sigma$
that operate on triangles or on squares
with corners decorated by elements of $F_p$
and exhibit interesting graphical features.

For all of these substitutions, corners that meet at the same point
hold the same value.

When operating on equilateral triangles, as with $\sigma$,
the substitution rule for $\bigtriangledown xyz$
can be obtained by turning a half turn that for $\bigtriangleup xyz$.

\subsection{\texorpdfstring{Substitution $\sigma_1$}{Substitution sigma1}}
\label{sec:variant-1}

The substitution $\sigma_1$ operates on equilateral triangles
and transforms a tile into four tiles
as depicted in figure \ref{fig:variant-1-r}.
This substitution uses multiplication instead of addition.

See figure \ref{fig:variant-1} for an example of supertile.

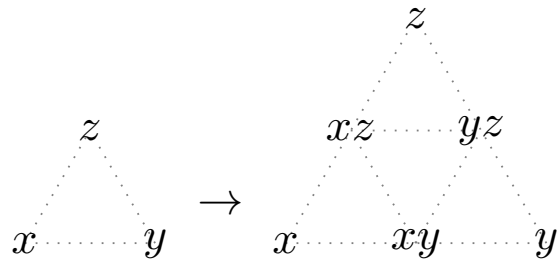
\begin{figure}[H]
    \centering
    \resizebox{8cm}{!}
    {
    \begin{tikzpicture}
    	\begin{scope}[yscale=.87,xslant=.5]
        \node at (1+1/3,1/3) {$\rightarrow$};
        \draw[dotted,gray] (0,0) -- (1,0);
        \draw[dotted,gray] (0,0) -- (0,1);
        \draw[dotted,gray] (1,0) -- (0,1);
        \draw[dotted,gray] (2,0) -- (4,0);
        \draw[dotted,gray] (2,0) -- (2,2);
        \draw[dotted,gray] (4,0) -- (2,2);
        \draw[dotted,gray] (2,1) -- (3,1);
        \draw[dotted,gray] (3,0) -- (3,1);
        \draw[dotted,gray] (3,0) -- (2,1);
        \node at (0,0) {$x$};
        \node at (0,1) {$z$};
        \node at (1,0) {$y$};
        \node at (2,0) {$x$};
        \node at (2,1) {$xz$};
        \node at (2,2) {$z$};
        \node at (3,0) {$xy$};
        \node at (3,1) {$yz$};
        \node at (4,0) {$y$};
    	\end{scope}
    \end{tikzpicture}
    }
    \caption{The substitution rule for $\sigma_1$}
    \label{fig:variant-1-r}
\end{figure}

\begin{figure}[b]
    \begin{center}
    \vstretch{1.732}{
        \includegraphics[height=7cm, keepaspectratio]{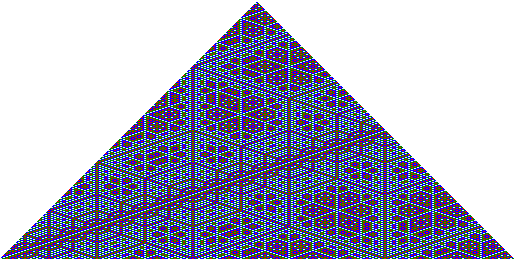}
    }
    \caption{Colored representation of $\sigma_1^8(\bigtriangleup 123)$ (for $p = 5$)}
    \label{fig:variant-1}
    \end{center}
\end{figure}

\subsection{\texorpdfstring{Substitution $\sigma_2$}{Substitution sigma2}}
\label{sec:variant-2}

The substitution $\sigma_2$ operates on equilateral triangles
and transforms a tile into nine tiles
as depicted in figure \ref{fig:variant-2-r}.

This substitution is related to the OEIS sequence \seqnum{A356097}.

See figure \ref{fig:variant-2} for an example of supertile.

\begin{figure}[H]
    \centering
    \resizebox{8cm}{!}
    {
    \begin{tikzpicture}
    	\begin{scope}[yscale=.87,xslant=.5]
        \node at (1+1/3,1/3) {$\rightarrow$};
        \draw[dotted,gray] (0,0) -- (1,0);
        \draw[dotted,gray] (0,0) -- (0,1);
        \draw[dotted,gray] (1,0) -- (0,1);
        \node at (0,0) {$x$};
        \node at (1,0) {$y$};
        \node at (0,1) {$z$};
        \draw[dotted,gray] (2,0) -- (5,0);
        \draw[dotted,gray] (2,0) -- (2,3);
        \draw[dotted,gray] (5,0) -- (2,3);
        \draw[dotted,gray] (2,1) -- (4,1);
        \draw[dotted,gray] (3,0) -- (3,2);
        \draw[dotted,gray] (4,0) -- (2,2);
        \draw[dotted,gray] (2,2) -- (3,2);
        \draw[dotted,gray] (4,0) -- (4,1);
        \draw[dotted,gray] (3,0) -- (2,1);
        \node at (2,0) {$x$};
        \node at (3,0) {$x$};
        \node at (2,1) {$x$};
        \node[scale=.8] at (3,1) {$x+y+z$};
        \node at (2,2) {$z$};
        \node at (3,2) {$z$};
        \node at (2,3) {$z$};
        \node at (4,0) {$y$};
        \node at (4,1) {$y$};
        \node at (5,0) {$y$};
    	\end{scope}
    \end{tikzpicture}
    }
    \caption{The substitution rule for $\sigma_2$}
    \label{fig:variant-2-r}
\end{figure}
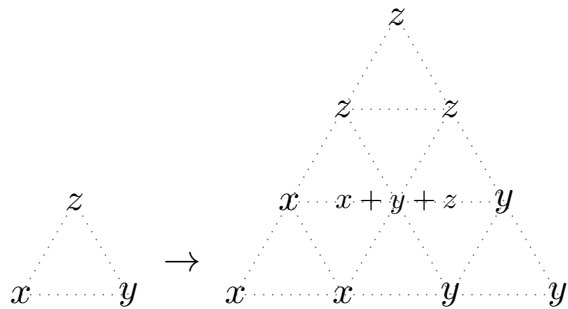

\begin{figure}[b]
    \begin{center}
    \vstretch{1.732}{
        \includegraphics[height=7cm, keepaspectratio]{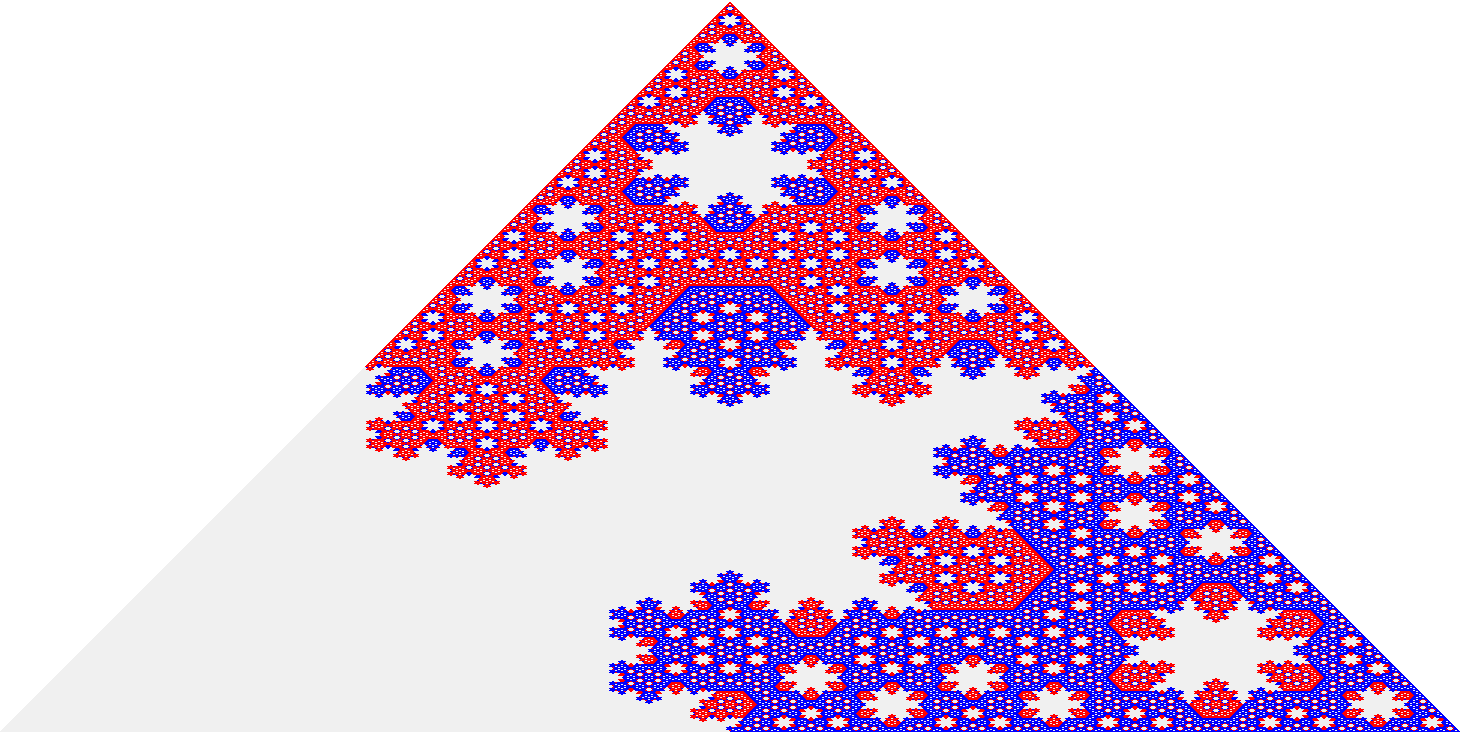}
    }
    \caption{Colored representation of $\sigma_2^6(\bigtriangleup 012)$ (for $p = 3$)}
    \label{fig:variant-2}
    \end{center}
\end{figure}

\subsection{\texorpdfstring{Substitution $\sigma_3$}{Substitution sigma3}}
\label{sec:variant-3}

The substitution $\sigma_3$ operates on equilateral triangles
and transforms a tile into nine tiles
as depicted in figure \ref{fig:variant-3-r}.

This substitution is related to the OEIS sequence \seqnum{A356096}.

See figure \ref{fig:variant-3} for an example of supertile.

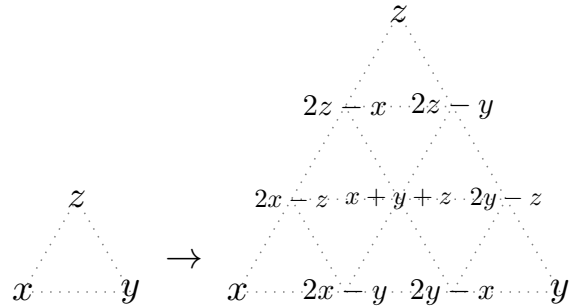
\begin{figure}[H]
    \centering
    \resizebox{8cm}{!}
    {
    \begin{tikzpicture}
    	\begin{scope}[yscale=.87,xslant=.5]
        \node at (1+1/3,1/3) {$\rightarrow$};
        \draw[dotted,gray] (0,0) -- (1,0);
        \draw[dotted,gray] (0,0) -- (0,1);
        \draw[dotted,gray] (1,0) -- (0,1);
        \node at (0,0) {$x$};
        \node at (1,0) {$y$};
        \node at (0,1) {$z$};
        \draw[dotted,gray] (2,0) -- (5,0);
        \draw[dotted,gray] (2,0) -- (2,3);
        \draw[dotted,gray] (5,0) -- (2,3);
        \draw[dotted,gray] (2,1) -- (4,1);
        \draw[dotted,gray] (3,0) -- (3,2);
        \draw[dotted,gray] (4,0) -- (2,2);
        \draw[dotted,gray] (2,2) -- (3,2);
        \draw[dotted,gray] (4,0) -- (4,1);
        \draw[dotted,gray] (3,0) -- (2,1);
        \node at (2,0) {$x$};
        \node[scale=.8] at (3,0) {$2x-y$};
        \node[scale=.7] at (2,1) {$2x-z$};
        \node[scale=.7] at (3,1) {$x+y+z$};
        \node[scale=.8] at (2,2) {$2z-x$};
        \node[scale=.8] at (3,2) {$2z-y$};
        \node at (2,3) {$z$};
        \node[scale=.8] at (4,0) {$2y-x$};
        \node[scale=.7] at (4,1) {$2y-z$};
        \node at (5,0) {$y$};
    	\end{scope}
    \end{tikzpicture}
    }
    \caption{The substitution rule for $\sigma_3$}
    \label{fig:variant-3-r}
\end{figure}

\begin{figure}[b]
    \begin{center}
    \vstretch{1.732}{
        \includegraphics[height=7cm, keepaspectratio]{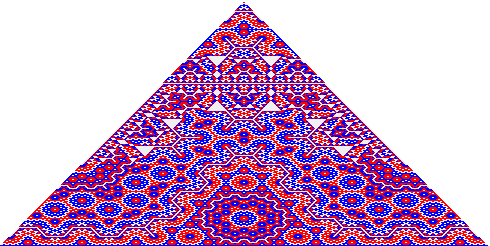}
    }
    \caption{Colored representation of $\sigma_3^5(\bigtriangleup 112)$ (for $p = 3$)}
    \label{fig:variant-3}
    \end{center}
\end{figure}

\subsection{\texorpdfstring{Substitution $\sigma_4$}{Substitution sigma4}}
\label{sec:variant-4}

The substitution $\sigma_4$ operates on equilateral triangles
and transforms a tile into nine tiles
as depicted in figure \ref{fig:variant-4-r}.

This substitution is related to the OEIS sequence \seqnum{A356002}.

See figure \ref{fig:variant-4} for an example of supertile.

\begin{figure}[H]
    \centering
    \resizebox{8cm}{!}
    {
    \begin{tikzpicture}
    	\begin{scope}[yscale=.87,xslant=.5]
        \node at (1+1/3,1/3) {$\rightarrow$};
        \draw[dotted,gray] (0,0) -- (1,0);
        \draw[dotted,gray] (0,0) -- (0,1);
        \draw[dotted,gray] (1,0) -- (0,1);
        \node at (0,0) {$x$};
        \node at (1,0) {$y$};
        \node at (0,1) {$z$};
        \draw[dotted,gray] (2,0) -- (5,0);
        \draw[dotted,gray] (2,0) -- (2,3);
        \draw[dotted,gray] (5,0) -- (2,3);
        \draw[dotted,gray] (2,1) -- (4,1);
        \draw[dotted,gray] (3,0) -- (3,2);
        \draw[dotted,gray] (4,0) -- (2,2);
        \draw[dotted,gray] (2,2) -- (3,2);
        \draw[dotted,gray] (4,0) -- (4,1);
        \draw[dotted,gray] (3,0) -- (2,1);
        \node at (2,0) {$x$};
        \node[scale=.8] at (3,0) {$2x+y$};
        \node[scale=.7] at (2,1) {$2x+z$};
        \node[scale=.7] at (3,1) {$x+y+z$};
        \node[scale=.8] at (2,2) {$x+2z$};
        \node[scale=.8] at (3,2) {$y+2z$};
        \node at (2,3) {$z$};
        \node[scale=.8] at (4,0) {$x+2y$};
        \node[scale=.7] at (4,1) {$2y+z$};
        \node at (5,0) {$y$};
    	\end{scope}
    \end{tikzpicture}
    }
    \caption{The substitution rule for $\sigma_4$}
    \label{fig:variant-4-r}
\end{figure}
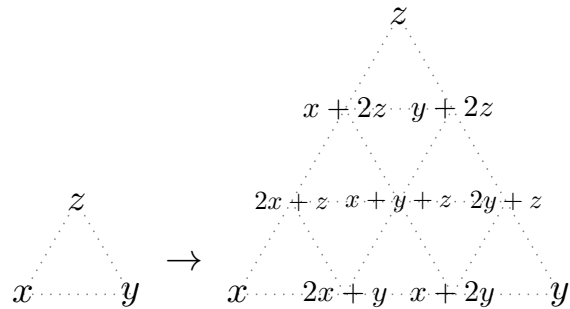

\begin{figure}[b]
    \begin{center}
    \vstretch{1.732}{
        \includegraphics[height=7cm, keepaspectratio]{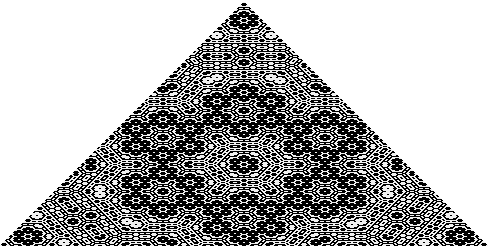}
    }
    \caption{Colored representation of $\sigma_4^5(\bigtriangleup 111)$ (for $p = 4$)}
    \label{fig:variant-4}
    \end{center}
\end{figure}

\subsection{\texorpdfstring{Substitution $\sigma_5$}{Substitution sigma5}}
\label{sec:variant-5}

The substitution $\sigma_5$ operates on equilateral triangles
and transforms a tile into nine tiles
as depicted in figure \ref{fig:variant-5-r}.

This substitution is related to the OEIS sequence \seqnum{A356098}.

See figure \ref{fig:variant-5} for an example of supertile.

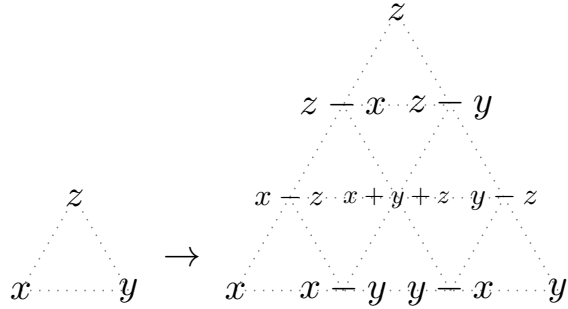
\begin{figure}[H]
    \centering
    \resizebox{8cm}{!}
    {
    \begin{tikzpicture}
    	\begin{scope}[yscale=.87,xslant=.5]
        \node at (1+1/3,1/3) {$\rightarrow$};
        \draw[dotted,gray] (0,0) -- (1,0);
        \draw[dotted,gray] (0,0) -- (0,1);
        \draw[dotted,gray] (1,0) -- (0,1);
        \node at (0,0) {$x$};
        \node at (1,0) {$y$};
        \node at (0,1) {$z$};
        \draw[dotted,gray] (2,0) -- (5,0);
        \draw[dotted,gray] (2,0) -- (2,3);
        \draw[dotted,gray] (5,0) -- (2,3);
        \draw[dotted,gray] (2,1) -- (4,1);
        \draw[dotted,gray] (3,0) -- (3,2);
        \draw[dotted,gray] (4,0) -- (2,2);
        \draw[dotted,gray] (2,2) -- (3,2);
        \draw[dotted,gray] (4,0) -- (4,1);
        \draw[dotted,gray] (3,0) -- (2,1);
        \node at (2,0) {$x$};
        \node at (3,0) {$x-y$};
        \node[scale=.8] at (2,1) {$x-z$};
        \node[scale=.7] at (3,1) {$x+y+z$};
        \node at (2,2) {$z-x$};
        \node at (3,2) {$z-y$};
        \node at (2,3) {$z$};
        \node at (4,0) {$y-x$};
        \node[scale=.8] at (4,1) {$y-z$};
        \node at (5,0) {$y$};
    	\end{scope}
    \end{tikzpicture}
    }
    \caption{The substitution rule for $\sigma_5$}
    \label{fig:variant-5-r}
\end{figure}

\begin{figure}[b]
    \begin{center}
    \vstretch{1.732}{
        \includegraphics[height=7cm, keepaspectratio]{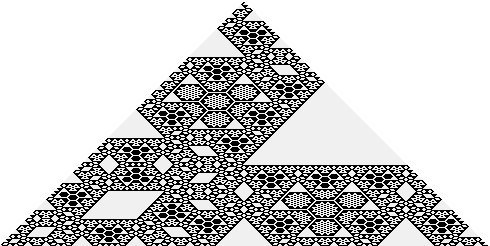}
    }
    \caption{Colored representation of $\sigma_5^5(\bigtriangleup 011)$ (for $p = 2$)}
    \label{fig:variant-5}
    \end{center}
\end{figure}

\subsection{\texorpdfstring{Substitution $\sigma_6$}{Substitution sigma6}}
\label{sec:variant-6}

The substitution $\sigma_6$ operates on squares
and transforms a tile into nine tiles
as depicted in figure \ref{fig:variant-6-r}.

This substitution is related to the OEIS sequence \seqnum{A356245}.

See figure \ref{fig:variant-6} for an example of supertile.

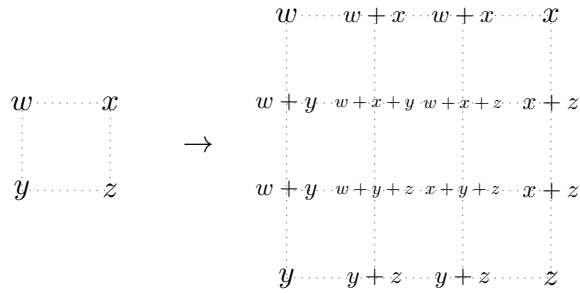
\begin{figure}[H]
    \centering
    \resizebox{8cm}{!}
    {
    \begin{tikzpicture}
    	\begin{scope}
        \node at (2,-.5) {$\rightarrow$};

        \draw[gray,dotted](0,0) -- (1,0);
        \draw[gray,dotted](0,0) -- (0,-1);
        \draw[gray,dotted](0,-1) -- (1,-1);
        \draw[gray,dotted](1,0) -- (1,-1);
        \node at (0,0) {$w$};
        \node at (1,0) {$x$};
        \node at (0,-1) {$y$};
        \node at (1,-1) {$z$};
        \draw[gray,dotted](3,1) -- (6,1);
        \draw[gray,dotted](3,1) -- (3,-2);
        \draw[gray,dotted](3,0) -- (6,0);
        \draw[gray,dotted](4,1) -- (4,-2);
        \draw[gray,dotted](3,-1) -- (6,-1);
        \draw[gray,dotted](5,1) -- (5,-2);
        \draw[gray,dotted](3,-2) -- (6,-2);
        \draw[gray,dotted](6,1) -- (6,-2);
        \node at (3,1) {$w$};
        \node[scale=.8] at (4,1) {$w+x$};
        \node[scale=.8] at (5,1) {$w+x$};
        \node at (6,1) {$x$};
        \node[scale=.8] at (3,0) {$w+y$};
        \node[scale=.6] at (4,0) {$w+x+y$};
        \node[scale=.6] at (5,0) {$w+x+z$};
        \node[scale=.8] at (6,0) {$x+z$};
        \node[scale=.8] at (3,-1) {$w+y$};
        \node[scale=.6] at (4,-1) {$w+y+z$};
        \node[scale=.6] at (5,-1) {$x+y+z$};
        \node[scale=.8] at (6,-1) {$x+z$};
        \node at (3,-2) {$y$};
        \node[scale=.8] at (4,-2) {$y+z$};
        \node[scale=.8] at (5,-2) {$y+z$};
        \node at (6,-2) {$z$};
    	\end{scope}
    \end{tikzpicture}
    }
    \caption{The substitution rule for $\sigma_6$}
    \label{fig:variant-6-r}
\end{figure}

\begin{figure}[b]
    \begin{center}
        \includegraphics[height=12cm, keepaspectratio]{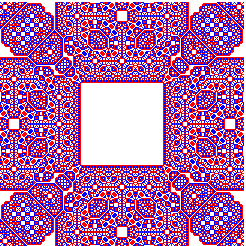}
    \caption{Colored representation of $\sigma_6^5(\square 1111)$ (for $p = 3$)}
    \label{fig:variant-6}
    \end{center}
\end{figure}

\subsection{\texorpdfstring{Substitution $\sigma_7$}{Substitution sigma7}}
\label{sec:variant-7}

The substitution $\sigma_7$ operates on isosceles right triangles
and transforms a tile into four tiles
as depicted in figure \ref{fig:variant-7-r}.

This substitution is related to the OEIS sequence \seqnum{A358871}.

See figure \ref{fig:variant-7} for an example of supertile.

\begin{figure}[H]
    \centering
    \resizebox{8cm}{!}
    {
    \begin{tikzpicture}
    	\begin{scope}
        \node at (2.5,0) {$\rightarrow$};
        
        \draw[gray,dotted] (0,0) -- (2,0) -- (1,1) -- cycle;
        \node at (0,0) {$x$};
        \node at (2,0) {$y$};
        \node at (1,1) {$z$};
        
        \draw[gray,dotted] (3,0) -- (5,0) -- (4,1) -- cycle;
        \draw[gray,dotted] (4,0) -- (4,1);
        \draw[gray,dotted] (3.5,.5) -- (4,0) -- (4.5,.5);
        \node at (3,0) {$x$};
        \node at (5,0) {$y$};
        \node at (4,1) {$z$};
        \node[scale=.7] at (4,0) {$x+y$};
        \node[scale=.7] at (3.5,.5) {$x+z$};
        \node[scale=.7] at (4.5,.5) {$y+z$};
    	\end{scope}
    \end{tikzpicture}
    }
    \caption{The substitution rule for $\sigma_7$}
    \label{fig:variant-7-r}
\end{figure}
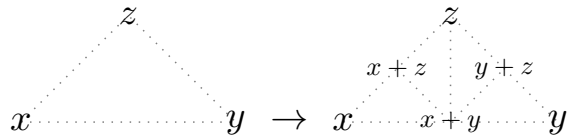

\begin{figure}[b]
    \begin{center}
        \includegraphics[height=7cm, keepaspectratio]{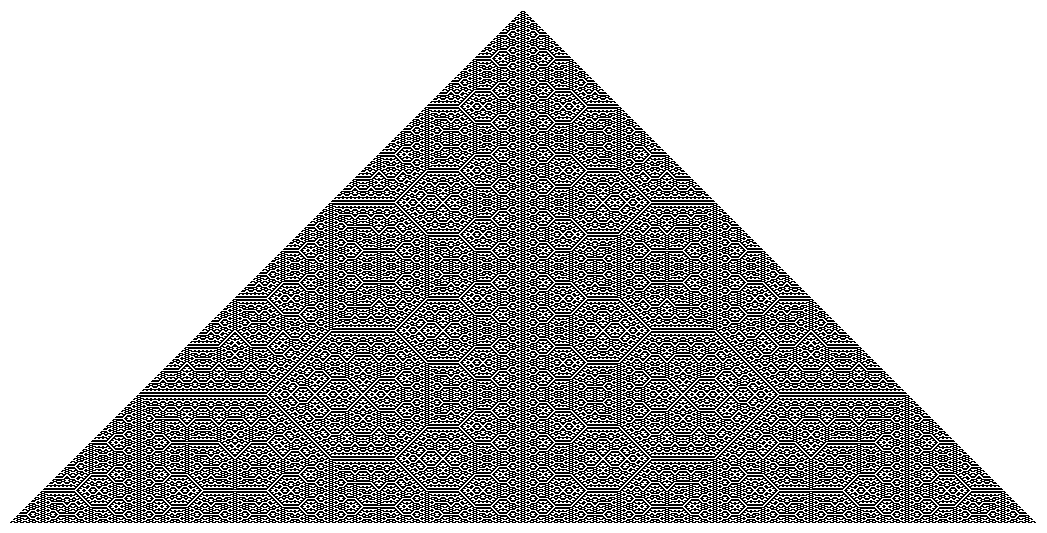}
    \caption{Colored representation of $\sigma_7^9(\bigtriangleup 110)$ (for $p = 2$)}
    \label{fig:variant-7}
    \end{center}
\end{figure}

\clearpage

\newpage

\end{document}